\documentclass[a4paper,10pt,reqno]{amsart}
\usepackage{amsmath,amssymb,amsfonts,amsthm}
\usepackage{cite}
\usepackage{eucal}
\usepackage{enumerate}
\usepackage{array}
\usepackage{graphicx}
\usepackage{mathrsfs}
\usepackage{microtype}
\usepackage{needspace}
\usepackage{etoolbox}
\usepackage{tikz}
\usetikzlibrary{arrows.meta}
\usepackage[hypertexnames=false,pagebackref=true,colorlinks=true,linkcolor=blue,citecolor=blue,urlcolor=blue]{hyperref}
\renewcommand*{\backrefalt}[4]{%
  \ifcase #1\relax
  \or (Cited on p.~#2.)%
  \else (Cited on pp.~#2.)%
  \fi
}
\allowdisplaybreaks
\hypersetup{
  pdftitle={Stability of optimal transport on sub-Riemannian manifolds},
  pdfauthor={Bang-Xian Han and Zhuo-Nan Zhu},
  pdfsubject={Coercivity, sharp stability exponents, and logarithmic potential stability on equiregular sub-Riemannian manifolds},
  pdfkeywords={Riemannian manifolds, fat distributions, Carnot groups, optimal transport, quantitative stability,
    Kantorovich potential, horizontal semiconcavity,
    John domain}
}

\numberwithin{equation}{section}

\theoremstyle{plain}
\newtheorem{theorem}{Theorem}[section]
\newtheorem{proposition}[theorem]{Proposition}
\newtheorem{lemma}[theorem]{Lemma}
\newtheorem{corollary}[theorem]{Corollary}
\theoremstyle{definition}
\newtheorem{definition}[theorem]{Definition}
\theoremstyle{remark}
\newtheorem{remark}[theorem]{Remark}

\newcommand{\HH}{\mathbb H}

\newcommand{\RR}{\mathbb R}
\newcommand{\PP}{\mathcal P}
\newcommand{\eps}{\varepsilon}
\newcommand{\iu}{\mathrm{i}}
\newcommand{\diam}{\operatorname{diam}}
\newcommand{\dist}{\mathsf{d}}
\newcommand{\dcc}{\dist_{\mathrm{cc}}}
\newcommand{\dE}{\dist_{\mathrm E}}
\newcommand{\dR}{\dist_{\mathrm R}}
\newcommand{\Lip}{\operatorname{Lip}}
\newcommand{\osc}{\operatorname{osc}}
\newcommand{\Exp}{\operatorname{Exp}}
\newcommand{\Id}{\operatorname{Id}}
\newcommand{\spt}{\operatorname{spt}}

\newcommand{\calA}{\mathcal A}
\newcommand{\calB}{\mathcal B}
\newcommand{\calE}{\mathcal E}
\newcommand{\calD}{\mathcal D}

\newcommand{\der}{\mathrm d}
\renewcommand{\d}{\,\mathrm d}
\newcommand{\dd}{\d}
\newcommand{\restr}[1]{\lower3pt\hbox{$|_{#1}$}}

\newcommand{\dx}{{\d x}}

\newcommand{\mm}{\mathfrak m}

\newcommand{\E}{\mathbb{E}}

\newcommand{\N}{\mathbb{N}}

\providecommand{\Var}{\operatorname{Var}}
\providecommand{\Cov}{\operatorname{Cov}}
\providecommand{\one}{\mathbf 1}
\providecommand{\norm}[1]{\left\lVert #1\right\rVert}

\title[Stability of optimal transport]
{\LARGE Stability of optimal transport on sub-Riemannian manifolds}
\author{Bang-Xian Han}
\address{School of Mathematics, Shandong University, Jinan 250100, China}
\email{hanbx@sdu.edu.cn}
\author{Zhuo-Nan Zhu}
\address{School of Mathematical Sciences, University of Science and
Technology of China, Hefei 230026, China}
\email{zhuonanzhu@mail.ustc.edu.cn}
\makeatletter
\def\@settitle{%
  \begin{center}
    \baselineskip14\p@\relax
    \resizebox{\textwidth}{!}{\bfseries\@title}
  \end{center}%
}
\patchcmd{\@maketitle}{\dimen@34\p@}{\dimen@22\p@}{}%
  {\PackageError{manuscript}{Cannot adjust title spacing}{}}
\patchcmd{\@starttoc}{\addvspace{32\p@\@plus14\p@}}%
  {\addvspace{12\p@\@plus2\p@}}{}%
  {\PackageError{manuscript}{Cannot adjust contents spacing}{}}
\@namedef{subjclassname@2020}{2020 Mathematics Subject Classification}
\makeatother
\subjclass[2020]{49Q22 (primary); 53C17, 22E25 (secondary)}
\keywords{Optimal transport, quantitative stability, Riemannian manifolds,
fat distributions, Carnot groups, coercivity estimates,
Kantorovich potential, horizontal semiconcavity,
John domain}
\date{}

\begin{document}
\begin{abstract}
We study the stability of optimal transport for the squared
Carnot--Carath\'eodory distance. We prove the sharp $W_1^{1/4}$
stability estimate for optimal transport maps on complete Riemannian
and fat sub-Riemannian manifolds, assuming bounded source support
and an upper density bound. On John domains with
two-sided density bounds, we obtain logarithmic $L^2$ stability of
Kantorovich potentials on complete smooth sub-Riemannian manifolds
that are equiregular near the source, with arbitrary compact targets.
For potentials, we obtain every H\"older exponent $0<\alpha<1/2$
on two-generating structures, and the exponent $1/2$ in the
Riemannian and fat settings and on all step-two Carnot groups.
Examples with three target atoms show that the map and potential exponents
with respect to $W_1$ cannot exceed $1/4$ and $1/2$, respectively, on any smooth
sub-Riemannian manifold of horizontal rank at least two.
We also construct smooth transport families whose target variation
of order $t$ in $W_1$ produces map variation of order $t^{1/s}$ in an
equiregular region of step $s$. The map estimates follow from coercivity of the duality defect,
while the potential estimates follow from quantitative concavity
of the Kantorovich functional.
\end{abstract}

\maketitle
\enlargethispage{-\baselineskip}
\setcounter{tocdepth}{1}
\begingroup
\setlength{\parskip}{0pt}
\tableofcontents
\endgroup

\section{Introduction}\label{sec:introduction}
\subsection{The stability problem}
Quantitative stability in optimal transport asks how perturbations
of the target measure affect the transport map and Kantorovich
potential. Such estimates are needed, for example, when the target
is approximated by samples or a numerical discretization.
Fix a source probability measure $\rho$. For each target $\mu$,
let $T_\mu$ denote the optimal transport map, when it exists, and
let $\phi_\mu$ be a source Kantorovich potential normalized by
$\int\phi_\mu\,\d\rho=0$.
We estimate their $L^2(\rho)$ differences in terms of the
Wasserstein distance $W_1$ between targets.

For maps, we use the following $L^2$ error. Since
$(T_{\mu_0},T_{\mu_1})_\#\rho$ couples $\mu_0$ and $\mu_1$,
\[
 \begin{aligned}
 W_2(\mu_0,\mu_1)
 &\leq\norm{\dcc(T_{\mu_0},T_{\mu_1})}_{L^2(\rho)}\\
 &=\left(\int\dcc^2(T_{\mu_0}(x),T_{\mu_1}(x))\,\d\rho(x)\right)^{1/2},
 \end{aligned}
\]
where $\dcc$ is the ambient Carnot--Carath\'eodory distance.
Here $F_\#\rho$ denotes the pushforward measure,
defined by $(F_\#\rho)(A)=\rho(F^{-1}(A))$ for Borel sets $A$.
For a real-valued $f\in L^2(\rho)$, set
$\Var_\rho(f)=\int|f-\int f\,\d\rho|^2\,\d\rho$.
For potentials, this measures quadratic error modulo additive
constants. Under our zero-mean normalization,
\[
 \Var_\rho(\phi_{\mu_1}-\phi_{\mu_0})
 =\norm{\phi_{\mu_1}-\phi_{\mu_0}}_{L^2(\rho)}^2.
\]

For targets in a fixed compact set, our estimates give rates of
strong convergence of maps and potentials when the targets converge
weakly. We use the theory of optimal transport maps \cite{Brenier1991,GangboMcCann1996,McCann2001}
and quantitative results along curves of target measures
\cite{Gigli2011}.

\medskip
In sub-Riemannian geometry, admissible curves are constrained to
horizontal directions, and displacement in other directions is
generated by iterated brackets of horizontal vector fields.
The step of the structure measures the bracket order needed to
span the tangent space. We investigate how this constraint affects
transport stability.

\subsection{Background and motivation}
Kitagawa--Letrouit--M\'erigot
\cite[Theorem~1.2]{KitagawaLetrouitMerigot2025} give, to our
knowledge, the first such estimates on general Riemannian
manifolds without assuming regularity of optimal maps.
Their proofs use a regularized Kantorovich functional and variance
estimates on John domains. Both arguments also apply to metric spaces.
In \cite{HanZhu2026}, we used heat-kernel regularization to prove
potential stability on spaces satisfying the Riemannian
curvature--dimension condition ($\mathrm{RCD}$ spaces). We also
proved map stability on Alexandrov spaces.

Heisenberg groups do not satisfy the curvature--dimension condition
$\mathrm{CD}(K,N)$ for any $K\in\RR$ and finite $N$
\cite{Juillet2009}. Thus our result on $\mathrm{RCD}$ spaces does
not apply to them. Stability without curvature bounds was also
asked in \cite[Open question~4.17]{LetrouitLecture}.
We first extended our heat-kernel proof to Heisenberg groups.
It gives $L^2$ stability of source potentials with rate $W_1^{1/2}$;
see \cite[Theorem~B and Section~6]{HanZhuHeatKernelSurvey2026}.

In this paper we use M\'erigot's estimate of the map error by the
duality defect \cite{Merigot2026}. This gives map stability directly.
For potentials, we estimate the concavity gap of the Kantorovich
functional.

\subsection{Main results}\label{sec:main-results}
All geometric structures below are fixed and smooth.
A sub-Riemannian structure $(M,\mathcal H,g_{\mathcal H})$ consists of
a smooth manifold $M$, a smooth bracket-generating subbundle
$\mathcal H\subseteq TM$, and a smooth inner product $g_{\mathcal H}$
on $\mathcal H$. Bracket generation means that horizontal vector
fields and their iterated Lie brackets span each tangent space.
An absolutely continuous curve is horizontal if its velocity belongs
to $\mathcal H$ almost everywhere. The Carnot--Carath\'eodory distance $\dcc$ is the
infimum of the $g_{\mathcal H}$-lengths of absolutely continuous
horizontal curves joining the two points. Set $c=\dcc^2/2$. When $\mathcal H=TM$, $\dcc$ is the Riemannian distance and we use the same notation in both settings. 
Let $\mm$ have a smooth strictly positive density in local
coordinates. On a Carnot group, we use Haar measure normalized in
fixed exponential coordinates.
For a Borel set $A\subseteq M$, write $\PP(A)$ for the Borel probability measures on $M$ concentrated on $A$.
The distances $W_p$, $1\leq p<\infty$, always use the ambient distance. For nonnegative quantities, $f\asymp g$ means
$c_1g\leq f\leq c_2g$ for fixed constants $c_1,c_2>0$; $f\sim g$ means $f/g\to1$ in the stated limit. 
For a bounded source set $S$ and a compact target set $Y$, put
$D:=\diam_{\mathrm{cc}}(\overline S\cup Y)$.
We use $B_{\mathrm{cc}}(x,r)$ for open metric balls, and
$\lambda B=B_{\mathrm{cc}}(x,\lambda r)$ when
$B=B_{\mathrm{cc}}(x,r)$. We write $S\Subset O$ when
$\overline S$ is a compact subset of the open set $O$,
$\mm\restr S$ for the restriction of $\mm$ to $S$, and
$\#A$ for the cardinality of $A$. For a real-valued function $f$,
$\osc_A f=\sup_A f-\inf_A f$.
For a smooth function $f$, its horizontal gradient
$\nabla_{\mathcal H}f$ is characterized by
$g_{\mathcal H}(\nabla_{\mathcal H}f,X)=Xf$ for horizontal $X$.
For locally Lipschitz functions with respect to $\dcc$, we use
the almost everywhere horizontal derivatives, equivalently their
weak horizontal derivatives. The letter $C$ denotes a positive
constant whose value may change.

\medskip
A \emph{Carnot group of step $s$} is a connected, simply connected
nilpotent Lie group whose Lie algebra has a stratification
$\mathfrak g=V_1\oplus\cdots\oplus V_s$, with
$[V_1,V_j]=V_{j+1}$, $V_s\ne\{0\}$, and $V_{s+1}=\{0\}$.
An inner product on $V_1$ defines the horizontal metric by left
translation. A Carnot group of step two has Lie algebra $V_1\oplus V_2$,
with horizontal layer $V_1$ and central layer $V_2=[V_1,V_1]$.
It is \emph{fat} if $[v,V_1]=V_2$ for every nonzero $v\in V_1$. More generally, the Levi form of a horizontal distribution
$\mathcal H$ is
$\mathcal L_x(v,w)=[\widetilde v,\widetilde w](x)\bmod\mathcal H_x$,
where $\widetilde v,\widetilde w$ are local horizontal extensions
of $v,w\in\mathcal H_x$. The distribution is \emph{two-generating} if
\begin{equation}\label{eq:two-generating}
	\mathcal H_x+[\mathcal H,\mathcal H]_x=T_xM,
	\qquad x\in M.
\end{equation}
It is \emph{fat} if
$\mathcal L_x(v,\mathcal H_x)=T_xM/\mathcal H_x$
for every $x\in M$ and every nonzero $v\in\mathcal H_x$. 
The structure is \emph{equiregular} if the ranks of the spaces
$\mathcal H^j$ spanned by brackets of length at most $j$ are
constant. Its step is the least $s$ with $\mathcal H^s=TM$,
and its homogeneous dimension is
$Q=\sum_{j=1}^s j(\dim\mathcal H^j-\dim\mathcal H^{j-1})$,
where $\mathcal H^0=\{0\}$. On a Carnot group this is
$Q=\sum_j j\dim V_j$.

Write $C(Y)$ for the real-valued continuous functions on $Y$.
For $\psi\in C(Y)$, its $c$-transform is
$\psi^c(x)=\min_{y\in Y}\{c(x,y)-\psi(y)\}$.
Choose a Borel map $T:S\to Y$ such that $T(x)$ attains this
minimum at each $x$. Put $\phi=\psi^c$ and
$R(x,y)=c(x,y)-\phi(x)-\psi(y)\geq0$.
For a probability plan $\pi$ on $S\times Y$ with first marginal
$\rho$, write
\[
 \calD(\pi)=\int R\,\d\pi,\qquad
 \calE(\pi)=\int\dcc^2(T(x),y)\,\d\pi(x,y).
\]
These are the duality defect and the squared transport error.

\begin{theorem}[Sharp coercivity estimate]\label{thm:main-coercivity}
Let $M$ be one of the following spaces:
\begin{enumerate}[(i)]
\item a complete connected smooth Riemannian manifold;
\item a complete connected smooth fat sub-Riemannian manifold.
\end{enumerate}
Let $S\subseteq M$ be a bounded Borel set, $Y\subseteq M$ be compact, and $\rho\in\PP(S)$ satisfy
$\rho\leq a_2\mm\restr S$ for some $a_2>0$.
For $\psi\in C(Y)$, put $\phi=\psi^c$ and
$R(x,y)=c(x,y)-\phi(x)-\psi(y)$. Let $T:S\to Y$ be any Borel
map satisfying $R(x,T(x))=0$ for every $x\in S$. Then every probability plan $\pi$ on $S\times Y$ with first marginal
$\rho$ satisfies
\begin{equation}\label{eq:main-coercivity}
 \left(\int_{S\times Y}\dcc^2(T(x),y)\,\d\pi(x,y)\right)^2
 \leq C\int_{S\times Y}R(x,y)\,\d\pi(x,y).
\end{equation}
The constant $C$ depends only on the fixed structure, $\mm$, a
compact set $K$ containing $\overline S\cup Y$, and $a_2$.
On a fat Carnot group with Haar measure one has $C=C(G,a_2,D)$.
In particular, $C$ is independent
of $\psi$, $T$, and the second marginal of $\pi$.
When the horizontal rank is at least two, the power $2$ on the
left cannot be decreased. 
\end{theorem}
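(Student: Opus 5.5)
The plan is to follow M\'erigot's strategy \cite{Merigot2026}: bound the duality defect from below by averaging a ``cross-difference'' inequality over a small neighbourhood of the source point, then optimise the neighbourhood scale. This scale optimisation is what produces the square on the left of \eqref{eq:main-coercivity}.

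\emph{Step 1 (pointwise cross-difference bound).} Fix $x\in S$ and an auxiliary point $x'\in S$. Since $\phi=\psi^c$, we have $\phi(x')\le c(x',y)-\psi(y)$. Using $R(x',T(x'))=0$, we also have $\phi(x')-\phi(x)\ge c(x',T(x'))-c(x,T(x'))$. Combining the two gives
\[
R(x,y)\ \ge\ \bigl[c(x,y)-c(x',y)\bigr]-\bigl[c(x,T(x'))-c(x',T(x'))\bigr].
\]
This holds for all $x,x'\in S$ and $y\in Y$. The right-hand side no longer involves $\phi$ or $\psi$, so every constant obtained from it will be independent of $\psi$, $T$ and the second marginal of $\pi$.

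\emph{Step 2 (quantitative twist).} Take $x'$ close to $x$ along a horizontal direction $v$ with $|v|=1$, say $x'=\exp_x(rv)$ or its horizontal analogue. The bracketed difference is then
\[
-r\,\bigl\langle \nabla_{\mathcal H}c(\cdot,y)(x)-\nabla_{\mathcal H}c(\cdot,T(x'))(x),\,v\bigr\rangle+O(r^2).
\]
The $O(r^2)$ term requires uniform horizontal semiconcavity of $c(\cdot,z)$ for $z\in Y$ and $x$ in the compact set $K$. In the Riemannian case this is classical away from cut loci, and one handles cut points by using semiconcavity, which gives a one-sided bound in the right direction. In the fat case I would use that fat distributions admit no nontrivial abnormal minimisers, so $\dcc^2$ is locally semiconcave on $K\times K$. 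This is where fatness enters.

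The first-order term needs a quantitative twist: for $z_1,z_2\in Y$ we want
\[
\sup_{|v|=1}\bigl\langle\nabla_{\mathcal H}c(\cdot,z_1)(x)-\nabla_{\mathcal H}c(\cdot,z_2)(x),v\bigr\rangle\ \gtrsim\ \dcc(z_1,z_2)^{\beta}
\]
on average in $x$. For $x$ in a set of full measure the gradient determines the geodesic, so it determines $T(x)$. The sharp power $2$ will come from an averaged (integrated over $x$ and $v$) version of this inequality rather than a pointwise one.

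\emph{Step 3 (averaging and scale choice).} Integrate the inequality of Step~1 over $x'$ in the ball $B_{\mathrm{cc}}(x,r)$ against $\mm$, and then against $\pi$. The bound $\rho\le a_2\mm\restr S$ lets us replace the measure on $x'$ by a Lebesgue-type average. The triangle inequality $\dcc(y,T(x))\le\dcc(y,T(x'))+\dcc(T(x'),T(x))$ reduces the error to an oscillation term of $T$ at scale $r$. That term is controlled by a BV-type bound on $\nabla_{\mathcal H}\phi$, coming from semiconcavity of $\phi$ with constants depending only on $K$ and $D$. Choosing the sign of $v$ to maximise the first-order term, this should yield an inequality of the form
\[
\int R\,\d\pi\ \ge\ c_1\,r\,\calE(\pi)-C_2r^2,
\]
up to constants depending only on the structure, $K$ and $a_2$. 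Taking $r\asymp\calE(\pi)$ (admissible since $\calE\le D^2$) gives $\calE(\pi)^2\le C\,\calD(\pi)$. On a fat Carnot group, left-invariance together with dilations makes every constant depend only on $G$, $a_2$ and $D$.

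\emph{Sharpness.} For the final claim, I would use a three-atom target configuration in a two-dimensional horizontal plane. A small perturbation $\psi_t$ moves the interface between two Laguerre-type cells by a distance $t$. A plan that reassigns the sliver between the cells then has $\calE\asymp t$ and $\calD\asymp t^2$, which rules out any power smaller than $2$.

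I expect the main obstacle to be Step~2 combined with the oscillation control in Step~3: obtaining uniform semiconcavity and a quantitative twist with the correct linear dependence in the fat sub-Riemannian case, where the exponential map is only locally well-behaved and $\dcc$ is not smooth on the diagonal. If these uniform bounds fail pointwise, I would first try to prove them in integrated form, averaging over $x$ against $\mm$, which is all that Step~3 requires.
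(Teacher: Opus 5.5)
Your Step~1 inequality is correct, and in the Riemannian case the plan can be completed, with one correction. Combining Step~1 with semiconcavity of $c(\cdot,T(x'))$ at $x'$ gives, in coordinates, $R(x,y)\geq\langle\nabla\phi(x')-p_y(x),x'-x\rangle-\Lambda|x'-x|^2$. The first-order term involves the gradient of $\phi$ at $x'$, not a gradient of $c(\cdot,T(x'))$ at $x$, which is not controlled at cut points. Average $x'$ over a ball of radius $\asymp r$ placed in the direction of $\nabla\phi(x)-p_y(x)$. The $BV$ bound on $\nabla\phi$ and the Lipschitz bound on $\exp_x$ then give $\calE\leq C(\calD/r+r)$. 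This is a direct-averaging variant of the paper's convexification, localized Br{\o}ndsted--Rockafellar and subdifferential-oscillation argument. Your sharpness sketch matches Proposition~\ref{prop:fixedsharp}.

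The fat case, however, has a genuine gap in Step~2: you need two properties that cannot hold simultaneously near the diagonal.

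\emph{Horizontal directions only.} Semiconcavity is then uniform, but the twist is false. The horizontal gradient $\nabla_{\mathcal H}c(\cdot,z)(x)$ records only the initial horizontal velocity of the minimizer. In $\HH^1$, normal geodesics from $e$ with the same unit initial velocity and different vertical covectors (in the minimizing range) end at distinct points $z_1\neq z_2$, yet $\nabla_{\mathcal H}c(\cdot,z_1)(e)=\nabla_{\mathcal H}c(\cdot,z_2)(e)$. Since the second marginal of $\pi$ is arbitrary, averaging in $x$ cannot repair this.

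\emph{Full coordinate directions.} These are what the endpoint bound needs, since $y=\Exp_x(-p_y)$ uses the whole covector. But then your claim that fatness makes $\dcc^2$ locally semiconcave on $K\times K$ is false. Figalli--Rifford's semiconcavity holds only off the diagonal. For central $Z$, $c(\exp(tZ),e)=|t|\,c(\exp Z,e)$ is an upward cusp. In fixed coordinates the semiconcavity constant of $c(\cdot,y)$ near $x$ grows like $\dcc(x,y)^{-2}$. This is not a null-set phenomenon, since $\dcc(x,T(x))$ can be small on a set of positive $\rho$-measure, so passing to an integrated form does not help.

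The missing idea is the multiscale decomposition of Section~\ref{sec:annular}. The paper splits pairs into dyadic bands $r_k\leq\max\{\dcc(x,T(x)),\dcc(x,y)\}<2r_k$.
\begin{itemize}
\item When both lengths are at least $\theta r_k$, it works in rescaled privileged coordinates $\kappa_p\delta_{r_k}$, using the partial transform over targets at distance comparable to $r_k$ only. There semiconcavity and the scaled endpoint bound $r^{-2}\dcc^2(y,w)\leq B|p_y-p_w|$ are uniform (Proposition~\ref{prop:fat-annuli}). This gives an estimate of your type with error $\tau r_k^2$ (Lemma~\ref{lem:comparable}).
\item When one length lies in $[r,2r]$ and the other is at most $\theta r$, the short pair's normalized cost has no uniform upper support at scale $r$, so no first-order argument of your kind applies. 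Instead, the cone exclusion of Lemma~\ref{lem:cone} is combined with horizontal flows that track minimizers (Lemma~\ref{lem:fat-flow}). This shows that such $x$ form a set of $\rho$-measure at most $C\eps/r^2$ (Lemma~\ref{lem:mixed}).
\end{itemize}
Summing over bands with $\sum_kr_k^2\lesssim D^2$ yields $\calE\leq C(\calD/\tau+D^2\tau)$. Your proposal has no counterpart to the scale-adapted coordinates or to the mixed-scale exclusion, and without them Step~3 does not close in the fat case.
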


The geometric assertions are proved in
Section~\ref{sec:realizations}; Proposition~\ref{prop:fixedsharp} proves the
sharpness assertion for fixed marginals.

\begin{corollary}[Sharp stability of optimal transport maps]
\label{cor:map-stability}
Under the hypotheses of Theorem~\ref{thm:main-coercivity}, every
$\mu\in\PP(Y)$ has a unique optimal coupling with $\rho$, induced
by a map $T_\mu$, and
\begin{equation}\label{eq:main-map-stability}
 \norm{\dcc(T_{\mu_0},T_{\mu_1})}_{L^2(\rho)}
 \leq C W_1^{1/4}(\mu_0,\mu_1),
 \qquad \mu_0,\mu_1\in\PP(Y).
\end{equation}
The constant has the same dependencies as in
Theorem~\ref{thm:main-coercivity}. The exponent $1/4$ is optimal
locally on each such space whose horizontal rank is at least two.
\end{corollary}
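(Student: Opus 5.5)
We deduce Corollary~\ref{cor:map-stability} from Theorem~\ref{thm:main-coercivity} by the duality-defect argument of M\'erigot. There are three steps: existence and uniqueness of maps, the stability estimate, and sharpness of the exponent.

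\emph{Existence and uniqueness.} Fix $\mu\in\PP(Y)$. Kantorovich duality on the compact set $\overline S\times Y$, with the continuous cost $c$, gives a maximizing pair $(\psi^c,\psi)$ with $\psi\in C(Y)$ a $c$-concave function. Every optimal coupling $\gamma$ is then concentrated on $\{R=0\}$, so $\calD(\gamma)=0$. Choose a Borel selection $T$ of the minimizer in the definition of $\psi^c$; it satisfies $R(x,T(x))=0$. Theorem~\ref{thm:main-coercivity}, applied to $\pi=\gamma$, yields $\calE(\gamma)^2\le C\cdot 0$. Hence $y=T(x)$ for $\gamma$-a.e.\ $(x,y)$, that is, $\gamma=(\Id,T)_\#\rho$. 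Every optimal coupling is therefore induced by this map. If there were two, their average would also be optimal and not induced by a map, which contradicts what we just proved. So the coupling is unique. We write $T_\mu$ for the map and $\psi_\mu$ for the potential.

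\emph{Stability.} Take $\mu_0,\mu_1\in\PP(Y)$, with potentials $\psi_0,\phi_0=\psi_0^c$ and map $T_0=T_{\mu_0}$. Let $\pi_1=(\Id,T_{\mu_1})_\#\rho$, which has first marginal $\rho$ and second marginal $\mu_1$. Then
\[
\calD(\pi_1)=\int c\,\d\pi_1-\int\phi_0\,\d\rho-\int\psi_0\,\d\mu_1 .
\]
By optimality of $\pi_1$ and duality for $\mu_1$, we have $\int c\,\d\pi_1=\sup_{\psi}\{\int\psi^c\,\d\rho+\int\psi\,\d\mu_1\}$. Since $(\phi_0,\psi_0)$ is optimal for $\mu_0$, it follows that
\[
\calD(\pi_1)\le \Bigl[\int\phi_1\,\d\rho+\int\psi_1\,\d\mu_1\Bigr]-\Bigl[\int\phi_1\,\d\rho+\int\psi_1\,\d\mu_0\Bigr]+\int\psi_0\,\d(\mu_0-\mu_1),
\]
which simplifies to $\calD(\pi_1)\le\int(\psi_1-\psi_0)\,\d(\mu_1-\mu_0)$. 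The $c$-concave potentials $\psi_i$ are $c$-transforms over $\overline S$, and $c=\dcc^2/2$ is $D$-Lipschitz on $K$. So each $\psi_i$ is $D$-Lipschitz on $Y$, and Kantorovich--Rubinstein duality gives $\calD(\pi_1)\le 2D\,W_1(\mu_0,\mu_1)$. Since $\calE(\pi_1)=\norm{\dcc(T_0,T_{\mu_1})}^2_{L^2(\rho)}$, Theorem~\ref{thm:main-coercivity} gives
\[
\norm{\dcc(T_0,T_{\mu_1})}_{L^2(\rho)}^4\le 2CD\,W_1(\mu_0,\mu_1).
\]
This is \eqref{eq:main-map-stability}, and the constant has the stated dependencies.

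\emph{Sharpness.} This is the main obstacle, and it needs an explicit construction near a point where the horizontal rank is at least two. I would use the three-atom examples: let $\rho$ be normalized Lebesgue measure on a small coordinate ball and let $\mu_0$ be supported on three atoms. The boundaries of the Laguerre-type cells of $T_{\mu_0}$ are then interfaces of the form $\{c(\cdot,y_i)-c(\cdot,y_j)=\text{const}\}$. Move one atom by $t$, or move a mass $t$ between atoms, so that $W_1\asymp t$. The atom placement should make the defect, which is the concavity gap of the dual functional, of order $t$ while a region of $\rho$-measure about $t^{1/2}$ changes its image by a distance $\asymp1$. The squared map error is then $\gtrsim t^{1/2}$, so the $L^2$ error is $\gtrsim t^{1/4}$. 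This uses the sharpness for fixed marginals in Proposition~\ref{prop:fixedsharp}, transferred from duality defects to a pair of targets. The delicate point is checking, in sub-Riemannian coordinates, that the cell interfaces are transversal smooth hypersurfaces near the chosen point. Rank at least two is what allows the three atoms to be placed in two independent horizontal directions to create the degenerate tangency that produces the $t^{1/2}$ volume.
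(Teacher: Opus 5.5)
Your first two steps are correct and are the paper's argument (Corollary~\ref{cor:coercivity-consequences}). An optimal plan has zero defect, so \eqref{eq:main-coercivity} forces $\gamma=(\Id,T)_\#\rho$. Applying the estimate to $(\Id,T_{\mu_1})_\#\rho$ with the potentials of $\mu_0$ bounds $\calD$ by $\int(\psi_1-\psi_0)\,\d(\mu_1-\mu_0)\le 2D\,W_1(\mu_0,\mu_1)$. The averaging argument for uniqueness is unnecessary, since every optimal plan already equals $(\Id,T)_\#\rho$ for the same $T$.

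The sharpness part has a genuine gap: none of your proposed mechanisms produces the exponent $1/4$.
\begin{itemize}
\item If you move an atom by $t$, or move mass $t$ between atoms at distance $\asymp1$, the weights change by $O(t)$. With transversal interfaces the reassigned source region then has $\rho$-measure $O(t)$, so the $L^2$ map error is $\lesssim t^{1/2}\asymp W_1^{1/2}$, which proves nothing.
\item Saying that ``a region of measure $t^{1/2}$ changes its image by distance $\asymp1$'' restates what must be constructed.
\item A degenerate tangency with $\rho(\{0<F<t\})\asymp t^{1/2}$ does not help. If that region is simply passed to an adjacent cell whose atom is at distance $\asymp1$, the target changes by mass $t^{1/2}$ over distance $\asymp1$. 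Then $W_1\asymp t^{1/2}$, not $t$.
\item Proposition~\ref{prop:fixedsharp} cannot just be ``transferred''. It perturbs non-optimal maps with both marginals fixed and yields no pair of targets. Since $\calD\le 2D\,W_1$ can be lossy, sharpness of the power in \eqref{eq:main-coercivity} does not give sharpness of the $W_1$ exponent.
\end{itemize}

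The missing idea, used in Lemma~\ref{lem:sharp-open-chain} and Theorem~\ref{thm:simultaneous-sharpness}, is to decouple the net target change from the source reassignment.
\begin{itemize}
\item Place $A_\varepsilon,B_\varepsilon$ at distance $\varepsilon$ on a smooth minimizing branch, and a third atom $C$ at distance $\asymp1$.
\item Build the source so that the $C$ cell separates the $A_\varepsilon$ and $B_\varepsilon$ cells.
\item Transferring mass $b\varepsilon$ from $A_\varepsilon$ to $B_\varepsilon$ costs $W_1=b\varepsilon^2$. The source is reassigned $A_\varepsilon\to C$ and $C\to B_\varepsilon$, each on mass $b\varepsilon$ over distance $\asymp1$, and these changes cancel in the target. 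The squared map error is therefore $\asymp\varepsilon\asymp W_1^{1/2}$.
\item The interfaces are transversal ($V_qF_{i,\varepsilon}\asymp1$), so the cell masses respond linearly as in \eqref{eq:sharp-cell-mass}, not through a tangency.
\item The weights must stay below $a\varepsilon$ so the $A_\varepsilon$ and $B_\varepsilon$ cells never touch; this caps the transferable mass at order $\varepsilon$ and produces exactly the exponent $1/4$.
\end{itemize}

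Rank at least two is used to obtain independent horizontal differentials of $p$ and $q$ at $x_*$. These give the fields $V_p,V_q$ from which the source is built as a nonconvex U-shaped John domain. With your small ball around the point, the $A_\varepsilon$ and $B_\varepsilon$ cells would share an interface. Shifting that interface only moves source mass between images at distance $\varepsilon$. You also need Agrachev's theorem, together with Lemma~\ref{lem:local-completion}, to place the configuration where the cost is smooth.
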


Optimal maps exist uniquely on complete two-generating structures
for compactly supported absolutely continuous sources and arbitrary
compactly supported targets
\cite[Theorem~3.2 and Proposition~4.2]{FigalliRifford10}.
Our map estimate needs fatness to control geodesic endpoints by
their initial normal covectors. For $\mathcal H\ne TM$, fatness
implies step two. Proposition~\ref{prop:smooth-defect-obstruction}
shows that \eqref{eq:main-coercivity} fails in every equiregular
region of step greater than two. The potential estimates use
optimal plans and do not require optimal maps.

For Kantorovich potentials, the John condition allows us to combine
local variance estimates over the source domain.
A bounded open set $S\subseteq M$ is an \emph{$\eta_0$-John domain}
if it has a point $x_0$ with the following property. Every $x\in S$
can be joined to $x_0$ by a rectifiable curve
$\gamma:[0,\ell]\to S$, parametrized by ambient arc length, with
\[
 \gamma(0)=x,\qquad \gamma(\ell)=x_0,\qquad
 \dcc(\gamma(s),M\setminus S)\geq\eta_0s,
 \qquad 0\leq s\leq\ell.
\]
For compact $M$ we also allow $S=M$, using a finite connected cover
to extend the local estimates to all of $M$.

\begin{theorem}[Stability of Kantorovich potentials]
\label{thm:potential-stability}
Let $M$ be a complete connected smooth sub-Riemannian manifold,
including the Riemannian case.
Let $S\subseteq M$ be a bounded $\eta_0$-John domain and assume
that the structure is equiregular, of homogeneous dimension $Q$,
on a neighborhood $U$ of $\overline S$.
Let $Y\subseteq M$ be compact, and let $\rho\in\PP(S)$ satisfy
$a_1\mm\restr S\leq\rho\leq a_2\mm\restr S$ for some $a_1,a_2>0$.
For every $\mu\in\PP(Y)$ the source Kantorovich potential
$\phi_\mu$ is unique after normalization by
$\int_S\phi_\mu\,\d\rho=0$. For $\mu_0,\mu_1\in\PP(Y)$:
\begin{enumerate}[(i)]
\item Without further restrictions on the step or target measures,
\begin{equation}\label{eq:main-log-potential-stability}
 \norm{\phi_{\mu_1}-\phi_{\mu_0}}_{L^2(\rho)}
 \leq C\left[\log\left(e+\frac{D}{W_1(\mu_0,\mu_1)}\right)
       \right]^{-1/[2(Q+2)]},
\end{equation}
with right-hand side zero when $\mu_0=\mu_1$.
\item If the distribution on $M$ is two-generating, then, for
every $0<\alpha<1/2$,
\begin{equation}\label{eq:subcritical-potential-stability}
 \norm{\phi_{\mu_1}-\phi_{\mu_0}}_{L^2(\rho)}
 \leq C_\alpha W_1^\alpha(\mu_0,\mu_1).
\end{equation}
\item If $M$ is Riemannian, fat, or a step-two Carnot group, then
\begin{equation}\label{eq:main-potential-stability}
 \norm{\phi_{\mu_0}-\phi_{\mu_1}}_{L^2(\rho)}
 \leq C W_1^{1/2}(\mu_0,\mu_1).
\end{equation}
\item If $\min\{\#\spt\mu_0,\#\spt\mu_1\}\leq m$ for an integer
$m\geq1$, then, at arbitrary step,
\begin{equation}\label{eq:main-finite-target-stability}
 \norm{\phi_{\mu_1}-\phi_{\mu_0}}_{L^2(\rho)}
 \leq C\sqrt m\,W_1^{1/2}(\mu_0,\mu_1).
\end{equation}
\end{enumerate}
The constants depend only on the fixed structure, $\mm$, the source
neighborhood $U$, a compact $K$ containing $\overline S\cup Y$,
and $a_1,a_2,\eta_0$; in \textup{(ii)} they also depend on $\alpha$.
On a Carnot group, the dependence on the structure, $U$, and $K$
reduces to dependence on $G$ and $D$.
The exponent in \textup{(iii)} is optimal locally on every such
space whose horizontal rank is at least two.
\end{theorem}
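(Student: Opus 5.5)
We prove Theorem~\ref{thm:potential-stability} by following the abstract: quantitative concavity of the Kantorovich functional
\[
 \mathcal K_\mu(\psi)=\int_S\psi^c\,\d\rho+\int_Y\psi\,\d\mu ,\qquad \psi\in C(Y).
\]
Uniqueness of $\phi_\mu$ after normalization is a preliminary step. On the equiregular neighborhood $U$, every $c$-concave $\phi$ is locally Lipschitz for $\dcc$. At points of horizontal differentiability, $\nabla_{\mathcal H}\phi(x)$ is the horizontal part of the initial covector of geodesics to all minimizers $y$. Two potentials for the same plan therefore have equal horizontal gradients $\rho$-a.e. Since $\rho\ge a_1\mm$ and $S$ is connected, they differ by a constant; the Poincar\'e inequality on John domains gives this directly.

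\emph{Core estimate.} Put $\psi_i=\phi_{\mu_i}^c$ and $\psi_t=(1-t)\psi_0+t\psi_1$. The key claim is a lower bound for the concavity gap,
\[
 \int_S\Big[(1-t)\psi_0^c+t\psi_1^c-\psi_t^c\Big]\d\rho\ \ge\ \Theta\big(\Var_\rho(\phi_1-\phi_0)\big),
\]
where $\Theta$ is a modulus depending on the case. Optimality of $\psi_0$ for $\mu_0$ and of $\psi_1$ for $\mu_1$, together with $\psi_i$ being $D$-Lipschitz up to constants on $Y$, bounds the gap by $C\,W_1(\mu_0,\mu_1)$ via Kantorovich--Rubinstein duality. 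Inverting $\Theta$ then yields the rates. To produce $\Theta$ we localize on Whitney-type balls $B\subset S$. Pointwise, $\psi_t^c(x)$ is attained at some $y_t$. Comparing $c(x,y_t)-\psi_i(y_t)\ge \phi_i(x)$ gives a gap that vanishes only if $y_t$ is simultaneously optimal for both. Perturbing $x$ inside $B$ and using semiconcavity of $c(\cdot,y)$ shows that the gap integrated over $B$ controls $\osc_B$ of $\phi_1-\phi_0$ minus its best affine-in-horizontal-direction part. Equivalently, it controls the local variance of $\phi_1-\phi_0$ at scale $r$, up to an error of order $r^2$ times second derivatives of $c$. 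This is where the cases split: the horizontal semiconcavity constant of $c(\cdot,y)$ on $B$ must be uniform in $y\in Y$.
\begin{itemize}
\item In the Riemannian, fat, and step-two Carnot cases this holds with a uniform constant, away from the diagonal by smoothness and near it by explicit formulas. One gets $\Theta(v)\asymp v$, hence the exponent $1/2$ in (iii).
\item For two-generating structures the constant blows up at most like a power of $\dcc$ to the cut/abnormal set, with arbitrarily small loss. Covering gives $\Theta(v)\gtrsim v^{1/(2\alpha)}$, hence (ii).
\item At arbitrary step only modulus-of-continuity information is available. The ball-box theorem gives volume $r^Q$ for $\dcc$-balls. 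Balancing a John-chain variance bound with the local error forces $r$ to be exponentially small in $1/v$, producing the logarithm with exponent $1/[2(Q+2)]$ in (i).
\item For (iv) we avoid semiconcavity altogether. The $\psi$'s live on at most $m$ atoms, so $S$ splits into at most $m$ Laguerre-type cells on which $\phi_i$ is a minimum of $c(\cdot,y_j)-\psi(y_j)$. The gap then controls $\sum_j|\Delta\psi(y_j)|^2$-type quantities cellwise, and Cauchy--Schwarz over the cells produces the factor $\sqrt m$.
\end{itemize}

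\emph{Globalization.} The local variance bounds on balls are combined over $S$ using the John condition. We chain balls along the John curves, with the standard Boman chain / Hardy-type argument, to pass from local to global variance; this needs $a_1\le\rho/\mm\le a_2$ and doubling of $\mm$ on $U$, which holds by equiregularity. For compact $S=M$ we use a finite connected cover.

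\emph{Sharpness.} Sharpness of (iii) comes from the three-atom target construction mentioned in the abstract. Moving mass $t$ between nearby atoms shifts a potential by order $t^{1/2}$ in $L^2$; this needs only two independent horizontal directions.

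\emph{Main obstacle.} I expect the hardest step to be the uniform horizontal semiconcavity of $c(\cdot,y)$ near the diagonal and near abnormal geodesics, i.e.\ the quantitative form of $\Theta$ in cases (ii) and (i). I would first try the Figalli--Rifford estimates for two-generating structures, and fall back on pure ball-box continuity for general step.
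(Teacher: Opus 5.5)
Your outer reduction is correct and is the paper's. The concavity gap satisfies $J_c(\psi_0,\psi_1)\le\frac14\int_Y(\psi_1-\psi_0)\,\d(\mu_1-\mu_0)\le\frac D2W_1(\mu_0,\mu_1)$, so everything rests on a modulus $V_c\le\Omega(J_c)$ built from local estimates glued along John chains. The gap is the core estimate: you give no mechanism that can produce a \emph{linear} modulus in (iii).

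The pointwise argument you describe uses a common minimizer $y$ for $\psi_{1/2}^c(x)$, contact errors $e_0+e_1=2\gamma(x)$, and semiconcave upper supports of $c(\cdot,y)$. It yields only a slope comparison at scale $r$, $|p_1-p_0|^2\lesssim r+\varepsilon_0+\varepsilon_1+\gamma(x)/r$, exactly as in Lemma~\ref{lem:common-approximate-contact}. For semiconcave potentials the affine errors $\varepsilon_i$ are of order one on a set of measure $\asymp r$ near kinks. After Poincar\'e and optimizing $r$ you therefore get only $V\lesssim J^{1/2}$, i.e.\ potential exponent $1/4$; this is the map-estimate mechanism. The loss is unavoidable for any route through $\int|\nabla_{\mathcal H}(\phi_1-\phi_0)|^2\,\d\rho$. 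In the three-atom example of Theorem~\ref{thm:simultaneous-sharpness} this integral is $\asymp\varepsilon$, while $J\asymp V\asymp\varepsilon^2$. The one-sided bound that would make a pointwise argument linear is semiconvexity of the potentials, which fails for atomic targets.

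Your local statement also does not close. Controlling $\phi_1-\phi_0$ modulo its best affine part does not control its local variance. Moreover, an additive error of order $r_B^2$ per ball does not vanish with $J$ after gluing, and shrinking radii makes the gluing constant grow like $r^{-2}$. So you would not even get $\Omega(0)=0$.

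The missing idea is entropic regularization, which gives a local bound with no additive error. With Gibbs measures $\mu_x^t$ one has $k_t''(\lambda)=-t^{-1}\int_S\Var_{\mu_x^t}(v)\,\d\rho$. Along a horizontal flow segment of length $\ell$, the Gibbs mean satisfies $g'=-t^{-1}\Cov_{\mu_s}(v,b_s')$. The identity $F''=\E_{\mu_s}b_s''-t^{-1}\Var_{\mu_s}(b_s')$, with $|F'|\le L_1$ and $\partial_s^2b\le L_2$, gives $\int\Var_{\mu_s}(b_s')\,\d s\le t(L_2\ell+2L_1)$. This uses cost semiconcavity in exactly the available direction. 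Averaging over paths, John gluing and $t\downarrow0$ then give $V\le CJ$.

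Note also that the directional bounds do not come from ``smoothness away from the diagonal''. Fat structures have cut points, and nonfat step-two groups have abnormal minimizers. The paper uses upper supports from regular minimizers and the Dragoni--Liu--Zhang horizontal semiconcavity.

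The other cases have the same kind of gap.
\begin{itemize}
\item[(ii)] Your claimed power-law blow-up ``with arbitrarily small loss'' is unproved and cannot be integrated. Proposition~\ref{prop:horizontal-obstruction} exhibits an optimal map along which $\liminf_h\int_SK_h\,\d\rho=+\infty$. The paper avoids semiconcavity of $c$ entirely. It uses penalized Riemannian costs with $0\le c-c_\varepsilon\le C\varepsilon$, which needs coordinate Lipschitz regularity of $c$ on two-generating structures. It bounds $L_2\le C_N\varepsilon^{-1/(2N+3)}$ via horizontal variations with vanishing moments, and transfers the estimate through Proposition~\ref{prop:cost-comparison}.
\item[(i)] Continuity gives no rate at all. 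The logarithm comes from Dorronsoro's square-function estimate and a pigeonholed common scale. The conversion $\varepsilon_f^{Q+2}\lesssim L^Qb_f^2$ is the actual source of $Q+2$. The remaining ingredients are the contact slope comparison, smoothing and Poincar\'e, and uniform nilpotent approximation.
\item[(iv)] Knowing that $g=\phi_1-\phi_0$ is near one of $m$ constants on each cell does not bound $\Var_\rho g$. The paper proves $\int_S\dE(g,A)\,\d\rho\le2L_yW_1$, with $A=-v(\spt\mu_0)$, directly from the duality defects. It then applies the $L^1$ Poincar\'e inequality to $H_A(g)$, using $|s-t|^2\le4m|H_A(s)-H_A(t)|$; this is where $\sqrt m$ comes from.
\end{itemize}
Finally, your uniqueness argument invokes an initial covector, which abnormal minimizers lack, together with a.e.\ horizontal differentiability. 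In the paper, uniqueness follows automatically from $J_c=0\Rightarrow V_c=0$.
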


Part~\textup{(i)} applies to every Carnot group, including the
Engel group. Equiregularity is required only near the source.
The targets and the minimizing curves may leave that neighborhood.
The logarithmic exponent need not be optimal.
The Riemannian case of \textup{(iii)} is due to
Kitagawa--Letrouit--M\'erigot
\cite[Theorem~1.2]{KitagawaLetrouitMerigot2025}.
Section~\ref{sec:potential-stability} proves all four parts.

Part~\textup{(iv)} needs only a horizontal $L^1$ Poincar\'e
inequality on the source and Lipschitz bounds on the cost.
Proposition~\ref{prop:one-finite-target} gives the precise statement.
In \textup{(ii)}, our constants may diverge as $\alpha\uparrow1/2$.
Proposition~\ref{prop:horizontal-obstruction} shows that horizontal
semiconcavity can fail away from the diagonal. It also shows that
the positive parts of directional second-difference quotients can
have unbounded $L^1$ norms along an optimal map. These facts
prevent a direct use of our proof of \textup{(iii)}.
Whether the exponent $1/2$ holds for all two-generating structures
remains open.

Part~\textup{(iii)} also applies to nonfat groups, for example
$\HH^1\times\RR$. The proof uses horizontal semiconcavity.
Full semiconcavity in source coordinates fails on this group,
even away from the diagonal; see Remark~\ref{rem:nonfat-obstruction}.
Our proof of map stability requires this stronger property.
Map stability on all Carnot groups of step two remains open.

\bigskip
The following local construction proves sharpness with the metric,
source domain, source measure, and compact target set all fixed.

\begin{theorem}[Upper bounds on stability exponents]\label{thm:simultaneous-sharpness}
Let $(M,\mathcal H,g_{\mathcal H})$ be a connected smooth
sub-Riemannian manifold without boundary, whose horizontal subbundle
$\mathcal H$ is smooth, bracket-generating, and of rank at least two.
Fix a smooth positive density $\omega$ on $M$.
Let $O\subseteq M$ be nonempty and open. There exist a John domain
$S\Subset O$, a compact set $Y\subseteq O$, and a constant $b>0$
with the following property. Fix $\rho=\omega\restr{S}/\omega(S)$.
For every sufficiently small $\varepsilon>0$, there are probability
measures $\mu_\varepsilon,\nu_\varepsilon\in\PP(Y)$, each supported
on exactly three points, such that
\begin{align}
 W_1(\mu_\varepsilon,\nu_\varepsilon)&=b\varepsilon^2,
 \label{eq:universal-W1}\\
 \|\phi_{\mu_\varepsilon}-\phi_{\nu_\varepsilon}\|_{L^2(\rho)}
 &\asymp\varepsilon,
 \label{eq:universal-potentials}\\
 \|\dcc(T_{\mu_\varepsilon},T_{\nu_\varepsilon})\|_{L^2(\rho)}
 &\asymp\varepsilon^{1/2}.
 \label{eq:universal-maps}
\end{align}
For both targets, the optimal coupling with $\rho$ is unique and
induced by a map, and the source Kantorovich potential
is unique under the normalization $\int_S\phi\,\d\rho=0$.
The cost $c$ is smooth near $\overline S\times Y$.
The comparison constants are positive and independent of $\varepsilon$.
Thus no potential estimate with $W_1$ exponent greater than $1/2$,
or map estimate with exponent greater than $1/4$, holds uniformly
over targets even for these fixed $S,Y,\rho$.
\end{theorem}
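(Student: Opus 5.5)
Work in a small coordinate chart inside $O$ where the cost is smooth off the diagonal. Choose $S$ to be a small ball (John by construction) and three target points $y_1,y_2,y_3$ in $O$, with $\overline S$ uniformly separated from them and outside their cut loci. Rank at least two lets us choose them so that, for each $x\in S$, the horizontal gradients $\nabla_{\mathcal H}c(\cdot,y_i)$ at $x$ are pairwise distinct. The two measures are $\mu_\varepsilon=\sum_i m_i\delta_{y_i}$ with fixed weights, and $\nu_\varepsilon=\sum_i m_i\delta_{y_i^\varepsilon}$, where the atoms are moved by a distance of order $\varepsilon^2$. Then \eqref{eq:universal-W1} holds, with $W_1$ exactly $b\varepsilon^2$ after normalizing. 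The displacement has to be arranged so that the weights stay fixed while the cell structure changes at first order in $\varepsilon$. An alternative of the same type keeps the atoms fixed and shifts mass of order $\varepsilon^2$.

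In either construction, for a target with finitely many atoms the optimal plan is a semi-discrete (Laguerre) partition of $S$ into cells $\{x:c(x,y_i)-\psi_i\le c(x,y_j)-\psi_j\}$. The source potential is $\phi=\min_i(c(\cdot,y_i)-\psi_i)$, and the map is constant on cells. The distinct-gradient condition makes the cell interfaces smooth transversal hypersurfaces. By the implicit function theorem, the dual weights $\psi(\varepsilon)$ are then unique up to constants, which gives uniqueness of maps and potentials. By absolute continuity of $\rho$, the maps are well defined a.e.

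The key mechanism is a dual-weight shift. Choose the perturbation so that the dual vector changes by $\delta\psi$ of size $\varepsilon$ in the direction transverse to the constants. The mass constraint then gives
\[
\partial_\psi(\text{cell masses})\cdot\delta\psi=O(\varepsilon^2)\ \text{(second order)}.
\]
With three atoms the mass map has a degenerate direction only at second order. The role of three atoms is that one can place the triple junction point inside $S$. Moving the weights by $\varepsilon$ rotates the partition around the junction. Each cell gains and loses comparable area, so the first-order mass changes cancel and only $O(\varepsilon^2)$ net mass moves.

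Make this concrete: take the triple junction at the center of $S$ with the three interfaces dividing a neighbourhood into sectors. Perturb the weights so that the junction slides by a distance of order $\varepsilon$. Then compute the following.
\begin{itemize}
\item The region where the map changes is a union of strips near the interfaces of width of order $\varepsilon$. It has $\rho$-measure of order $\varepsilon$, and on it $\dcc(T_\mu,T_\nu)\asymp1$. This gives \eqref{eq:universal-maps}, since $\|\cdot\|_{L^2}\asymp\varepsilon^{1/2}$.
\item The potentials differ by $\min_i(c_i-\psi_i)-\min_i(c_i-\psi_i^\varepsilon)$. This is a piecewise constant difference of order $\varepsilon$ off the strips, non-constant across cells, so after removing means its $L^2$ norm is $\asymp\varepsilon$.
\item The transported mass between the two targets is $O(\varepsilon^2)$ at unit distance, matching $W_1\asymp\varepsilon^2$.
\end{itemize}

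The main obstacle is the cancellation. I need to construct the perturbation, via the implicit function theorem in the sub-Riemannian chart, so that the net change in each target mass is exactly second order while the junction moves at first order. I would set this up by writing the mass functional $F(\psi,\text{targets})$ and computing its linearization at the junction configuration. The aim is to show the cell-mass Jacobian is a graph Laplacian with a nondegenerate kernel complement. The targets would then be adjusted by $O(\varepsilon^2)$, using nondegeneracy of $\partial_y c$ (transversality away from the cut locus), to absorb the residual. The upper-bound direction (lower bounds on norms) then gives $\asymp$. The resulting conclusion about exponents follows immediately by comparing $\varepsilon$ and $\varepsilon^{1/2}$ with powers of $b\varepsilon^2$.
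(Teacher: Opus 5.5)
There is a genuine gap. The cancellation mechanism your whole construction rests on is false, and the nondegeneracy you plan to prove is exactly what rules it out. Where the cost is smooth and the interfaces are transversal, the derivative of the cell masses with respect to the dual weights is the weighted graph Laplacian $L$ of the adjacency graph of the cells:
\[
 \delta\psi^{\mathsf T}L\,\delta\psi=\sum_{i<j}w_{ij}(\delta\psi_i-\delta\psi_j)^2,
\]
where $w_{ij}>0$ is the $\rho$-weighted interface integral for adjacent cells. With a triple junction inside $S$, all three cells are pairwise adjacent, so $|L\delta\psi|\gtrsim|\delta\psi|$ modulo constants. Sliding the junction by $\varepsilon$ therefore changes the masses by $\asymp\varepsilon$; no rotation makes the gains and losses cancel.

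Conversely, in both of your variants the atoms stay at mutual distance $\asymp1$, and their positions and masses change only by $O(\varepsilon^2)$. The implicit function theorem you invoke then gives $\delta\psi=O(\varepsilon^2)$ modulo constants, so the cells change only on a set of $\rho$-mass $O(\varepsilon^2)$. This gives $\|\dcc(T_{\mu_\varepsilon},T_{\nu_\varepsilon})\|_{L^2(\rho)}=O(\varepsilon)$ and $\|\phi_{\mu_\varepsilon}-\phi_{\nu_\varepsilon}\|_{L^2(\rho)}=O(\varepsilon^2)$. That only excludes exponents above $1/2$ for maps and $1$ for potentials. The residual you want to absorb by $O(\varepsilon^2)$ displacements of the atoms is of order $\varepsilon$, so that step cannot close either.

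The missing idea is that the small $W_1$ must come from two atoms being close, not from masses cancelling. The paper places $A_\varepsilon,B_\varepsilon$ at distance exactly $\varepsilon$ on a minimizing normal geodesic, with a fixed third atom $C$ at distance $\asymp1$. It then shifts the weights by $\asymp\varepsilon$ so that mass $b\varepsilon$ passes from $A_\varepsilon$ to $B_\varepsilon$, which gives $W_1=b\varepsilon\cdot\varepsilon$. The masses do change at first order; the Jacobian is nondegenerate there too.

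The large map error additionally requires that the cells of $A_\varepsilon$ and $B_\varepsilon$ share no interface. If they did, $h_{A_\varepsilon}-h_{B_\varepsilon}\approx-\varepsilon p$ would have gradient only $\asymp\varepsilon$ across it. Mass would then pass directly between the two nearby atoms, and the reassigned source points would change target by only $\varepsilon$. So a small ball around a triple junction is the wrong source. The paper instead uses a U-shaped union of balls along a horizontal curve in coordinates $(p,q)$:
\begin{itemize}
\item the two legs lie in $\{p\geq\eta/2\}$ and $\{p\leq-\eta/2\}$, where $h_{A_\varepsilon}-h_{B_\varepsilon}$ has a definite sign;
\item the connecting piece lies in $\{q\geq\eta/2\}$, inside the $C$-cell.
\end{itemize}
The reassigned source mass $\asymp\varepsilon$ therefore crosses the $A$--$C$ and $C$--$B$ interfaces, at target distance $\asymp1$. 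Rank at least two enters precisely here: it makes $\der_{\mathcal H}p(x_*)$ and $\der_{\mathcal H}q(x_*)$ independent, which produces this separated three-cell geometry.
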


The rank condition is automatic for smooth bracket-generating
subbundles $\mathcal H\ne TM$;
in the Riemannian case it is equivalent to $\dim M\geq2$.
Under this condition, the examples prove local sharpness in
Corollary~\ref{cor:map-stability} and
Theorem~\ref{thm:potential-stability}\textup{(iii)}. On general smooth
sub-Riemannian structures, they give upper bounds on the possible
stability exponents. 

The same targets distinguish the sharpness questions for $W_1$
and $W_2$. Step~1 of the proof gives
\[
 W_2^2(\mu_\varepsilon,\nu_\varepsilon)=b\varepsilon^3,
 \qquad
 \norm{\dcc(T_{\mu_\varepsilon},T_{\nu_\varepsilon})}_{L^2(\rho)}
 \asymp W_2^{1/3}(\mu_\varepsilon,\nu_\varepsilon).
\]
Thus these examples exclude $W_2$ map exponents greater than
$1/3$, whereas they exclude $W_1$ exponents greater than $1/4$.
More generally, the proof computes the corresponding $W_p$
scaling for every $1\leq p<\infty$. These restrictions do not
assert matching positive estimates for $p>1$.

\medskip

\Needspace{8\baselineskip}
\begin{theorem}[Smooth transport families]\label{thm:smooth-response}
Let $M$ and $\omega$ be as in
Theorem~\ref{thm:simultaneous-sharpness}, and let $O\subseteq M$
be a nonempty open set on which the structure is equiregular of
step $s$. Fix an auxiliary smooth Riemannian metric and denote its distance by $\dR$.
There exist $t_0>0$, a fixed probability measure $\rho=f\omega$,
with $0\leq f\in C_c^\infty(O)$, a compact set $Y\subseteq O$,
and probability measures $\mu_t=g_t\omega$, where
\[
 g\in C^\infty((-t_0,t_0)\times M),\qquad
 g_t=g(t,\cdot)\geq0,\qquad \spt g_t\subseteq Y.
\]
The unique optimal plans from $\rho$ to $\mu_t$ are induced by
maps $T_t$ satisfying
\begin{align}
 W_1(\mu_t,\mu_0)&\asymp
 \norm{\dR(T_t,T_0)}_{L^2(\rho)}\asymp |t|,
 \label{eq:smooth-response-linear}\\
 \norm{\dcc(T_t,T_0)}_{L^2(\rho)}&\asymp |t|^{1/s}.
 \label{eq:smooth-response-weight}
\end{align}
The maps depend smoothly on $(t,x)$ near $\spt\rho$.
The cost is smooth near $\spt\rho\times Y$, and each pair in
this product is joined by a unique normal minimizing geodesic
with invertible exponential differential.
\end{theorem}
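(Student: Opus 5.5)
We construct the family explicitly by transporting a fixed smooth source along a smooth curve of maps generated by a potential, chosen so that the target perturbation points in a direction of weight $s$.

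\emph{Local geometry.} Fix $x_0\in O$ and privileged coordinates centered at a point $y_0\in O$ with $y_0\ne x_0$, but close enough that $x_0$ and $y_0$ are joined by a unique normal minimizing geodesic with no abnormal minimizer, and are not conjugate. Such pairs exist near any point by standard results: the set of smooth points of $\dcc^2$ is open and dense, as in Agrachev's and Rifford--Trélat's work. On a neighborhood $A\times B$ of $(x_0,y_0)$ the cost $c$ is then smooth, and the map $x\mapsto -d_xc(x,y)$ is a local diffeomorphism onto covectors, so the twist condition holds. Take $\rho=f\omega$ with $f\in C_c^\infty(A)$.

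\emph{The transport family.} Start with $\psi_0$ smooth near $y_0$. The $c$-concave potential $\phi_0=\psi_0^c$ is chosen so that $T_0(x)=\exp_x(-d\phi_0(x))$, that is, $\phi_0(x)=c(x,y_0)-\langle p,\cdot\rangle$ for a small linear perturbation. With this choice $T_0$ is a diffeomorphism from $A$ onto a neighborhood inside $B$, and $\phi_0$ is strictly $c$-concave there; this follows from a small $C^2$ perturbation of $c(\cdot,y_0)$ restricted to a small neighborhood, using $D^2_{xy}c$ invertible.

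Now set $\phi_t=\phi_0+t h$ with $h\in C^\infty$ small, so that $\phi_t$ stays locally $c$-concave. Then $T_t(x)=\exp_x(-d\phi_t(x))$ is the optimal map from $\rho$ to $\mu_t:=(T_t)_\#\rho$, by the standard smooth sufficient condition: a locally $c$-concave potential, combined with global optimality obtained by shrinking supports, using a $c$-convexity (Ma--Trudinger--Wang-free) argument that works because everything is a $C^2$-small perturbation of an exactly $c$-concave configuration. The density is $g_t=(f/\lvert\det DT_t\rvert)\circ T_t^{-1}$, which is smooth in $(t,x)$ with support in a fixed compact $Y$. Uniqueness follows from the twist condition.

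\emph{Estimates.} We have $\partial_tT_t(x)=-D\exp_x(-d\phi_t)[dh(x)]=:V(x)$. Choose $h$ so that $V(x)$, for $x$ near $\spt\rho$, has a nonzero component along a weight-$s$ privileged direction at $T_0(x)$, while its components in lower-weight directions vanish to first order; the invertibility of $D\exp$ makes this possible. Then:
\begin{itemize}
\item $\dR(T_t,T_0)\asymp|t|$, since $V\ne0$.
\item $\dcc(T_t(x),T_0(x))\asymp|t|^{1/s}$ uniformly, by the ball-box theorem, which applies by equiregularity. It is enough to require that the weight-$s$ coordinate dominates. Let $X$ be any smooth vector field whose value is not in $\mathcal H^{s-1}$. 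Its flow satisfies $\dcc\asymp|t|^{1/s}$ uniformly on compacts: the upper bound holds always, and the lower bound comes from ball-box in privileged coordinates. Since $T_t=T_0+tV+O(t^2)$ in coordinates, we get $\dcc(T_t,T_0)\asymp|t|^{1/s}$.
\item $W_1(\mu_t,\mu_0)\lesssim|t|$ follows from the coupling $(T_0,T_t)_\#\rho$ only if we use $\dcc$. Instead we bound $W_1$ by duality. For $1$-Lipschitz $u$ we have
\[
\int u\,(g_t-g_0)\,\d\omega=t\int u\,\partial_tg\,\d\omega+O(t^2),
\qquad \partial_tg=-\operatorname{div}_\omega(g_0W),
\]
where $W=V\circ T_0^{-1}$. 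Decompose $W$ using horizontal fields $X_i$ and brackets, and integrate by parts along horizontal fields only. Write $g_0W=\sum_i X_i(F_i)$ plus horizontal terms, solving $\operatorname{div}(g_0W)=\sum X_i^*F_i$ with $F_i$ smooth and compactly supported. This is possible because the subelliptic operator $\sum X_i^*X_i$ is hypoelliptic and $\int\partial_tg\,\d\omega=0$: one solves $\sum X_i^*X_iw=\partial_tg$ on a compact manifold extension or a large ball. Then $\bigl|\int u\,\partial_tg\bigr|\le\sum\|X_iu\|_\infty\|F_i\|_1\lesssim1$, which gives $W_1\lesssim|t|$.
\item The lower bound $W_1\gtrsim|t|$ comes from testing with a smooth $u$ satisfying $\int u\,\partial_tg\ne0$, which exists since $\partial_tg\not\equiv0$.
\end{itemize}

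The main obstacle is the second estimate together with the global optimality of $T_t$. We must choose $h$ and the base configuration so that $V$ has nondegenerate weight-$s$ content uniformly near $\spt\rho$. The cost must also remain smooth on all of $\spt\rho\times Y$ and globally $c$-cyclically monotone. I would handle this by taking $A$ and $B$ tiny and by using the smallness of $t$.
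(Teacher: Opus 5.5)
Your overall plan matches the paper's: a potential family $\phi_t=\phi_0+th$, maps defined by the contact equation, and ball--box for the $\dcc$ scaling. However, your base configuration is degenerate, and the construction fails at exactly that point.

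\textbf{The base potential.} With $\phi_0=c(\cdot,y_0)-\langle p,\cdot\rangle$, differentiating $\der_xc(x,T_0(x))=\der\phi_0(x)$ gives
\[
D^2_{xy}c(x,T_0(x))\,D_xT_0(x)=D^2_{xx}c(x,y_0)-D^2_{xx}c(x,T_0(x)).
\]
The right-hand side is $O(|p|)$, and in the Euclidean model it vanishes identically. There $T_0\equiv y_0+p$ is constant, so $\mu_0$ is a Dirac mass and has no density $g_0$. For the same reason, $x\mapsto c(x,y)-\phi_0(x)$ is only degenerately convex, so $\phi_0$ is not strictly $c$-concave. In the Euclidean model, $c$-concavity of $\phi_0+th$ for both signs of $t$ even forces $D^2h=0$, which makes every $T_t$ constant. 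So the ``$C^2$-small perturbation of an exactly $c$-concave configuration'' you rely on for global optimality does not exist.

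\textbf{The missing idea.} You need $A(x,y)=D^2_{xx}c(x,y)-D^2\phi_0(x)$ to be uniformly positive definite on $\overline U\times\overline V$, with $U$ a convex chart. The paper takes
\[
\phi_0(x)=\der_xc(x_*,y_*)\cdot(x-x_*)-\frac{\kappa}{2}|x-x_*|^2
\]
with $\kappa$ large, and $h$ linear so that $A$ does not depend on $t$. Then:
\begin{itemize}
\item $B\,D_xT_t=-A$ shows that $D_xT_t$ is invertible.
\item Strict convexity of $x\mapsto c(x,y)-\phi_t(x)$ on the convex chart gives injectivity of $T_t$.
\item $\psi_t(y)=\min_{z\in\overline U}\{c(z,y)-\phi_t(z)\}$ is attained uniquely at $T_t^{-1}(y)$, so Kantorovich duality gives optimality and uniqueness of the plan directly. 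No MTW-type or perturbative argument is needed.
\end{itemize}

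\textbf{Smaller points.}
\begin{itemize}
\item Upper bound on $W_1$: you do not need the hypoelliptic decomposition, whose boundary terms and compact extension you leave unaddressed. Since $\int(g_t-g_0)\,\d\omega=0$, you have $W_1(\mu_t,\mu_0)\leq\frac{\diam_{\mathrm{cc}}Y}{2}\int|g_t-g_0|\,\d\omega\leq C|t|$.
\item Lower bound on $W_1$: $\partial_tg\not\equiv0$ does not follow from $V\neq0$, because $g_0W$ can be divergence-free. As in the paper, choose $a\in C_c^\infty$ with $\der a(y_*)v_*>0$, and shrink $\spt f$ so that $\left.\frac{\der}{\der t}\right|_{t=0}\int a\,\d\mu_t=\int\der a(T_0)\dot T_0\,\d\rho>0$.
\item Ball--box step: this is fine. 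Requiring the lower-weight components to vanish to first order is unnecessary, since $O(|t|)$ coordinates of weight $w\leq s$ contribute only $O(|t|^{1/w})\leq O(|t|^{1/s})$.
\item Completeness: $M$ is not assumed complete. You should first pass to the local completion (Lemma~\ref{lem:local-completion}) before invoking Agrachev's theorem.
\end{itemize}
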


Theorems~\ref{thm:simultaneous-sharpness} and~\ref{thm:smooth-response}
imply
\[
 \alpha\leq\min\{1/4,1/s\}
\]
for any map estimate with exponent $\alpha$ that holds for every
bounded source with an upper density bound and every compact
target in such an open set. The two examples may use different
fixed sources.

\medskip
There is also a restriction on the coercivity estimate itself.
For the smooth family above, Proposition~\ref{prop:smooth-defect-obstruction}
proves, with the dual potentials and the map $T_0$ fixed at $t=0$, that
\[
 \calD(\pi_t)\asymp |t|^2,\qquad
 \calE(\pi_t)\asymp |t|^{2/s},\qquad
 \pi_t=(\Id,T_t)_\#\rho.
\]
Here $\Id$ is the identity map. Thus $\calE^p\leq C\calD$
requires $p\geq s$. In particular, \eqref{eq:main-coercivity}
fails when $s>2$, even where the cost is smooth and the minimizing
geodesics are normal.

For steps three and four, \eqref{eq:main-coercivity} fails, but
these examples leave the $W_1$ map exponent $1/4$ open. For $s\geq5$,
the smooth examples also rule out that exponent.
Whether these bounds are attained in higher step remains open.

\subsection{Coercivity and proof strategy}\label{sec:proof-strategy}
We estimate the map error by the duality defect and the potential
variance by the concavity gap. Kantorovich duality then bounds
both defects by $W_1$.

\medskip
\noindent\textbf{Coercivity for maps.}
Under the hypotheses of Theorem~\ref{thm:main-coercivity}, every
probability plan with first marginal $\rho$ satisfies
\begin{equation}\tag{C}\label{eq:intro-coercivity}
 \calE(\pi)^2\leq C\calD(\pi).
\end{equation}
The constant is independent of the target function $\psi$ and the second
marginal of $\pi$. If the second marginal is
$T_\#\rho$, then $\calD$ is the excess transport cost.
The power $2$ is sharp even with both marginals fixed when the
horizontal rank is at least two; see
Proposition~\ref{prop:fixedsharp}.

\medskip
For target functions $\theta,\theta_0,\theta_1\in C(Y)$, set
\[
 \begin{aligned}
 K_c(\theta)&=\int_S\theta^c\,\d\rho,\\
 J_c(\theta_0,\theta_1)
 &=K_c\!\left(\frac{\theta_0+\theta_1}{2}\right)
   -\frac{K_c(\theta_0)+K_c(\theta_1)}2,\qquad
 V_c(\theta_0,\theta_1)=\Var_\rho(\theta_1^c-\theta_0^c).
 \end{aligned}
\]
Concavity of $K_c$ gives $J_c\geq0$.

\Needspace{6\baselineskip}
\medskip
\noindent\textbf{Coercivity for potentials.}
Under the hypotheses of Theorem~\ref{thm:potential-stability},
there is a nondecreasing modulus $\Omega$, with $\Omega(0)=0$,
such that every $\theta_0,\theta_1\in C(Y)$ with
$\osc_Y(\theta_1-\theta_0)\leq2D^2$ satisfies
\begin{equation}\tag{P}\label{eq:intro-potential-coercivity}
 V_c(\theta_0,\theta_1)
 \leq\Omega\!\left(J_c(\theta_0,\theta_1)\right).
\end{equation}
The modulus depends only on the structure, $\mm$, $U$, $K$,
and $a_1,a_2,\eta_0$ from Theorem~\ref{thm:potential-stability}.
For a fixed constant $J_0>0$ and $0<t\leq1$, it can be chosen so that
\begin{equation}\label{eq:intro-potential-hierarchy}
 \Omega(J_0t)\leq
 \begin{cases}
 C[\log(e+1/t)]^{-1/(Q+2)},&\text{equiregular near the source},\\
 C_\alpha t^{2\alpha},\quad 0<\alpha<1/2,
       &\text{two-generating},\\
 Ct,&\text{Riemannian, fat, or a Carnot group of step two}.
 \end{cases}
\end{equation}
Here $C_\alpha$ may also depend on $\alpha$.
In particular, $J_c=0$ implies that the two source transforms
differ by a constant on $S$.

Both estimates involve the same quantity when the target varies.
For $i=0,1$, choose a $c$-conjugate Kantorovich pair
$(\phi_i,\psi_i)$ for $(\rho,\mu_i)$ and set
\[
 \Delta:=\int_Y(\psi_1-\psi_0)\,\d(\mu_1-\mu_0).
\]
Optimality gives $\Delta\geq0$. Each target potential $\psi_i$
is $D$-Lipschitz on $Y$, so Kantorovich--Rubinstein duality gives
\[
 0\leq\Delta\leq2D\,W_1(\mu_0,\mu_1).
\]
For maps, $\Delta$ is the sum of the two
comparison defects in \eqref{eq:symmetricgap}; for potentials,
\eqref{eq:optimal-jensen-gap} gives
$J_c(\psi_0,\psi_1)\leq\Delta/4$.
Since $\calE$ is the squared $L^2$ map error,
\eqref{eq:intro-coercivity} gives the exponent $1/4$. For normalized
potentials, the Lipschitz bound also gives the oscillation bound
required in \eqref{eq:intro-potential-coercivity}, and hence
\[
 \norm{\phi_{\mu_1}-\phi_{\mu_0}}_{L^2(\rho)}^2
 \leq\Omega\!\left(\frac D2W_1(\mu_0,\mu_1)\right).
\]
This is Proposition~\ref{prop:potential-conversion}.
The three bounds in \eqref{eq:intro-potential-hierarchy} give
Theorem~\ref{thm:potential-stability}\textup{(i)--(iii)}. We prove them in
\eqref{eq:equiregular-log-jensen}, \eqref{eq:subcritical-jensen},
and Theorem~\ref{thm:directional-jensen}, respectively.
The estimate in part~\textup{(iv)}, where one target has finite
support, follows separately from an $L^1$-Poincar\'e inequality
and Lipschitz bounds on the cost.

On the fixed target set $Y$, the elementary comparisons are
\[
 W_1(\mu_0,\mu_1)\leq W_2(\mu_0,\mu_1)
 \leq\bigl(DW_1(\mu_0,\mu_1)\bigr)^{1/2}.
\]
The first follows from Cauchy--Schwarz, and the second from
$\dcc^2\leq D\dcc$ on $Y\times Y$.
Thus each of our estimates also holds with $W_2$ in place of
$W_1$. Conversely, a bound by $W_2^\alpha$ implies a bound by
$W_1^{\alpha/2}$. These comparisons do not determine the sharp
exponent for $W_2$. In particular, the $W_1^{1/4}$ map estimate
does not imply a $W_2^{1/3}$ estimate.

\medskip
For maps, we follow M\'erigot's proof
\cite[Theorems~3.1 and~3.4 and Appendix~A]{Merigot2026} and use the
subdifferential estimates of Carlier--Delalande--M\'erigot
\cite{CarlierDelalandeMerigot2025}.
On Riemannian manifolds, smooth upper supports give semiconcavity
in source coordinates. We then compare subgradients of a convex
function and use the exponential map to estimate target distances,
as in \cite{KitagawaLetrouitMerigot2025}.
On fat manifolds, we apply this argument on annuli of transport
lengths. If one length lies in $[r,2r]$ and the other is at most
$\theta r$, we use Lemma~\ref{lem:mixed}. Here $0<\theta<1/4$ is
fixed. Summing over the annuli gives \eqref{eq:intro-coercivity}.
The required geometric estimates are uniform after rescaling in
privileged coordinates; see Proposition~\ref{prop:fat-annuli}.

\medskip
For potentials, we use the concavity method of
M\'erigot--Delalande--Chazal and Delalande--M\'erigot
\cite{MerigotDelalandeChazal2020,DelalandeMerigot2023}.
Kitagawa--Letrouit--M\'erigot developed this method on manifolds
\cite{KitagawaLetrouitMerigot2025}.
Entropic regularization and integration along horizontal paths
give local variance estimates. We sum them over a John domain,
following \cite{LetrouitMerigot2026,KitagawaLetrouitMerigot2025}.
This proves $V_c\leq CJ_c$ on Riemannian and fat manifolds and
Carnot groups of step two.
For a general two-generating structure, we approximate the cost
by penalized Riemannian costs \cite{Ge1993,AmbrosioRigot2004}.
We bound the cost error and the directional second derivatives.
Proposition~\ref{prop:cost-comparison} then gives every potential
exponent below $1/2$.

For arbitrary step, we approximate both source potentials by
functions affine in horizontal coordinates. Dorronsoro's theorem
on Carnot groups \cite[Theorem~34]{Ryoo2026} gives a scale at which
both errors are small. Two contact inequalities then bound the
difference of the affine gradients; see
Lemma~\ref{lem:common-approximate-contact}.
Smoothing and a Poincar\'e inequality give the logarithmic estimate.
On an equiregular manifold, we compare the rescaled structure with
its tangent Carnot groups and bound the additional errors.
This uses only a neighborhood of the source. The targets may lie
outside that neighborhood, and the minimizing curves may be abnormal.

\medskip
For sharpness, we perturb measures supported on three points,
as in the unstable transport examples of Letrouit
\cite[Section~3.1]{LetrouitUnstable2026}.
We use transport from a source with a density to a measure of
finite support, called semi-discrete optimal transport
\cite{KitagawaMerigotThibert2019,deGournayKahnLebrat2019,BansilKitagawa2022}.
The source sets assigned to individual targets are called Laguerre
cells. They are defined by the minima of $c(x,y)-\psi(y)$.
The source is fixed on a John domain, and the cost is smooth at all
transported pairs. We also construct families with smooth densities.
For these families, the ball--box theorem gives the dependence
of the map error on the step
\cite{Bellaiche1996,NagelSteinWainger1985}.

\subsection{Comparison with related work}
Early Euclidean estimates include Berman's dimension-dependent
bound \cite[Theorem~1.4 and Lemma~4.2]{Berman2021} and
M\'erigot--Delalande--Chazal's dimension-independent estimate
\cite[Theorem~3.1]{MerigotDelalandeChazal2020}.
For compact convex sources with two-sided density bounds,
Delalande--M\'erigot obtained potential exponent $1/2$ and map
exponent $1/6$ for arbitrary compact targets
\cite[Corollary~3.2 and Theorem~4.2]{DelalandeMerigot2023}.
Letrouit--M\'erigot treat John domains, strongly log-concave sources,
and densities that degenerate at the boundary, and give a non-John source
with no uniform H\"older potential estimate
\cite[Theorems~1.4, 1.7, 1.9, and~1.10]{LetrouitMerigot2026}.
Letrouit rules out map exponents above $1/4$ for bounded source
densities \cite[Theorem~2 and Section~3.1]{LetrouitUnstable2026};
M\'erigot proves the matching bound \cite[Theorem~3.1]{Merigot2026}.
More generally, Letrouit's Theorem~2 excludes $W_p$ map exponents
above $p/[2(p+1)]$ for a suitable bounded Euclidean source.

\medskip
A concurrent preprint of OpenAI
\cite[Theorem~1.1 and Proposition~2.2]{OpenAI2026Brenier}
proves the sharp $W_2^{1/3}$ map estimate for the uniform source
on a compact convex body in $\RR^d$, $d\geq2$, uniformly over
targets in a fixed compact set. Its example with three target atoms and a
fixed cube as source disproves the square-root conjecture
in~\cite[Conjecture~3]{LetrouitUnstable2026}.
In that example, $W_1\asymp a^2$, $W_2\asymp a^{3/2}$, and
the map error is of order $a^{1/2}$, so it also excludes
$W_1$ map exponents above $1/4$.
Our local examples give the same two map exponent restrictions
on every smooth sub-Riemannian manifold of horizontal rank at
least two, with a fixed John source; they do not require that
source to be convex. We do not prove a matching $W_2^{1/3}$
upper estimate in the geometric settings considered here.

\medskip
On Riemannian John domains, Kitagawa--Letrouit--M\'erigot
\cite[Theorem~1.2]{KitagawaLetrouitMerigot2025} require two-sided
density bounds and, for maps, finite boundary measure. Earlier manifold results
required special geometries or regularity of the maps, as
discussed in their introduction. They derive map stability
from potential stability by a reverse Poincar\'e inequality and
integral geometry \cite[Section~4]{KitagawaLetrouitMerigot2025}.
Our defect estimate gives the sharp Riemannian map exponent $1/4$
for a bounded Borel source with only an upper density bound.

\medskip
Ambrosio--Rigot proved existence and uniqueness of optimal maps
on Heisenberg groups \cite[Theorem~5.1]{AmbrosioRigot2004}.
Agrachev--Lee treated complete two-generating structures
\cite[Theorems~4.1 and~6.3]{AgrachevLee2009};
Figalli--Rifford's general theory also covers this setting
\cite[Theorem~3.2 and Proposition~4.2]{FigalliRifford10}.
The unrestricted higher-step problem remains open, although
Badreddine's theorem covers complete analytic rank-two structures
in dimension four with $\dim(\mathcal H+[\mathcal H,\mathcal H])=3$,
including the Engel group \cite[Theorem~1]{Badreddine2019}.
These existence theorems do not give quantitative stability.
For geometric background see
\cite{Mitchell1985,Bellaiche1996,Montgomery2002,CapognaDanielliPaulsTyson2007,
AgrachevBarilariBoscain2020,Rifford2014}.

\medskip
Abnormal minimizers can prevent full off-diagonal semiconcavity
\cite{MontanariMorbidelli2016}; even fat groups lack full diagonal
semiconcavity. Curvature--dimension theories based on diffusion
\cite{BaudoinGarofalo2017} or transport
\cite{BarilariMondinoRizzi2026} do not directly give the concavity
estimates used here.
We use horizontal semiconcavity on step-two groups
\cite[Theorem~1.1]{DragoniLiuZhang2025} and quantitative first-order
approximation at arbitrary step.

\medskip
Semi-discrete results often fix the target sites.
Bansil--Kitagawa control differences between Laguerre cells by the
$\ell^1$ distance of mass vectors independently of the masses
\cite[Theorem~1.3]{BansilKitagawa2022}, but comparison with $W_1$
is not uniform when sites collide. Their weight and Hausdorff
estimates require minimum cell masses or further geometric data
\cite[Theorem~1.10 and Corollary~1.11]{BansilKitagawa2022}.
Kitagawa--M\'erigot--Thibert's bound on the modulus of strong concavity depends on
the sites and a positive lower bound on cell masses
\cite[Theorem~5.1]{KitagawaMerigotThibert2019}.
The differentiability results of de~Gournay--Kahn--Lebrat require
smoothness and nondegeneracy \cite[Theorem~1]{deGournayKahnLebrat2019}.
Berman treats structured mesh approximations under stronger
Euclidean regularity assumptions
\cite[Section~1.1.1 and Theorem~1.1]{Berman2021}.
In Theorem~\ref{thm:potential-stability}\textup{(iv)}, one target
is arbitrary and the bound is independent of atom separation
and masses. The source and target region remain fixed while the
sites and masses may vary; cardinality enters only through the
explicit factor $\sqrt m$.
Only cost Lipschitz bounds and an $L^1$-Poincar\'e
inequality enter, so the argument applies to sub-Riemannian costs.

\medskip
Our examples prove sharpness in each fixed Euclidean dimension
at least two (Corollary~\ref{cor:carnot-euclidean-sharpness}).
M\'erigot also describes the potential exponent as optimal
\cite[Remark~1.2]{Merigot2026}. Our sharpness assertion concerns the class of John domains and
leaves open the optimal exponent for convex domains or balls.
The unit-ball examples in
\cite[Remark~3.4]{KitagawaLetrouitMerigot2025} exclude exponents
above $(d+2)/(2d)$, giving asymptotic sharpness as $d\to\infty$.
For strongly log-concave sources, logarithmic losses appear in
both estimates and Gaussian lower bounds
\cite[Theorem~1.4 and Proposition~6.10]{LetrouitMerigot2026}.
Our examples with three target points have a fixed source on a John domain
and unique optimal plans, including in the limit when the two
nearby target points coincide.

\medskip
Gallou\"et--M\'erigot--Thibert prove stability under strong
$c$-concavity of Kantorovich potentials \cite{GallouetMerigotThibert2026}.
For estimates with regular densities on uniformly convex domains,
see \cite{CajaLopezDelgadinoKitagawa2026}. Stability for varying
Euclidean marginals is studied in \cite{CazellesPauwelsPortales2026}.
Our targets may be atomic, and our cost need not be globally smooth.
Related work concerns barycenters
\cite{CarlierDelalandeMerigot2024Barycenters,CarlierGuerinMerigot2026,HanZhuBarycenters2026},
including our estimates on Alexandrov spaces; pushforwards and plans
\cite{CarlierDelalandeMerigot2025,LiNochetto2021}; Euclidean power
costs, with potential exponent $1/2$ for $p\geq2$
\cite[Theorem~1.1]{MischlerTrevisan2026}; and entropic approximation
\cite{Leonard2012,NutzWiesel2022,DivolNilesWeedPooladian2025}.
Ford treats Euclidean plan stability and coercivity under
bounds on the source mass of metric balls, including perturbations of both marginals
\cite{Ford2026Plans}, and quantitative uniqueness of potentials
\cite{Ford2026Potentials}.
Sub-Riemannian work also studies transport densities, Beckmann
and congestion problems \cite{CircelliClop2025,CircelliCitti2026},
and small-noise Schr\"odinger approximations
\cite{AduElamvazhuthiGharesifard2026}.

\subsection{Organization of the paper}
Section~\ref{sec:coercivity} proves the duality defect estimate
under hypotheses stated in source coordinates or on annuli of
source and target pairs.
Section~\ref{sec:realizations} verifies their geometric assumptions
and establishes the horizontal cost bounds needed for potential
stability.
Section~\ref{sec:potential-stability} proves the potential estimates
using directional bounds, cost approximation, and quantitative
affine approximation at arbitrary step, together with a separate
first-order argument when one target is finitely supported.
Section~\ref{sec:sharpness} constructs local sharpness examples,
proves sharpness of the defect estimate with fixed marginals,
and studies smooth transport families in higher step. 

\section{Coercivity for optimal transport maps}\label{sec:coercivity}
We adopt the direct defect estimate of M\'erigot
\cite[Theorem~3.4 and Appendix~A]{Merigot2026} and the quantitative
subdifferential estimates of Carlier--Delalande--M\'erigot
\cite{CarlierDelalandeMerigot2025}.
We state the required hypotheses in source coordinates and on
annuli of source and target pairs, then verify them in the
geometric settings of Section~\ref{sec:realizations}.

\subsection{Kantorovich duality and the duality defect}
Throughout this section, $(X,\dist)$ denotes a metric
space; in the applications $\dist=\dcc$. Source sets $S$ are
Borel with compact closure, and target sets $Y$ are compact.
We write $\Pi(\rho,\mu)$ for the probability measures on the product
with marginals $\rho$ and $\mu$, and put
$\operatorname{OT}_c(\rho,\mu)=\inf_{\pi\in\Pi(\rho,\mu)}\int c\,\d\pi$.
In particular, $W_1(\mu,\nu)=\inf_{\pi\in\Pi(\mu,\nu)}\int\dist\,\d\pi$.
More generally, for $1\leq p<\infty$, we use
$W_p^p(\mu,\nu)=\inf_{\pi\in\Pi(\mu,\nu)}\int\dist^p\,\d\pi$;
the transport maps and potentials still correspond to $c=\dist^2/2$.
Also, $\Id$ denotes the identity map, $\delta_y$ the Dirac mass at
$y$, $\spt\sigma$ the support of a measure $\sigma$, and
$\one_A$ the indicator function of $A$.
For an integrable
function $f$ and a finite signed measure $\sigma$, write
$\E_\sigma f=\int f\,\d\sigma$. If $\sigma$ is a probability measure,
set $\Var_\sigma(f)=\E_\sigma[(f-\E_\sigma f)^2]$ and
$\Cov_\sigma(f,g)=\E_\sigma[(f-\E_\sigma f)(g-\E_\sigma g)]$.
For a function on $A$, $\Lip(f;A)$ denotes its Lipschitz constant
for the ambient distance, and $\Lip(A,\dist)$ is the space of
real-valued Lipschitz functions on $A$.
We write $\osc_A f=\sup_A f-\inf_A f$. 
For $0<\mm(A)<\infty$, the mean is
$f_{A,\mm}=\mm(A)^{-1}\int_A f\,\d\mm$.
The Kantorovich--Rubinstein formula \cite{Villani2009} is
\begin{equation}\label{eq:KR-duality}
 W_1(\mu,\nu)=\sup_{\Lip(u;X)\leq1}\int_Xu\,\d(\mu-\nu).
\end{equation}

\medskip
For a continuous cost on $\overline S\times Y$, Kantorovich duality
\cite{Villani2009} states that
\[
 \operatorname{OT}_c(\rho,\mu)
 =\max_{\substack{\phi\in C(\overline S),\ \psi\in C(Y)\\
                  \phi(x)+\psi(y)\leq c(x,y)}}
       \left\{\int_S\phi\,\d\rho+\int_Y\psi\,\d\mu\right\}.
\]
The two functions in a maximizing pair are called
\emph{Kantorovich potentials}: $\phi$ is the source potential and
$\psi$ is the target potential.
They may be chosen as mutual $c$-transforms:
\begin{equation}\label{eq:mutual-c-transforms}
 \phi=\psi^c,\quad
 \psi^c(x)=\min_{y\in Y}\{c(x,y)-\psi(y)\},\qquad
 \psi=\phi^c,\quad
 \phi^c(y)=\inf_{x\in S}\{c(x,y)-\phi(x)\}.
\end{equation}
We call functions satisfying \eqref{eq:mutual-c-transforms}
$c$-conjugate. We always choose Kantorovich potentials in this form.
The same target infimum extends $\phi$ to source neighborhoods.
For any $\phi=\psi^c$, the set
$\partial^c\phi(x)=\{y\in Y:\phi(x)+\psi(y)=c(x,y)\}$
is called the contact set. When $\phi,\psi$ are Kantorovich potentials,
a coupling $\pi\in\Pi(\rho,\mu)$ is optimal if and only if
$\phi(x)+\psi(y)=c(x,y)$ for $\pi$-almost every $(x,y)$. The following result is classical. 

\begin{lemma}[Lipschitz bounds]\label{lem:transport-foundations}
For $c=\dist^2/2$ and
$D=\diam(\overline S\cup Y)$, every conjugate pair satisfies
\begin{equation}\label{eq:potential-Lipschitz}
 \Lip(\phi;S)\leq D,\qquad \Lip(\psi;Y)\leq D.
\end{equation}
\end{lemma}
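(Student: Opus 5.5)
The plan is to use the elementary bound $|\dist^2(x,y)-\dist^2(x',y)|\le(\dist(x,y)+\dist(x',y))\,\dist(x,x')$, which is the triangle inequality written through the factorization $a^2-b^2=(a+b)(a-b)$, and then pass to infima over $y\in Y$ or $x\in S$. We assume the conjugate pair is real-valued, so both $c$-transforms are finite; this is automatic since $\psi\in C(Y)$ with $Y$ compact.

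For the source potential, take $x,x'\in S$. By \eqref{eq:mutual-c-transforms}, $\phi(x)=\min_{y\in Y}\{c(x,y)-\psi(y)\}$, and we pick a minimizer $y'$ for $x'$, so that $\phi(x')=c(x',y')-\psi(y')$. Then
\[
 \phi(x)-\phi(x')\le c(x,y')-c(x',y')
 =\tfrac12\bigl(\dist(x,y')+\dist(x',y')\bigr)\bigl(\dist(x,y')-\dist(x',y')\bigr).
\]
Both $\dist(x,y')$ and $\dist(x',y')$ are at most $D$, because $x,x',y'\in\overline S\cup Y$. The last factor is at most $\dist(x,x')$ by the triangle inequality. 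Hence $\phi(x)-\phi(x')\le D\,\dist(x,x')$, and exchanging the roles of $x$ and $x'$ gives $\Lip(\phi;S)\le D$.

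For the target potential, $\psi=\phi^c$ is an infimum over $x\in S$, which need not be attained because $S$ is only Borel. So for $y,y'\in Y$ and $\eps>0$ we choose $x\in S$ with $c(x,y')-\phi(x)\le\psi(y')+\eps$. The same computation gives $\psi(y)-\psi(y')\le c(x,y)-c(x,y')+\eps\le D\,\dist(y,y')+\eps$. Letting $\eps\to0$ and symmetrizing yields $\Lip(\psi;Y)\le D$.

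There is no real obstacle here. The only point needing care is the non-attainment of the infimum over the non-compact set $S$, which the $\eps$-argument handles; with the convention that potentials are extended to $\overline S$ one could use compactness instead. The same argument also shows that the extension of $\phi$ by the target minimum is $D$-Lipschitz on $\overline S$, with the same diameter bound.
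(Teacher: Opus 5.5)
Your proof is correct and follows the paper's argument. The paper likewise bounds $|c(x,y)-c(x',y)|\le\tfrac12\bigl(\dist(x,y)+\dist(x',y)\bigr)\dist(x,x')\le D\,\dist(x,x')$ in each variable and notes that taking infima in the $c$-transform preserves this Lipschitz bound. Your $\varepsilon$-argument for the possibly non-attained infimum over $S$ simply spells out that last step.
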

\begin{proof}
For $x,x'\in\overline S$ and $y\in Y$, the triangle inequality gives
\[
 |c(x,y)-c(x',y)|
 \leq\tfrac12\bigl(\dist(x,y)+\dist(x',y)\bigr)\dist(x,x')
 \leq D\dist(x,x').
\]
Taking infima in the $c$-transform preserves this Lipschitz bound.
The same argument in the target variable proves the bound for $\psi$.
\end{proof}

\medskip
For an arbitrary continuous target function $\psi$, put $\phi=\psi^c$
and select $T(x)\in\partial^c\phi(x)$ measurably. Compactness and
continuity give such a Borel map
\cite[Theorem~35.46(ii)]{Kechris1995}. For a probability plan $\pi$
whose first marginal is $\rho$, define $ R(x,y):=c(x,y)-\phi(x)-\psi(y)$ and 
\begin{equation}\label{eq:defs}
 \begin{aligned}
 \calD(\pi):=\int_{S\times Y}R\,\d\pi,\qquad
 \calE(\pi):=\int_{S\times Y}\dist^2(T(x),y)\,\d\pi(x,y).
 \end{aligned}
\end{equation}
Then $R\geq0$ and $R(x,T(x))=0$. Our goal is the coercivity
estimate \eqref{eq:intro-coercivity}, with a constant independent
of $\psi$ and of the second marginal of $\pi$.

\medskip
All convexity, semiconcavity, and subdifferentials in coordinate
domains are Euclidean; we use the standard calculus of
\cite{CannarsaSinestrari2004}.
We write $\mathcal L^d$ for Lebesgue measure, $B_{\mathrm E}(x,r)$
for a Euclidean ball, and $\diam_{\mathrm E}$ for Euclidean diameter.
For a Euclidean measurable set $A$, $|A|=\mathcal L^d(A)$.
A function $f$ on a convex set is $\Lambda$-semiconcave if
$f(z)-\Lambda|z|^2/2$ is concave.
Its superdifferential $D^+f(z)$ consists of the covectors $p$ such that
$f(z+h)\leq f(z)+p\cdot h+o(|h|)$ as $h\to0$.
A smooth upper support at $z$ is a smooth function that is at least
$f$ near $z$ and equals $f$ at $z$.
For a convex function $u$, $\partial u(z)$ consists of the vectors
$q$ such that $u(w)\geq u(z)+q\cdot(w-z)$ throughout its domain.
For a set $E$ in that domain, put
$\partial u(E):=\bigcup_{z\in E}\partial u(z)$.
The oscillation $x\mapsto\diam_{\mathrm E}\partial u(B_{\mathrm E}(x,s))$
is lower semicontinuous: each fixed pair of subgradients counted
at $x$ is still counted at every sufficiently nearby center.
It is therefore measurable.
\Needspace{5\baselineskip}
\subsection{Convex subdifferential estimates}
We use a local form of the Br{\o}ndsted--Rockafellar
theorem \cite{BrondstedRockafellar1965}.
It replaces an approximate subgradient, defined by a local
inequality, with an exact subgradient at a nearby point.
The local domains in the statement allow us to apply it within
source charts.

 \begin{lemma}[Localized Br{\o}ndsted--Rockafellar theorem]\label{lem:localized-brondsted-rockafellar}
 	Let \(U\subseteq \RR^d\) be open and convex and let \(u:U\to\RR\) be convex. Let $s>0$, assume that $\overline{B_{\mathrm E}(x,2s)}\subseteq U$, and suppose there exist \(q\in\RR^d\), \(\varepsilon\ge0\), such that
 	\begin{equation}
 		u(z)\ge u(x)+q\cdot(z-x)-\varepsilon,
 		\qquad 
 		\forall z\in \overline{B_{\mathrm E}(x,2s)}.
 	\end{equation}
 	Then there exist
 	\begin{equation}
 		x_s\in \overline{B_{\mathrm E}(x,s)},
 		\quad
 		q_s\in \partial u(x_s), \quad \text{such that} \quad 	|q_s-q|\le \frac{\varepsilon}{s}. 
 	\end{equation}
 \end{lemma}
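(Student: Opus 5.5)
The plan is to reduce to the classical Brøndsted--Rockafellar argument via Ekeland's variational principle, applied to the tilted function on a closed ball. Put
\[
 v(z)=u(z)-u(x)-q\cdot(z-x)+\varepsilon,\qquad z\in\overline{B_{\mathrm E}(x,2s)} .
\]
Then $v$ is convex and continuous on this compact ball, since convex functions are continuous on the interior of their domain and the closed $2s$-ball lies in $U$. By hypothesis $v\geq0$ on the ball, and $v(x)=\varepsilon$. Hence $x$ is an $\varepsilon$-minimizer of $v$ on the ball.

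Apply Ekeland's principle on the complete metric space $\overline{B_{\mathrm E}(x,2s)}$, with parameter $\lambda=s$. This gives a point $x_s$ with $|x_s-x|\leq s$ and
\[
 v(z)\geq v(x_s)-\frac{\varepsilon}{s}|z-x_s|\qquad\text{for all } z\in\overline{B_{\mathrm E}(x,2s)} .
\]
If $\varepsilon=0$ we simply take $x_s=x$, since then $x$ minimizes $v$ and $q\in\partial u(x)$ by the local-to-global argument below. Because $|x_s-x|\leq s$, the ball $B_{\mathrm E}(x_s,s)$ lies inside the $2s$-ball. So $x_s$ minimizes the convex function $v+\frac{\varepsilon}{s}|\cdot-x_s|$ on a neighborhood of $x_s$.

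For convex functions a local minimum is a minimum on the whole convex domain: take a segment from $x_s$ to any point of $U$ and use convexity along it. The subdifferential sum rule then applies, since both summands are finite and continuous near $x_s$. It gives
\[
 0\in\partial v(x_s)+\frac{\varepsilon}{s}\overline{B_{\mathrm E}(0,1)} .
\]
Equivalently, there is $p\in\partial v(x_s)$ with $|p|\leq\varepsilon/s$. Finally $\partial v(x_s)=\partial u(x_s)-q$, where the subdifferential of $u$ is taken globally on $U$ as defined in the paper. Setting $q_s=q+p$ gives $q_s\in\partial u(x_s)$ and $|q_s-q|\leq\varepsilon/s$.

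The only delicate point is the passage from a local to a global subgradient on $U$. The sum rule is applied to functions defined only on the ball, so one must check that a subgradient of $v$ relative to the ball is a subgradient on all of $U$. The plan is to avoid any subtlety by arguing directly. Let $p$ be a subgradient inequality for $u-q\cdot$ valid on $B_{\mathrm E}(x_s,s)$. For any $w\in U$, restrict to the segment $[x_s,w]$: the convex function $t\mapsto u(x_s+t(w-x_s))-(q+p)\cdot t(w-x_s)$ has nonnegative right derivative at $t=0$, so it is nondecreasing in $t$, and the inequality extends to all of $U$.

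As an alternative to Ekeland, which avoids the sum rule altogether, one can minimize $v(z)+\frac{\varepsilon}{s^2}|z-x|^2$ over the closed $2s$-ball, i.e. take a proximal point. Comparing with the value at $x$ gives $\frac{\varepsilon}{s^2}|x_s-x|^2\leq\varepsilon$, hence $|x_s-x|\leq s$, so the minimizer is interior. The optimality condition gives $-\frac{2\varepsilon}{s^2}(x_s-x)\in\partial v(x_s)$, with norm at most $2\varepsilon/s$. That loses a factor $2$, so the Ekeland route is the one that gives the stated constant.
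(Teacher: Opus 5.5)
Your proof is correct and follows essentially the paper's route: a linear penalty of slope $\varepsilon/s$ on the closed $2s$-ball, the observation that the resulting point lies in the closed $s$-ball and is therefore interior, and the convex optimality condition, giving the same constant $\varepsilon/s$. The only difference is that the paper does not invoke Ekeland's principle. By compactness it directly minimizes $F(z)+\frac{\varepsilon}{s}|z-x|$, where $F(z)=u(z)-q\cdot z$, and compares with the hypothesis to get $|x_s-x|\le s$. Your explicit segment argument, which upgrades the subgradient from a local to a global one on $U$, is a step the paper leaves implicit.
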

 
 \begin{proof}
Set $F(z):=u(z)-q\cdot z$. If $\varepsilon=0$, then $x$ is a
local minimizer of the convex function $F$, so $0\in\partial F(x)$;
take $x_s=x$ and $q_s=q$.
For $\varepsilon>0$, put $\alpha:=\varepsilon/s$ and minimize
$F(z)+\alpha|z-x|$ over $\overline{B_{\mathrm E}(x,2s)}$.
A minimizer $x_s$ exists by continuity and compactness. The assumption
and minimality yield 
\[
 F(x)-\varepsilon+\alpha|x_s-x|
 \leq F(x_s)+\alpha|x_s-x|\leq F(x),
\]
hence $|x_s-x|\leq s$. Thus $x_s$ is an interior point of the
constraint ball. The convex optimality condition gives
$r\in\partial F(x_s)$ with $|r|\leq\alpha$.
Taking $q_s:=q+r\in\partial u(x_s)$ proves the claim.
 \end{proof}

\medskip
The following result is a localized integrated form of the
subdifferential estimate of Carlier--Delalande--M\'erigot
\cite[Lemma~2.3 and the proof of Theorem~2.1]{CarlierDelalandeMerigot2025}.
Their argument controls subgradient oscillation through the
distributional Laplacian of a convex function. We reproduce it to
specify the domains and constants needed below. We adopt M\'erigot's use
of these oscillation bounds to control duality defects
\cite[Theorems~2.3 and~3.4]{Merigot2026}.
\begin{lemma}[Oscillation of convex subdifferentials]\label{lem:subdifferential-oscillation}
	Let \(K\Subset U\subseteq\RR^d\), where \(K\) is compact and
	\(U\) is open, bounded and convex, and let \(u:U\to\RR\) be convex and
	Lipschitz. There exists \(s_0=s_0(K,U)>0\) such that for any \(0<s<s_0\),
	\begin{equation}
		\label{eq:integrated-subgradient-oscillation}\int_K \diam_{\mathrm E}\bigl(\partial u(B_{\mathrm E}(x,s))\bigr)\,\dx
		\le C(d,K,U) \Lip(u;U)s.
	\end{equation}
\end{lemma}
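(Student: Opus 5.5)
The plan is to bound the oscillation pointwise by second-order information of $u$, and then integrate against the distributional Laplacian $\Delta u$, which is a nonnegative Radon measure on $U$. Its mass on a compact subset is controlled by $\Lip(u;U)$ through an integration by parts against a cutoff.

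\emph{Choice of scale.} Choose $s_0$ so that $B_{\mathrm E}(x,4s_0)\Subset U$ for every $x\in K$; for instance take $s_0=\dist_{\mathrm E}(K,\RR^d\setminus U)/8$. Fix $0<s<s_0$, $x\in K$, and $z\in B_{\mathrm E}(x,s)$. For every $q\in\partial u(z)$ the supporting hyperplane gives
\[
 u(z+h)-u(z)-q\cdot h\geq0 .
\]
The aim is to show that $|q-\bar q_x|$ is controlled by an average of $\Delta u$ over $B_{\mathrm E}(x,3s)$, where $\bar q_x$ is a reference vector, for instance the average of $\nabla u$ over $B_{\mathrm E}(x,s)$.

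\emph{Pointwise bound.} Following Carlier--Delalande--M\'erigot, use the monotonicity of $\partial u$ in the form $(q-q')\cdot(z-z')\geq0$. A cleaner route is the following. For a unit vector $e$, convexity gives
\[
 q\cdot e\leq\frac{u(z+te)-u(z)}{t}
\]
for $t=2s$, together with the reverse inequality $q\cdot e\geq[u(z)-u(z-te)]/t$. Hence the spread of $q\cdot e$ over $z\in B_{\mathrm E}(x,s)$ is bounded by oscillations of difference quotients over a ball of radius $3s$. These oscillations are in turn controlled by second differences of $u$. The second differences are nonnegative, and on averaging they are bounded by
\[
 s\cdot s^{-d}\,\Delta u\bigl(B_{\mathrm E}(x,4s)\bigr)
\]
up to a dimensional constant. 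This is the mean-value inequality for the subharmonic function $u$: $\fint_{\partial B}u-u(\text{center})\lesssim r^2\cdot r^{-d}\Delta u(B)$ at comparable radii. The intended pointwise estimate is
\[
 \diam_{\mathrm E}\partial u\bigl(B_{\mathrm E}(x,s)\bigr)\leq C(d)\,s^{1-d}\,\Delta u\bigl(B_{\mathrm E}(x,4s)\bigr).
\]
This is the main obstacle. Converting nonnegative second differences, which are only available on average, into a uniform bound on the diameter of the image of the subdifferential requires care. I would first try the scheme of Carlier--Delalande--M\'erigot's Lemma~2.3. In that scheme, for a pair $q_1\in\partial u(z_1)$, $q_2\in\partial u(z_2)$, the supporting inequalities at $z_1,z_2$ are evaluated at points $w$ in a ball of radius $s$ around $x+s\,e$ with $e=(q_1-q_2)/|q_1-q_2|$. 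This produces $|q_1-q_2|\,s\lesssim\fint(u(w)-\ell(w))$ for an affine $\ell$ touching from below. The right side is then bounded by the Riesz representation of $u-\ell$ through $\Delta u$.

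\emph{Integration over $K$.} Integrate the pointwise bound over $x\in K$ and use Fubini:
\[
 \int_K\Delta u\bigl(B_{\mathrm E}(x,4s)\bigr)\dx\leq C(d)\,s^d\,\Delta u(K_{4s}),
\]
where $K_{4s}$ is the $4s$-neighborhood of $K$, which is compact in $U$. This yields the right side $C\,s\,\Delta u(K_{4s_0})$.

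\emph{Mass of the Laplacian.} Take a cutoff $\chi\in C_c^\infty(U)$ with $\chi=1$ on $K_{4s_0}$ and $0\leq\chi\leq1$. Then
\[
 \Delta u(K_{4s_0})\leq\int\chi\,\d\Delta u=-\int\nabla\chi\cdot\nabla u\leq\|\nabla\chi\|_{L^1}\Lip(u;U),
\]
which is a constant $C(d,K,U)\Lip(u;U)$. Combining this with the previous step gives \eqref{eq:integrated-subgradient-oscillation}. Measurability of the integrand was noted earlier via lower semicontinuity.
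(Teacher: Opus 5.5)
Your outer structure is the paper's, and those steps are fine: a pointwise bound $\diam_{\mathrm E}\partial u(B_{\mathrm E}(x,s))\leq C(d)s^{1-d}(\Delta u)(B_{\mathrm E}(x,4s))$, then Fubini over $K$, then integration by parts against a cutoff. The gap is the pointwise bound itself. It carries all the content of the lemma, and you leave it unproved.

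The justification you offer for it is also false as stated. The inequality ``spherical mean of $u$ on $\partial B(z,r)$ minus $u(z)\lesssim r^{2-d}(\Delta u)(B(z,r))$'' fails for subharmonic functions. For example, take $d\geq3$ and $u(w)=\max\{-|w|^{2-d},-\varepsilon^{2-d}\}$. For $r>\varepsilon$, $(\Delta u)(B(0,r))$ is a fixed constant, while the left side equals $\varepsilon^{2-d}-r^{2-d}$, which tends to $\infty$ as $\varepsilon\to0$.

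Your Carlier--Delalande--M\'erigot route runs into the same problem. The reduction itself works, with a case split on the sign of $\ell_1(x)-\ell_2(x)$. It gives $s|q_1-q_2|\lesssim$ the average of $v=u-\ell\geq0$ over $B_{\mathrm E}(x\pm se,s/2)$, where $v$ vanishes at some $z_i\in B_{\mathrm E}(x,s)$. But the Green representation of $v$ about $z_i$ has kernel of size $|y-z_i|^{2-d}$. Bounding it by $s^{2-d}(\Delta u)(B_{\mathrm E}(x,4s))$ requires ruling out concentration of $\Delta u$ near $z_i$, and subharmonicity alone does not do this. Your difference-quotient route has the same defect: the spread of $q\cdot e$ is a supremum-type quantity, while nonnegativity of second differences only controls averages.

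The missing step must use convexity. The paper subtracts the affine function whose slope $a_{x,s}$ is the mean of $\nabla u$ on $B_{\mathrm E}(x,4s)$, and sets $A_{x,s}=\osc_{\overline{B_{\mathrm E}(x,2s)}}\tilde u$.
\begin{enumerate}[(1)]
\item Every subgradient of $\tilde u$ on $B_{\mathrm E}(x,s)$ has norm at most $A_{x,s}/s$.
\item A subgradient at a maximum point has norm at least $A_{x,s}/(4s)$, so $\tilde u$ rises by at least $\tfrac{3}{16}A_{x,s}$ on a ball of radius $s/4$ in that direction.
\item Integrating $\nabla\tilde u$ along segments then gives $A_{x,s}\lesssim s^{1-d}\int_{B_{\mathrm E}(x,4s)}|\nabla u-a_{x,s}|$.
\item The $BV$--Poincar\'e inequality and $|D^2u|\leq C(d)\Delta u$ for convex $u$ finish the bound.
\end{enumerate}

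Alternatively, your own route can be repaired. For convex $v$, $t\mapsto\nabla v(z+t\omega)\cdot\omega$ is nondecreasing, so $\rho\mapsto\rho^{1-d}(\Delta v)(B_{\mathrm E}(z,\rho))$ is nondecreasing; this is justified by mollification. Integrating the spherical-mean identity then gives $\int_{B_{\mathrm E}(z_i,5s/2)}v\leq Cs^2(\Delta u)(\overline{B_{\mathrm E}(z_i,5s/2)})$. Since $B_{\mathrm E}(x\pm se,s/2)\subseteq B_{\mathrm E}(z_i,5s/2)$ and $\overline{B_{\mathrm E}(z_i,5s/2)}\subseteq B_{\mathrm E}(x,4s)$, this yields the pointwise bound directly, without the Poincar\'e step.
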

\begin{proof}
\textbf{Step 1: Normalize by an affine function.} Choose an open set \(U'\) such that \(K\Subset U'\Subset U\), and choose
\(s_0>0\) small enough that $B_{\mathrm E}(x,4s)\subseteq U'$ for all $x\in K$ and $0<s<s_0$.
Fix \(x\in K\) and \(0<s<s_0\). Define 
	     \begin{equation*}
	     	A_{x,s}:={\rm osc}_{ \overline{B_{\mathrm E}(x,2s)}}(\tilde u), \quad\tilde u(z):=u(z)-a_{x,s}\cdot z, \quad a_{x,s}:=\frac{1}{|B_{\mathrm E}(x,4s)|}\int_{B_{\mathrm E}(x,4s)} \nabla u(z)\,\d z. 
	     \end{equation*}
		Subtracting an affine function only translates the subdifferential,
		so it does not change its diameter.
	
	For any  \(y\in B_{\mathrm E}(x,s)\) and \(p\in \partial \tilde u(y)\), if \(p\ne0\), then $y+s\frac{p}{|p|}\in B_{\mathrm E}(x,2s)$. Since \(p\in\partial \tilde u(y)\), the subgradient inequality yields 
	    \begin{equation}\label{eq:subgradient-upper}
	    	\tilde u\left(y+s\frac{p}{|p|}\right)
	    	\ge
	    	\tilde u(y)+s|p|,  \quad \text{and hence} \quad	|p|\le \frac{A_{x,s}}{s}.
	    \end{equation}
	    The same estimate holds for $p=0$.
	    If $A_{x,s}=0$, \eqref{eq:subgradient-upper} makes the local
	    subdifferential diameter zero, so its contribution to the desired
	    estimate vanishes. We now assume $A_{x,s}>0$. 
	    
	    On the other hand, let \(y_0,y_1\in \overline{B_{\mathrm E}(x,2s)}\) be such that $ A_{x,s}=\tilde u(y_1)-\tilde u(y_0)$. If $A_{x,s}>0$, then for \(p_1\in \partial \tilde u(y_1)\), the subgradient inequality yields 
	    \begin{equation}\label{eq:subgradient-lower}
	    	\tilde u(y_0)\ge \tilde u(y_1)+p_1\cdot(y_0-y_1),  \quad \text{and hence} \quad|p_1|\ge \frac{A_{x,s}}{4s}.
	    \end{equation}
	   
		   	\Needspace{4\baselineskip}

\medskip
\textbf{Step 2: A lower bound for the gradient on a subset.}
		   	\par\smallskip\noindent Define
	   	$E:=B_{\mathrm E}\left(y_1+s\frac{p_1}{|p_1|},\frac{s}{4}\right)
		    	\subseteq B_{\mathrm E}(x,4s)$. For any $z\in E$, by \eqref{eq:subgradient-lower}, we have 
	   	\begin{equation*}
		   		p_1\cdot(z-y_1)\ge \frac{3}{4} s |p_1| \ge \frac{3}{16}A_{x,s}, \quad\text{and hence}\quad 	\tilde u(z)-\tilde u(y_1)
	   		\ge\frac{3}{16}A_{x,s}. 
	   	\end{equation*}
		  	Convexity then yields 
	   \begin{equation*}
	   	\tilde u(z)-\tilde u\left(\frac{z+y_1}{2}\right)
	   	\ge
	   	\frac12\bigl(\tilde u(z)-\tilde u(y_1)\bigr)
	   	\ge
	   	\frac{3}{32}A_{x,s}. 
	   \end{equation*}
	   	
Let \(z_t=y_1+t(z-y_1)\), \(t\in[0,1]\).
The restriction of $\tilde u$ to each segment is Lipschitz and
hence absolutely continuous. For each $t\in[1/2,1]$, the affine map $z\mapsto z_t$
preserves null sets. Fubini's theorem therefore shows that, for almost
every $z\in E$, the derivative equals
$\nabla\tilde u(z_t)\cdot(z-y_1)$ for almost every $t\in[1/2,1]$.
Consequently, for almost every $z\in E$,
	   	\begin{equation*}
	   		\frac{3}{32}A_{x,s}
	   		\le
	   		\int_{\frac{1}{2}}^1 \nabla \tilde u(z_t)\cdot (z-y_1)\,\d t
	   		\le
	   		\frac{5}{4}s\int_{\frac{1}{2}}^1 |\nabla \tilde u(z_t)|\,\d t .
	   	\end{equation*}
	   	Integrating with respect to $z\in E$ implies 
	   	\begin{equation*}
	   		A_{x,s} |E|
	   		\le
	   		\frac{40}{3} s \int_E\int_{\frac{1}{2}}^1
	   		|\nabla \tilde u(z_t)|\,\d t\,\d z
	   		\le
	   	C(d) s \int_{B_{\mathrm E}(x,4s)}
	   		|\nabla\tilde u(w)|\,\d w. 
	   	\end{equation*}
	   	Therefore, we have  
	   	\begin{equation*}
	   	A_{x,s} \leq C(d)s^{1-d}\int_{B_{\mathrm E}(x,4s)}
	   		|\nabla\tilde u(w)|\,\d w .
	   	\end{equation*}

	  	\medskip
	  	\textbf{Step 3: Integrate the gradient lower bound.}
	  	Note that by \eqref{eq:subgradient-upper},
	   	\begin{equation*}
	   	\diam_{\mathrm E}\bigl(\partial u(B_{\mathrm E}(x,s))\bigr)=\diam_{\mathrm E}\bigl(\partial \tilde u(B_{\mathrm E}(x,s))\bigr) \leq  \frac{2A_{x,s}}{s}\leq C(d)s^{-d}\int_{B_{\mathrm E}(x,4s)}
	   			|\nabla\tilde u(w)|\,\d w .
	   	\end{equation*}
	   		   	
	We use the convexity argument of
\cite[Lemma~2.3 and the proof of Theorem~2.1]{CarlierDelalandeMerigot2025}.
Since $u$ is convex, $\nabla u\in BV_{\mathrm{loc}}(U)$ and $D^2u$
is a positive semidefinite matrix-valued Radon measure; see also
\cite[Section~6.3]{EvansGariepy2015}.
	Here $BV_{\mathrm{loc}}$ denotes local bounded variation,
	$D^2u$ is the distributional Hessian, and $|D^2u|$ is its total
	variation measure. The trace $\Delta u$ is the distributional Laplacian.
	Consequently,
	$|D^2u|\le C(d)\operatorname{tr}(D^2u)=C(d)\Delta u$. The standard $BV$--Poincar\'e inequality then yields 

	\begin{equation*}
		\int_{B_{\mathrm E}(x,4s)}|\nabla\tilde u(w)|\,\d w=\int_{B_{\mathrm E}(x,4s)}
		|\nabla u(w)-a_{x,s}|\,\d w\leq C(d)s|D^2u|(B_{\mathrm E}(x,4s)). 
	\end{equation*}
		Combining these estimates gives
	\begin{equation*}
			\diam_{\mathrm E}\bigl(\partial u(B_{\mathrm E}(x,s))\bigr)\leq C(d)s^{1-d}(\Delta u)(B_{\mathrm E}(x,4s)).
	\end{equation*}
	 Integrating over $K$ and applying Fubini's theorem, we obtain 
	\begin{equation*}
			\begin{aligned}
				\int_K \diam_{\mathrm E}\bigl(\partial u(B_{\mathrm E}(x,s))\bigr)\,\dx
				\leq C(d)s^{1-d}
				\int_{U'}\bigl|K\cap B_{\mathrm E}(y,4s)\bigr|\,\d\Delta u(y)
				\leq C(d)s(\Delta u)(U').
			\end{aligned}
	\end{equation*}
	
	Finally, choose $\chi\in C_c^\infty(U)$ with $0\leq\chi\leq1$ and
	$\chi=1$ on $U'$.  Distributional integration by parts then yields 
	\begin{equation*}
		(\Delta u)(U')
		\leq\int_U\chi\,\d\Delta u
		=-\int_U\nabla\chi\cdot\nabla u\,\d x
		\leq\norm{\nabla\chi}_{L^1(U)}\Lip(u;U),
	\end{equation*}
	which proves the claim. 
\end{proof}

\subsection{Coercivity in source coordinates}
We adapt M\'erigot's defect argument
\cite[Theorem~3.4]{Merigot2026} to costs that are semiconcave
in source coordinates. Convexification allows us to apply
the preceding subdifferential estimates. We also require
an estimate controlling the squared distance between target
points by differences of selected cost supergradients
at the same source point. 

\begin{definition}[Semiconcavity and endpoint estimates]\label{def:regular}
Let $\chi:V\to U$ be a source chart with open, bounded,
and convex image $U\subseteq\RR^d$, and let $Y$ be a
compact target set.
Assume that the continuous cost $c:V\times Y\to\RR$
satisfies the following conditions with constants
$L,\Lambda,B$: 
\begin{enumerate}[(R1)]
\item For every $y\in Y$, the function $f_y(z)=c(\chi^{-1}z,y)$ is $L$-Lipschitz and $\Lambda$-semiconcave on the bounded convex set $U$. 
\item There exist nonempty sets $P(z,y)\subseteq D^+f_y(z)$
for $(z,y)\in U\times Y$ such that
\begin{equation}\label{eq:recovery}
	\dist^2(y,w)\leq B|p-p'|
\end{equation}
for all $z\in U$, $y,w\in Y$, $p\in P(z,y)$,
and $p'\in P(z,w)$. 
\end{enumerate}
The elements of $P(z,y)$ are supergradients of $f_y$ at $z$.
The differential of a smooth upper support belongs to $D^+f_y(z)$.
We do not require $P(z,y)$ to contain every element of $D^+f_y(z)$.
\end{definition}

\begin{theorem}[Coercivity in source coordinates]\label{thm:regular}
	Let $M$ be a smooth $d$-dimensional manifold. Let $S\subseteq M$ be a bounded Borel set, $Y\subseteq M$ be compact, and $\rho\in\PP(S)$. Suppose there are finitely many source charts
	$\chi_j:V_j\to U_j$, where $U_j\subseteq\RR^d$ is open,
	bounded, and convex, and compact sets $K_j\Subset U_j$ such that $\overline S\subseteq\bigcup_j\chi_j^{-1}(K_j)$. 
	Assume that $c$ is continuous on each $V_j\times Y$
	and satisfies Definition~\ref{def:regular} in these charts
	with common constants $L,\Lambda,B$, and that
	\[
	(\chi_j)_\#\bigl(\rho|_{S\cap\chi_j^{-1}(K_j)}\bigr)
	\leq A_j\mathcal L^d.
	\]
	
	For every $\psi\in C(Y)$, set $\phi=\psi^c$.
	Then \eqref{eq:intro-coercivity} holds for every Borel map
	$T:S\to Y$ satisfying $\phi(x)+\psi(T(x))=c(x,T(x))$
	for all $x\in S$, and every probability plan $\pi$
	on $S\times Y$ with first marginal $\rho$.
	The constant depends only on the charts and domains
	$(\chi_j,K_j,U_j)$, $L,\Lambda,B$, $\sum_jA_j$,
	and $\diam(Y)$, and is independent of $\psi$, $T$, and $\pi$.
\end{theorem}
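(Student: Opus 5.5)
Working chart by chart, the goal is to bound the defect $\calD(\pi)$ from below by a quantity that Lemma~\ref{lem:subdifferential-oscillation} controls. Fix a chart $j$ and write $u(z):=\Lambda|z|^2/2-\phi(\chi_j^{-1}z)$ on $U_j$. Since $\phi=\psi^c$ is an infimum over $y\in Y$ of the functions $f_y-\psi(y)$, each $L$-Lipschitz and $\Lambda$-semiconcave by (R1), $\phi\circ\chi_j^{-1}$ is $\Lambda$-semiconcave and $L$-Lipschitz. Hence $u$ is convex on $U_j$ with $\Lip(u;U_j)\leq L+\Lambda\diam U_j$. For a pair $(x,y)$ with $z=\chi_j(x)\in K_j$, the defect gives, for every $w$,
\[
\phi(\chi_j^{-1}w)\leq f_y(w)-\psi(y)\leq f_y(z)+p\cdot(w-z)+\tfrac{\Lambda}{2}|w-z|^2-\psi(y),\qquad p\in P(z,y).
\]
The second inequality holds because semiconcavity makes every supergradient in $P(z,y)$ a global upper support up to the quadratic term. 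Subtracting $R(x,y)=f_y(z)-\phi(x)-\psi(y)$ and rewriting in terms of $u$ yields
\[
u(w)\geq u(z)+(\Lambda z-p)\cdot(w-z)-R(x,y)-\Lambda|w-z|^2.
\]
On $\overline{B_{\mathrm E}(z,2s)}$ this is an $\varepsilon$-subgradient inequality with $\varepsilon=R(x,y)+4\Lambda s^2$. Lemma~\ref{lem:localized-brondsted-rockafellar} then produces a point $z_s\in\overline{B_{\mathrm E}(z,s)}$ and an exact subgradient $q_s\in\partial u(z_s)$ with $|q_s-(\Lambda z-p)|\leq R/s+4\Lambda s$.

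The same argument applies to the pair $(x,T(x))$, where $R=0$. Take $p'\in P(z,T(x))$; it yields $q'_s\in\partial u(z'_s)$ with $z'_s\in\overline{B_{\mathrm E}(z,s)}$ and $|q'_s-(\Lambda z-p')|\leq 4\Lambda s$. Combining with (R2),
\[
\dist^2(T(x),y)\leq B|p-p'|\leq B\Bigl(\diam_{\mathrm E}\partial u\bigl(\overline{B_{\mathrm E}(z,s)}\bigr)+\frac{R(x,y)}{s}+8\Lambda s\Bigr).
\]
Closed versus open balls is harmless: we replace $s$ by $2s$ in the oscillation term. The bound is pointwise in $(x,y)$ and uniform in $\psi,T$.

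Integrate this against $\pi$ restricted to $\{x\in S\cap\chi_j^{-1}(K_j)\}$. The covering may overlap, so we assign each $x$ to one chart by a Borel partition, which only decreases the restricted measures. The oscillation term depends only on $x$, and the first marginal of $\pi$ is $\rho$, so the bound $(\chi_j)_\#\rho\leq A_j\mathcal L^d$ together with Lemma~\ref{lem:subdifferential-oscillation} gives at most $C A_j\Lip(u;U_j)s$. Summing over $j$ gives
\[
\calE(\pi)\leq C\Bigl(s+\frac{\calD(\pi)}{s}\Bigr),\qquad 0<s<s_0,
\]
with $C$ depending only on the allowed data. If $\calD\leq s_0^2$, choose $s=\calD^{1/2}$; this gives $\calE\leq C\calD^{1/2}$, that is, $\calE^2\leq C\calD$. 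If $\calD>s_0^2$, use the trivial bound $\calE\leq\diam(Y)^2$. Then $\calE^2\leq\diam(Y)^4\leq \diam(Y)^4 s_0^{-2}\calD$. This second case is why the constant depends on $\diam(Y)$.

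The main obstacle I expect is the first step: checking that the $c$-transform inherits semiconcavity in the chart with uniform constants, and that each $p\in P(z,y)$ gives the one-sided quadratic bound on the whole ball $B_{\mathrm E}(z,2s)\subseteq U_j$. That bound uses concavity of $f_y-\Lambda|\cdot|^2/2$ on the convex set $U_j$, and it must hold with constants independent of $y$. The need to do this on a convex $U_j$ is why the charts are required to have convex images. A second point to verify is measurability of $x\mapsto\diam_{\mathrm E}\partial u(B_{\mathrm E}(\chi_j(x),2s))$; this was already noted before Lemma~\ref{lem:localized-brondsted-rockafellar}.
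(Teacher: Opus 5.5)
Your proposal is correct and follows essentially the paper's proof: you convexify by $u=\Lambda|z|^2/2-\phi\circ\chi_j^{-1}$, treat $\Lambda z-p$ as an approximate subgradient, apply Lemma~\ref{lem:localized-brondsted-rockafellar}, combine with (R2), integrate via Lemma~\ref{lem:subdifferential-oscillation}, and balance at $s=\calD^{1/2}$ (for $\calD=0$, let $s\downarrow0$ instead). One small slip does not affect validity: the quadratic terms cancel exactly, since $\frac{\Lambda}{2}(|w|^2-|z|^2)=\Lambda z\cdot(w-z)+\frac{\Lambda}{2}|w-z|^2$, so $\Lambda z-p$ is an exact $R(x,y)$-subgradient of $u$ on all of $U_j$ and $\Lambda z-p'\in\partial u(z)$ outright; this is how the paper avoids your extra $4\Lambda s^2$ slack and your second application of the lemma, but your weaker inequality is still true and the argument goes through.
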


\begin{proof}
	Choose a Borel partition $S=\bigsqcup_j S_j$ with
	$S_j\subseteq\chi_j^{-1}(K_j)$.
	Using the $c$-transform extension of $\phi$, set 
	\begin{equation*}
		\begin{aligned}
			u_j(z)=\tfrac{\Lambda}{2}|z|^2-\phi(\chi_j^{-1}z),\qquad 
			a_{j,y}(z)=\tfrac{\Lambda}{2}|z|^2
			-c(\chi_j^{-1}z,y)+\psi(y).
		\end{aligned}
	\end{equation*}
	By (R1), each $a_{j,y}$ is convex and $u_j=\sup_y a_{j,y}$.
	Their Lipschitz constants have a common bound independent of $\psi$.
	
	Fix $x\in S_j$, $y\in Y$, and write $z=\chi_j(x)$,
	$w=T(x)$.
	Choose $p_y\in P(z,y)$ and $p_w\in P(z,w)$.
	By convexity and the contact condition, 
	\[
	q_y:=\Lambda z-p_y\in\partial a_{j,y}(z),
	\qquad
	q_w:=\Lambda z-p_w\in\partial u_j(z).
	\]
	Since $u_j(z)-a_{j,y}(z)=R(x,y)$, $q_y$ is an
	$R(x,y)$-subgradient of $u_j$.
	For $0<s<s_0$, chosen sufficiently small
	uniformly over the charts,
	Lemma~\ref{lem:localized-brondsted-rockafellar} at scale $s/2$ then yields 
	\[
	q_s\in\partial u_j(B_{\mathrm E}(z,s)),
	\qquad |q_s-q_y|\leq 2R(x,y)/s.
	\]
	Put $\omega_{j,s}(z):=
	\diam_{\mathrm E}\partial u_j(B_{\mathrm E}(z,s))$. By (R2), we have  
	\begin{equation}\label{eq:pointwise}
		\dist^2(y,T(x))
		\leq B|q_y-q_w|
		\leq B\left(\frac{2R(x,y)}s+\omega_{j,s}(z)\right).
	\end{equation}
	
	Using the first marginal of $\pi$, the density bound, and
	Lemma~\ref{lem:subdifferential-oscillation}, after decreasing $s_0$,
	we obtain
	\[
	\int_{S_j\times Y}\omega_{j,s}(\chi_j(x))\,\d\pi
	\leq A_j\int_{K_j}\omega_{j,s}(z)\,\d z
	\leq C A_j s.
	\]
	Integrating \eqref{eq:pointwise} and summing over $j$ yields
	\begin{equation}\label{eq:balance}
		\calE(\pi)\leq C\left(\frac{\calD(\pi)}s+s\right),
		\qquad 0<s<s_0.
	\end{equation}
	For $0<\calD(\pi)<s_0^2$, take $s=\sqrt{\calD(\pi)}$.
	For $\calD(\pi)=0$, let $s\downarrow0$.
	For $\calD(\pi)\geq s_0^2$, use
	$\calE(\pi)\leq\diam(Y)^2$.
	This proves \eqref{eq:intro-coercivity} with the stated dependence
	of the constant.
\end{proof} 

\begin{remark}\label{rem:twist}
	Semiconcavity alone does not imply \eqref{eq:intro-coercivity}.
	Take $c\equiv0$ and $\psi\equiv0$, so that $\phi\equiv0$.
	For distinct points $y_0,y_1\in Y$, the map $T\equiv y_0$ and the plan $\pi=\rho\otimes\delta_{y_1}$ satisfy
	\[
	\calD(\pi)=0,\qquad
	\calE(\pi)=\dist^2(y_0,y_1)>0.
	\]
	In the geometric applications, (R2) is verified using
	the initial covectors of normal minimizing geodesics
	and estimates for the normal exponential map.
\end{remark} 

\subsection{Consequences for optimal transport}
To deduce stability under target perturbations from the defect estimate,
we use M\'erigot's argument based on the symmetrized duality gap
\cite[proof of Theorem~3.1]{Merigot2026}. We apply it to the squared metric cost and to plans with
arbitrary second marginal.

\begin{corollary}\label{cor:coercivity-consequences}
Let $c=\dist^2/2$ and assume that \eqref{eq:intro-coercivity} holds
with a common constant $C$ for every $\psi\in C(Y)$,
every Borel map $T:S\to Y$ satisfying
$\phi(x)+\psi(T(x))=c(x,T(x))$ with $\phi=\psi^c$, and every probability plan $\pi$
on $S\times Y$ with first marginal $\rho$. 
Then the optimal coupling from $\rho$ to every
$\mu\in\PP(Y)$ is unique and induced by a map $T_\mu$, and
\[
 \norm{\dist(T_{\mu_0},T_{\mu_1})}_{L^2(\rho)}
 \leq C'W_1^{1/4}(\mu_0,\mu_1).
\]
For $\mu=T_\#\rho$ and every $\pi\in\Pi(\rho,\mu)$,
\begin{equation}\label{eq:excess}
 \begin{aligned}
 \calD(\pi)=\int c\,\d\pi-\operatorname{OT}_c(\rho,\mu),\qquad 
 \calE(\pi)^2\leq C
 \left(\int c\,\d\pi-\operatorname{OT}_c(\rho,\mu)\right).
 \end{aligned}
\end{equation}
\end{corollary}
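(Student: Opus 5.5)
We plan to derive all three assertions directly from \eqref{eq:intro-coercivity} applied to suitably chosen $\psi$, $T$, and $\pi$, following the symmetrized duality-gap argument of M\'erigot.

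\emph{Step 1: uniqueness and the map structure.} Fix $\mu\in\PP(Y)$ and a $c$-conjugate Kantorovich pair $(\phi,\psi)$ for $(\rho,\mu)$. Select a Borel map $T$ with $T(x)\in\partial^c\phi(x)$ for every $x\in S$. If $\pi\in\Pi(\rho,\mu)$ is optimal, then $R=0$ $\pi$-a.e., so $\calD(\pi)=0$. Hence \eqref{eq:intro-coercivity} gives $\calE(\pi)=0$, that is, $y=T(x)$ for $\pi$-a.e.\ $(x,y)$, and so $\pi=(\Id,T)_\#\rho$. Consequently every optimal plan is induced by the same map $T$; this gives uniqueness, and we set $T_\mu:=T$. (Then $T_\#\rho=\mu$, and any other contact selection agrees with $T$ $\rho$-a.e.)

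\emph{Step 2: the excess-cost identity.} Let $\mu=T_\#\rho$ with $T$ a contact selection for $\psi$. The plan $(\Id,T)_\#\rho$ has $R=0$ on its support, so it is optimal, and $(\phi,\psi)$ attain the duality. Hence $\operatorname{OT}_c(\rho,\mu)=\int\phi\,\d\rho+\int\psi\,\d\mu$. For any $\pi\in\Pi(\rho,\mu)$ we then have
\[
 \calD(\pi)=\int c\,\d\pi-\int\phi\,\d\rho-\int\psi\,\d\mu=\int c\,\d\pi-\operatorname{OT}_c(\rho,\mu),
\]
and \eqref{eq:excess} follows from \eqref{eq:intro-coercivity}.

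\emph{Step 3: stability.} For $i=0,1$, take conjugate pairs $(\phi_i,\psi_i)$ for $(\rho,\mu_i)$ and maps $T_i=T_{\mu_i}$. Use the plan $\pi=(\Id,T_1)_\#\rho$ with defect computed relative to $(\phi_0,\psi_0,T_0)$, and symmetrically. Since $\int c(x,T_1(x))\,\d\rho=\int\phi_1\,\d\rho+\int\psi_1\,\d\mu_1$, we get
\[
 \calD_0+\calD_1=\int(\phi_1-\phi_0)\,\d\rho+\int\psi_1\d\mu_1-\int\psi_0\d\mu_1+\int(\phi_0-\phi_1)\,\d\rho+\int\psi_0\d\mu_0-\int\psi_1\d\mu_0=\Delta .
\]
Here $\Delta=\int(\psi_1-\psi_0)\,\d(\mu_1-\mu_0)\leq 2D\,W_1(\mu_0,\mu_1)$ by Lemma~\ref{lem:transport-foundations} and \eqref{eq:KR-duality}. (Each $\psi_i$ is $D$-Lipschitz on $Y$; extend by McShane if needed.) Then \eqref{eq:intro-coercivity} yields
\[
 \Bigl(\int\dist^2(T_0,T_1)\,\d\rho\Bigr)^2=\calE_0^2\leq C\calD_0\leq 2CD\,W_1(\mu_0,\mu_1).
\]
Taking fourth roots gives the estimate with $C'=(2CD)^{1/4}$.

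The only delicate point is the bookkeeping in Step 3. One must make sure the defects are computed with the pair $(\phi_i,\psi_i)$ defined on all of $Y$ and with $T_i$ an everywhere-contact selection, as the hypothesis requires. This is ensured by choosing the potentials $c$-conjugate and the selections as in Step 1.
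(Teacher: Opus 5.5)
Your proof is correct and is essentially the paper's argument. Optimal plans have zero defect, so coercivity gives uniqueness and the map structure. Duality gives the excess identity. The symmetrized defect $\int[R_1(x,T_0(x))+R_0(x,T_1(x))]\,\d\rho=\int_Y(\psi_1-\psi_0)\,\d(\mu_1-\mu_0)\leq 2DW_1(\mu_0,\mu_1)$ then yields the $W_1^{1/4}$ bound.

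The only difference is cosmetic: you test $(\phi_0,\psi_0)$ against $(\Id,T_1)_\#\rho$, whereas the paper tests $(\phi_1,\psi_1)$ against $(\Id,T_0)_\#\rho$. You should also state explicitly that an optimal plan exists by compactness, since that is what gives $(T_\mu)_\#\rho=\mu$.
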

\begin{proof}
	By compactness, the optimal transport plan exists. 
	For each $\mu\in\PP(Y)$, choose a $c$-conjugate Kantorovich
	pair $(\phi_\mu,\psi_\mu)$ and a Borel map $T_\mu$ satisfying
	$\phi_\mu(x)+\psi_\mu(T_\mu(x))=c(x,T_\mu(x))$.
	Every optimal plan $\gamma\in\Pi(\rho,\mu)$ has zero defect,
	so \eqref{eq:intro-coercivity} implies $\calE(\gamma)=0$.
	Since its first marginal is $\rho$,  it follows that 
	$\gamma=(\Id,T_\mu)_\#\rho$.
	In particular, $(T_\mu)_\#\rho=\mu$. 
	Thus the optimal transport plan is unique and induced by a map,
	which we denote by $T_\mu$. For $i=0,1$, denote the corresponding potentials by
	$(\phi_i,\psi_i)$, and set $T_i=T_{\mu_i}$ and
	$R_i=c-\phi_i-\psi_i$.
	Apply \eqref{eq:intro-coercivity} with $(\phi_1,\psi_1)$ and
	$\pi=(\Id,T_0)_\#\rho$.
	Since $R_0\geq0$ and
	$\phi_i(x)=c(x,T_i(x))-\psi_i(T_i(x))$, we obtain
	\begin{equation}\label{eq:symmetricgap}
		\begin{aligned}
			\left(\int\dist^2(T_0,T_1)\,\d\rho\right)^2
			&\leq C\int
			\bigl[R_1(x,T_0(x))+R_0(x,T_1(x))\bigr]\,\d\rho(x)\\
			&=C\int_Y(\psi_1-\psi_0)\,\d(\mu_1-\mu_0)\leq 2CDW_1(\mu_0,\mu_1),
		\end{aligned}
	\end{equation}
	where $D=\diam(\overline S\cup Y)$ and the last inequality follows from
	Lemma~\ref{lem:transport-foundations} and the Kantorovich--Rubinstein formula
	\eqref{eq:KR-duality}. 
	
	Finally, fix $\psi\in C(Y)$, set $\phi=\psi^c$, and let $T$
	be a Borel map satisfying $\phi(x)+\psi(T(x))=c(x,T(x))$.
	Put $\mu=T_\#\rho$. By dual feasibility and equality on the graph of $T$, 
	\[
	\operatorname{OT}_c(\rho,\mu)
	=\int_S c(x,T(x))\,\d\rho(x)
	=\int_S\phi\,\d\rho+\int_Y\psi\,\d\mu.
	\]
	Consequently, for every $\pi\in\Pi(\rho,\mu)$,
	\[
	\calD(\pi)
	=\int c\,\d\pi-\int_S\phi\,\d\rho-\int_Y\psi\,\d\mu
	=\int c\,\d\pi-\operatorname{OT}_c(\rho,\mu).
	\]
	Combining this identity with \eqref{eq:intro-coercivity}
	proves \eqref{eq:excess}.
\end{proof} 

\subsection{Estimates on annuli}\label{sec:annular}
Uniform semiconcavity estimates may fail near the diagonal, so we
localize the argument to annuli. At each transport scale, we again
use M\'erigot's estimates for the defect and the oscillation of
subdifferentials \cite[Theorem~3.4]{Merigot2026}.

Let $(X,\dist)$ be a proper geodesic metric space: closed bounded
sets are compact, and every two points can be joined by a minimizing geodesic. Let $\mm$ be a locally finite Borel reference measure, and fix a Borel source set $S$ with compact closure and a compact target set $Y$. Set $D=\diam(\overline S\cup Y)>0$ and assume $\rho\leq a_2\mm$. Fix $0<\theta<1/4$ and use $c=\dist^2/2$.

\begin{definition}[Assumptions on annuli]\label{def:annular}
Assume that the following conditions hold on the fixed source and
target regions, with constants independent of $r\in(0,D]$.

\smallskip\noindent\textup{(A) Coordinate estimates on annuli.}
There exists a finite Borel partition $S=\bigsqcup_j S_{j,r}$ and charts $F_{j,r}$, defined on neighborhoods of these sets, taking $S_{j,r}$ into a fixed compact $K_{\mathrm E}\Subset U_{\mathrm E}\subseteq\RR^d$ with $U_{\mathrm E}$ bounded and convex. There are compact target subsets $Y_{j,r}\subseteq Y$ such that
\[
 (x,y)\in S_{j,r}\times Y,\quad \theta r\leq \dist(x,y)\leq2r
 \quad\Longrightarrow\quad y\in Y_{j,r}.
\]
The normalized costs
\[
 f_{j,r,y}(z)=r^{-2}c(F_{j,r}^{-1}z,y),\qquad y\in Y_{j,r},
\]
have uniformly bounded Lipschitz and semiconcavity constants on $U_{\mathrm E}$.
For every $z\in U_{\mathrm E}$ and $y,w\in Y_{j,r}$,
there exist $p_y\in D^+f_{j,r,y}(z)$ and
$p_w\in D^+f_{j,r,w}(z)$ such that
\begin{equation}\label{eq:scaledrecovery}
 r^{-2}\dist^2(y,w)\leq B|p_y-p_w|.
\end{equation}
	Moreover, there are $a_{j,r}\geq0$ such that
\begin{equation}\label{eq:densitybudget}
 (F_{j,r})_\#(\mm|_{S_{j,r}})\leq a_{j,r}\mathcal L^d,
 \qquad \sum_j a_{j,r}\leq B.
\end{equation}
One may also use finitely many fixed pairs of coordinate domains
$(K_{\mathrm E},U_{\mathrm E})$, with uniform constants over those pairs.
In Lemma~\ref{lem:cone}, $B$ denotes a ball for the ambient metric.
If $Y_{j,r}$ is empty, no pair with source in $S_{j,r}$ satisfies
$\theta r\leq\dist(x,y)\leq2r$, so that part of the partition
contributes nothing to the estimates.

\smallskip\noindent\textup{(F) Flow decomposition.}
\textup{(F1)} There are finitely many bounded Borel sets
$V_j\subseteq\RR^{m_j}$, bounded intervals $I_j$, and Borel injections
$\Theta_j:V_j\times I_j\to X$, with Borel images $Z_j$ and Borel
inverses on $Z_j$, such that
\begin{equation}\label{eq:directional-measure}
 (\Theta_j^{-1})_\#(\mm|_{Z_j})
 \leq C_j(\mathcal L^{m_j}\otimes\mathcal L^1)|_{V_j\times I_j}.
\end{equation}
Write
$\Phi_j^t\Theta_j(\xi,s)=\Theta_j(\xi,s+t)$ whenever $s,s+t\in I_j$. 

\textup{(F2)} For each pair $(x,y)\in S\times Y$ with
$\ell=\dist(x,y)>0$, there exists a Borel choice of an index $j$ and a
constant-speed minimizing geodesic
$\Gamma_{x,y}\in C([0,1];X)$ from $x$ to $y$, with $x\in Z_j$.
Here the path space carries the uniform topology. Set
$\gamma_{x,y}(t)=\Gamma_{x,y}(t/\ell)$, so that $\gamma_{x,y}$ has unit
speed. For some fixed $0<\sigma\leq1$, the chosen index satisfies 
\begin{equation}\label{eq:flowtrack}
 \dist(\Phi_j^t x,\gamma_{x,y}(t))\leq\tfrac\alpha2 t,
 \quad 0<t<\sigma\ell,\qquad
 \alpha=\frac{1-4\theta}{4(1+\theta)}.
\end{equation}
All shifts in \eqref{eq:flowtrack} are defined, and the
sets, intervals, maps, and constants in (F1), together with
$\sigma$, are independent of $r$. 
\end{definition}

Condition \textup{(A)} gives the coordinate estimates when both
transport lengths lie in $[\theta r,2r]$. When one length lies in
$[r,2r]$ and the other is at most $\theta r$, Lemma~\ref{lem:cone}
excludes certain balls along a minimizing geodesic. We combine
this exclusion with the flow coordinates in \textup{(F)}.
Summing these estimates over all scales gives the following result.

\begin{theorem}[Annular criterion for coercivity]\label{thm:annular}
	Assume the conditions of Definition~\ref{def:annular},
	and let $\rho\in\PP(S)$ satisfy $\rho\leq a_2\mm$.
	There is a constant $C>0$ depending only on $a_2$, $D$,
	$\theta$, and the constants and coordinate domains in that definition.
	With this constant, \eqref{eq:intro-coercivity} holds for every
	$\psi\in C(Y)$, every Borel map $T:S\to Y$ satisfying
	$\phi(x)+\psi(T(x))=c(x,T(x))$ with $\phi=\psi^c$, and every
	probability plan $\pi$ on $S\times Y$ with first
	marginal $\rho$.
	Consequently, the conclusions of
	Corollary~\ref{cor:coercivity-consequences} hold.
\end{theorem}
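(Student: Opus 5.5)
We decompose the transport error dyadically by transport length. Fix $\psi$, $T$, $\pi$ and write $\ell_0(x)=\dist(x,T(x))$, $\ell(x,y)=\dist(x,y)$. For $r_k=2^{-k}D$ we split $S\times Y$ according to which lengths lie in $[r_k,2r_k]$. Each pair $(x,y)$ with $\max(\ell_0,\ell)>0$ then falls into one of two regimes. In the \emph{comparable} regime both lengths lie in $[\theta r,2r]$ for some dyadic $r$. In the \emph{mixed} regime one length lies in $[r,2r]$ and the other is at most $\theta r$. In the mixed regime the triangle inequality gives $\dist^2(T(x),y)\leq 9r^2$.

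\textbf{Comparable regime.} Here we rerun the proof of Theorem~\ref{thm:regular} at scale $r$ using condition (A). The rescaling is $u_{j,r}(z)=\tfrac\Lambda2|z|^2-r^{-2}\phi(F_{j,r}^{-1}z)$, but with the supremum taken only over $y\in Y_{j,r}$ together with the contact point $T(x)$; because contact points lie in the annulus, this rescaled function still agrees with $r^{-2}\phi$ near $z$ up to a defect. Lemma~\ref{lem:localized-brondsted-rockafellar}, \eqref{eq:scaledrecovery} and Lemma~\ref{lem:subdifferential-oscillation} then give, for $0<s<s_0$,
\[
 r^{-2}\dist^2(y,T(x))\leq B\bigl(2r^{-2}R(x,y)/s+\omega_{j,r,s}(F_{j,r}x)\bigr).
\]
Integrating against $\pi$ restricted to the scale-$r$ comparable set $P_r$, and using \eqref{eq:densitybudget} together with $\rho\leq a_2\mm$, yields
\[
 \int_{P_r}\dist^2(T(x),y)\,\d\pi\leq C\bigl(\calD_r/s+r^2s\,\rho(S_r)\bigr).
\]
Here $\calD_r$ is the defect on $P_r$ and $S_r$ is the set of sources with $\ell_0\in[\theta r,2r]$.

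\textbf{Mixed regime.} This uses Lemma~\ref{lem:cone} and (F). Suppose $\ell\in[r,2r]$ and $\ell_0\leq\theta r$; the other case is symmetric. Contact at $T(x)$ together with $\ell_0$ small forces $R(x,y)$ to be large unless $x$ is exceptional. Concretely, Lemma~\ref{lem:cone} excludes sources $x'$ near $\gamma_{x,y}(t)$, for $t\lesssim\sigma r$, from having short contact transport. The flow tracking \eqref{eq:flowtrack} places these excluded balls along $\Phi_j^t x$. By \eqref{eq:directional-measure} and Fubini in the flow parameter, the set of $x$ at which $R(x,y)<\kappa r^2$ while $\ell_0(x)\leq\theta r$ has controlled $\mm$-measure. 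Converting this into an estimate, we aim for
\[
 \int_{\text{mixed at }r}\dist^2\,\d\pi\leq C\bigl(\calD_r/s+r^2s\bigr)
\]
with the same balance structure, which we plan to obtain via a Chebyshev split of $R$ at level $sr^2$. This step is the main obstacle: the $r$-independence of the flow data in (F) must compensate for the lack of semiconcavity near the diagonal, and the exceptional measure must scale like $s$, not like a constant.

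\textbf{Summation.} We sum over dyadic $r$. The defects $\calD_r$ add up to at most $\calD(\pi)$, since each pair occurs at boundedly many scales. For the oscillation terms we use $\sum_r r^2\rho(S_r)\lesssim D^2$, or better, only the scales $r\gtrsim$ a cutoff. Scales below $\sqrt{\calD}$ are handled by the trivial bound $\dist^2(T(x),y)\leq 9r^2$ on pairs whose lengths are both at most $r$, and their contribution sums geometrically to $O(r_*^2)$. The result is $\calE\leq C(\calD/s+s+r_*^2)$. Choosing $s=\sqrt\calD$ and $r_*^2=\sqrt\calD$ gives \eqref{eq:intro-coercivity}. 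The consequences then follow verbatim from Corollary~\ref{cor:coercivity-consequences}.
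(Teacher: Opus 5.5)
Your architecture matches the paper's proof. You use dyadic bands in the longer length, condition (A) with the localized Br{\o}ndsted--Rockafellar and oscillation lemmas when both lengths lie in $[\theta r,2r]$, a Chebyshev split of the mixed pairs at $R=\kappa sr^2$, and summation with $s\sim\sqrt{\calD(\pi)}$. The gap is the mixed regime, which you leave as a plan. This is exactly where the failure of semiconcavity near the diagonal has to be overcome. What is needed is the estimate of Lemma~\ref{lem:mixed}: with $\eps=\kappa sr^2$, the sets $E^\pm_{r,\eps}$ of sources admitting a mixed pair of defect at most $\eps$ satisfy $\rho(E^+_{r,\eps})+\rho(E^-_{r,\eps})\leq C\eps/r^2=C\kappa s$. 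Given this, the mixed pairs below the threshold have $\pi$-mass at most $Cs$ and contribute at most $Csr^2$. Those above it satisfy $\dist^2(T(x),y)\leq9r^2\leq CR(x,y)/s$. Your sketch does not yield this estimate. That Lemma~\ref{lem:cone} and \eqref{eq:flowtrack} exclude a ball around $\Phi_j^tx$ for one fixed $x$ says nothing by itself about the measure of the set of all such $x$. Moreover, a threshold $\kappa r^2$, as in your sentence, gives at best an $O(1)$ bound.

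The missing idea is mutual exclusion inside the exceptional set along a single flow line. Partition $E^\pm_{r,\eps}$ into Borel pieces on which the index $j$ that (F2) attaches to the long pair at the source is fixed. For $E^+$ this long pair is $(x,T(x))$; for $E^-$ it is a Borel-selected $(x,y_x)$ with $R(x,y_x)\leq\eps$. Take $x,z$ in one piece. Apply Lemma~\ref{lem:cone} to the long pair at $x$ and to the short pair of defect at most $\eps$ at $z$, namely a selected $(z,y_z)$ for $E^+$ and $(z,T(z))$ for $E^-$. It gives $z\notin B(\gamma(t),\alpha t)$ for $8\eps/(3r)<t<3r/16$, whereas \eqref{eq:flowtrack} places $\Phi_j^tx$ in $B(\gamma(t),\alpha t/2)$. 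Hence, with $\sigma\leq3/16$ and $\kappa$ small, no two parameters of a slice $\Theta_j(\xi,\cdot)$ lying in the piece differ by an amount in $(a,b)$, where $a=8\eps/(3r)$ and $b=\sigma r$. Cut $I_j$ into at most $1+|I_j|/b$ intervals of length at most $b$. Each slice then has length at most $a(1+|I_j|/(\sigma r))\leq C\eps/r^2$, using $r\leq D$ and the $r$-independence of $I_j$. Finally, \eqref{eq:directional-measure} and $\rho\leq a_2\mm$ give the bound. The factor $\eps$ enters only through the lower endpoint $8\eps/(3r)$ of the exclusion range.

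Two smaller points. First, Lemma~\ref{lem:subdifferential-oscillation} controls the oscillation only integrated over all of $K_{\mathrm E}$, so the factor $\rho(S_r)$ in your comparable bound is not justified. It is also unnecessary, since $\sum_kr_k^2\leq\tfrac43D^2$, and this likewise makes the cutoff $r_*$ superfluous. Second, you must not add $T(x)$ to the family defining $u_{j,r}$, because the convex function cannot depend on $x$; condition (A) already places $T(x)$ in $Y_{j,r}$.
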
 

\Needspace{13\baselineskip}
\begin{lemma}[Transport lengths in one annulus]\label{lem:comparable}
Set
\[
 \begin{aligned}
 L(x,y)&=\max\{\dist(x,T(x)),\dist(x,y)\},\\
 l(x,y)&=\min\{\dist(x,T(x)),\dist(x,y)\}.
 \end{aligned}
\]
For
\[
 A_r=\{r\leq L<2r,\ l\geq\theta r\},
\]
and all $0<\tau<\tau_0$, hypothesis (A) yields 
\begin{equation}\label{eq:comparable}
 \int_{A_r}\dist^2(T(x),y)\dd\pi
 \leq C\left(\tau^{-1}\int_{A_r}R\dd\pi+\tau r^2\right).
\end{equation}
\end{lemma}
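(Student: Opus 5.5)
The plan is to repeat the proof of Theorem~\ref{thm:regular} chart by chart at the fixed scale $r$, after normalizing the cost by $r^{-2}$. Fix $r$ and the partition $S=\bigsqcup_jS_{j,r}$ with charts $F_{j,r}$ from (A). On $A_r$ both $\dist(x,T(x))$ and $\dist(x,y)$ lie in $[\theta r,2r)$. Hence for $x\in S_{j,r}$ both $T(x)$ and $y$ belong to $Y_{j,r}$.

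On $U_{\mathrm E}$ define the rescaled potentials
\[
 u_{j,r}(z)=\tfrac{\Lambda}{2}|z|^2-r^{-2}\phi(F_{j,r}^{-1}z),
 \qquad
 a_{j,r,y}(z)=\tfrac{\Lambda}{2}|z|^2-f_{j,r,y}(z)+r^{-2}\psi(y),
\]
where $\Lambda$ is the uniform semiconcavity bound for $f_{j,r,y}$. The difficulty is that $u_{j,r}$ should be the supremum over all $y\in Y$, while (A) controls only $y\in Y_{j,r}$. I would therefore take the supremum only over $y\in Y_{j,r}$ and call the result $\tilde u_{j,r}$. Then $\tilde u_{j,r}\leq u_{j,r}$, with equality at $z=F_{j,r}(x)$ whenever $T(x)\in Y_{j,r}$, since $T(x)$ is a contact point. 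The function $\tilde u_{j,r}$ is convex with a Lipschitz bound independent of $r,j,\psi$.

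For $(x,y)\in A_r$ with $x\in S_{j,r}$, put $z=F_{j,r}(x)$. Choose $p_y,p_{T(x)}$ as in \eqref{eq:scaledrecovery} and set $q_y=\Lambda z-p_y$. Then $q_y\in\partial a_{j,r,y}(z)$, so $q_y$ is an $r^{-2}R(x,y)$-subgradient of $\tilde u_{j,r}$ at $z$. This uses $\tilde u_{j,r}(z)-a_{j,r,y}(z)=r^{-2}R(x,y)$, which holds because of the contact at $T(x)\in Y_{j,r}$. Similarly $q_{T(x)}=\Lambda z-p_{T(x)}\in\partial\tilde u_{j,r}(z)$.

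Lemma~\ref{lem:localized-brondsted-rockafellar} at scale $\tau/2$ gives a subgradient of $\tilde u_{j,r}$ in $B_{\mathrm E}(z,\tau)$ within $2r^{-2}R/\tau$ of $q_y$. Combined with \eqref{eq:scaledrecovery}, this yields
\[
 r^{-2}\dist^2(T(x),y)\leq B\Bigl(\frac{2r^{-2}R(x,y)}{\tau}
 +\diam_{\mathrm E}\partial\tilde u_{j,r}(B_{\mathrm E}(z,\tau))\Bigr).
\]

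Next I integrate over $A_r\cap(S_{j,r}\times Y)$ against $\pi$. The first marginal of $\pi$ is $\rho\leq a_2\mm$, and \eqref{eq:densitybudget} transfers this to Lebesgue measure on $K_{\mathrm E}$ with weight $a_2a_{j,r}$. Lemma~\ref{lem:subdifferential-oscillation} then bounds the oscillation term by $C a_2 a_{j,r}\tau$, for $\tau<\tau_0=s_0(K_{\mathrm E},U_{\mathrm E})$. Summing over $j$ with $\sum_ja_{j,r}\leq B$ and multiplying by $r^2$ gives \eqref{eq:comparable}. If finitely many coordinate pairs are used, take the minimum of the $s_0$ and the maximum of the constants.

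The main obstacle is the restriction of the supremum to $Y_{j,r}$, together with the subgradient bookkeeping around it. One must check that the contact at $T(x)$ is preserved after restricting, and that both supergradients $p_y,p_{T(x)}$ come from the family for which \eqref{eq:scaledrecovery} holds. A second point is that the rescaled Lipschitz constant of $\tilde u_{j,r}$ must be independent of $r$, and this is exactly what the uniform bounds in (A) provide.
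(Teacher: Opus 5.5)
Your proposal is correct and follows the paper's proof essentially step for step. The paper also replaces the normalized potential by its convexified version with the minimum restricted to $Y_{j,r}$. It then uses that both $T(x)$ and $y$ lie in $Y_{j,r}$ on $A_r$, so the restricted minimum equals $r^{-2}\phi(x)$ and the normalized defect is $r^{-2}R(x,y)$. Finally, it reruns the pointwise bound \eqref{eq:pointwise} at coordinate radius $\tau$ and integrates using \eqref{eq:densitybudget}, Lemma~\ref{lem:subdifferential-oscillation}, and $\sum_j a_{j,r}\leq B$.
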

\begin{proof}
	
Let $\Lambda$ be a uniform semiconcavity constant in (A).
For $z\in U_{\mathrm E}$, set
\[
u_{j,r}(z)=\frac{\Lambda}{2}|z|^2
-\min_{w\in Y_{j,r}}
\bigl\{f_{j,r,w}(z)-r^{-2}\psi(w)\bigr\}.
\]
By (A), the functions $u_{j,r}$ are convex and uniformly
Lipschitz on $U_{\mathrm E}$, independently of $j$, $r$,
and $\psi$. Since $K_{\mathrm E}\Subset U_{\mathrm E}$,
the radius $\tau_0$ can also be chosen uniformly. For $(x,y)\in A_r$ with $x\in S_{j,r}$, both $T(x)$ and
$y$ belong to $Y_{j,r}$. Hence the minimum above equals
$r^{-2}\phi(x)$ at $z=F_{j,r}x$, and the normalized defect
is $r^{-2}R(x,y)$.
Applying the argument of \eqref{eq:pointwise} with
coordinate radius $\tau$ and multiplying by $r^2$ yields 
\[
 \dist^2(T(x),y)\leq C\left(\tau^{-1}R(x,y)+r^2\diam_{\mathrm E}\partial u_{j,r}(B_{\mathrm E}(F_{j,r}x,\tau))\right).
\]
The restriction of $\pi$ to
$A_r\cap(S_{j,r}\times Y)$ has first marginal bounded by
$\rho|_{S_{j,r}}$. Hence by 
Lemma~\ref{lem:subdifferential-oscillation} and \eqref{eq:densitybudget}, we obtain 
\[
r^2\int_{A_r\cap(S_{j,r}\times Y)}
\diam_{\mathrm E}\partial u_{j,r}
\bigl(B_{\mathrm E}(F_{j,r}x,\tau)\bigr)\,\d\pi(x,y)
\leq Ca_2a_{j,r}\tau r^2.
\]
Integrating the pointwise estimate and summing over $j$,
using $\sum_j a_{j,r}\leq B$, proves \eqref{eq:comparable}. 
\end{proof}

\begin{lemma}[Exclusion of balls along minimizing geodesics]\label{lem:cone}
Suppose $(x,y_x),(z,y_z)\in S\times Y$ satisfy
$R(x,y_x)+R(z,y_z)\leq\eps$ for some $\eps\geq0$, and
$\dist(x,y_x)=\ell\in[r,2r]$, $\dist(z,y_z)\leq\theta r$. Every unit-speed minimizer $\gamma$ from $x$ to $y_x$ satisfies
\begin{equation}\label{eq:cone}
 z\notin B(\gamma(t),\alpha t),\qquad
 \frac{8\eps}{3r}<t<\frac{3r}{16}.
\end{equation}
\end{lemma}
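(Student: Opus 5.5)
The plan is to argue by contradiction from the cross-pair nonnegativity of the defect $R$: a small total defect on the two pairs makes swapping the targets almost unprofitable, while a point $z$ lying close to the geodesic from $x$ to $y_x$, together with a short transport to $y_z$, would make the swap clearly profitable.

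\textbf{Step 1: the swap inequality.} Since $\phi=\psi^c$, we have $R\geq0$ everywhere on $S\times Y$, in particular at the swapped pairs $(x,y_z)$ and $(z,y_x)$. The potentials cancel in the difference
\[
R(x,y_z)+R(z,y_x)-R(x,y_x)-R(z,y_z).
\]
Together with the hypothesis $R(x,y_x)+R(z,y_z)\leq\eps$, this gives
\[
 c(x,y_x)+c(z,y_z)\leq c(x,y_z)+c(z,y_x)+\eps .
\]

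\textbf{Step 2: distance bounds if $z$ is near the geodesic.} Suppose, for contradiction, that $\dist(z,\gamma(t))<\alpha t$ for some $t$ in the stated range. Because $\gamma$ is a unit-speed minimizer, $\dist(\gamma(t),y_x)=\ell-t$ and $\dist(x,\gamma(t))=t$. The triangle inequality then gives
\[
\dist(z,y_x)\leq\ell-(1-\alpha)t,\qquad \dist(x,z)\leq(1+\alpha)t .
\]
To handle $y_z$, I keep $c(z,y_z)$ on the left rather than discarding it, because a crude bound would leave a term $\theta^2r^2$ that does not vanish as $t\to0$. Writing $\dist(x,y_z)\leq\dist(x,z)+\dist(z,y_z)$ with $\dist(z,y_z)\leq\theta r$ gives
\[
 \dist^2(x,y_z)-\dist^2(z,y_z)\leq(1+\alpha)^2t^2+2(1+\alpha)\theta r\,t .
\]

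\textbf{Step 3: a linear-in-$t$ contradiction.} Substitute these bounds into Step 1, multiply by $2$, and cancel $\ell^2$. This leaves
\[
 2\ell(1-\alpha)t\leq\bigl[(1-\alpha)^2+(1+\alpha)^2\bigr]t^2+2(1+\alpha)\theta r\,t+2\eps .
\]
Divide by $t>0$ and use $\ell\geq r$. The definition of $\alpha$ is designed exactly so that
\[
1-\alpha-(1+\alpha)\theta=1-\theta-\tfrac{1-4\theta}{4}=\tfrac34 ,
\]
hence the inequality reduces to
\[
\tfrac32 r\leq(2+2\alpha^2)\,t+\frac{2\eps}{t}.
\]
Now use the range of $t$. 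From $0<\alpha\leq1/4$ and $t<3r/16$, the first term on the right is at most $\tfrac{17}{8}\cdot\tfrac{3r}{16}<\tfrac{2r}{5}$. From $t>8\eps/(3r)$, the second term is strictly less than $\tfrac34 r$. The right side is therefore strictly less than $\tfrac32 r$, which is a contradiction. If $\eps=0$, the lower constraint on $t$ is simply $t>0$ and the same computation applies.

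\textbf{Main obstacle.} The only delicate point is Step 2: the short transport from $z$ to $y_z$ must be absorbed through the difference $\dist^2(x,y_z)-\dist^2(z,y_z)$ rather than bounded separately, so that its contribution stays linear in $t$. After that, the constants reduce to checking that $\alpha=\frac{1-4\theta}{4(1+\theta)}$ yields the margin $\tfrac34$, and that this margin beats the combined error terms over the whole window $8\eps/(3r)<t<3r/16$.
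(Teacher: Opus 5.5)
Your proof is correct and follows the paper's argument: the cross-pair inequality from $R\geq0$, the triangle inequalities through $\gamma(t)$ with the $\dist^2(z,y_z)$ term cancelling, and the identity $(1-\alpha)-(1+\alpha)\theta=3/4$, with only cosmetic differences in how you absorb the $t^2$ and $\eps/t$ terms. You implicitly use $t<\ell$ and $\ell-(1-\alpha)t>0$ before squaring; both follow immediately from $t<3r/16<r\leq\ell$.
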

\begin{proof}
By dual feasibility on the cross pairs, we have  
\[
 \dist^2(x,y_x)+\dist^2(z,y_z)\leq \dist^2(x,y_z)+\dist^2(z,y_x)+2\eps.
\]
If $z$ belonged to the indicated ball, with $a=\dist(z,y_z)\leq\theta r$, the triangle inequalities would give
\[
 \dist(x,y_z)<a+(1+\alpha)t,\qquad
 \dist(z,y_x)<\ell-(1-\alpha)t.
\]
Substitution implies
\[
 0\leq2t\{(1+\alpha)a-(1-\alpha)\ell\}+2(1+\alpha^2)t^2+2\eps
 \leq-\tfrac32rt+4t^2+2\eps<0.
\]
Here $(1+\alpha)\theta-(1-\alpha)=-3/4$ and the bounds on $t$ were used.
\end{proof}

\begin{lemma}[One short and one long transport length]\label{lem:mixed}
Define
\begin{align*}
 E^+_{r,\eps}&=\{x:r\leq \dist(x,T(x))\leq2r,\ \exists y\in Y:
 \dist(x,y)\leq\theta r,\ R(x,y)\leq\eps\},\\
 E^-_{r,\eps}&=\{x:\dist(x,T(x))\leq\theta r,\ \exists y\in Y:
 r\leq \dist(x,y)\leq2r,\ R(x,y)\leq\eps\}.
\end{align*}
Under hypothesis (F), there are $\kappa,C>0$ such that
\begin{equation}\label{eq:mixed}
 \rho(E^+_{r,\eps})+\rho(E^-_{r,\eps})\leq C\eps/r^2,
 \qquad0<\eps\leq\kappa r^2.
\end{equation}
\end{lemma}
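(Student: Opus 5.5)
The plan is to show that the mixed sets are thin by combining the exclusion of balls in Lemma~\ref{lem:cone} with the flow coordinates of (F). Treat $E^+_{r,\eps}$ first; $E^-_{r,\eps}$ is symmetric, with the roles of $T(x)$ and $y$ exchanged.

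\textbf{Step 1: pairs for the cone lemma.} For $x\in E^+_{r,\eps}$, set $y_x=T(x)$, so that $R(x,y_x)=0$ and $\ell=\dist(x,y_x)\in[r,2r]$. For every other point $z\in E^+_{r,\eps}$, pick $y_z$ with $\dist(z,y_z)\leq\theta r$ and $R(z,y_z)\leq\eps$. Lemma~\ref{lem:cone} applies to the pairs $(x,T(x))$ and $(z,y_z)$. It shows that no point of $E^+_{r,\eps}$ lies in $B(\gamma_{x,T(x)}(t),\alpha t)$ for $t\in(8\eps/(3r),3r/16)$.

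\textbf{Step 2: translate into flow lines.} Use the Borel index $j$ and the geodesic from (F2). By \eqref{eq:flowtrack} and the triangle inequality, $\Phi_j^tx\in B(\gamma(t),\alpha t/2)$ for $t<\sigma\ell$. The $\alpha t/2$-neighbourhood of $\Phi_j^t x$ therefore lies in the forbidden ball. In particular, $\Phi_j^t x\notin E^+_{r,\eps}$ for
\[
 t\in J:=\bigl(8\eps/(3r),\,t_1\bigr),\qquad t_1=\min\{3r/16,\sigma r\}.
\]
This gives a one-dimensional statement on each flow line of $\Theta_j$. Let $x=\Theta_j(\xi,s)$ lie in $E_j:=E^+_{r,\eps}\cap\{\text{index }j\}$. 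Then no point of $E_j$ on the line $\{\Theta_j(\xi,\cdot)\}$ has parameter in $s+J$.

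\textbf{Step 3: one-dimensional counting.} Fix $\xi$ and let $A\subseteq I_j$ be the set of parameters in $E_j$. Any two points of $A$ are either within $8\eps/(3r)$ of each other or at least $t_1$ apart. Choose $\kappa$ small enough that $8\eps/(3r)\leq t_1/2$ whenever $\eps\leq\kappa r^2$. Then $A$ splits into clusters of diameter at most $8\eps/(3r)$, separated by gaps of length at least $t_1\gtrsim r$. Since $I_j$ is bounded, there are at most $C/r$ clusters, so $\mathcal L^1(A)\leq C(\eps/r)(1/r)=C\eps/r^2$. Integrate over $\xi\in V_j$, use \eqref{eq:directional-measure} and $\rho\leq a_2\mm$, and sum over the finitely many $j$. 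This yields $\rho(E^+_{r,\eps})\leq C\eps/r^2$.

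\textbf{Step 4: the set $E^-_{r,\eps}$.} Here the roles swap. Take $y_x$ with $\dist(x,y_x)\in[r,2r]$ and $R(x,y_x)\leq\eps$, while $z\in E^-$ is paired with $T(z)$, which has zero defect. The defect sum is again at most $\eps$, and Steps 2–3 repeat verbatim.

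\textbf{Main obstacle.} The delicate points are measurability and the direction of the forward exclusion. The index and geodesic depend on the pair $(x,y_x)$, and $y_x$ must be chosen Borel measurably, via (F2) and a measurable selection as in \cite{Kechris1995}. The exclusion is only forward in $t$, but the cluster argument needs only that. I expect the main care is needed in confirming that the shifts stay within $I_j$ for all $t<\sigma\ell$, which (F2) asserts.
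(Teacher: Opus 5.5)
Your proposal is correct and follows the paper's proof essentially step for step. It uses the same pairings for $E^\pm$ (the longer transport with its (F2) index, the shorter one with $T$ or a selected target), Lemma~\ref{lem:cone} together with \eqref{eq:flowtrack} for forward exclusion along $\Phi_j$, and a one-dimensional gap count on each slice integrated through \eqref{eq:directional-measure}. The only cosmetic difference is that the paper partitions $I_j$ into half-open intervals of length $\sigma r$ instead of forming clusters; this sidesteps the boundary case $8\eps/(3r)=t_1/2$, which your clustering should exclude by choosing $\kappa$ so the inequality is strict.
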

\begin{proof}
For each $x$, the targets satisfying the inequalities in the
definition of $E^\pm_{r,\eps}$ form a compact subset of $Y$.
The sets of pairs $(x,y)$ satisfying these conditions are Borel,
because $c$ and $\phi$ are continuous and $T$ is Borel.
The Arsenin--Kunugui theorem
\cite[Theorem~35.46(ii)]{Kechris1995} makes the projections Borel
and gives Borel choices of these targets. Use (F) to partition each exceptional set into finitely many Borel
sets. On each set, the index $j$ associated with the longer transport is fixed.

For $x,z\in E^+_{r,\eps}$ in the same part, use $(x,T(x))$ and choose
$(z,y_z)$ with $\dist(z,y_z)\leq\theta r$ and $R(z,y_z)\leq\eps$.
For $x,z\in E^-_{r,\eps}$ in the same part, choose $(x,y_x)$ with
$r\leq\dist(x,y_x)\leq2r$ and $R(x,y_x)\leq\eps$, and use
$(z,T(z))$. Lemma~\ref{lem:cone} and \eqref{eq:flowtrack} imply, after decreasing $\sigma\leq3/16$,
\[
 x\text{ in such a set}\quad\Longrightarrow\quad
 \Phi_j^t x\text{ not in that set},
 \qquad 8\eps/(3r)<t<\sigma r.
\]
Choose $\kappa$ so that the lower endpoint is smaller than half the upper endpoint.
Put $a=8\eps/(3r)$ and $b=\sigma r$. On a slice with fixed
$\xi\in V_j$, no two parameters in the exceptional set differ by
a number in $(a,b)$. Up to finitely many endpoints, partition $I_j$
into at most $1+|I_j|/b$ half-open intervals of length at most $b$.
Within each interval,
the slice has diameter at most $a$, and hence measure at most $a$.
Thus the measure of the whole slice is bounded by
\[
 \frac{8\eps}{3r}
 \left(1+\frac{|I_j|}{\sigma r}\right)
 \leq C\frac{\eps}{r^2},
\]
since $r\leq D$. For each sign, denote the partition pieces by $E_j^\pm$.
By the preceding slice estimate, \eqref{eq:directional-measure},
and $\rho\leq a_2\mm$, we obtain 
\[
\begin{aligned}
	\rho(E^\pm_{r,\eps})
	\leq a_2\sum_j C_j
	\int_{V_j}\int_{I_j}
	\one_{E_j^\pm}(\Theta_j(\xi,s))\,\d s\,\d\xi\leq C\frac{\eps}{r^2}, 
\end{aligned}
\]
which proves the claim. 
\end{proof}

\begin{proof}[Proof of Theorem~\ref{thm:annular}]
Take $r_k=2^{-k}D$, $k\geq0$. The bands $r_k\leq L<2r_k$
partition all positive lengths under consideration. The error is
zero when $L=0$. Set $\tau_*=\min\{\tau_0,1\}$ and fix $0<\tau<\tau_*$.  Apply
Lemma~\ref{lem:comparable} when $l\geq\theta r_k$.
For the other pairs, split at $R(x,y)=\kappa\tau r_k^2$. 

If $R(x,y)\geq\kappa\tau r_k^2$, then $\dist^2(T(x),y)\leq16r_k^2\leq C R(x,y)/\tau$. Otherwise,
$x\in E^+_{r_k,\kappa\tau r_k^2}
\cup E^-_{r_k,\kappa\tau r_k^2}$.
Since $\pi$ has first marginal $\rho$,
Lemma~\ref{lem:mixed} shows that these pairs have
$\pi$-mass at most $C\tau$. Their contribution to
$\calE(\pi)$ is therefore at most $C\tau r_k^2$.
Combining these bounds with Lemma~\ref{lem:comparable}
and summing over the disjoint bands, by monotone convergence, we obtain 
\[
 \calE(\pi)\leq C\left(\frac{\calD(\pi)}\tau+
 \tau\sum_{k=0}^{\infty}r_k^2\right)
 \leq C\left(\frac{\calD(\pi)}\tau+D^2\tau\right).
\]
For sufficiently small positive $\calD(\pi)$, choose
$\tau=\sqrt{\calD(\pi)}/D$ to obtain \eqref{eq:intro-coercivity}.
The case $\calD(\pi)=0$ follows by letting $\tau\downarrow0$,
while the bound $\calE(\pi)\leq\diam(Y)^2$ handles larger
defects after increasing $C$.
The remaining conclusions follow from
Corollary~\ref{cor:coercivity-consequences}. 
\end{proof}

\section{Geometric estimates}\label{sec:realizations}
We now verify the geometric hypotheses of
Theorems~\ref{thm:regular} and~\ref{thm:annular} in the
Riemannian and fat sub-Riemannian settings, respectively.
We also prove the horizontal estimates needed for potential
stability on arbitrary step-two Carnot groups. 

We first recall the control notation. In a local orthonormal
horizontal frame $F_1,\ldots,F_k$, a control
$a\in L^2([0,1];\RR^k)$ determines a curve by
$\dot\gamma_a=\sum_i a_iF_i(\gamma_a)$, with $\gamma_a(0)=x$.
The \emph{endpoint map} is $E_x(a)=\gamma_a(1)$, on the controls for
which this curve exists. The curve is \emph{regular} if $DE_x(a)$
is surjective. Otherwise it is \emph{singular}, or \emph{abnormal}.
The normal Hamiltonian is
$H(x,p)=\tfrac12|p|_{\mathcal H_x}^2
=\tfrac12\sum_i p(F_i(x))^2$, where $|p|_{\mathcal H_x}$
is the dual norm of $p|_{\mathcal H_x}$.
A \emph{normal geodesic} is the projection of a trajectory of
this Hamiltonian; the corresponding covector curve is its
\emph{normal lift}. We denote its endpoint at time one by
$\Exp_x(p)$, whenever the trajectory with initial covector
$p\in T_x^*M$ is defined up to that time. The differential
$D_p\Exp_x(p)$ is taken with respect to the initial covector.
When it is invertible, the endpoint is called nonconjugate to $x$
along that geodesic.

On a compact neighborhood we may replace the local frame by finitely
many smooth fields $F_i$ satisfying $\sum_iF_i\otimes F_i=g_{\mathcal H}^{-1}$,
where $g_{\mathcal H}^{-1}$ is the horizontal quadratic form on
covectors. These fields are obtained from local orthonormal frames
and a partition whose squares sum to one. They may be redundant.
For a horizontal curve its minimum-norm control is
$a_i=g_{\mathcal H}(\dot\gamma,F_i)$; this has the smallest norm
among controls representing its velocity, and its energy is
$\norm{a}_{L^2}^2/2$.

\subsection{Riemannian manifolds}\label{sec:riemannian}
We write $\exp_x$ for the Riemannian exponential map and
$\operatorname{inj}(x)$ for the injectivity radius at $x$.
The metric identifies vectors and covectors by
$v^\flat=g(v,\cdot)$; its inverse is denoted by $p\mapsto p^\sharp$.

\begin{lemma}[Uniform upper supports on compact sets]\label{lem:riemgeometry}
	Let $(M,g)$ be a complete connected smooth Riemannian manifold, and
	suppose that $\overline S$ and $Y$ are compact subsets of $M$.
	In finitely many source charts covering $\overline S$, the cost
	$c=\dcc^2/2$ satisfies Definition~\ref{def:regular} with uniform
	constants.
	
	Let $x$ belong to one of these source charts and $y\in Y$.
	Let $\gamma:[0,1]\to M$ be a constant-speed minimizing geodesic
	from $x$ to $y$, with initial velocity $v_y$. It defines a smooth
	upper support for $c(\cdot,y)$ at $x$, with differential
	\[
	p_y=-v_y^\flat,\qquad y=\exp_x(-p_y^\sharp).
	\]
	The estimates hold also on the diagonal and at cut points.
\end{lemma}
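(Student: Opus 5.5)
The plan is to verify (R1) and (R2) of Definition~\ref{def:regular} directly from Riemannian comparison on a compact neighbourhood. First choose a compact set $K'\supseteq\overline S\cup Y$ that contains every minimizing geodesic between points of $\overline S$ and $Y$; completeness and Hopf--Rinow give this, for example a closed $2D$-neighbourhood. On $K'$ the sectional curvatures are bounded in absolute value by some $\kappa$. Next cover $\overline S$ by finitely many charts $\chi_j:V_j\to U_j$ with convex images, for instance normal coordinate balls of radius smaller than half the injectivity radius on $K'$. Then pick compact $K_j\Subset U_j$ whose preimages still cover $\overline S$. In these charts the metric coefficients and Christoffel symbols are uniformly bounded, so Riemannian and Euclidean distances are comparable.

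\textbf{Smooth upper support.} Let $\gamma$ be a constant-speed minimizer from $x$ to $y$, of length $\ell\leq D$. For $x'$ near $x$, connect $x'$ to $\gamma(1/2)$ by the short geodesic, or by a variation of the first half of $\gamma$ with endpoint moved. This gives the upper bound
\[
c(x',y)\leq \tfrac12\bigl(\dcc(x',\gamma(\tfrac12))+\tfrac{\ell}{2}\bigr)^2 .
\]
A cleaner choice is $h(x')=\tfrac12 L(\sigma_{x'})^2$, where $\sigma_{x'}$ is obtained from $\gamma$ by parallel-translating the variation field $(1-t)J$ with $J$ the vector from $x$ to $x'$. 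Either way the support is smooth, equals $c(\cdot,y)$ at $x$, and has first variation $-g(\dot\gamma(0),\cdot)$. This gives $p_y=-v_y^\flat$ and $y=\exp_x(v_y)=\exp_x(-p_y^\sharp)$. The second variation formula with curvature bounded below by $-\kappa$ bounds the Hessian of the support by $C(\kappa,D)$, uniformly in $x,y$ and including cut points, where we only need an upper support. Transferring to coordinates adds terms controlled by the Christoffel symbols times $|p_y|\leq D$. Upper supports with uniformly bounded Hessians give uniform $\Lambda$-semiconcavity of $f_y$ on convex $U_j$. The standard calculus of \cite{CannarsaSinestrari2004} applies because the support radius is uniform. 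Lemma~\ref{lem:transport-foundations} gives the Lipschitz bound $L$. On the diagonal, $\gamma$ is constant, $p_y=0$, and the support is $\dcc^2(\cdot,y)/2$, which is smooth near $y$. This proves (R1).

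\textbf{(R2).} Set $P(z,y)=\{$coordinate expressions of $-v^\flat$ over all initial velocities $v$ of minimizers from $x$ to $y\}$. For $p\in P(z,y)$ and $p'\in P(z,w)$ we have $y=\exp_x(-p^\sharp)$ and $w=\exp_x(-p'^\sharp)$. Since $|p|,|p'|\leq D$, the map $\exp_x$ restricted to the ball of radius $D$ in $T_xM$ is uniformly Lipschitz over $x\in K'$. The Rauch comparison theorem with bound $\kappa$ controls Jacobi fields, giving a Lipschitz constant of order $\sinh(\sqrt\kappa D)/(\sqrt\kappa D)$. Hence
\[
\dcc(y,w)\leq C|p-p'|,
\]
where the sharp and flat maps in the chart are uniformly bounded. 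Combined with $\dcc(y,w)\leq D$, this yields $\dcc^2(y,w)\leq CD|p-p'|$, which is (R2) with $B=CD$.

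\textbf{Main obstacle.} I expect the hardest step to be making the semiconcavity constant uniform at cut points and far from $x$. The support must be built from a fixed variation of the full minimizer, not from local distance functions, and its Hessian must be bounded through the index form with the curvature bound. The endpoint estimate is routine by comparison.
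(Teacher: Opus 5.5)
Your proposal is correct, but for (R1) it takes a different route from the paper. Your (R2) argument is essentially the paper's: the supergradients are the coordinate forms of $-v^\flat$, $\exp_x$ is uniformly Lipschitz on initial velocities of norm at most $D$, and $\dcc(y,w)\leq\diam(Y)$ converts the linear bound into \eqref{eq:recovery}.

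For the upper support, the paper uses Calabi's trick. It picks $t\in(0,1)$ so small that $\gamma(t)$ lies well inside the uniform injectivity radius of $x$, and sets $S_{x,y}(z)=\dcc^2(z,\gamma(t))/(2t)+\tfrac{1-t}{2}\dcc^2(x,y)$. The triangle inequality through $\gamma(t)$ and weighted Cauchy--Schwarz give $c\leq S_{x,y}$, with equality at $x$. Uniform $C^2$ bounds on a ball of fixed radius then come for free from smoothness of $\dcc^2$ inside the injectivity radius. There is no curvature computation, and the diagonal is included automatically.

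You instead deform the whole minimizer by the parallel-transported field $(1-t)P_tJ$. You bound the Hessian at $x$ through the index form and the lower curvature bound, which gives an explicit constant $1+\kappa D^2/3$ plus Christoffel terms. This is the classical Cordero-Erausquin--McCann--Schmuckenschl\"ager argument. It buys quantitative dependence on the geometry, at the cost of a second-variation computation.

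To make your route airtight, a few repairs are needed:
\begin{enumerate}[(i)]
\item Use the energy $\tfrac12\int_0^1|\dot\sigma_{x'}|^2\d t$ rather than $\tfrac12L(\sigma_{x'})^2$. The latter is smooth only for $\dcc(x,x')$ small compared with $\ell$, so your claimed uniform support radius fails near the diagonal. The energy still dominates $c(x',y)$, equals it at $x$, and is smooth on a ball of fixed radius.
\item The second variation bounds the Hessian only at the base point. You should either invoke smooth dependence on the compact set of initial data to get uniform $C^2$ bounds near $x$, or apply the one-dimensional criterion (local affine upper supports at every point imply concavity) to $f_y-(\Lambda+\delta)|z|^2/2$.
\item Drop the midpoint variant. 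Calling the geodesic to $\gamma(1/2)$ ``short'' is wrong in general, since $\gamma(1/2)$ need not lie within the injectivity radius of $x$. Smoothness of $\dcc(\cdot,\gamma(1/2))$ at $x$ does hold, because interior points of minimizers are not cut points. But uniformity would need closedness of the cut locus and a compactness argument, which is exactly what the paper avoids by taking $t$ small.
\item Rauch comparison does not give the upper Jacobi-field bound past conjugate points, and the segment from $-p_y^\sharp$ to $-p_w^\sharp$ may cross them. Your two-sided curvature bound gives the same $\sinh$ estimate via Gronwall applied to the Jacobi equation. Plain compactness also suffices, as in the paper.
\end{enumerate}
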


\begin{proof}
	We construct smooth upper supports as in \cite{Calabi1958}. Choose the source charts inside a fixed compact neighborhood of $\overline S$. All minimizing geodesics from this neighborhood
	to $Y$ have uniformly bounded lengths and, by completeness,
	lie in a common compact set. The injectivity radius has a positive lower bound on this compact
	set. Choose $t\in(0,1)$ and $r>0$ using these two bounds. For every constant-speed minimizer $\gamma:[0,1]\to M$
	from $x$ to $y$, the function 
	\[
	S_{x,y}(z)
	=\frac{\dcc^2(z,\gamma(t))}{2t}
	+\frac{1-t}{2}\dcc^2(x,y)
	\]
	is smooth on $B(x,r)$, with uniform coordinate $C^2$ bounds.
	The triangle inequality through $\gamma(t)$ and the weighted
	Cauchy--Schwarz inequality give
	$\dcc^2(z,y)\leq\dcc^2(z,\gamma(t))/t+(1-t)\dcc^2(x,y)$;
	compare \cite[Claim~2.4]{CorderoErausquinMcCannSchmuckenschlager2001}.
	Thus
	\[
	c(z,y)\leq S_{x,y}(z),\qquad c(x,y)=S_{x,y}(x).
	\]
	The first variation formula yields 
	\[
	p_y:=dS_{x,y}(x)=-v_y^\flat,
	\qquad y=\exp_x(-p_y^\sharp);
	\]
	see also \cite[Proposition~6]{McCann2001}. In finitely many relatively compact source charts with convex
	images, the uniform $C^2$ bounds on these supports give a common
	semiconcavity constant. The triangle inequality gives a common
	Lipschitz constant, proving \textup{(R1)}. 
	
	For \textup{(R2)}, fix a source chart $\chi$ and write
	$z=\chi(x)$. Let $P(z,y)$ be the set of coordinate
	representatives of $-v_y^\flat$ as the minimizing geodesic
	from $x$ to $y$ varies. These sets are nonempty and contained
	in $D^+f_y(z)$. For $p\in P(z,y)$ and $p'\in P(z,w)$, let $v,v'$ be the
	corresponding initial velocities. Since these are uniformly
	bounded, integrating the derivative of $\exp_x$ along the
	segment joining them yields 
	\[
	\dcc(y,w)
	=\dcc(\exp_x v,\exp_x v')
	\leq C|p-p'|.
	\]
	Since $\dcc(y,w)\leq\diam(Y)$, this proves \eqref{eq:recovery}. 
\end{proof} 

\begin{proof}[Proof of Theorem~\ref{thm:main-coercivity} on Riemannian manifolds]
	Lemma~\ref{lem:riemgeometry} gives source charts satisfying Definition~\ref{def:regular}.
	On compact subsets contained in the chart domains, the smoothness of $\mm$ and
	$\rho\leq a_2\mm$ give the required coordinate density bounds.
	Then Theorem~\ref{thm:regular} proves \eqref{eq:intro-coercivity}, and Corollary~\ref{cor:coercivity-consequences} yields uniqueness and \eqref{eq:main-map-stability}, with constants depending only
	on the fixed metric, $\mm$, $K$, and $a_2$.
\end{proof} 

\begin{remark}\label{rem:riemannian-l1-defect} 
	In the Riemannian case, the same argument also gives
	\[
	\left(\int_{S\times Y}\dcc(T(x),y)\,\d\pi(x,y)\right)^2
	\leq C\calD(\pi).
	\]
	Indeed, use the linear endpoint bound from
	Lemma~\ref{lem:riemgeometry} in the proof of
	Theorem~\ref{thm:regular}. This estimate fails in general for the Carnot--Carath\'eodory
	distance, even with fixed marginals; see
	Remark~\ref{rem:heisenberg-l1-defect}. 
\end{remark} 

\subsection{Local geometry of two-generating structures}\label{sec:two-step-local}
The following local estimates apply to all two-generating
structures. They will be used for map stability on fat manifolds
and potential stability on general two-generating manifolds.
Fix a compact region of a smooth two-generating structure.
Let $h=\operatorname{rank}\mathcal H$ and $k=\dim M-h$, and put
$Q=h+2k$. Choose local orthonormal fields $X_1,\ldots,X_h$ and
smooth complementary fields $Z_1,\ldots,Z_k$. On smaller compact
charts, choose privileged exponential coordinates of the first kind 
\cite[Proposition~6.2]{ChoiPonge2019II}, depending smoothly on the
center $p$: the point with coordinates $(z,w)$ is
\[
 \kappa_p(z,w)=\exp\left(\sum_i z_iX_i+\sum_a w_aZ_a\right)(p).
\]
Here $\exp(V)(p)$ denotes the time-one flow of the vector field $V$.
In these coordinates, the functions $z_i$ have horizontal order one,
and the functions $w_a$ have horizontal order two: all first horizontal
derivatives of $w_a$ vanish at $p$, while some second horizontal
derivative is nonzero. Their respective weights are one and two.
For $\delta_r(z,w)=(rz,r^2w)$, write
\[
 X_i^{p,r}=r(\delta_{1/r})_*(\kappa_p^{-1})_*X_i.
\]
The rescaled fields  admit the nilpotent approximation 
\begin{equation}\label{eq:rescaled-fields}
	X_i^{p,r}=X_i^{p,0}+O_{C^3}(r),\qquad
	X_i^{p,0}=\partial_{z_i}
	+\frac12\sum_{j,a}b_{ji}^a(p)z_j\partial_{w_a},
\end{equation}
uniformly on fixed bounded sets of rescaled coordinates. 
The coefficients $b_{ji}^a$ represent $[X_j,X_i]$ modulo
$\mathcal H$ in the chosen complementary frame. The remainder and
its derivatives through order three are bounded by $Cr$.
These uniform estimates on compact sets follow from the weighted Taylor expansion
in privileged coordinates; see
\cite{Bellaiche1996,ChoiPonge2019I,Mitchell1985,AgrachevBarilariBoscain2020}.
The limiting fields define the metric tangent Carnot group. 

Two-generation and compactness give a uniform positive lower
bound for the maximum absolute value of the determinants formed
from the horizontal fields and their first brackets. Consequently, the constants in the ball--box estimates for the
rescaled fields can be chosen uniformly. Let $\dist_{p,r}$ denote their control distance.
The ball--box bounds \cite{NagelSteinWainger1985} and smooth positive
density give
\begin{equation}\label{eq:fat-box-density}
 \dist_{p,r}^2(q,q')\leq C|q-q'|_{\mathrm E},\qquad
 (\kappa_p\delta_r)^*\mm=r^Qw_{p,r}(q)\,\d q,
 \qquad 0<c\leq w_{p,r}\leq C.
\end{equation}
The distance estimate holds uniformly for $q,q'$ in the fixed
bounded sets of rescaled coordinates under consideration. 
In particular, $\mm(B_{\mathrm{cc}}(p,r))\asymp r^Q$ uniformly. 

The uniform ball--box bounds place the images of these coordinate
sets in
$B_{\mathrm{cc}}(p,Cr)$, while $B_{\mathrm{cc}}(p,4Cr)$ lies in the
coordinate chart for sufficiently small $r$, uniformly in $p$.
By \cite[Proposition~2.15]{DonMagnani2023} and the scaling of
horizontal lengths, 
\begin{equation}\label{eq:fat-distance-rescaling}
	\dcc(\kappa_p\delta_rq,\kappa_p\delta_rq')
	=r\dist_{p,r}(q,q').
\end{equation}

Fix $L>0$. Every curve starting in these coordinate sets and
having $\dist_{p,r}$-length at most $L$ stays in one larger box.
This follows from the triangle inequality and the uniform
ball--box bounds. The box is independent of $p$ and small $r$.
These coordinate, distance, and volume estimates use only two-generation. 

\subsection{Fat manifolds}\label{sec:fat-manifolds}

On complete smooth fat structures, we need estimates that are
uniform over the rescaled systems of Section~\ref{sec:two-step-local}
and their nilpotent limits. We first prove such estimates for
regular minimizers, then verify the annular and flow conditions. 

\begin{lemma}[Uniform upper supports for regular minimizers]\label{lem:varying-controls}
	Consider smooth control systems forming a compact set in local
	$C^3$ on a common neighborhood, with energy $\norm{u}_{L^2}^2/2$
	and endpoint coordinates in a fixed finite atlas. Assume that nearby endpoints admit joining curves whose lengths
	tend to zero uniformly over these systems. Suppose that the
	minimizers under consideration, including those for limiting
	systems and endpoint pairs, exist, stay in a common compact
	subset of this neighborhood, and have lengths in
	$[\ell_-,\ell_+]$, where $\ell_->0$. 
	
	If every limiting minimizer is regular, then each minimizer from
	$x$ to $y$ admits a $C^2$ upper support $S_{x,y}$ for $c(\cdot,y)$
	on coordinate balls of a fixed positive radius, with uniform $C^2$ bounds. The normal lift \(p\) of the minimizer oriented from \(x\) to \(y\) is uniformly bounded and satisfies 
	\[
	\der S_{x,y}(x)=-p(0),\qquad y=\Exp_x(p(0)).
	\]
	The assertion includes cut points.
\end{lemma}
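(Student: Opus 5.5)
The plan follows the Riemannian construction of Lemma~\ref{lem:riemgeometry}. The role of the exponential map near the diagonal is taken over by the endpoint map near a regular control, used with the implicit function theorem. Throughout, we use the minimum-norm controls of the fields $F_i$ fixed above.

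\textbf{Step 1 (uniform regularity by compactness).} We first show that the minimizers under consideration are uniformly regular. Concretely, there is $\lambda>0$ such that for every system, endpoint pair, and minimizing control $a$, the smallest singular value of $DE_x(a)$ is at least $\lambda$. The argument is by contradiction. Suppose there are sequences with singular values tending to zero. The controls are bounded in $L^2$, since lengths are at most $\ell_+$, so we may extract a weak limit. By the hypothesis that nearby endpoints admit short joining curves, together with lower semicontinuity of energy, the limit is a minimizer for a limiting system and endpoint pair; its length is at least $\ell_->0$. Weak convergence of minimizing controls combined with convergence of energies gives strong $L^2$ convergence. The endpoint differentials converge because the systems converge in $C^3$ and the trajectories stay in a common compact set. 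Hence the limit has a degenerate differential, contradicting the assumption that every limiting minimizer is regular. The same compactness yields uniform bounds on the normal covectors $p(0)$: the Lagrange multiplier $\lambda_1$ solving $\lambda_1 DE_x(a)=a$ is bounded by $\|a\|/\lambda$.

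\textbf{Step 2 (construction of the upper support).} Fix a minimizer from $x$ to $y$ with control $a$. Choose a uniformly bounded right inverse of $DE_x(a)$ with range orthogonal to $\ker DE_x(a)$. For $z$ near $x$, apply a quantitative implicit function theorem, with constants uniform by Step~1 and the $C^3$ bounds, to the map $(z,b)\mapsto E_z(a+b)$. This produces controls $a(z)=a+b(z)$ with $E_z(a(z))=y$, where $b$ is $C^2$ in $z$, $b(x)=0$, and the $C^2$ bounds are uniform on a coordinate ball of fixed radius. We then set
\[
S_{x,y}(z)=\tfrac12\|a(z)\|_{L^2}^2.
\]
Since $a(z)$ steers $z$ to $y$, we have $c(z,y)\le S_{x,y}(z)$, with equality at $z=x$ because $a$ is minimizing. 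The uniform $C^2$ bounds on $b$ give uniform $C^2$ bounds on $S_{x,y}$. This is the step I expect to be the main technical obstacle. The implicit function theorem has to be run for the map $(z,b)\mapsto E_z(a+b)$ jointly in the source point and the control, across a family of systems, with all constants controlled only through $\lambda$, $\ell_\pm$, and the $C^3$ norms. It also has to be done in the endpoint charts of the fixed atlas, which are needed when $y$ is far from $x$.

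\textbf{Step 3 (identification of the differential).} We compute $\der S_{x,y}(x)=\langle a,b'(x)\rangle$. Differentiating the constraint $E_z(a(z))=y$ at $z=x$ gives $\partial_zE\cdot v+DE_x(a)\,b'(x)v=0$. Using the multiplier relation $a=\lambda_1\circ DE_x(a)$ from Step~1, we obtain $\der S_{x,y}(x)v=-\lambda_1(\partial_zE\cdot v)$. This is the pullback of the final covector by the flow, that is, $-p(0)(v)$ for the normal lift $p$ with $p(1)=\lambda_1$. This is the standard Pontryagin adjoint computation. Normality of the lift then gives $y=\Exp_x(p(0))$, up to the orientation and time conventions of the paper. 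Nothing in Steps~1–3 uses smoothness of $c$ at $(x,y)$, so the conclusion also holds at cut points.
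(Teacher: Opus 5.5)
Your proposal is correct and follows essentially the same route as the paper: compactness with strong $L^2$ convergence of minimizing controls gives a uniform lower bound on $\lambda_{\min}(LL^*)$, a bounded right inverse $Q=L^*(LL^*)^{-1}$ and a quantitative inverse/implicit function theorem produce $S_{x,y}(z)=\tfrac12\norm{u+Q\beta(z)}_{L^2}^2$, and the first-order multiplier relation for the minimizing control identifies $\der S_{x,y}(x)$. The only difference is orientation. The paper reverses the minimizer (from $y$ to $x$) and perturbs the terminal point of $E_y$, so it needs only the finite-dimensional map $a\mapsto E_y(u+Qa)$ and reads off $\der S_{x,y}(x)=Q^*u=q(1)$ before setting $p(t)=-q(1-t)$. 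You instead vary the initial point and transport the multiplier back by $\partial_zE=P(1,0)$, which requires joint $C^2$ bounds in $(z,b)$ but avoids the reversal.
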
 

\begin{proof}
	Consider convergent systems and endpoint pairs $(x_j,y_j)\to(x,y)$. For each pair $(x_j,y_j)$, choose a constant-speed minimizer from $y_j$ to $x_j$, and let $u_j$ be its minimum-norm control. Then $\norm{u_j}_{L^2}\leq \ell_+$.
	Since the paths lie in a common compact set, where the fields are
	uniformly bounded, the control equations make them
	equicontinuous. After passing to a subsequence,
	$u_j\rightharpoonup u$ in $L^2$ and the paths converge uniformly. Moreover, the integral control equation shows that $u$ joins $y$ to $x$. Let $m_j=\norm{u_j}_{L^2}^2/2$, and let $m$ be the minimum energy
	for the limiting system. Let $\bar u$ be a constant-speed minimizing control from $y$ to
	$x$ in the limiting system. Starting at $y_j$, the same control
	in the $j$th system ends at a point $z_j\to x$. Since $x_j\to x$,
	uniform local controllability joins $z_j$ to $x_j$ by a curve of
	length $o(1)$. Joining these two curves and reparametrizing at constant speed yields a competitor of energy $m+o(1)$, so 
	$\limsup_j m_j\leq m$. Therefore, 
	\[
	m\leq\tfrac12\norm{u}_{L^2}^2
	\leq\liminf_j m_j\leq\limsup_j m_j\leq m.
	\]
	The control norms therefore converge, and $u_j\to u$ strongly in $L^2$.

	\medskip
	For any minimizer under consideration, set $L=DE_y(u)$. Let $L^*$ be its adjoint for the $L^2$ and
	Euclidean inner products, and let $\lambda_{\min}$ denote the
	smallest eigenvalue of a symmetric matrix.
	If no uniform positive lower bound existed for $\lambda_{\min}(LL^*)$, we could choose
	$L_j=DE_{y_j}(u_j)$ with $\lambda_{\min}(L_jL_j^*)\to0$.
	After passing to a subsequence in a fixed endpoint chart,
	the strong compactness above and the first variational equation
	give $L_j\to L$ in operator norm, where $L$ is the endpoint
	differential of a limiting minimizer.
	Thus $\lambda_{\min}(LL^*)=0$, contradicting its regularity.
	Consequently, $\lambda_{\min}(LL^*)\geq\delta>0$ uniformly. Hence 
	\[
	Q=L^*(LL^*)^{-1},\qquad LQ=\Id,\qquad
	\norm Q\leq\delta^{-1/2}.
	\]

	\medskip
	The paths stay in a common compact subset of the open neighborhood.
	Gronwall's inequality therefore gives a common $L^2$ radius for
	control perturbations on which the endpoint maps are defined.
	The first two variational equations give uniform $C^2$ bounds
	for $a\mapsto E_y(u+Qa)$ near $a=0$. Its derivative there is
	$LQ=\Id$, so the quantitative inverse function theorem yields 
	an inverse $a=a(z)$ on a coordinate ball of fixed radius, with uniform
	$C^2$ bounds; compare
	\cite[proof of Theorem~5.9]{FigalliRifford10} (see also \cite[Theorem 3.14]{Rifford2014}). Hence
	\[
	S_{x,y}(z)=\tfrac12\norm{u+Qa(z)}_{L^2}^2
	\]
	is an upper support for $c(z,y)$: the perturbed control joins
	$y$ to $z$, and equality holds at $z=x$. 
	
	Let $q$ be the normal lift of the minimizer from $y$ to $x$.
	The first-order condition $u=L^*q(1)$, together with $LQ=\Id$ and $Da(x)=\Id$, yields 
	\[
	\der S_{x,y}(x)=Q^*u=q(1).
	\]
	The bounds on $Q$ and $u$, followed by the adjoint equation,
	bound $q$ uniformly. Reversing the lift by $p(t)=-q(1-t)$ yields
	\[
	\der S_{x,y}(x)=-p(0),\qquad y=\Exp_x(p(0)).
	\]
	Since the construction treats each minimizer separately, it remains
	valid at cut points.
\end{proof} 

To apply Lemma~\ref{lem:varying-controls} uniformly as $r\to0$,
we first check fatness of the nilpotent limits. Their step-two
brackets are the Levi forms $\mathcal L_p$ of $\mathcal H$ at the
base points \cite[Definition~6.9 and Proposition~6.12]{ChoiPonge2019I}.
Here $v\in\mathcal H_p$ and
$\zeta\in(T_pM/\mathcal H_p)^*$ are measured in the fixed local frames.
On each fixed compact base region, fatness and continuity yield the uniform bound
\[
\inf_{p,\,|v|=|\zeta|=1}
\norm{\mathcal L_p(v,\cdot)^*\zeta}>0.
\]
Thus the nilpotent limits are uniformly fat. By the $C^1$
convergence in \eqref{eq:rescaled-fields}, so are the rescaled
fields on the fixed boxes for sufficiently small $r$. 

\begin{proposition}[Uniform estimates on annuli in fat manifolds]\label{prop:fat-annuli}
	Let $(M,\mathcal H,g_{\mathcal H})$ be a complete connected smooth
	fat sub-Riemannian manifold, and suppose that $\overline S$ and $Y$
	are compact subsets of $M$. Then $c=\dcc^2/2$ satisfies condition
	\textup{(A)} of Definition~\ref{def:annular} with uniform constants. 
	
	Fix a compact set of pairs of rescaled coordinates separated from
	the diagonal. Each minimizing geodesic defines a $C^2$ upper
	support for $\dist_{p,r}^2/2$ on a coordinate ball of fixed radius.
	The $C^2$ norms and the normal lifts are bounded uniformly in
	$p$ and sufficiently small $r\geq0$.
	This includes cut points. 
\end{proposition}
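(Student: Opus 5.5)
The plan is to verify condition \textup{(A)} at two ranges of scales: small scales $0<r\leq r_0$ in the rescaled privileged coordinates of Section~\ref{sec:two-step-local}, and a finite range $r_0<r\leq D$ in fixed charts. In both ranges the semiconcavity and supergradient data come from Lemma~\ref{lem:varying-controls}.

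\textbf{Small scales.} For $0<r\leq r_0$, choose a maximal $r$-separated set $\{p_j\}$ in $\overline S$. Since $\mm(B_{\mathrm{cc}}(p,r))\asymp r^Q$, the balls $B_{\mathrm{cc}}(p_j,r)$ have bounded overlap and their number is at most $Cr^{-Q}$. Take a Borel partition $S_{j,r}\subseteq B_{\mathrm{cc}}(p_j,r)$ and charts $F_{j,r}=\delta_{1/r}\kappa_{p_j}^{-1}$. The ball--box bounds put $F_{j,r}(S_{j,r})$ in a fixed compact $K_{\mathrm E}$. Let $Y_{j,r}$ be the targets at $\dcc$-distance at most $3r$ from $p_j$. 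By \eqref{eq:fat-distance-rescaling}, their rescaled images lie in a fixed box and
\[
f_{j,r,y}(z)=\tfrac12\dist_{p_j,r}^2(z,F_{j,r}y).
\]
Every pair with $\theta r\leq\dcc(x,y)\leq 2r$ then has rescaled distance in $[\theta,2]$, which is separated from the diagonal. The density budget \eqref{eq:densitybudget} follows from \eqref{eq:fat-box-density}: $(F_{j,r})_\#\mm\leq Cr^Q\mathcal L^d$, so
\[
\sum_j a_{j,r}\leq Cr^Q\cdot r^{-Q}\leq B.
\]

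\textbf{Upper supports.} Apply Lemma~\ref{lem:varying-controls} to the family of rescaled systems $\{X^{p,r}_i\}$, with $p$ in a compact base region and $r\in[0,r_0]$. By \eqref{eq:rescaled-fields} this family is compact in local $C^3$, and it includes the nilpotent limits $r=0$. Uniform ball--box estimates give the uniform local controllability required by the lemma, and curves of bounded length stay in one box. The key hypothesis is that every limiting minimizer of length in $[\theta,2]$ is regular. By the discussion preceding the statement, the limiting systems, both rescaled fields and nilpotent limits, are uniformly fat. A fat distribution admits no nonconstant abnormal curves: an abnormal covector $\zeta$ annihilating $\mathcal H$ would force $\mathcal L(\dot\gamma,\cdot)^*\zeta=0$ with $\dot\gamma\neq0$, which fatness forbids. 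The lemma then yields $C^2$ upper supports on coordinate balls of fixed radius, with uniform $C^2$ bounds, including at cut points. From these we get uniform Lipschitz and semiconcavity constants for $f_{j,r,y}$ on $U_{\mathrm E}$, after shrinking $U_{\mathrm E}$ to a fixed ball. We also get supergradients $p_y=-p(0)$ with $F_{j,r}y=\Exp^{p,r}_z(p(0))$ and $|p(0)|$ uniformly bounded. This is also the second paragraph of the proposition.

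\textbf{Recovery estimate.} The normal exponential maps of the rescaled Hamiltonians depend smoothly on the coefficients. By the $C^3$ compactness, they have uniform $C^1$ bounds on bounded sets of covectors and times up to one. Hence $|F_{j,r}y-F_{j,r}w|_{\mathrm E}\leq C|p_y-p_w|$. By \eqref{eq:fat-box-density} and \eqref{eq:fat-distance-rescaling},
\[
r^{-2}\dcc^2(y,w)=\dist_{p_j,r}^2(F_{j,r}y,F_{j,r}w)\leq C|F_{j,r}y-F_{j,r}w|_{\mathrm E},
\]
which gives \eqref{eq:scaledrecovery}.

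\textbf{Large scales and the main obstacle.} For $r_0<r\leq D$, use a fixed finite atlas with $Y_{j,r}=Y$. Apply Lemma~\ref{lem:varying-controls} to the single unrescaled system with lengths in $[\theta r_0,2D]$; completeness keeps minimizers in a compact set. The factor $r^{-2}$ is then bounded above and below, so the same argument works. I expect the main obstacle to be the uniformity as $r\to0$: one must check carefully that the compactness in Lemma~\ref{lem:varying-controls} applies across the whole family including $r=0$, in particular that minimizers of the limiting Carnot structures are regular and that the minimizers remain in the fixed box. The fatness-excludes-abnormals argument and the uniform Levi-form lower bound should handle this.
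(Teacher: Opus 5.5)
There is a genuine gap in your choice of the cells $S_{j,r}$ and the target sets $Y_{j,r}$.

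Condition \textup{(A)} asks for two things at every point of the convex set $U_{\mathrm E}$. First, it requires Lipschitz and semiconcavity bounds for $f_{j,r,y}$ on all of $U_{\mathrm E}$, for every $y\in Y_{j,r}$. Second, it requires supergradients satisfying \eqref{eq:scaledrecovery} at every $z\in U_{\mathrm E}$. This is what makes $u_{j,r}$ convex in Lemma~\ref{lem:comparable} and lets Lemma~\ref{lem:subdifferential-oscillation} be applied on $K_{\mathrm E}$. Your observation that pairs with $\theta r\leq\dcc(x,y)\leq2r$ have rescaled distance in $[\theta,2]$ gives separation from the diagonal only at the source point $x$ itself.

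With $S_{j,r}\subseteq B_{\mathrm{cc}}(p_j,r)$ and $Y_{j,r}=\{y:\dcc(p_j,y)\leq3r\}$, the set $U_{\mathrm E}\supseteq F_{j,r}(S_{j,r})$ has rescaled size of order one. Meanwhile $Y_{j,r}$ may contain targets whose rescaled images lie in $U_{\mathrm E}$ or arbitrarily close to it; nothing excludes $Y\cap\overline S\neq\emptyset$. At such points the hypothesis $\ell_->0$ of Lemma~\ref{lem:varying-controls} fails. Uniform semiconcavity is in fact false there, because fat structures are not semiconcave at the diagonal: on $\HH^1$ one has $\dcc^2(e,[0,t])\asymp|t|$, so even $D^+$ is empty at $t=0$. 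Shrinking $U_{\mathrm E}$ does not help, since it must contain $K_{\mathrm E}\supseteq F_{j,r}(S_{j,r})$. Your large-scale step with $Y_{j,r}=Y$ has the same defect.

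The missing step, which is how the paper proceeds, is to make the cells small relative to $\theta$ and to keep the targets away from the cell centre:
\begin{enumerate}[(i)]
\item take a maximal $\eta r$-separated set with $\eta\ll\theta$, and cells inside the $\eta r$-balls;
\item use nested convex boxes of $\dist_{p_j,r}$-diameter less than $\theta/4$;
\item set $Y_{j,r}=\{y\in Y:\theta r/2\leq\dcc(p_j,y)\leq3r\}$.
\end{enumerate}
Then $x\in S_{j,r}$ and $\theta r\leq\dcc(x,y)\leq2r$ still force $y\in Y_{j,r}$. Now every pair $(z,F_{j,r}y)$ with $z$ in the outer box and $y\in Y_{j,r}$ has rescaled distance in $[\theta/4,4]$. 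Hence Lemma~\ref{lem:varying-controls} applies at every point of $U_{\mathrm E}$, and the density budget only acquires a factor $\eta^{-Q}$. For $r\in[r_0,D]$, use finitely many charts of metric diameter less than $\theta r_0/4$, with the same distance bounds defining $Y_{j,r}$.

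With this change, the rest of your argument agrees with the paper's proof. That includes regularity of limiting minimizers from fatness, compactness of the rescaled family including $r=0$, and the recovery estimate through the normal exponential map and \eqref{eq:fat-box-density}.
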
 

\begin{proof}
	Fix $r_0>0$ so that the rescaled estimates of
	Section~\ref{sec:two-step-local} hold for $0<r\leq r_0$.
	The nilpotent limits are fat, so their nonconstant minimizers are
regular \cite[Example~1.15]{Rifford2014}. The minimizers under
consideration for $0<r\leq r_0$ exist in $M$ by completeness; confinement
and \eqref{eq:fat-distance-rescaling} identify them with minimizers
of the rescaled systems. At $r=0$, they exist in the complete tangent
Carnot groups. On these fixed sets of pairs separated from the diagonal,
the minimizing lengths are bounded above and away from zero.
Section~\ref{sec:two-step-local} shows that bounded length curves
remain in a common compact box, while \eqref{eq:fat-box-density}
gives uniformly short joining curves between nearby endpoints.
Lemma~\ref{lem:varying-controls}
therefore implies uniform $C^2$ upper supports and bounded normal
lifts. On sufficiently small convex source boxes, the supports
give the asserted Lipschitz and semiconcavity bounds. 
	
	To prove \eqref{eq:scaledrecovery}, write
	$x=\kappa_p\delta_r z$, $y=\kappa_p\delta_r\tilde y$, and
	$w=\kappa_p\delta_r\tilde w$. Choose minimizing geodesics from
	$z$ to $\tilde y,\tilde w$, and let $p_y,p_w$ be the gradients of
	their upper supports for the normalized costs. Then $\tilde y=\Exp_z^{p,r}(-p_y)$ and
	$\tilde w=\Exp_z^{p,r}(-p_w)$, where $\Exp^{p,r}$ denotes  the normal exponential map of the rescaled fields. 
	Set $\lambda_s=-(1-s)p_y-sp_w$, $0\leq s\leq1$.
	These initial covectors are uniformly bounded.
	Hamiltonian conservation bounds their horizontal speeds, and
	Section~\ref{sec:two-step-local} confines the base curves to a
	common larger box. In this box the adjoint equation has the form
	$\dot\lambda=-\sum_i u_i(DX_i^{p,r})^*\lambda$, with uniformly
	bounded controls $u_i$ and coefficients. Gronwall's inequality
	therefore bounds the covectors up to time one and prevents finite-time
	escape. The variational equation then bounds
	$D_\lambda\Exp_z^{p,r}$ uniformly along the segment.
	Thus, by \eqref{eq:fat-distance-rescaling} and \eqref{eq:fat-box-density}, 
	\begin{equation*}
		\begin{aligned}
			r^{-2}\dcc^2(y,w)
			=\dist_{p,r}^2(\tilde y,\tilde w)
			&\leq C|\tilde y-\tilde w|_{\mathrm E}\\
			&\leq C\int_0^1
			\|D_\lambda\Exp_z^{p,r}(\lambda_s)\|
			|p_y-p_w|\,\d s
			\leq C|p_y-p_w|.
		\end{aligned}
	\end{equation*}
	
For small $r$, choose a maximal $\eta r$-separated set of source
centers and partition the source into Borel cells, each contained
in the $\eta r$-ball around its center. For a cell centered at $p_j$,
	set $F_{j,r}=(\kappa_{p_j}\delta_r)^{-1}$ and use nested convex boxes whose diameter for $\dist_{p_j,r}$ is less
	than $\theta/4$. Set
	$Y_{j,r}=\{y\in Y:\theta r/2\leq\dcc(p_j,y)\leq3r\}$.
	If $\eta$ is small enough, these sets
	contain every pair required in \textup{(A)}, while the targets
	remain uniformly separated from the outer source boxes. Uniform
	ball volumes give at most $Cr^{-Q}$ centers; by
	\eqref{eq:fat-box-density}, each partition element has coordinate
	density at most $Cr^Q$. Consequently
	\[
	\sum_j a_{j,r}\leq Cr^Q\cdot Cr^{-Q}\leq C.
	\] 
	For $r\in[r_0,D]$, use finitely many fixed source charts of
	metric diameter less than $\theta r_0/4$ and define $Y_{j,r}$ by the same distance bounds. The relevant endpoint pairs form a compact set away
	from the diagonal. Fix such a chart $F=F_{j,r}$, write
	$x=F^{-1}(z)$, and for each $v\in\{y,w\}$ choose a minimizing
	geodesic from $x$ to $v$. Let $q_v$ be the differential at $x$ of
	its upper support for $c(\cdot,v)$. The corresponding
	normalized coordinate gradient is $p_v=r^{-2}(dF^{-1}|_z)^*q_v$. 
	By Lemma~\ref{lem:varying-controls} and the uniform
	estimates for the exponential map and the ball--box estimates, 
	\[
	r^{-2}\dcc^2(y,w)
	\leq Cr^{-2}|q_y-q_w|
	\leq C|p_y-p_w|.
	\]
	This proves \eqref{eq:scaledrecovery}. The remaining normalized
	bounds follow from $r^{-2}\leq r_0^{-2}$, and the smooth density
	of $\mm$ then yields \eqref{eq:densitybudget}. 
\end{proof} 

To verify \textup{(F)}, we compare short minimizers with flows
generated by constant horizontal controls. For a smooth vector
field $X$, write $\Phi_X^t$ for its local flow. Given a local
orthonormal horizontal frame $X_1,\ldots,X_h$ and a constant
coefficient vector $v\in\RR^h$, set
$\Phi_v^t=\Phi_{\sum_i v_iX_i}^t$. 

\begin{lemma}[Approximation by horizontal flows]\label{lem:fat-flow}
Let $\gamma:[0,R]\to M$ be a unit-speed minimizer in the fixed
compact region, with $0<R\leq D$ and initial unit control $v$.
For fixed sufficiently small $\sigma>0$,
\begin{equation}\label{eq:fat-flow-approximation}
 \dcc(\gamma(t),\Phi_v^t x)\leq Ct\sqrt{t/R},
 \qquad 0<t<\sigma R,\quad x=\gamma(0).
\end{equation}
For unit vectors $v,w$ and uniformly small $t>0$,
\begin{equation}\label{eq:fat-flow-net}
 \dcc(\Phi_v^t x,\Phi_w^t x)\leq Ct\sqrt{|v-w|}.
\end{equation}
\end{lemma}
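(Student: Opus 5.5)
The plan is to reduce both estimates to a single comparison principle. Two horizontal curves issuing from the same point $x$, with controls that differ uniformly by $\delta$ on $[0,t]$, have endpoints at Carnot--Carath\'eodory distance at most $Ct\sqrt{\delta}$.

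\textbf{The comparison principle.} To prove it, I would rescale at scale $t$ using the privileged coordinates $\kappa_x\delta_t$ of Section~\ref{sec:two-step-local}. A curve of length $\lesssim t$ becomes a curve of length $\lesssim1$ for the rescaled fields $X_i^{x,t}$, run on the unit time interval with the same controls. By the confinement remark at the end of Section~\ref{sec:two-step-local}, both rescaled curves stay in a fixed box. On that box the fields $X_i^{x,t}$ are uniformly $C^1$ bounded by \eqref{eq:rescaled-fields}. Gronwall's inequality then bounds the Euclidean distance between the rescaled endpoints by $C\norm{\delta}_{L^\infty}$. Next, \eqref{eq:fat-box-density} gives
\[
\dist_{x,t}^2\leq C\,|\cdot|_{\mathrm E},
\]
and \eqref{eq:fat-distance-rescaling} gives $\dcc=t\,\dist_{x,t}$. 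Together these yield the bound $Ct\sqrt{\norm{\delta}_\infty}$.

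\textbf{Estimate \eqref{eq:fat-flow-net}.} This follows immediately from the principle with $\delta=|v-w|$, since the flows $\Phi_v^t$ and $\Phi_w^t$ correspond to constant controls $v$ and $w$. Uniformly small $t$ keeps everything inside the charts.

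\textbf{Estimate \eqref{eq:fat-flow-approximation}.} Here it suffices to show that the minimum-norm control $u$ of the unit-speed minimizer $\gamma$ satisfies
\[
|u(s)-v|\leq Cs/R \quad\text{for } 0\leq s\leq t.
\]
The principle then gives $\dcc(\gamma(t),\Phi_v^tx)\leq Ct\sqrt{t/R}$.

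Fatness makes $\gamma$ a normal geodesic. Its control is $u_i(s)=\lambda(s)(X_i)$, where $\lambda$ is the normal lift. Differentiating along the Hamiltonian flow gives
\[
\dot u_i=\sum_j u_j\,\lambda([X_j,X_i]).
\]
So $|\dot u|\lesssim|\lambda|$ on the compact region, and the key point is to bound $|\lambda|\lesssim1/R$.

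\textbf{The main obstacle: bounding the covector.} I expect the uniform bound on $\lambda$ to be the main difficulty. I would split into two regimes.
\begin{itemize}
\item \emph{Long minimizers, $R\geq r_0$.} Lemma~\ref{lem:varying-controls}, applied on the compact set of pairs separated from the diagonal, bounds the normal lift uniformly. Hence $|\dot u|\leq C\leq C'/R$ since $R\leq D$.
\item \emph{Short minimizers, $R<r_0$.} I would rescale at scale $R$ around $x$. Proposition~\ref{prop:fat-annuli} provides normal lifts bounded uniformly in $p$ and small $r$, for rescaled pairs at distance of order one. After undoing the dilation $\delta_R$ and the time scaling, the horizontal components of $\lambda$ stay $O(1)$. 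The vertical components, which carry weight two, become $O(1/R)$. Since $\lambda$ enters the formula for $\dot u$ only through brackets, which lie in the vertical directions, this gives $|\dot u|\leq C/R$.
\end{itemize}
One must check that the lift bounds hold along the whole curve and not only at time $0$. This should follow from the Gronwall argument for the adjoint equation used in the proof of Proposition~\ref{prop:fat-annuli}.

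\textbf{Choice of $\sigma$.} Finally, I would take $\sigma$ small enough that $t<\sigma R$ keeps all flows and rescaled curves inside the fixed charts and boxes. This ensures that the comparison principle applies uniformly.
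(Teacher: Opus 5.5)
Your proposal is correct. The first half matches the paper: the bound $|u(s)-v|\leq Cs/R$ comes from bounded normal lifts, inserted into the Hamiltonian equation for the control. These lifts are bounded by Proposition~\ref{prop:fat-annuli} for small $R$ and by Lemma~\ref{lem:varying-controls} for $R$ bounded below. There is one harmless imprecision. In a non-nilpotent structure $[X_j,X_i]$ need not be vertical. However, after undoing the dilation, every coordinate component of the unit-speed lift is $O(1/R)$, so $|\lambda([X_j,X_i])|\leq C/R$ anyway. Equivalently, as the paper does, you can bound $\frac{\der}{\der\tau}u(R\tau)$ directly in rescaled variables, using the bounded rescaled lift and the uniform $C^1$ bounds on $X_i^{x,R}$.

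The second half takes a different route. The paper stays at the original scale, in privileged coordinates centered at $x$, and uses that $\Phi_v^sx=(sv,0)$ in exponential coordinates of the first kind. Separate Gronwall estimates give horizontal error $Ct^2/R$ and vertical error $Ct^3/R$. It then recenters at $\Phi_v^tx$, where the vertical discrepancy becomes $C(|\eta(t)|+t|\Delta z(t)|)$, and only then applies ball--box. It treats \eqref{eq:fat-flow-net} by a parallel horizontal/vertical computation.

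You instead rescale by $\delta_t$. A single Gronwall estimate on a fixed box then gives a Euclidean discrepancy $O(\delta)$, where $\delta$ is the sup-norm gap of the controls. The bound \eqref{eq:fat-box-density} is already uniform over all pairs in the box, so no recentering is needed. Together with \eqref{eq:fat-distance-rescaling}, it converts this discrepancy into $Ct\sqrt{\delta}$.

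Your route is more modular: one comparison principle, valid on any two-generating structure, gives both estimates, and fatness enters only through the covector bound. The paper's computation keeps finer anisotropic information, namely that the vertical error dominates and produces the square root. That is informative but not needed for the lemma.
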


\begin{proof} 
	After decreasing $\sigma$, let $u$ be the control of
	$\gamma|_{[0,\sigma R]}$ in the initial local frame, and put
	$\tau=s/R$. The normal lifts are uniformly bounded after rescaling. For small
	$R$, this follows from Proposition~\ref{prop:fat-annuli}. For
	$R$ bounded below, apply Lemma~\ref{lem:varying-controls} to the
	full minimizer on $[0,1]$, using the fields $F_i$ defined above.
	The Hamiltonian equations then give $\left|\frac{\der}{\der\tau}u(R\tau)\right|\leq C$. 
	Since $v=u(0)$, it follows that 
	\[
	|u(s)-v|\leq Cs/R,\qquad 0\leq s\leq\sigma R.
	\] 
	In these privileged exponential coordinates centered at $x$, write
	$\gamma(s)=(z(s),\eta(s))$ and set $\Delta z(s)=z(s)-sv$.
	Since $\Phi_v^s x=(sv,0)$, comparison of the control equations yields 
	\[
	|\dot{\Delta z}(s)|
	\leq C\bigl(s/R+|\Delta z(s)|+|\eta(s)|\bigr),\qquad
	|\dot\eta(s)|
	\leq C\bigl(s^2/R+|\Delta z(s)|+|\eta(s)|\bigr).
	\]
	By Gronwall's inequality, we obtain 
	\[
	|\Delta z(t)|\leq C t^2/R,\qquad
	|\eta(t)|\leq C t^3/R. 
	\]
	In coordinates centered at $\Phi_v^t x$, the vertical coordinate
	difference is at most
	$C\bigl(|\eta(t)|+t|\Delta z(t)|\bigr)\leq Ct^3/R$. Hence, by the ball--box estimate, 
	\[
	\dcc(\gamma(t),\Phi_v^t x)
	\leq C\left(t^2/R+\sqrt{t^3/R}\right)
	\leq Ct\sqrt{t/R}.
	\]
	For two constant controls, the recentered horizontal and vertical
	errors are $O(t|v-w|)$ and $O(t^2|v-w|)$, respectively. Therefore, 
	\[
	\dcc(\Phi_v^t x,\Phi_w^t x)
	\leq Ct\bigl(|v-w|+\sqrt{|v-w|}\bigr)
	\leq Ct\sqrt{|v-w|}, 
	\] 
	which proves the claim. 
\end{proof} 

\begin{proof}[Proof of Theorem~\ref{thm:main-coercivity} on fat sub-Riemannian manifolds]
Proposition~\ref{prop:fat-annuli} proves \textup{(A)}.
Choose $\sigma$ and finite nets of unit controls so that the two
errors in Lemma~\ref{lem:fat-flow} are each at most $\alpha t/4$.
The resulting nonvanishing fields admit finitely many flow boxes
covering the compact source region, with bounded coordinate
densities. Decreasing $\sigma$ so that $\sigma D$ fits inside the
larger boxes implies \textup{(F1)} and \eqref{eq:flowtrack}.

On each positive distance annulus, the bounds on normal lifts and the
closed endpoint and energy conditions yield a Borel graph of
minimizing initial covectors with nonempty compact fibers.
The measurable selection theorem
\cite[Theorem~35.46]{Kechris1995}, followed by the first suitable
box and net direction, gives the Borel choices in \textup{(F2)}
across countably many annuli. Theorem~\ref{thm:annular} and
Corollary~\ref{cor:coercivity-consequences} finish the proof.
For a fat Carnot group, left invariance reduces the geometry to
a ball controlled by $D$, giving $C=C(G,a_2,D)$. 
\end{proof} 

On fat sub-Riemannian manifolds, the squared distance is locally
semiconcave away from the diagonal \cite[Proposition~4.1]{FigalliRifford10}. The following proposition gives the uniform bound along horizontal flows, including at the diagonal, which will be used for the stability of Kantorovich potentials. 

\begin{proposition}[Directional semiconcavity along horizontal flows]
	\label{prop:fat-directional}
	Let $(M,\mathcal H,g_{\mathcal H})$ be a complete connected smooth
	fat sub-Riemannian manifold, and let $\overline S,Y\subseteq M$
	be compact. Let $X$ range over finitely many smooth unit horizontal
	fields, each defined near a compact source set, with these sets
	covering $\overline S$. For $c=\dcc^2/2$, there exist $L_1,L_2<\infty$
	and $\sigma>0$ such that, for $|s|<\sigma$,
	\[
	|\partial_s c(\Phi_X^s x,y)|\leq L_1,\qquad
	\partial_s^2 c(\Phi_X^s x,y)\leq L_2,
	\]
	uniformly for $x$ in the corresponding source set and $y\in Y$.
	The first inequality holds almost everywhere in $s$ and the
	second holds in distributions, including across cut and diagonal
	points. The same assertion holds on complete connected smooth Riemannian
	manifolds for smooth unit vector fields.
\end{proposition}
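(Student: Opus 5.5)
The first bound is elementary; the second splits into a regime away from the diagonal and a regime near it. For $|\partial_s c|$: since $X$ is a unit horizontal field, $s\mapsto\Phi_X^s x$ is a horizontal curve of unit speed, so $\dcc(\Phi_X^s x,\Phi_X^{s'}x)\leq|s-s'|$. By the Lipschitz computation in Lemma~\ref{lem:transport-foundations}, the function $s\mapsto c(\Phi_X^sx,y)$ is Lipschitz with constant bounded by the diameter of a compact set containing the flowed source set and $Y$. This gives $L_1$ almost everywhere in $s$.

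For the distributional bound $\partial_s^2c\leq L_2$ it suffices to show that at every $s_0$ the function $s\mapsto c(\Phi_X^s x,y)$ admits an upper support $g$ with $g''\leq L_2$ on a fixed interval around $s_0$. Such a one-sided family of $C^2$ supports with uniform second derivatives makes $c(\Phi_X^s x,y)-L_2s^2/2$ locally concave, hence concave on $(-\sigma,\sigma)$. I split according to $\ell=\dcc(\Phi_X^{s_0}x,y)$, using the rescaled coordinates of Section~\ref{sec:two-step-local}.

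\textbf{Far from the diagonal.} When $\ell\geq r_0$, Lemma~\ref{lem:varying-controls}, with fixed systems and pairs separated from the diagonal, gives a $C^2$ upper support $S$ on a coordinate ball of fixed radius with uniform $C^2$ bounds, including cut points. The composition $s\mapsto S(\Phi_X^sx)$ then has second derivative bounded by the $C^2$ norm of $S$ and of the flow.

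\textbf{Near the diagonal: positive distance.} For $0<\ell<r_0$, set $r=\ell$ and pass to the coordinates $\kappa_p\delta_r$ centered at $p=\Phi_X^{s_0}x$. By Proposition~\ref{prop:fat-annuli}, $\dist_{p,r}^2/2$ has $C^2$ upper supports with uniform bounds on a ball of fixed coordinate radius around the rescaled source point. This holds because the rescaled target lies at $\dist_{p,r}$-distance one, a compact set of pairs separated from the diagonal. In rescaled coordinates the flow $s\mapsto\Phi_X^{s_0+rs'}x$ is the flow of the rescaled field $X^{p,r}=X^{p,0}+O_{C^3}(r)$, with $s'$ of order one. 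The rescaled field has uniformly bounded $C^2$ norm, so $\frac{\der^2}{\der s'^2}$ of the support composed with this flow is bounded by a constant $C$. Undoing the scaling, $c=r^2\cdot(\dist_{p,r}^2/2)$ and $s=rs'$, so $\partial_s^2$ of the support is at most $r^2\cdot C/r^2=C$. This bound holds on an $s$-interval of length comparable to $r$, and that is enough for a local upper support.

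\textbf{Near the diagonal: zero distance.} When $\ell=0$, that is, at $p=y$, I use the upper support $g(s)=s^2/2\geq\dcc^2(\Phi_X^{s_0+s}x,y)/2$, which holds because the flow has unit speed. Its second derivative is $1$.

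The Riemannian case is the same argument, using Lemma~\ref{lem:riemgeometry} directly; there no rescaling is needed, since the squared distance is uniformly semiconcave in charts.

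I expect the near-diagonal, positive-distance regime to be the main obstacle. The difficulty is making the rescaling uniform in $p$ and in small $r$: the supports from Proposition~\ref{prop:fat-annuli} live on a ball of fixed coordinate radius, and the rescaled flow must stay in that ball for a uniform range of $s'$. The $C^1$ convergence in \eqref{eq:rescaled-fields} should supply this. Finally, the pointwise supports must be glued into a global distributional inequality; local concavity on overlapping intervals implies concavity on the whole interval.
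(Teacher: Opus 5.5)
Your proposal is correct and follows the paper's own route. It uses the same unit-speed Lipschitz bound and the same far/near-diagonal split, with supports from Lemma~\ref{lem:varying-controls} and rescaled supports from Proposition~\ref{prop:fat-annuli} on an $s$-interval of length comparable to $r$, and the same upper-support characterization of semiconcavity. The only variation is at diagonal points: you insert the global support $(s-s_0)^2/2$ directly, whereas the paper glues the two one-sided concave pieces using the matching zero one-sided derivatives. Both versions rest on the same inequality $0\le f(s_*+h)\le h^2/2$.
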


\begin{proof}
	By finiteness and compactness, decrease $\sigma$ so that these
	flow segments remain in fixed compact subsets of the respective
	field domains.
	Set $f(s)=c(\Phi_X^s x,y)$. Since $X$ has unit length,
	$s\mapsto\dcc(\Phi_X^s x,y)$ is $1$-Lipschitz. The distances are
	uniformly bounded on the compact regions in use, proving the
	first inequality.
	
	Fix $s_0$ with $z=\Phi_X^{s_0}x\ne y$, and put $r=\dcc(z,y)$.
	If $r$ is small, set $F=(\kappa_z\delta_r)^{-1}$, $q(\tau)=F(\Phi_X^{r\tau}z)$, and  $\tilde y=F(y)$. Writing $X$ as a smooth combination of the local frame fields, \eqref{eq:rescaled-fields} gives uniform $C^1$ bounds for the rescaled field generating $q$.
	By Proposition~\ref{prop:fat-annuli}, the normalized cost
	$\dist_{z,r}^2(\cdot,\tilde y)/2$ has a $C^2$ upper support $S$
	at $q(0)=0$, with uniform bounds. Hence $q(h/r)$ stays in the coordinate neighborhood on which $S$ is an upper support whenever $|h|\leq c_0r$, for some
	$c_0>0$ independent of $z$ and $r$. By \eqref{eq:fat-distance-rescaling},
	$g(h):=r^2S(q(h/r))$ is an upper support for $f(s_0+h)$.
	Writing $\tau=h/r$, we have
	\[
	g''(h)
	=D^2S(q(\tau))[q'(\tau),q'(\tau)]
	+DS(q(\tau))\,q''(\tau)\leq C.
	\]
	The bound is uniform in $r$ by the $C^2$ bounds on $S$ and
	the $C^1$ bounds on the rescaled field. For $r$ bounded below,
	fatness and Lemma~\ref{lem:varying-controls} yield the same bound
	on compact off-diagonal endpoint sets. The characterization of semiconcavity by smooth upper supports therefore yields $f''\leq L_2$
	in distributions away from the diagonal, including at cut points. 
	
	On the other hand, if $\Phi_X^{s_*}x=y$, then $0\leq f(s_*+h)\leq h^2/2$. Hence both one-sided derivatives at $s_*$ are zero. 
	Near $s_*$, the flow meets $y$ only at $s_*$.
	Since $f'_-(s_*)=f'_+(s_*)=0$, the two restrictions of
	$f(s)-L_2s^2/2$ are concave and have matching one-sided
	derivatives. They therefore form a concave function across
	$s_*$, giving $f''\leq L_2$ in distributions. In the Riemannian case, compose the
	uniform upper supports of Lemma~\ref{lem:riemgeometry} with
	the flows.
\end{proof}

\begin{remark}[Contact and Sasakian manifolds]
A distribution on a $(2n+1)$-dimensional manifold is contact if
locally $\mathcal H=\ker\alpha$, where $\alpha$ is a smooth
differential one-form and $\alpha\wedge(\der\alpha)^n$ is nowhere zero.
Contact distributions are fat of corank one, so the preceding
argument includes all complete smooth contact sub-Riemannian structures. In Darboux
coordinates the distribution is the standard Heisenberg distribution:
only the horizontal metric varies under rescaling. 
In particular, the map and potential estimates apply to complete
smooth Sasakian structures under the respective source hypotheses.
Here a Riemannian manifold $(M,g)$ is Sasakian if its metric cone
$((0,\infty)\times M,(\der r)^2+r^2g)$ is K\"ahler; the induced
contact distribution carries the restriction of $g$.
\end{remark}

\subsection{Step-two Carnot groups}\label{sec:preliminaries}
Let $G$ be a connected and simply connected Lie group with stratified
Lie algebra $\mathfrak g=V_1\oplus V_2$, where $V_2\ne\{0\}$,
$[V_1,V_1]=V_2$ and $[V_1,V_2]=\{0\}$. We fix an inner product on
$V_1$, choose orthonormal coordinates on $V_1$ and linear coordinates
on $V_2$, and use the resulting exponential coordinates on $G$. Write
$d_1:=\dim V_1$, $d_2:=\dim V_2$, $d:=d_1+d_2$, and identify
$G$ with $\RR^d\simeq V_1\times V_2$. Points are denoted by
$x=[\zeta,t]$, with $\zeta\in V_1$ and $t\in V_2$. The group law is
\begin{equation}\label{eq:step-two-group-law}
 [\zeta,t]\cdot[\zeta',t']
 =\left[\zeta+\zeta',t+t'+\frac12[\zeta,\zeta']\right].
\end{equation}
Here $[\zeta,\zeta']\in V_2$ denotes the Lie bracket. The identity is
$e=[0,0]$ and $[\zeta,t]^{-1}=[-\zeta,-t]$. The dilations
$\delta_r[\zeta,t]=[r\zeta,r^2t]$ are group automorphisms.
We adopt the conventions of
\cite{FollandStein1982,AgrachevBarilariBoscain2020,Rifford2014}.

We normalize Haar measure by $\mm:=\mathcal L^d$ in the fixed
exponential coordinates. Left and right translations have Jacobian
one, so $\mm$ is bi-invariant, and
$\mm(\delta_r E)=r^Q\mm(E)$ for Borel sets $E$, where
$Q=d_1+2d_2$ is the homogeneous dimension.
For $v\in V_1$, let $V_v$ be the left-invariant horizontal vector field
defined by
\[
 V_vf(x)=\left.\frac{\der}{\der s}\right|_{s=0}
 f\bigl(x\cdot[sv,0]\bigr).
\]
If $e_1,\ldots,e_{d_1}$ is an orthonormal basis of $V_1$, the fields
$X_j:=V_{e_j}$ form the horizontal orthonormal frame. We write
$H_xG=(\der L_x)_e V_1$ for the horizontal space at $x$,
where $L_x(y)=x\cdot y$ is left translation.
The Carnot--Carath\'eodory distance has the definition given in
Section~\ref{sec:introduction}.
It is left invariant and homogeneous:
\[
 \dcc(z\cdot x,z\cdot y)=\dcc(x,y),\qquad
 \dcc(\delta_rx,\delta_ry)=r\dcc(x,y).
\]
The metric space $(G,\dcc)$ is proper and geodesic, and the exact formula for the measure of a ball is
\begin{equation}\label{eq:Haar-ball-volume}
 \mm(B_{\mathrm{cc}}(x,r))=\omega_G r^Q,
 \qquad \omega_G:=\mm(B_{\mathrm{cc}}(e,1)).
\end{equation}
In particular, $\mm$ is doubling: $\mm(B_{\mathrm{cc}}(x,2r))
\leq2^Q\mm(B_{\mathrm{cc}}(x,r))$. We write $\dE$ for Euclidean
distance in these coordinates. The distance from the identity satisfies
\begin{equation}\label{eq:gauge-comparison}
 \dcc(e,[u,\tau])\asymp_G |u|+|\tau|^{1/2}, 
\end{equation}
which implies $\dcc^2(x,y)\leq C(G,R)\dE(x,y)$ on
$B_{\mathrm{cc}}(e,R)$; see
\cite[Proposition~2.8]{DragoniLiuZhang2025}. 

For $\lambda\in V_2^*$, define the skew-adjoint map
$J_\lambda:V_1\to V_1$ by
\begin{equation}\label{eq:J-definition}
	\langle J_\lambda v,w\rangle=\lambda([v,w]).
\end{equation}
The group is fat if and only if $J_\lambda$ is invertible for every
$\lambda\ne0$, equivalently, $[v,V_1]=V_2$ for every $v\ne0$.
By compactness, fatness yields 
\[
\min_{|\lambda|=|v|=1}|J_\lambda v|>0.
\]
Fat groups admit no nonconstant abnormal horizontal curves
\cite[Section~4.1]{FigalliRifford10}. A general step-two Carnot
group, however, may admit nonconstant abnormal minimizers; by
\cite[Theorem~10]{Rizzi2016}, this occurs precisely when the group
is not fat. Thus the ideal Carnot groups, those without such
minimizers, are precisely the fat ones. H-type groups form a subclass of fat Carnot groups. A group
is of \emph{H-type} if some inner product on $V_2$, with dual norm
$|\cdot|_*$ on $V_2^*$, satisfies
$J_\lambda^2=-|\lambda|_*^2\Id$ for every $\lambda\in V_2^*$. The standard $(2n+1)$-dimensional Heisenberg group $\HH^n$
is an H-type group. Apart from the inner product in the H-type definition, norms on $V_2$ and
$V_2^*$ refer to the fixed coordinates; their choice is included in constants depending on $G$.

The proof of Kantorovich potential stability uses the following result, which holds on every step-two Carnot group. 

\begin{proposition}[Horizontal semiconcavity of the cost]\label{prop:horizontal-semiconcavity}
There exists $K_G\geq0$ such that, for every $x,y\in G$ and every unit
$v\in V_1$, the function $s\mapsto c(x\cdot[sv,0],y)$ satisfies
\begin{equation}\label{eq:horizontal-semiconcavity}
 \frac{\der^2}{\der s^2}c(x\cdot[sv,0],y)\leq K_G
\end{equation}
in distributions on $\RR$. On a segment contained in $S$, its first derivative exists almost
everywhere in $s$ and is bounded in absolute value by
$D=\diam_{\mathrm{cc}}(\overline S\cup Y)$, uniformly for $y\in Y$. 
\end{proposition}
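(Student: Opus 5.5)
The plan is to reduce to the horizontal semiconcavity theorem of Dragoni--Liu--Zhang \cite[Theorem~1.1]{DragoniLiuZhang2025}. Around it I will supply three ingredients: the reductions by the group symmetries, the passage from a second-difference bound to a distributional bound, and the first-derivative bound.

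\textbf{Reductions.} By left invariance, $c(x\cdot[sv,0],y)=c([sv,0],x^{-1}y)$, so I may take $x=e$ and let $y$ range over all of $G$. Writing $f(s)=c([sv,0],y)$, the target estimate is the symmetric second-difference bound
\[
 f(s+h)+f(s-h)-2f(s)\leq K_Gh^2,\qquad s\in\RR,\ h>0 .
\]
Since $f$ is continuous, this bound for all $s$ and $h$ is equivalent to concavity of $f(s)-K_Gs^2/2$, hence to $f''\leq K_G$ in distributions. Applying left translation by $[sv,0]$ again reduces the bound to $s=0$.

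Dilations handle the dependence on scale. The identity $c(\delta_r\cdot,\delta_r\cdot)=r^2c$ combined with $\delta_r[hv,0]=[rhv,0]$ shows that the second difference at scale $h$ for the target $y$ equals $h^2$ times the second difference at scale $1$ for the target $\delta_{1/h}y$. So the whole statement is one uniform bound,
\[
 c([v,0],y)+c([-v,0],y)-2c(e,y)\leq K_G\qquad\text{for all } y\in G \text{ and unit } v.
\]
This uniformity in $y$, including targets near and far from the flow line, is exactly what \cite[Theorem~1.1]{DragoniLiuZhang2025} provides on every step-two group. Its constant depends only on $G$ once orthonormal coordinates on $V_1$ are fixed.

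\textbf{Mechanism, if not citing.} Fix a constant-speed minimizer $e\cdot g(\tau)$ from $e$ to $y$ with control $u$. Build competitors from $[\pm v,0]$ to $y$ by using the left-translated curve $[\pm v,0]\cdot g_\pm(\tau)$ with controls $u\mp v+w_\pm$. The endpoint mismatch this creates in $V_2$ is $\mp\tfrac12[v,\cdot]$ applied to the horizontal endpoint, which is linear in the sign. I would choose $w_-=-w_+$ to fix this error to first order, so that the first-order energy changes cancel in the sum and only quadratic terms remain.

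The main obstacle is bounding $\|w_\pm\|$ uniformly when the minimizer is abnormal or nearly so, which happens only for nonfat groups. There the linearized endpoint map can fail to be surjective onto $V_2$, and the correction cost may be only of order $|\tau|^{1/2}$. This is the point where I would rely on \cite{DragoniLiuZhang2025} rather than reprove it.

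\textbf{First-derivative bound.} Horizontal flow at unit speed gives $\dcc([sv,0],[s'v,0])\leq|s-s'|$. The estimate $|c(z,y)-c(z',y)|\leq D\,\dcc(z,z')$ from the proof of Lemma~\ref{lem:transport-foundations} then shows that $s\mapsto c(x\cdot[sv,0],y)$ is $D$-Lipschitz whenever the segment lies in $S$ and $y\in Y$. By Rademacher's theorem its derivative exists almost everywhere and satisfies $|\partial_s c|\leq D$.
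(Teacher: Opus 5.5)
Your proposal is correct and follows the paper's proof. The paper likewise cites \cite[Theorem~1.1 and Definition~2.17]{DragoniLiuZhang2025} for horizontal semiconcavity of $\dcc^2(e,\cdot)$ and transfers it to the cost through $c(x,y)=\dcc^2(e,y^{-1}x)/2$. This is the symmetric form of your left-translation step, and it puts the horizontal increment on the right, as in that definition. The paper then gets the first-derivative bound from the triangle inequality and the unit speed of the line, as you do. Your dilation reduction and the second-difference characterization are harmless elaborations that the citation already covers.
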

\begin{proof}
By \cite[Theorem~1.1 and Definition~2.17]{DragoniLiuZhang2025},
$\dcc^2(e,\cdot)$ is horizontally semiconcave with a constant depending only on $G$.
Restricting to a horizontal line and using
$c(x,y)=\dcc^2(e,y^{-1}x)/2$ yields 
\eqref{eq:horizontal-semiconcavity}. The bound on the first derivative follows
from the triangle inequality and the unit speed of the line.
\end{proof}

\begin{remark}\label{rem:nonfat-obstruction}
	Horizontal semiconcavity does not imply the full semiconcavity in
	source coordinates required by \textup{(A)}. Consider the orthogonal
	product $G=\HH^1\times\RR$. Let $Z$ be a nonzero central vector in
	$\HH^1$ and fix $a\ne0$. For $x=(h,r)$, set
	$y=(h,r+a)$ and $x_s=(h\exp(sZ),r)$. The product distance and
	Heisenberg dilations yield 
	\[
	c(x_s,y)=\frac12\bigl(a^2+b|s|\bigr),\qquad
	b:=\dcc^2\bigl((e,0),(\exp Z,0)\bigr)>0.
	\]
	Thus $x\ne y$, but $c(\cdot,y)$ has an upward cusp at $x$ and is
	not semiconcave there. The variation $x_s$ is central, so this does
	not contradict horizontal semiconcavity.
	
	The same cusp occurs at pairs $(x,T(x))$ for an optimal transport map $T$. For any compactly supported
	absolutely continuous source $\rho$, let $T(h,r)=(h,r+a)$ and
	$\mu=T_\#\rho$. By Jensen's inequality, every $\pi\in\Pi(\rho,\mu)$ satisfies 
	\[
	\int c\,\d\pi
	\geq\frac12\int |r'-r|^2\,\d\pi
	\geq\frac{a^2}{2}
	=\int c(x,T(x))\,\d\rho(x),
	\]
	which implies $T$ is optimal. Thus the semiconcavity assumption in our map argument can fail
	even at pairs transported by an optimal map, although the stability estimate itself
	may still hold. For the general relation between abnormal minimizers and
	semiconcavity, and for failure of horizontal semiconcavity in the
	step-three Engel group, see \cite[Theorems~1.1--1.2 and Section~4]{MontanariMorbidelli2016}. 
\end{remark} 

\section{Coercivity for Kantorovich potentials}\label{sec:potential-stability} 
We use the quantitative concavity method of
M\'erigot--Delalande--Chazal and Delalande--M\'erigot
\cite{MerigotDelalandeChazal2020,DelalandeMerigot2023},
using the entropic regularization on manifolds developed by
Kitagawa--Letrouit--M\'erigot \cite{KitagawaLetrouitMerigot2025}.
We combine local variance estimates on John domains as in
Letrouit--M\'erigot and Kitagawa--Letrouit--M\'erigot
\cite{LetrouitMerigot2026,KitagawaLetrouitMerigot2025}. 
We prove the potential coercivity estimate
\eqref{eq:intro-potential-coercivity} in its three forms.
Horizontal paths and directional cost bounds give the linear
modulus and Theorem~\ref{thm:potential-stability}\textup{(iii)};
cost approximation gives every exponent below $1/2$ in \textup{(ii)}.
Quantitative affine approximation on Carnot groups and equiregular
charts proves the logarithmic estimate in \textup{(i)}.
The estimate in \textup{(iv)}, where one target has finite support,
follows separately from Lipschitz bounds and a Poincar\'e inequality.
Throughout this section, $M$, $S$, $\rho$, and $Y$ satisfy the
hypotheses of Theorem~\ref{thm:potential-stability}. The constants in the
resulting stability estimates have the dependencies stated there. 

\subsection{Concavity and stability}\label{sec:potential-conversion}
Let $U$ be a relatively compact neighborhood of $\overline S$ and
let $b$ be a continuous cost on a neighborhood of $\overline U\times Y$.
For $\theta\in C(Y)$, define
\[
 \theta^b(x)=\min_{y\in Y}\{b(x,y)-\theta(y)\},\qquad
 K_b(\theta)=\int_S\theta^b\,\d\rho.
\]
For $\theta_0,\theta_1\in C(Y)$, set
\begin{equation}\label{eq:potential-gap-variance}
 \begin{aligned}
 J_b(\theta_0,\theta_1)
 =K_b\!\left(\frac{\theta_0+\theta_1}{2}\right)
   -\frac{K_b(\theta_0)+K_b(\theta_1)}2,\qquad V_b(\theta_0,\theta_1)=\Var_\rho(\theta_1^b-\theta_0^b).
 \end{aligned}
\end{equation}
Concavity of $K_b$ yields $J_b\geq0$. We seek bounds for $V_b$ in
terms of $J_b$. The next proposition bounds $J_c$ by $W_1$ using Kantorovich
duality, as in \cite{DelalandeMerigot2023,KitagawaLetrouitMerigot2025}.

\begin{proposition}[From concavity to stability]\label{prop:potential-conversion}
For normalized $c$-conjugate Kantorovich potentials $(\phi_i,\psi_i)$
from $\rho$ to $\mu_i$, $i=0,1$, where $c=\dcc^2/2$,
\begin{equation}\label{eq:optimal-jensen-gap}
 J_c(\psi_0,\psi_1)
 \leq\frac14\int_Y(\psi_1-\psi_0)\,\d(\mu_1-\mu_0)
 \leq\frac D2W_1(\mu_0,\mu_1),
 \qquad \osc_Y(\psi_1-\psi_0)\leq2D^2.
\end{equation}
Suppose that $\Omega:[0,\infty)\to[0,\infty)$ is nondecreasing,
$\Omega(0)=0$, and
$V_c(\theta_0,\theta_1)\leq\Omega(J_c(\theta_0,\theta_1))$
for all $\theta_0,\theta_1\in C(Y)$ with
$\osc_Y(\theta_1-\theta_0)\leq2D^2$. Then, for every $\mu\in\PP(Y)$, the normalized source
Kantorovich potential $\phi_\mu$ is unique on $S$, and 
\begin{equation}\label{eq:potential-modulus-stability}
 \norm{\phi_{\mu_1}-\phi_{\mu_0}}_{L^2(\rho)}^2
 \leq\Omega\!\left(\frac D2W_1(\mu_0,\mu_1)\right).
\end{equation}
\end{proposition}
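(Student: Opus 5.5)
The plan is to prove the three claims in \eqref{eq:optimal-jensen-gap} directly from duality, and then deduce uniqueness and \eqref{eq:potential-modulus-stability} by applying the hypothesis on $\Omega$ with $\theta_i=\psi_i$.

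\textbf{The concavity gap.} Put $\bar\psi=(\psi_0+\psi_1)/2$. For every $\theta\in C(Y)$ and $\mu\in\PP(Y)$, the pair $(\theta^c,\theta)$ is dual feasible. Hence $K_c(\theta)+\int\theta\,\d\mu_i\leq\operatorname{OT}_c(\rho,\mu_i)=K_c(\psi_i)+\int\psi_i\,\d\mu_i$, using $\phi_i=\psi_i^c$.

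Apply this with $\theta=\bar\psi$, once for $i=0$ and once for $i=1$, and average the two inequalities. This gives
\[
K_c(\bar\psi)\leq\tfrac12\bigl(K_c(\psi_0)+K_c(\psi_1)\bigr)+\tfrac12\Bigl(\int\psi_0\,\d\mu_0+\int\psi_1\,\d\mu_1\Bigr)-\tfrac12\Bigl(\int\bar\psi\,\d\mu_0+\int\bar\psi\,\d\mu_1\Bigr).
\]
The last two groups combine to $\tfrac14\int(\psi_1-\psi_0)\,\d(\mu_1-\mu_0)$, which is the first inequality.

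For the second inequality, note that $\psi_1-\psi_0$ is $2D$-Lipschitz on $Y$ by Lemma~\ref{lem:transport-foundations}. Extend it to $M$ with the same Lipschitz constant (McShane extension) and apply \eqref{eq:KR-duality}; this yields the bound $\tfrac{2D}{4}W_1=\tfrac D2W_1$.

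The oscillation bound follows from the same Lipschitz bound: $\osc_Y(\psi_1-\psi_0)\leq 2D\diam Y\leq2D^2$.

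\textbf{Stability.} Let $\phi_0,\phi_1$ be normalized source potentials. Since $\phi_i=\psi_i^c$, we have $V_c(\psi_0,\psi_1)=\Var_\rho(\phi_1-\phi_0)=\norm{\phi_1-\phi_0}_{L^2(\rho)}^2$, using the zero mean of both potentials. The hypothesis on $\Omega$, together with its monotonicity and \eqref{eq:optimal-jensen-gap}, then gives \eqref{eq:potential-modulus-stability}.

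\textbf{Uniqueness.} Take $\mu_0=\mu_1=\mu$ with two $c$-conjugate Kantorovich pairs $(\phi_i,\psi_i)$. The integral in \eqref{eq:optimal-jensen-gap} vanishes, so $J_c=0$, and hence $V_c\leq\Omega(0)=0$. The two potentials therefore differ by a constant $\rho$-a.e., and this constant is zero after normalization.

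To pass from $\rho$-a.e.\ to every point of $S$, use that the potentials are continuous and $\rho$ charges every open subset of $S$. Under the two-sided density bound of Theorem~\ref{thm:potential-stability}, this holds because $S$ is open.

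\textbf{Main obstacle.} The only delicate point is this a.e.-to-pointwise step, together with checking that the $c$-conjugate choice is what makes $\phi_i=\psi_i^c$ hold. This is needed to identify $V_c$ with the $L^2$ error. The rest is bookkeeping with duality.
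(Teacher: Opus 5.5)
Your proposal is correct and follows the paper's proof essentially step for step. The shared steps are: testing $\bar\psi$ against the duality for $\mu_0$ and $\mu_1$ (you average the two inequalities, the paper adds them); the $2D$-Lipschitz bound with a McShane extension and Kantorovich--Rubinstein duality; the identification $V_c(\psi_0,\psi_1)=\norm{\phi_1-\phi_0}_{L^2(\rho)}^2$ through normalization; and uniqueness from the case $\mu_0=\mu_1$, upgraded to every point of $S$ by continuity and the lower bound $\rho\geq a_1\mm\restr S$.
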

\begin{proof}
	Set $\bar\psi=(\psi_0+\psi_1)/2$ and
	$\eta=(\psi_1-\psi_0)/2$. Testing $\bar\psi=\psi_0+\eta
	=\psi_1-\eta$ against optimal couplings for $\mu_0$ and $\mu_1$ yields 
	\[
	K_c(\bar\psi)\leq K_c(\psi_0)-\int_Y\eta\,\d\mu_0,
	\qquad
	K_c(\bar\psi)\leq K_c(\psi_1)+\int_Y\eta\,\d\mu_1, 
	\]
	which implies the first inequality in
	\eqref{eq:optimal-jensen-gap}. By
	Lemma~\ref{lem:transport-foundations},
	$\Lip(\psi_1-\psi_0;Y)\leq2D$ and
	$\osc_Y(\psi_1-\psi_0)\leq2D^2$. Combining the Kantorovich--Rubinstein formula \eqref{eq:KR-duality} with a McShane extension \cite{McShane1934} implies the second inequality in \eqref{eq:optimal-jensen-gap}. Since the $\phi_i=\psi_i^c$ are normalized,
	$V_c(\psi_0,\psi_1)=\norm{\phi_1-\phi_0}_{L^2(\rho)}^2$.
	The assumption on $\Omega$ and its monotonicity prove
	\eqref{eq:potential-modulus-stability}. If $\mu_0=\mu_1$, the estimate applied to any two normalized
	potentials gives $\phi_0=\phi_1$ $\rho$-almost everywhere.
	Since $\rho\geq a_1\mm\restr S$ and the potentials are continuous,
	they agree on $S$. 
\end{proof} 

\subsection{Directional estimates}\label{sec:directional-costs}
For this subsection assume that $M$ is Riemannian or two-generating.
For every field $X$ used in Lemma~\ref{lem:averaged-paths}, every
$y\in Y$, and every flow segment in the source neighborhood,
assume that $s\mapsto b(\Phi_X^s x,y)$ satisfies
\begin{equation}\label{eq:general-directional-bounds}
 |\partial_s b|\leq L_1,\qquad \partial_s^2b\leq L_2,
 \qquad L_1,L_2\geq0, 
\end{equation}
where the first inequality holds almost everywhere and the second
holds in distributions. The constants are uniform in $x$, $y$,
and the selected field. The bounds are imposed on an open
neighborhood of the segments used below, so they also hold on
slightly extended segments.

\begin{theorem}[Variance from directional bounds]\label{thm:directional-jensen}
	Under the standing assumptions of this subsection, suppose that
	\eqref{eq:general-directional-bounds} holds. Then, for every $R_0\geq0$ and
	$\theta_0,\theta_1\in C(Y)$ with
	$\osc_Y(\theta_1-\theta_0)\leq R_0$,
	\begin{equation}\label{eq:directional-jensen}
		V_b(\theta_0,\theta_1)
		\leq C(1+L_1+L_2+R_0)J_b(\theta_0,\theta_1).
	\end{equation}
	The constant $C$ depends only on the fixed source geometry and
	density bounds, not on $\theta_0,\theta_1$ or higher derivatives of $b$. 
\end{theorem}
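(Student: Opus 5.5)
We follow the Delalande--M\'erigot concavity method as adapted by Kitagawa--Letrouit--M\'erigot, replacing Euclidean segments by horizontal flow lines. The argument has three stages: entropic regularization, a local variance estimate along flows, and a John-domain gluing.

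\textbf{Regularization.} For $\varepsilon>0$ set
\[
\theta^{b,\varepsilon}(x)=-\varepsilon\log\int_Y e^{(\theta(y)-b(x,y))/\varepsilon}\,\d\nu(y),
\]
where $\nu$ is a fixed reference measure on $Y$; one may first approximate arbitrary targets by finitely supported ones and pass to the limit. Then $K_b^\varepsilon(\theta)=\int\theta^{b,\varepsilon}\,\d\rho$ is smooth and concave in $\theta$. Moreover $|\theta^{b,\varepsilon}-\theta^b|\leq C\varepsilon\log(1/\varepsilon)$ uniformly, since the $b(\cdot,y)$ are uniformly Lipschitz and the $\theta$'s may be taken Lipschitz, or simply because we take $\nu$ uniform on a finite set. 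Along $\theta_t=\theta_0+t(\theta_1-\theta_0)$, write $v=\theta_1-\theta_0$ and let $\pi^\varepsilon_{x,t}$ be the Gibbs probability on $Y$. Then
\[
-\frac{\d^2}{\d t^2}K^\varepsilon_b(\theta_t)=\frac1\varepsilon\int_S\Var_{\pi^\varepsilon_{x,t}}(v)\,\d\rho(x).
\]
This gives the quantitative gap $J^\varepsilon\gtrsim\varepsilon^{-1}\int_0^1\!\int\Var_{\pi_{x,t}}(v)\,\d\rho\,\d t$, while $|J_b-J^\varepsilon|\leq C\varepsilon\log(1/\varepsilon)$.

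\textbf{Local variance estimate along flows.} The derivative of the source transform along a flow is $X\theta_t^{b,\varepsilon}(x)=\E_{\pi_{x,t}}[Xb(x,\cdot)]$. Hence, with $f_t=\partial_t\theta_t^{b,\varepsilon}=-\E_{\pi_{x,t}}v$,
\[
Xf_t=-\varepsilon^{-1}\Cov_{\pi_{x,t}}\bigl(Xb(x,\cdot),v\bigr).
\]
Applying Cauchy--Schwarz with $|Xb|\leq L_1$ bounds $|Xf_t|$ by $\varepsilon^{-1}L_1\Var^{1/2}(v)$. This is too weak, since it loses $\varepsilon^{-1/2}$. Instead, as in KLM, we integrate along the flow segment and use semiconcavity. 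The function $s\mapsto\theta^{b,\varepsilon}(\Phi^s_Xx)$ has second derivative bounded above: it equals $\E[\partial_s^2b]-\varepsilon^{-1}\Var(\partial_sb)$, which is $\leq L_2$, where $\partial_s^2 b\le L_2$ holds distributionally and is transferred by mollification. The key identity is then of the following type. For a segment $\gamma$ of length $h$,
\[
\int_\gamma\!\int_0^1\!\Var_{\pi_{x,t}}(\partial_s b)\,\d t\,\d s
\]
is controlled by $\varepsilon\,(L_1+L_2h)$ plus boundary terms. Combining this with the covariance formula and Cauchy--Schwarz gives
\[
\int|X(\theta^{b,\varepsilon}_1-\theta^{b,\varepsilon}_0)|\,\d\rho\lesssim\Bigl(\varepsilon^{-1}\!\int\!\!\int\Var(v)\Bigr)^{1/2}\bigl(\varepsilon(1+L_1+L_2)\bigr)^{1/2}.
\]
This is $\lesssim\bigl((1+L_1+L_2)J\bigr)^{1/2}$ up to the regularization error. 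Averaging over families of horizontal paths, as in Lemma~\ref{lem:averaged-paths}, whose statement I take to supply averaged path families with bounded density through balls, turns these directional bounds into a local $L^1$ Poincar\'e-type bound on balls. The oscillation bound $R_0$ enters as $\|v\|_\infty$ through the boundary terms. Letting $\varepsilon\to0$, with $\varepsilon$ comparable to $J$ so that the error $\varepsilon\log(1/\varepsilon)$ is absorbed after care, or by using the linear structure, yields
\[
\int_B|f-f_B|\,\d\rho\leq C\bigl((1+L_1+L_2+R_0)J\bigr)^{1/2}|B|^{1/2}.
\]

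\textbf{John-domain gluing and main obstacle.} We pass from local $L^1$ or $L^2$ control of $f=\theta_1^b-\theta_0^b$ on Whitney-type balls to $\Var_\rho(f)$ on all of $S$ by a chain argument along John curves, following Letrouit--M\'erigot. The squared estimate then gives $V_b\leq C(1+L_1+L_2+R_0)J_b$. The main obstacle is the local step. One must obtain a bound \emph{linear} in $J$, with no $\log(1/\varepsilon)$ loss, which needs the second-order semiconcavity identity exactly rather than the crude first-order covariance bound. One must also keep all constants uniform when only distributional bounds \eqref{eq:general-directional-bounds} are available, including at cut and diagonal points. I would first try to reproduce the KLM identity $\partial_s^2\theta^{b,\varepsilon}\leq L_2-\varepsilon^{-1}\Var(\partial_sb)$ with mollified costs and check that the $\varepsilon$-dependence cancels exactly.
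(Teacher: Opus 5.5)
Your ingredients are the right ones: entropic regularization, the second variation $-k''(t)=\varepsilon^{-1}\int_S\Var_{\pi^\varepsilon_{x,t}}(v)\,\d\rho$ with $k(t)=K_b^\varepsilon(\theta_t)$, the identity $\partial_s^2\theta^{b,\varepsilon}=\E[\partial_s^2b]-\varepsilon^{-1}\Var(\partial_sb)$ and its integrated form, the covariance formula, averaged paths, and John gluing. But the way you assemble them does not give a bound linear in $J$.

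First, the claimed gap $J^\varepsilon\gtrsim\varepsilon^{-1}\int_0^1\int\Var_{\pi^\varepsilon_{x,t}}(v)\,\d\rho\,\d t$ is false. One has $J^\varepsilon=\frac12\int_0^1\min\{t,1-t\}\,[-k''(t)]\,\d t$, and the weight vanishes at the endpoints. If $-k''$ is concentrated near $t=0$, then $\int_0^1(-k'')=k'(0)-k'(1)$ is arbitrarily large compared with $J^\varepsilon$. Only $\int_{1/4}^{3/4}(-k'')\leq8J^\varepsilon$ holds, so you cannot control $X(\theta_1^{b,\varepsilon}-\theta_0^{b,\varepsilon})=\int_0^1Xf_t\,\d t$ this way. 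The paper works with $\partial_tF_t$, where $F_t:=\theta_t^{b,\varepsilon}$, only for $t\in[1/4,3/4]$. It returns to $F_1-F_0$ by concavity of $t\mapsto F_t(x)$. The slopes $-\E_{\pi^\varepsilon_{x,t}}v$ lie in an interval of length at most $R_0$, which gives $|\partial_tF_t-(F_1-F_0)|\leq8\beta$ with $\beta=F_{1/2}-\frac{F_0+F_1}{2}\in[0,R_0/4]$. Hence $\int\beta^2\,\d\rho\leq\frac{R_0}{4}J^\varepsilon$. This is where $R_0$ enters, not through boundary terms.

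Second, a single global Cauchy--Schwarz yields only an $L^1$ bound of order $\bigl((1+L_1+L_2)J\bigr)^{1/2}$ on the horizontal gradient. (Your displayed product also carries a spurious factor $\varepsilon^{1/2}$.) An $L^1$ Poincar\'e inequality then controls $\norm{f-f_S}_{L^1}$ by $J^{1/2}$. Interpolating with $\osc_S f\leq R_0$ gives only $V_b\lesssim J_b^{1/2}$, i.e.\ potential exponent $1/4$ rather than $1/2$. Your local bound $\int_B|f-f_B|\lesssim J^{1/2}|B|^{1/2}$ also has a non-localized right-hand side, so it cannot be summed over a Whitney cover; in any case Lemma~\ref{lem:john-gluing} requires local $L^2$ bounds. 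An $L^2$ gradient bound is not available either, because $\Var(\partial_sb)$ is controlled only after integration along a segment.

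The missing idea is to apply Cauchy--Schwarz on each flow segment separately, at fixed $t$ and $\varepsilon$. Put $\mu_s=\pi^\varepsilon_{\gamma(s),t}$. Since $\int_{s_0}^{s_1}\Var_{\mu_s}(\partial_sb)\,\d s\leq\varepsilon\{L_2(s_1-s_0)+2L_1\}$ holds for every segment, the covariance formula gives the squared chord estimate
\[
 |g(s_1)-g(s_0)|^2\leq\frac{L_2(s_1-s_0)+2L_1}{\varepsilon}\int_{s_0}^{s_1}\Var_{\mu_s}(v)\,\d s,
 \qquad g(s)=\E_{\mu_s}v.
\]
Averaging over the path families of Lemma~\ref{lem:averaged-paths} gives
$\int_B|g-g_{B,\mm}|^2\,\d\mm\leq C(1+L_1+L_2)\frac r\varepsilon\int_{\kappa B}\Var_{\pi^\varepsilon_{x,t}}(v)\,\d\mm(x)$.
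These right-hand sides sum by bounded overlap, and John gluing then yields $\Var_\rho(\partial_tF_t)\leq A[-k''(t)]$ with $A=C(1+L_1+L_2)$. Combined with the concavity step, $\Var_\rho(F_1-F_0)\leq32(A+R_0)J^\varepsilon$ for every fixed $\varepsilon$, with no $\varepsilon$-dependent error. You then simply let $\varepsilon\downarrow0$, using uniform convergence of the regularized transforms for a full-support reference measure (Lemma~\ref{lem:laplace-principle}). No coupling of $\varepsilon$ to $J$ and no logarithmic loss are needed. Nor is the rate $\varepsilon\log(1/\varepsilon)$ needed, which you could not justify for merely continuous $\theta_i$.
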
 

\subsubsection{Horizontal paths and John domains}
We use averages over horizontal paths, as in
\cite{LanconelliMorbidelli2000Poincare,Jerison1986,FranchiLuWheeden1995},
and the arguments for summing local variances on John domains
of Letrouit--M\'erigot
\cite[Sections~3 and~4.1]{LetrouitMerigot2026} and
Kitagawa--Letrouit--M\'erigot
\cite[Section~3.3.1]{KitagawaLetrouitMerigot2025}.
The latter use the classical methods of
\cite{BuckleyKoskelaLu1996,HajlaszKoskela2000}.
All balls, paths, and measures in this section refer to the original
geometry, including when we later vary the cost.

\begin{lemma}[Averaged path estimate]\label{lem:averaged-paths}
Fix a compact region of a smooth two-generating or Riemannian
structure. There are constants $\kappa\geq2$, $r_*>0$, and $C$
depending only on this region and the structure. Let
$B=B_{\mathrm{cc}}(p,r)$ satisfy $r\leq r_*$ and $\kappa B\subseteq S$.
Each pair $x,y\in B$ can be joined by a path
$\gamma_{xy}\subseteq\kappa B$. The paths can be chosen Borel
measurably in $(x,y)$. Each path consists of a uniformly
bounded number of flow segments from a fixed finite collection of
locally defined smooth unit horizontal fields. Its total parameter length is at most $Cr$, and for every nonnegative Borel $F$,
\begin{equation}\label{eq:local-main-integral}
 \int_B\int_B\int_{\gamma_{xy}}F\,\d s\,\d\mm(y)\,\d\mm(x)
 \leq Cr\mm(B)\int_{\kappa B}F\,\d\mm.
\end{equation}
In a group one can take $r_*=D$, with constants depending on $G,D$.
\end{lemma}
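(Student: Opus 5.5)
The plan is to build the paths from a fixed finite family of horizontal flows, in the spirit of Jerison's and Lanconelli--Morbidelli's averaged-path Poincar\'e arguments, and to obtain \eqref{eq:local-main-integral} by changes of variables along flows with bounded Jacobians. The argument has three steps.

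\textbf{Step 1: a smooth horizontal-flow chart.} Cover the compact region by finitely many neighborhoods, each carrying a local orthonormal horizontal frame $X_1,\dots,X_h$. By two-generation (or trivially in the Riemannian case), near each point there is a composition
\[
 \Psi_p(t_1,\dots,t_N)=\Phi_{X_{i_N}}^{t_N}\circ\cdots\circ\Phi_{X_{i_1}}^{t_1}(p)
\]
of boundedly many flows of these fields that is a submersion at a suitable parameter. For step two one can use commutator-type words of length at most four per bracket. Applying the dilation structure of \eqref{eq:rescaled-fields} and the ball--box estimates \eqref{eq:fat-box-density}, a rescaled version $t\mapsto\Psi_p(rt)$ with $|t|\leq C$ covers $B_{\mathrm{cc}}(p,r)$. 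Its Jacobian in a transversal slice is comparable to $r^Q$, uniformly in $p$ and in $r\leq r_*$.

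\textbf{Step 2: concatenated paths and Borel choice.} For $x,y\in B$, take a word $\Psi_x(rt)=y$ with parameter $t=t(x,y)$ chosen Borel measurably; the quantitative inverse function theorem gives a local smooth inverse, and measurable selection handles the finitely many patches. Let $\gamma_{xy}$ be the concatenation of the flow segments. Its parameter length is at most $C\sum|rt_k|\leq Cr$, and it stays in $\kappa B$ for a suitable $\kappa$ by the triangle inequality. A cleaner option is to route through a common intermediate point: write $\gamma_{xy}=\gamma_{x z}\cup\gamma_{z y}$ with $z$ averaged over $B$. It then suffices to prove the one-sided bound
\[
 \int_B\int_{\gamma_{xz}}F\,\d s\,\d\mm(x)\leq Cr\int_{\kappa B}F\,\d\mm
\]
uniformly in $z$, and to integrate it against the normalized measure in the remaining variable. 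This is where the factor $\mm(B)$ comes from.

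\textbf{Step 3: the key estimate.} Each path is a union of at most $N$ segments. For the $k$-th segment, change variables $x\mapsto(\text{point at flow time }\tau\text{ on segment }k)$ for each fixed $\tau\in[0,Cr]$. This map is a composition of flows of smooth fields for times $O(r)$ together with the smooth inverse of $\Psi$, and in rescaled coordinates it has Jacobian bounded above and below. Hence the pushforward of $\mm\restr B$ under it is at most $C\mm\restr{\kappa B}$, and integrating over $\tau$ gives the factor $Cr$. In a group, left invariance and the dilations $\delta_r$ reduce the estimate to $r=1$ with balls up to radius $D$, so one can take $r_*=D$.

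I expect the main obstacle to be the Jacobian control in Step 3. The intermediate points depend on the endpoint through the inverse of $\Psi$, and this inverse may degenerate. I would first try the standard Jerison device, which averages over the free parameters rather than inverting: choose the word with extra parameters so that, for fixed $x$, the map from parameters to intermediate points is a submersion with uniformly nondegenerate rescaled differential. This uniform nondegeneracy is obtained by compactness over the rescaled systems and their nilpotent limits, as in Proposition~\ref{prop:fat-annuli}.
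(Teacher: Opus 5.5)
Your Steps~2--3 have a genuine gap, at the point you flagged. The route through an intermediate point rests on a false estimate. When $Q\geq2$, the bound $\int_B\int_{\gamma_{xz}}F\,\d s\,\d\mm(x)\leq Cr\int_{\kappa B}F\,\d\mm$ cannot hold uniformly in $z$, for any choice of paths. Each $\gamma_{xz}$ ends at $z$ and is built from unit horizontal fields, so $\dcc(\gamma(s),\gamma(s'))\leq|s-s'|$. Hence, if $\dcc(x,z)\geq\varepsilon$, the path spends parameter length at least $\varepsilon/2$ in $B_{\mathrm{cc}}(z,\varepsilon)$. Take $F=\one_{B_{\mathrm{cc}}(z,\varepsilon)}$ with $\varepsilon\ll r$. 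The left side is then $\gtrsim\varepsilon r^Q$, while the right side is $\asymp r\varepsilon^Q$, so their ratio grows like $(r/\varepsilon)^{Q-1}$.

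The same concentration breaks the Jacobian claim in Step~3. Once the endpoint is fixed, the map sending $x$ to the point of the last segment near its end collapses $B$ toward that endpoint. So a change of variables that passes through the inverse of $\Psi$ has no lower Jacobian bound. A fixed flow time $\tau$ is also undefined on segments shorter than $\tau$. Averaging over $z$ afterwards only reproduces the two-sided estimate you are trying to prove, and it would not give one path per pair.

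Your proposed remedy also fails as stated. For fixed $x$, the points of the first segment lie on a single integral curve through $x$. So the map from parameters to intermediate points cannot be a submersion when $\dim M\geq2$, however many parameters you add.

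The missing idea is to change variables in the endpoint first, and only then in $x$ with the parameters frozen. The paper uses exactly $n$ parameters $\xi=(a,b)$: flows of $X_1,\ldots,X_h$ with times $Ra_i$, followed by modified commutators $\Phi_{X_{j_\alpha}}^{-Rb_\alpha}\circ\Phi_{X_{i_\alpha}}^{-R}\circ\Phi_{X_{j_\alpha}}^{Rb_\alpha}\circ\Phi_{X_{i_\alpha}}^{R}$ with $R=Lr$. One flow time is fixed, so the bracket displacement is, to first order and up to sign, $R^2b_\alpha[X_{i_\alpha},X_{j_\alpha}]$, which is linear in $b_\alpha$. After rescaling, the endpoint map $\Psi_{x,R}$ therefore converges in $C^1$ to a map with invertible differential at $\xi=0$. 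This, rather than a submersion at an unspecified parameter, is what gives a chart centered at $x$.

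The uniform inverse function theorem then yields a fixed set $\mathcal A$ on which $\Psi_{x,R}$ is injective, with $B\subseteq\Psi_{x,R}(\mathcal A)$ and $\Psi_{x,R}^*\mm\asymp R^Q\,\d\xi$. The remaining steps are:
\begin{enumerate}[(i)]
\item Substitute $y=\Psi_{x,R}(\xi)$.
\item Parametrize the $j$th segment by its fraction $t\in[0,1]$ with weight $|\tau_j(\xi)|$, where $\sum_j|\tau_j|\leq CR$.
\item Use Fubini to make $\xi$ the outermost variable.
\end{enumerate}
For fixed $(\xi,j,t)$, the map $x\mapsto\Gamma_{\xi,j,t}(x)$ is a composition of flows with fixed times $O(R)$. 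It is therefore a diffeomorphism with Jacobian bounded above and below, and no inverse of $\Psi$ enters. Hence $\int_BF(\Gamma_{\xi,j,t}(x))\,\d\mm(x)\leq C\int_{\kappa B}F\,\d\mm$. Integrating over $t$, $j$, and $\xi\in\mathcal A$ gives $CR^{Q+1}\int_{\kappa B}F\,\d\mm\asymp r\,\mm(B)\int_{\kappa B}F\,\d\mm$.

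So nondegeneracy is needed in only two places. In $x$ for frozen parameters, flows provide it automatically. In $\xi$, it is needed only for the endpoint map, not for the intermediate points.
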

\begin{proof}
Choose a local orthonormal frame $X_1,\ldots,X_h$ and
$k=\dim M-h$ brackets $[X_{i_\alpha},X_{j_\alpha}]$,
$\alpha=1,\ldots,k$, forming a basis modulo $\mathcal H$
throughout a smaller chart. Their determinant in an adapted
frame is uniformly bounded away from zero.
Set $R=Lr$, with $L$ a fixed sufficiently large constant,
and write $a\in\RR^h$ and $b\in\RR^k$. For each selected bracket, define
\[
\mathcal C_{\alpha,R}(b_\alpha)
=\Phi_{X_{j_\alpha}}^{-Rb_\alpha}
\circ\Phi_{X_{i_\alpha}}^{-R}
\circ\Phi_{X_{j_\alpha}}^{Rb_\alpha}
\circ\Phi_{X_{i_\alpha}}^{R},
\qquad \alpha=1,\ldots,k.
\]
Starting from $x$, follow the horizontal frame flows and then
the four successive flows defining each $\mathcal C_{\alpha,R}$. Their endpoint is
\[
\Psi_{x,R}(a,b)
=\mathcal C_{k,R}(b_k)\circ\cdots\circ
\mathcal C_{1,R}(b_1)\circ
\Phi_{X_h}^{Ra_h}\circ\cdots\circ
\Phi_{X_1}^{Ra_1}(x).
\]
Since $\mathcal C_{\alpha,R}(0)=\Id$, we have $\Psi_{x,R}(0,0)=x$. 

By \eqref{eq:rescaled-fields}, the endpoint maps, written in
privileged coordinates at $x$ and rescaled by $\delta_R$,
converge in $C^1$ to the endpoint maps for the limiting fields,
uniformly in $x$. The derivatives of the limiting maps at zero consist of the
horizontal basis and, up to signs, the selected bracket vectors,
and are uniformly invertible. The uniform inverse function theorem gives a fixed bounded
parameter neighborhood $\mathcal A$ on which $\Psi_{x,R}$ is
injective, with image containing $\kappa_x\delta_R(\mathcal U)$
for a fixed neighborhood $\mathcal U$ of zero in rescaled coordinates. By the uniform ball--box estimate, choose $L$
large enough, and then decrease $r_*$, so that
\[
B(p,r)\subseteq\Psi_{x,R}(\mathcal A)
\qquad\text{for every }x\in B(p,r).
\]
The inverse parameters $\xi(x,y)=\Psi_{x,R}^{-1}(y)$ are locally
smooth and hence Borel. Let $\gamma_{xy}$ be the corresponding
concatenated path. Write $\xi=(a,b)$. The endpoint Jacobian satisfies
\[
\Psi_{x,R}^*\mm=J_{x,R}(\xi)\,\d\xi,\qquad
cR^Q\leq J_{x,R}(\xi)\leq CR^Q,
\qquad Q=h+2k.
\]
Let $\tau_j(\xi)$ be the signed flow time of the $j$th segment 
and $\Gamma_{\xi,j,t}(x)$ be its point at parameter fraction
$t\in[0,1]$. For all $\xi\in\mathcal A$, increasing $\kappa$ if necessary yields 
\[
N=h+4k,\qquad 
\sum_{j=1}^N|\tau_j(\xi)|\leq CR,\qquad
\Gamma_{\xi,j,t}(B)\subseteq\kappa B. 
\]

\medskip
For fixed $\xi,j,t$, the map $x\mapsto\Gamma_{\xi,j,t}(x)$
is a composition of flows with fixed times $O(R)$.
Its Jacobian and inverse Jacobian are uniformly bounded, so
\[
\int_B F(\Gamma_{\xi,j,t}(x))\,\d\mm(x)
\leq C\int_{\kappa B}F\,\d\mm.
\]
Changing variables from $y$ to $\xi$ and applying Fubini's theorem, we obtain 
\[
\begin{aligned}
	\int_B\int_B\int_{\gamma_{xy}}
	F\,\d s\,\d\mm(y)\,\d\mm(x)
	&\leq CR^Q
	\int_{\mathcal A}\sum_{j=1}^N|\tau_j(\xi)|
	\int_0^1\int_B
	F(\Gamma_{\xi,j,t}(x))\,\d\mm(x)\,\d t\,\d\xi\\
	&\leq CR^{Q+1}\int_{\kappa B}F\,\d\mm
	\leq Cr\mm(B)\int_{\kappa B}F\,\d\mm.
\end{aligned}
\]
The last inequality uses $R=Lr$ and $\mm(B)\asymp r^Q$,
proving \eqref{eq:local-main-integral}.

\medskip
Fix a finite chart cover and order its charts. For each ball, use
the first chart in which the preceding construction applies. This
gives the required Borel choice of paths. In the Riemannian case, use only the
flows of the Riemannian frame fields, with $k=0$ and $Q=\dim M$.
On a Carnot group the normalized fields are independent of
scale; translation, dilation, and Haar invariance give the
assertion up to $r_*=D$, with constants depending on $G,D$. 
\end{proof}

\begin{lemma}[Summing local variance estimates]\label{lem:john-gluing}
On a smooth structure equiregular near $\overline S$, let $S$
satisfy the source assumptions of Theorem~\ref{thm:potential-stability}.
Fix $\kappa\geq2$ and a sufficiently small $r_*>0$.
There exists a countable collection $\calB$ of balls with
\begin{equation}\label{eq:john-cover}
 S=\bigcup_{B\in\calB}B,\quad r_B\leq r_*,\quad
 \kappa B\subseteq S,\quad
 \sum_{B\in\calB}\one_{\kappa B}\leq C\one_S,
\end{equation}
such that every continuous $u\in L^2(S,\mm)$ satisfies
\begin{equation}\label{eq:john-gluing}
 \int_S|u-u_{S,\mm}|^2\,\d\mm
 \leq C\sum_{B\in\calB}\int_B|u-u_{B,\mm}|^2\,\d\mm.
\end{equation}
The constant depends only on the structure and $\mm$ near $\overline S$,
$\kappa,r_*$, and the John parameter $\eta_0$. 
Here $u_{E,\mm}:=\mm(E)^{-1}\int_Eu\,\d\mm$ for any measurable
set $E$ of positive finite measure.
\end{lemma}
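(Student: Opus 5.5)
We follow the Whitney-cover and chaining argument of Letrouit--M\'erigot and Kitagawa--Letrouit--M\'erigot, using the Buckley--Koskela--Lu and Haj{\l}asz--Koskela method on John domains. The only change is that Euclidean balls are replaced by Carnot--Carath\'eodory balls. Equiregularity near $\overline S$ supplies the Ahlfors-type bound $\mm(B_{\mathrm{cc}}(x,r))\asymp r^Q$ for $x$ in a neighborhood $U$ and $r\leq r_*$, with uniform constants. This bound, and hence doubling at small scales, is all the geometry the argument uses.

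\textbf{Step 1 (construction of the cover).} For $x\in S$, set $r(x)=\min\{r_*,\dcc(x,M\setminus S)/(2\kappa)\}$. Apply the $5r$ covering lemma to $\{B_{\mathrm{cc}}(x,r(x)/5)\}$ to get disjoint balls $B_i'$. Let $\calB$ be the family of enlarged balls $B_i=5B_i'$. Then $S=\bigcup B_i$ and $\kappa B_i\subseteq S$. For bounded overlap, fix $z\in S$. Every $\kappa B_i$ containing $z$ has radius comparable to $\min\{r_*,\dcc(z,M\setminus S)\}$. This uses the Lipschitz dependence of $r(\cdot)$ on the center, and all constants depend on $\kappa$. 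The corresponding disjoint balls $B_i'$ then lie in a ball of comparable radius around $z$. Volume comparison gives at most $C$ of them. Countability follows from disjointness and $\sigma$-finiteness.

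\textbf{Step 2 (chains).} Fix a central ball $B_0\in\calB$ containing the John center $x_0$. For $B\in\calB$, choose a point $x\in B$ and follow the John curve $\gamma$ from $x$ to $x_0$. Along $\gamma$, pick the balls of $\calB$ met at successive points. This gives a chain $B=B_{(0)},B_{(1)},\dots,B_{(m)}=B_0$ in which consecutive balls overlap in a set of measure comparable to each of them. The John condition $\dcc(\gamma(s),M\setminus S)\geq\eta_0 s$ forces two facts. First, the radii of the chain balls grow at least linearly along the curve, up to the cap $r_*$. Second, each chain ball $B_{(j)}$ is contained in $C\,B_{(j')}$ for every later ball $B_{(j')}$ of radius at most $r_*$. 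Since $S$ is bounded, only finitely many balls have radius near $r_*$, and they are connected through the fixed ball $B_0$. Thus the large-scale part of every chain has bounded length, with a bound depending on $r_*$, $\eta_0$, and $\operatorname{diam}S$.

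\textbf{Step 3 (telescoping).} Write $a_B=u_{B,\mm}$. Then
\[
|u-a_{B_0}|\leq|u-a_B|+\sum_j|a_{B_{(j)}}-a_{B_{(j+1)}}|\quad\text{on }B.
\]
Each consecutive difference is bounded by $C\bigl(\mm(B_{(j)})^{-1}\int_{B_{(j)}}|u-a_{B_{(j)}}|^2\,\d\mm\bigr)^{1/2}$, because consecutive balls overlap in comparable measure. Square the inequality, integrate over $B$, and sum over $B\in\calB$. The standard Boman/Hardy--Littlewood argument then applies. One dominates $\sum_j\one_{B_{(j)}}\mm(B_{(j)})^{-1/2}\|\cdot\|$ pointwise by the maximal function of $g=\sum_{B'}\one_{B'}\mm(B')^{-1/2}\|u-a_{B'}\|_{L^2(B')}$. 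The inclusion $B_{(j)}\subseteq CB_{(j')}$ is used here. One then applies the $L^2$ boundedness of the Hardy--Littlewood maximal operator on the doubling space $(U,\dcc,\mm)$ at scales up to $r_*$, together with the bounded overlap from Step 1. The result is $\int_S|u-a_{B_0}|^2\,\d\mm\leq C\sum_B\int_B|u-a_B|^2\,\d\mm$. Replacing $a_{B_0}$ by $u_{S,\mm}$ only decreases the left side. When $S=M$ is compact, a finite connected cover replaces the John chains.

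\textbf{Main obstacle.} The hardest step is the inclusion $B_{(j)}\subseteq C B_{(j')}$ needed for the maximal-function domination. It must be proved with metric balls whose volume is controlled only up to $r_*$. The capping at $r_*$ breaks the pure linear-growth picture. I would first split each chain into a small-scale part, where the John estimate gives the inclusion, and a bounded number of large-scale balls. The large-scale part is handled crudely by Cauchy--Schwarz with a constant depending on $\operatorname{diam}S/r_*$.
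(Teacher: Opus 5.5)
Your proposal follows the same route as the paper. The paper also builds a Whitney cover capped at $r_*$ and joins each ball to a central ball by John chains. These chains have comparable consecutive overlaps and satisfy $B\subseteq CB'$ for every $B'$ in the chain of $B$. The paper then invokes the Boman-type summation of \cite[Proposition~3.7 and Lemma~3.8]{KitagawaLetrouitMerigot2025}.

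Two steps of your sketch need repair, both routine.

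\textbf{Overlap of consecutive balls.} With $\calB=\{5B_i'\}$ equal to the covering family, two balls that only share a curve point can intersect in a set of negligible measure. The common point may lie arbitrarily close to both boundaries. The paper covers $S$ by the $5b_i$ but puts $\calB=\{10b_i\}$. Then a common point $w$ of two covering balls yields a ball of comparable radius around $w$ inside both members of $\calB$. To copy this, shrink $r(\cdot)$ by a fixed factor so that $\kappa B\subseteq S$ still holds.

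\textbf{The maximal-function step.} Put $f_{B'}=\mm(B')^{-1/2}\|u-u_{B',\mm}\|_{L^2(B')}$. The chain sum $\sum_{B'\in\mathcal C(B)}f_{B'}$ is not pointwise dominated by the maximal function $Mg$. Each single term is, but chain lengths are unbounded. What makes the argument work is the duality lemma $\|\sum_{B'}f_{B'}\one_{CB'}\|_{L^2}\leq C\|\sum_{B'}f_{B'}\one_{B'}\|_{L^2}$, proved by applying the maximal function to the test function.

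Finally, you do not need to split chains at the cap. The paper observes that doubling holds for all balls centered near $\overline S$ with bounded radius. For small radii this is local doubling. For larger radii it follows from a lower bound on the measure of a fixed small ball and an upper bound on the measure of one compact neighborhood. So the constant in $B\subseteq CB'$ may simply depend on $r_*$, $\eta_0$ and the size of $S$, and the summation applies directly.
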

\begin{proof}
Suppose first $S\neq M$ and set $\delta(x)=\dcc(x,M\setminus S)$.
We adopt the construction of a Whitney cover by balls in
\cite[proof of Proposition~3.7]{KitagawaLetrouitMerigot2025} and choose
countably many balls
\[
 b_i=B_{\mathrm{cc}}(x_i,q_i),\qquad
 q_i=\min\{\delta(x_i)/(1000\kappa),r_*/20\},
\]
such that the $b_i$ are disjoint at each dyadic scale and the
$5b_i$ cover $S$, and put $\calB=\{B_i=10b_i\}$.
The radius function is $(1000\kappa)^{-1}$-Lipschitz. This bound and
local doubling yield bounded overlap of the $\kappa B_i$; moreover
$\kappa B_i\subseteq S$ and $r_{B_i}\leq r_*$. Thus
\eqref{eq:john-cover} holds. The construction using John curves in that proof, with step sizes
adjusted to these radii, gives a central ball and chains
$\mathcal C(i)$ from $B_i$ to it, with no repeated balls.
Consecutive balls $B_j,B_{j'}$ satisfy
$\mm(B_j\cap B_{j'})\geq c\max\{\mm(B_j),\mm(B_{j'})\}$,
and $B_i\subseteq C B_j$ for every $j\in\mathcal C(i)$.
The proof of \cite[Lemma~3.8]{KitagawaLetrouitMerigot2025}
uses doubling only for balls centered near $\overline S$ with
bounded radius. For small radii this follows from local doubling
\cite{NagelSteinWainger1985}. For the remaining radii it follows
from a uniform positive lower bound for the measure of a fixed
small ball and a finite upper bound for the measure of one compact
neighborhood containing all the enlarged balls. Applying the
variance estimate to
$\nu=\mm\restr S/\mm(S)$ and multiplying by $\mm(S)$ then implies \eqref{eq:john-gluing}.

If $S=M$ is compact, a finite connected cover by sufficiently small
balls has bounded overlap and chains with the same lower bound
on the measure of consecutive intersections. Applying the same
variance estimate to these finite chains proves the result. 
\end{proof} 

\subsubsection{Entropic regularization and variance estimates}
For interpolation arguments, given $\theta_0,\theta_1\in C(Y)$, we use
$v:=\theta_1-\theta_0$ and
$\theta_\lambda:=\theta_0+\lambda v$, $0\leq\lambda\leq1$.
The line and ball estimates below are stated for arbitrary
$\theta,v\in C(Y)$.
Fix a probability measure $\sigma$ on $Y$ with $\spt\sigma=Y$.
Such a measure exists for every nonempty compact $Y$; for example,
one can put positive summable masses on a countable dense subset.
For $\varphi\in C(Y)$ and $t>0$, define the regularized $b$-transform
\begin{equation}\label{eq:Phi}
 \Phi_t[\varphi](x)
 :=-t\log\int_Y
       e^{(\varphi(y)-b(x,y))/t}\,\d\sigma(y),\qquad x\in U,
\end{equation}
and the regularized Kantorovich functional
\begin{equation}\label{eq:Kt}
 K_t[\varphi]:=\int_S\Phi_t[\varphi]\,\d\rho.
\end{equation}
The associated Gibbs measures are
\begin{equation}\label{eq:endpoint-Gibbs}
 \d\mu_x^t[\varphi](y)
 :=\frac{e^{(\varphi(y)-b(x,y))/t}\,\d\sigma(y)}
          {\int_Y e^{(\varphi(z)-b(x,z))/t}\,\d\sigma(z)},
 \qquad
 \mu^t[\varphi]:=\int_S\mu_x^t[\varphi]\,\d\rho(x).
\end{equation}
Thus, for $v\in C(Y)$,
\begin{equation}\label{eq:averaged-Gibbs}
 \E_{\mu^t[\varphi]}v
 =\int_S\E_{\mu_x^t[\varphi]}v\,\d\rho(x).
\end{equation}
The regularization uses the Gibbs kernel $e^{-b/t}$ directly on the
compact target set $Y$.
The next identities are the formulas for the first and second variations
of Kitagawa--Letrouit--M\'erigot
\cite[Lemmas~2.3--2.4]{KitagawaLetrouitMerigot2025}, written for the $c$-transform defined by an infimum in
\eqref{eq:mutual-c-transforms}. We combine these identities with
horizontal semiconcavity below.

\begin{lemma}[Variations of the regularized functional]\label{lem:variation}
Let $\theta_0,\theta_1\in C(Y)$, with $v$ and
$\theta_\lambda$ as above.
For $t>0$, the following identities hold:
\begin{equation}\label{eq:regularized-functional-variations}
 \frac{\der}{\der\lambda}K_t[\theta_\lambda]
 =-\E_{\mu^t[\theta_\lambda]}v,
 \qquad
 \frac{\der^2}{\der\lambda^2}K_t[\theta_\lambda]
 =-\frac1t\int_S\Var_{\mu_x^t[\theta_\lambda]}(v)\,\d\rho(x),
\end{equation}
and
\begin{equation}\label{eq:centered-transform-variation}
 \frac{\der}{\der\lambda}
       (\Phi_t[\theta_\lambda]-K_t[\theta_\lambda])(x)
 =-\bigl(\E_{\mu_x^t[\theta_\lambda]}v
          -\E_{\mu^t[\theta_\lambda]}v\bigr).
\end{equation}
\end{lemma}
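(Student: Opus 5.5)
The argument is a direct differentiation, done for fixed $x$ and then integrated against $\rho$. Fix $x\in U$ and write $Z(\lambda,x)=\int_Y e^{(\theta_\lambda(y)-b(x,y))/t}\,\d\sigma(y)$, so that $\Phi_t[\theta_\lambda](x)=-t\log Z(\lambda,x)$.

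\textbf{Justifying differentiation in $\lambda$.} The integrand is smooth in $\lambda$. Its $\lambda$-derivatives of order $k\leq2$ equal $(v/t)^k$ times the integrand. These are bounded uniformly for $\lambda\in[0,1]$, because $v$, $\theta_0$ are continuous on compact $Y$, $b$ is continuous on $\overline U\times Y$, and $\sigma$ is a probability measure. Dominated convergence therefore allows differentiation under the integral sign. This gives
\[
\partial_\lambda Z=\frac1t\int_Y v\,e^{(\theta_\lambda-b)/t}\,\d\sigma,
\qquad
\partial_\lambda^2 Z=\frac1{t^2}\int_Y v^2e^{(\theta_\lambda-b)/t}\,\d\sigma .
\]

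\textbf{Pointwise formulas.} Since $Z>0$, dividing by $Z$ turns these integrals into Gibbs expectations:
\[
\partial_\lambda\Phi_t[\theta_\lambda](x)=-t\,\frac{\partial_\lambda Z}{Z}=-\E_{\mu_x^t[\theta_\lambda]}v .
\]
Differentiating once more, the quotient rule gives
\[
\partial_\lambda^2\Phi_t=-t\left(\frac{\partial_\lambda^2Z}{Z}-\Bigl(\frac{\partial_\lambda Z}{Z}\Bigr)^2\right)
=-\frac1t\Bigl(\E_{\mu_x^t}v^2-(\E_{\mu_x^t}v)^2\Bigr)
=-\frac1t\Var_{\mu_x^t[\theta_\lambda]}(v).
\]

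\textbf{Integrating over $S$.} Both pointwise derivatives are bounded by $\|v\|_\infty$ and $\|v\|_\infty^2/t$ respectively, uniformly in $x$ and $\lambda$. Since $\rho$ is a probability measure on $S$, differentiation under $\int_S\cdot\,\d\rho$ is also justified. Integrating the first pointwise formula and using \eqref{eq:averaged-Gibbs} gives the first identity in \eqref{eq:regularized-functional-variations}. Integrating the second pointwise formula gives the second identity.

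\textbf{The centered identity.} Subtracting the first integrated identity from the first pointwise formula yields \eqref{eq:centered-transform-variation}.

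No step is a real obstacle. The only care needed is the uniform domination, which comes from compactness of $Y$ and $\overline U$ together with continuity of $b$ on a neighborhood of $\overline U\times Y$.
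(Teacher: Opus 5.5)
Your proposal is correct and follows the paper's proof. Both differentiate under the integral defining $\Phi_t$ to get $\partial_\lambda\Phi_t[\theta_\lambda]=-\E_{\mu_x^t[\theta_\lambda]}v$ and $\partial_\lambda^2\Phi_t[\theta_\lambda]=-t^{-1}\Var_{\mu_x^t[\theta_\lambda]}(v)$, then integrate against $\rho$; your explicit domination bounds just spell out the paper's one-line remark that $v$, $\theta_\lambda$, and $b$ are bounded on the relevant compact sets.
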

\begin{proof}
The functions $v$, $\theta_\lambda$, and $b$ are bounded on the
relevant compact sets. Differentiation under the integral in
\eqref{eq:Phi} gives
$\partial_\lambda\Phi_t[\theta_\lambda]
=-\E_{\mu_x^t[\theta_\lambda]}v$ and
$\partial_\lambda^2\Phi_t[\theta_\lambda]
=-t^{-1}\Var_{\mu_x^t[\theta_\lambda]}(v)$.
Integrating over $S$ proves the assertions.
\end{proof}

\begin{lemma}[Oscillation along horizontal flows]
\label{lem:line-oscillation}
Let $\gamma:[s_0,s_1]\to M$ be a segment of one of the horizontal
flows used in Lemma~\ref{lem:averaged-paths}, with its flow parameter. In a group,
$\gamma(s)=x\cdot[su,0]$ with $|u|_{\mathrm E}=1$. For
$\theta,v\in C(Y)$ and $t>0$, put
$\mu_s=\mu_{\gamma(s)}^t[\theta]$. Then
\begin{equation}\label{eq:line-oscillation}
 |\E_{\mu_{s_1}}v-\E_{\mu_{s_0}}v|^2
 \leq\frac{L_2(s_1-s_0)+2L_1}{t}
       \int_{s_0}^{s_1}\Var_{\mu_s}(v)\,\d s.
\end{equation}
\end{lemma}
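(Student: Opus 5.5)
The plan is to differentiate $s\mapsto\E_{\mu_s}v$ and bound the derivative by a covariance, following the Kitagawa--Letrouit--M\'erigot line estimate. Write $b_s(y)=b(\gamma(s),y)$. For smooth $b$ the Gibbs density gives
\[
 \frac{\der}{\der s}\E_{\mu_s}v=-\frac1t\Cov_{\mu_s}(v,\partial_sb_s).
\]
Cauchy--Schwarz then yields
\[
 \Bigl|\frac{\der}{\der s}\E_{\mu_s}v\Bigr|\leq\frac1t\Var_{\mu_s}(v)^{1/2}\Var_{\mu_s}(\partial_sb_s)^{1/2}.
\]
Integrating over $[s_0,s_1]$ and applying Cauchy--Schwarz in $s$ gives
\[
 |\E_{\mu_{s_1}}v-\E_{\mu_{s_0}}v|^2\leq\frac1{t^2}\int_{s_0}^{s_1}\Var_{\mu_s}(v)\,\d s\int_{s_0}^{s_1}\Var_{\mu_s}(\partial_sb_s)\,\d s .
\]
So it suffices to show
\[
 \int_{s_0}^{s_1}\Var_{\mu_s}(\partial_sb_s)\,\d s\leq t\bigl(L_2(s_1-s_0)+2L_1\bigr).
\]

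The key identity for this is
\[
 \frac{\der}{\der s}\E_{\mu_s}[\partial_sb_s]=\E_{\mu_s}[\partial_s^2b_s]-\frac1t\Var_{\mu_s}(\partial_sb_s).
\]
Rearranging and integrating,
\[
 \frac1t\int_{s_0}^{s_1}\Var_{\mu_s}(\partial_sb_s)\,\d s=\int_{s_0}^{s_1}\E_{\mu_s}[\partial_s^2b_s]\,\d s-\bigl[\E_{\mu_s}\partial_sb_s\bigr]_{s_0}^{s_1}.
\]
The semiconcavity bound $\partial_s^2b\leq L_2$ controls the first term by $L_2(s_1-s_0)$. The Lipschitz bound $|\partial_sb|\leq L_1$ controls the boundary term by $2L_1$. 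This gives exactly the required constant.

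The main obstacle is regularity. $b$ is only Lipschitz in $s$ with a distributional upper bound on $\partial_s^2b$; it may have cusps at cut points and diagonal points. To handle this, I would work directly with $F(s):=t\log Z_s$, where $Z_s=\int e^{(\theta-b_s)/t}\d\sigma$. Then $F'(s)=-\E_{\mu_s}\partial_sb_s$ almost everywhere, by dominated differentiation using the Lipschitz bound. I would then express everything through $F$ and pass to the limit with a mollification $b^\epsilon_s(y)=\int b_{s-h}(y)\eta_\epsilon(h)\,\d h$ in $s$ only. This is possible because the bounds \eqref{eq:general-directional-bounds} hold on slightly extended segments, per the standing assumptions. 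The mollified cost preserves both bounds, $|\partial_sb^\epsilon|\leq L_1$ and $\partial_s^2b^\epsilon\leq L_2$ pointwise, and is smooth in $s$ with $y$-measurable derivatives, so the smooth computation above applies verbatim.

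Finally, let $\epsilon\to0$. The cost $b^\epsilon\to b$ uniformly, so $\mu_s^\epsilon\to\mu_s$ in total variation uniformly in $s$, and $\E_{\mu^\epsilon_s}v\to\E_{\mu_s}v$ and $\Var_{\mu^\epsilon_s}(v)\to\Var_{\mu_s}(v)$. The inequality \eqref{eq:line-oscillation} therefore passes to the limit. In the group case the same argument applies with $\gamma(s)=x\cdot[su,0]$, using Proposition~\ref{prop:horizontal-semiconcavity} for the bounds.
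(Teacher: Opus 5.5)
Your proposal is correct and follows essentially the same route as the paper. Both use the identities $g'=-t^{-1}\Cov_{\mu_s}(v,b_s')$ and $\frac{\der}{\der s}\E_{\mu_s}b_s'=\E_{\mu_s}b_s''-t^{-1}\Var_{\mu_s}(b_s')$, integrated to give $\int\Var_{\mu_s}(b_s')\d s\leq t\{L_2(s_1-s_0)+2L_1\}$, then Cauchy--Schwarz in $s$, and finally mollification of $b_s(y)$ in $s$ with a common nonnegative mollifier on a slightly extended segment, passing to the limit by uniform convergence of the Gibbs expectations and variances. The a.e.\ differentiation of $F(s)=t\log Z_s$ that you mention is not needed, since the mollified computation already covers the nonsmooth case.
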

\begin{proof}
Write $b_s(y)=b(\gamma(s),y)$,
$F(s)=\Phi_t[\theta](\gamma(s))$, and $g(s)=\E_{\mu_s}v$.
For a cost smooth in $s$, differentiation yields 
\begin{equation}\label{eq:directional-Gibbs-identities}
 F'=\E_{\mu_s}b_s',\qquad
 F''=\E_{\mu_s}b_s''-t^{-1}\Var_{\mu_s}(b_s'),\qquad
 g'=-t^{-1}\Cov_{\mu_s}(v,b_s').
\end{equation}
By integrating the second identity, we then obtain 
\begin{equation}\label{eq:integrated-score-variance}
 \int_{s_0}^{s_1}\Var_{\mu_s}(b_s')\,\d s
 \leq t\{L_2(s_1-s_0)+2L_1\}.
\end{equation}
The covariance inequality and Cauchy--Schwarz in $s$ then prove
\eqref{eq:line-oscillation}.

For a continuous cost satisfying
\eqref{eq:general-directional-bounds}, extend the flow segment
slightly and mollify $b_s(y)$ in $s$, using the same nonnegative
mollifier for every $y$. The resulting costs $b_\varepsilon$ are
smooth in $s$ and satisfy
\[
|\partial_s b_\varepsilon|\leq L_1,\qquad
\partial_s^2 b_\varepsilon\leq L_2.
\]
Moreover, $b_\varepsilon(s,y)\to b_s(y)$ uniformly on
$[s_0,s_1]\times Y$. For fixed $t>0$, the associated Gibbs
expectations and variances of $v$ therefore converge uniformly
in $s$. Passing to the limit in the smooth inequality proves
\eqref{eq:line-oscillation}. 
\end{proof}

\begin{lemma}[Local ball estimate]\label{lem:local-L2}
For $0<t\leq1$, $\theta,v\in C(Y)$, and a ball as in
Lemma~\ref{lem:averaged-paths}, put
$g(x)=\E_{\mu_x^t[\theta]}v$. Then
\begin{equation}\label{eq:local-L2}
 \int_B|g-g_{B,\mm}|^2\,\d\mm
 \leq\frac{C(1+L_1+L_2)r}{t}\int_{\kappa B}
          \Var_{\mu_x^t[\theta]}(v)\,\d\mm(x),
 \qquad g_{B,\mm}=\frac1{\mm(B)}\int_Bg\,\d\mm.
\end{equation}
\end{lemma}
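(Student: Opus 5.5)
The plan is to obtain the estimate from the path construction of Lemma~\ref{lem:averaged-paths} combined with the line oscillation bound of Lemma~\ref{lem:line-oscillation}. First, Jensen's inequality gives
\[
 \int_B|g-g_{B,\mm}|^2\,\d\mm\leq\frac1{\mm(B)}\int_B\int_B|g(x)-g(y)|^2\,\d\mm(y)\,\d\mm(x),
\]
so it suffices to bound the double integral of $|g(x)-g(y)|^2$ by $C(1+L_1+L_2)r\,\mm(B)t^{-1}\int_{\kappa B}\Var_{\mu_z^t[\theta]}(v)\,\d\mm(z)$. To do this, I would join each pair $x,y\in B$ by the Borel path $\gamma_{xy}\subseteq\kappa B$ of Lemma~\ref{lem:averaged-paths}. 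This path is a concatenation of at most $N$ flow segments of total parameter length at most $Cr$, and $N$ is uniformly bounded.

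Next, I would write $g(y)-g(x)$ as a telescoping sum over the segment endpoints. By Cauchy--Schwarz in the number of segments, $|g(x)-g(y)|^2\leq N\sum_j|\Delta_j g|^2$. On each segment I apply Lemma~\ref{lem:line-oscillation}, whose hypotheses on the flow fields are exactly those in \eqref{eq:general-directional-bounds}. The segment length is at most $Cr\leq Cr_*$, so the prefactor satisfies $L_2(s_1-s_0)+2L_1\leq C(L_1+L_2)$. Summing over segments gives
\[
 |g(x)-g(y)|^2\leq \frac{C(1+L_1+L_2)}{t}\int_{\gamma_{xy}}\Var_{\mu_{\gamma(s)}^t[\theta]}(v)\,\d s,
\]
where the integral is taken in flow parameter, which matches the parametrization used in \eqref{eq:local-main-integral}.

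Finally, I would integrate over $x,y\in B$ and apply \eqref{eq:local-main-integral} with $F(z)=\Var_{\mu_z^t[\theta]}(v)$. This gives the factor $Cr\,\mm(B)\int_{\kappa B}F\,\d\mm$, and dividing by $\mm(B)$ yields \eqref{eq:local-L2}.

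The main technical point I expect is measurability and nonnegativity of $F$. Nonnegativity is clear. For measurability, $z\mapsto\Var_{\mu_z^t[\theta]}(v)$ is continuous in $z$ for fixed $t>0$, because the cost $b$ is continuous and the Gibbs densities depend continuously on $z$ uniformly in $y$. The Borel dependence of the path on $(x,y)$ is already part of Lemma~\ref{lem:averaged-paths}, so Fubini applies.

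One further check is that the constant $C$ in the line estimate does not pick up a dependence on $t$. It does not: the factor $1/t$ appears explicitly and all other quantities are controlled by $L_1$, $L_2$, and $r_*$. The restriction $t\leq1$ is then not needed beyond what the statement records.
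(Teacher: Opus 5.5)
Your proposal is correct and follows essentially the same route as the paper: apply Lemma~\ref{lem:line-oscillation} on each piece of $\gamma_{xy}$, use Cauchy--Schwarz over the uniformly bounded number of pieces, then integrate over $B\times B$ and invoke \eqref{eq:local-main-integral} with $F=\Var_{\mu_z^t[\theta]}(v)$. Your Jensen reduction to the double integral (followed by division by $\mm(B)$) and your measurability check are the details that the paper's short proof leaves implicit.
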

\begin{proof}
Apply Lemma~\ref{lem:line-oscillation} to each nonconstant piece
of $\gamma_{xy}$. The number of pieces and their parameter lengths are uniformly
bounded. Then Cauchy--Schwarz for their sum yields 
\[
 |g(x)-g(y)|^2\leq\frac{C(1+L_1+L_2)}t
       \int_{\gamma_{xy}}\Var_{\mu_z^t[\theta]}(v)\,\d s.
\]
Integrating over $B\times B$ and using \eqref{eq:local-main-integral}, we complete the proof. 
\end{proof}

\begin{proposition}[Global oscillation estimate]
\label{prop:global-gibbs-oscillation}
For every $0<t\leq1$ and $\theta,v\in C(Y)$,
\begin{equation}\label{eq:global-gibbs-oscillation}
 \norm{\E_{\mu_x^t[\theta]}v-\E_{\mu^t[\theta]}v}_{L^2(\rho)}^2
 \leq\frac{C(1+L_1+L_2)}t\int_S\Var_{\mu_x^t[\theta]}(v)\,\d\rho(x).
\end{equation}
\end{proposition}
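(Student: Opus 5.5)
The plan is to combine the local ball estimate of Lemma~\ref{lem:local-L2} with the gluing Lemma~\ref{lem:john-gluing}, after replacing $\rho$ by $\mm$ using the two-sided density bounds $a_1\mm\restr S\leq\rho\leq a_2\mm\restr S$. Set $g(x)=\E_{\mu_x^t[\theta]}v$. By \eqref{eq:averaged-Gibbs}, $\E_{\mu^t[\theta]}v=\int_S g\,\d\rho$. The left-hand side of \eqref{eq:global-gibbs-oscillation} is therefore $\Var_\rho(g)$.

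\textbf{Step 1: pass to $\mm$.} For every constant $k$ we have $\Var_\rho(g)\leq\int_S|g-k|^2\,\d\rho\leq a_2\int_S|g-k|^2\,\d\mm$. Taking $k=g_{S,\mm}$ reduces the claim to bounding $\int_S|g-g_{S,\mm}|^2\,\d\mm$. The function $g$ is continuous on $S$, since $b$ is continuous and $Y$ is compact. It is also bounded by $\max|v|$, hence lies in $L^2(S,\mm)$.

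\textbf{Step 2: gluing.} Choose $\kappa$ and $r_*$ as in Lemma~\ref{lem:averaged-paths}, decreasing $r_*$ if necessary so that Lemma~\ref{lem:john-gluing} also applies. Take the Whitney-type collection $\calB$ from \eqref{eq:john-cover}. Each $B\in\calB$ satisfies $r_B\leq r_*$ and $\kappa B\subseteq S$, so it is admissible in Lemma~\ref{lem:local-L2}. Combining \eqref{eq:john-gluing} with \eqref{eq:local-L2} and $r_B\leq r_*\leq C$ gives
\[
\int_S|g-g_{S,\mm}|^2\,\d\mm
\leq C\sum_{B\in\calB}\frac{(1+L_1+L_2)r_B}{t}\int_{\kappa B}\Var_{\mu_x^t[\theta]}(v)\,\d\mm
\leq\frac{C(1+L_1+L_2)}{t}\int_S\Var_{\mu_x^t[\theta]}(v)\,\d\mm .
\]
The last inequality uses the bounded overlap $\sum_B\one_{\kappa B}\leq C\one_S$ from \eqref{eq:john-cover}.

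\textbf{Step 3: back to $\rho$.} Since $\mm\restr S\leq a_1^{-1}\rho$, the right-hand integral is at most $a_1^{-1}\int_S\Var_{\mu_x^t[\theta]}(v)\,\d\rho$, which yields \eqref{eq:global-gibbs-oscillation}.

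The step needing the most care is the compatibility of the two lemmas. The local path estimate requires $r_B\leq r_*$ with $r_*$ small enough for the uniform ball--box and inverse-function constructions, while the gluing lemma allows any sufficiently small $r_*$. I would therefore fix $r_*$ first from Lemma~\ref{lem:averaged-paths} and then build $\calB$ with that value. If $S=M$ is compact, the finite cover case of Lemma~\ref{lem:john-gluing} is used in the same way. Measurability of $g$ and of $x\mapsto\Var_{\mu_x^t[\theta]}(v)$ follows from their continuity. The constant then depends only on the structure, $\mm$, $a_1,a_2,\eta_0$, and the $\kappa,r_*$ fixed above.
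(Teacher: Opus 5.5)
Your proposal is correct and matches the paper's proof step for step: the upper density bound to pass from $\rho$ to $\mm$ with the constant $g_{S,\mm}$, the John gluing lemma combined with the local ball estimate, then $r_B\leq r_*$ and bounded overlap of the enlarged balls, and finally the lower density bound to return to $\rho$. Your remark on fixing $\kappa$ and $r_*$ once for both lemmas makes explicit what the paper leaves implicit.
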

\begin{proof}
	Set $g(x)=\E_{\mu_x^t[\theta]}v$ and
	$A(x)=\Var_{\mu_x^t[\theta]}(v)$. These functions are continuous and bounded. Comparing with the constant $g_{S,\mm}$ and applying Lemmas~\ref{lem:john-gluing} and~\ref{lem:local-L2}, we obtain 
	\[
	\begin{aligned}
		\Var_\rho(g)
		\leq a_2\int_S|g-g_{S,\mm}|^2\,\d\mm
		&\leq Ca_2\sum_{B\in\calB}
		\int_B|g-g_{B,\mm}|^2\,\d\mm\\
		&\leq \frac{Ca_2(1+L_1+L_2)}{t}
		\sum_{B\in\calB}r_B\int_{\kappa B}A\,\d\mm.
	\end{aligned}
	\]
	By $r_B\leq r_*$, bounded overlap, and the lower density bound, 
	\[
	\sum_{B\in\calB}r_B\int_{\kappa B}A\,\d\mm
	\leq Cr_*\int_S A\,\d\mm
	\leq \frac{Cr_*}{a_1}\int_S A\,\d\rho.
	\]
	Since $\E_\rho g=\E_{\mu^t[\theta]}v$, these estimates prove
	\eqref{eq:global-gibbs-oscillation}.
\end{proof} 

\subsubsection{Variance of potential differences}
We now pass from the variance of the regularized derivatives to
the variance of a difference of potentials. 

\begin{lemma}[Uniform Laplace principle]\label{lem:laplace-principle}
Let $\theta_0,\theta_1\in C(Y)$, with $\theta_\lambda$ as above. Then
\begin{equation}\label{eq:laplace-limit}
 \Phi_t[\theta_\lambda](x)\longrightarrow\theta_\lambda^b(x)
 \quad\text{uniformly on }\overline S\times[0,1]
 \quad\text{as }t\downarrow0.
\end{equation}
Consequently $k_t(\lambda):=K_t[\theta_\lambda]$ converges uniformly
to the concave function
$k(\lambda):=\int_S\theta_\lambda^b\,\d\rho$.
\end{lemma}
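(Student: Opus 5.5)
The plan is to prove a two-sided pointwise bound that does not depend on $x$ or $\lambda$, and then integrate it over $S$ against $\rho$.

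\emph{Upper bound.} Since $\sigma$ is a probability measure and $\theta_\lambda(y)-b(x,y)\leq\max_Y(\theta_\lambda-b(x,\cdot))=-\theta_\lambda^b(x)$, the integrand in \eqref{eq:Phi} is at most $e^{-\theta_\lambda^b(x)/t}$. This gives
\[
\Phi_t[\theta_\lambda](x)\geq\theta_\lambda^b(x).
\]

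\emph{Lower bound.} This is where $\spt\sigma=Y$ and equicontinuity are used. Consider the family of functions $y\mapsto\theta_\lambda(y)-b(x,y)$ for $(x,\lambda)\in\overline S\times[0,1]$. It is uniformly equicontinuous on $Y$, because $b$ is uniformly continuous on the compact set $\overline U\times Y$ and $\theta_\lambda=\theta_0+\lambda v$ with $\theta_0,v$ uniformly continuous. Given $\eta>0$, pick $\delta>0$ so that the oscillation of each such function over any set of diameter at most $\delta$ is below $\eta$.

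Cover $Y$ by finitely many balls $B_1,\dots,B_N$ of radius $\delta/2$, each centred in $Y$. Because $\spt\sigma=Y$, each $B_i$ has positive mass, so $m_\delta:=\min_i\sigma(B_i)>0$. For fixed $(x,\lambda)$, a maximiser $y^*$ of $\theta_\lambda-b(x,\cdot)$ lies in some $B_i$. On $B_i$ the integrand is at least $e^{(-\theta_\lambda^b(x)-\eta)/t}$. Hence
\[
\Phi_t[\theta_\lambda](x)\leq\theta_\lambda^b(x)+\eta+t\log(1/m_\delta).
\]
Choosing $t$ small makes the last term at most $\eta$, so $0\leq\Phi_t-\theta_\lambda^b\leq2\eta$ uniformly in $(x,\lambda)$. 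This proves \eqref{eq:laplace-limit}.

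\emph{Consequence for $k_t$.} Integrating the two-sided bound against the probability measure $\rho$, which is concentrated on $S$, gives $|k_t(\lambda)-k(\lambda)|\leq2\eta$ uniformly in $\lambda$. Concavity of $k$ follows because $\theta\mapsto\theta^b(x)$ is a minimum of affine functions of $\theta$, hence concave, and integration preserves concavity. Alternatively, it follows by passing to the limit from the concavity of $k_t$ given by Lemma~\ref{lem:variation}.

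The only step that needs care is the lower bound: the choice of $\delta$ and $m_\delta$ must be independent of $(x,\lambda)$. This is exactly what the equicontinuity argument and the finite cover provide, so I expect no real obstacle.
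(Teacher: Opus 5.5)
Your proposal is correct and follows essentially the same route as the paper's proof. Both get $\Phi_t[\theta_\lambda]\geq\theta_\lambda^b$ from $\sigma$ being a probability measure. Both then use uniform continuity, a finite cover and $\spt\sigma=Y$ to obtain a uniform positive lower bound on $\sigma$-masses of small balls, which gives $0\leq\Phi_t[\theta_\lambda]-\theta_\lambda^b\leq\eta+t\log(1/m_\delta)$, and both integrate against $\rho$. The differences are cosmetic: you use the cover ball containing the maximiser instead of a ball centred at it, and you justify the concavity of $k$, which the paper takes from the concavity of $K_b$.
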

\begin{proof}
Let $m(x,\lambda):=\min_{y\in Y}\{b(x,y)-\theta_\lambda(y)\}$. Since $\sigma$ is a probability measure,
$\Phi_t[\theta_\lambda](x)\geq m(x,\lambda)$.
Given $\eta>0$, uniform continuity provides $r>0$ such that,
whenever $\dcc(y,z)<r$,
\[
 |b(x,y)-\theta_\lambda(y)-b(x,z)+\theta_\lambda(z)|<\eta
\]
uniformly in $x,\lambda$. Choose a finite cover
$Y\subseteq\bigcup_{j=1}^N B_{\mathrm{cc}}(y_j,r/2)$,
with $y_j\in Y$. Since $\sigma$ has full support,
\[
b_r:=\inf_{y\in Y}\sigma(B_{\mathrm{cc}}(y,r)\cap Y)
\geq\min_{1\leq j\leq N}
\sigma(B_{\mathrm{cc}}(y_j,r/2)\cap Y)>0.
\]
Integrating over the $r$-ball around a minimizing point yields 
\[
0\leq\Phi_t[\theta_\lambda](x)-m(x,\lambda)
\leq\eta-t\log b_r.
\]
Letting $t\downarrow0$ and then $\eta\downarrow0$ proves uniform
convergence to $m$. Integration against $\rho$ then yields uniform convergence of $k_t$. 
\end{proof}

\begin{proof}[Proof of Theorem~\ref{thm:directional-jensen}]
Fix $0<t\leq1$, and set $F_\lambda=\Phi_t[\theta_\lambda]$ and
$k_t(\lambda)=K_t[\theta_\lambda]$. By Lemma~\ref{lem:variation} and
Proposition~\ref{prop:global-gibbs-oscillation}, 
\[
 \Var_\rho(\partial_\lambda F_\lambda)
 \leq A[-k_t''(\lambda)],\qquad A=C(1+L_1+L_2).
\]
The functions $F_\lambda$ are concave in $\lambda$, and their slopes lie between
$-\max_Y(\theta_1-\theta_0)$ and $-\min_Y(\theta_1-\theta_0)$.
Thus the common slope interval has length at most $R_0$. 

Set $\beta=F_{1/2}-(F_0+F_1)/2$ and
$J_t=\int\beta\,\d\rho$.
By concavity and the slope bound,
\[
|\partial_\lambda F_\lambda-(F_1-F_0)|
\leq8\beta,\qquad \lambda\in[1/4,3/4],\qquad 0\leq\beta\leq R_0/4. 
\]
Moreover, 
\[
J_t=\frac12\int_0^1
\min\{\lambda,1-\lambda\}[-k_t''(\lambda)]\,\d\lambda
\geq\frac18\int_{1/4}^{3/4}[-k_t''(\lambda)]\,\d\lambda.
\]
By the variance bound and Cauchy--Schwarz,
averaging over $[1/4,3/4]$ yields
\[
\begin{aligned}
	\Var_\rho(F_1-F_0)\leq4A\int_{1/4}^{3/4}[-k_t''(\lambda)]\,\d\lambda+128\int\beta^2\,\d\rho\leq32(A+R_0)J_t.
\end{aligned}
\]
Finally, letting $t\downarrow0$ and applying
Lemma~\ref{lem:laplace-principle} proves \eqref{eq:directional-jensen}. 
\end{proof}

\subsection{Cost approximation}\label{sec:cost-approximation}
The following proposition transfers a variance estimate from an
approximating cost to the original cost. The error is controlled
by the uniform distance between the two costs.

\begin{proposition}[Comparison of costs]\label{prop:cost-comparison}
Let $b,c$ be continuous costs with $\norm{b-c}_\infty\leq e$, where
$e\geq0$. Here costs are compared on $\overline U\times Y$ and
their transforms on $\overline U$. Then for all $\theta,\theta_0,\theta_1\in C(Y)$, 
\[
 \norm{\theta^b-\theta^c}_\infty\leq e,\qquad
 |J_b(\theta_0,\theta_1)-J_c(\theta_0,\theta_1)|\leq2e.
\]
If $V_b(\theta_0,\theta_1)\leq A J_b(\theta_0,\theta_1)$ for $A\geq0$, then
\begin{equation}\label{eq:perturbed-jensen}
 V_c(\theta_0,\theta_1)
 \leq2A\bigl(J_c(\theta_0,\theta_1)+2e\bigr)+8e^2.
\end{equation}
Consequently, if costs $b_\varepsilon$, $0<\varepsilon<\varepsilon_0$, satisfy
$\norm{b_\varepsilon-c}_\infty\leq e_\varepsilon$ and
$V_{b_\varepsilon}\leq A_\varepsilon J_{b_\varepsilon}$
on a fixed class of pairs $(\theta_0,\theta_1)\in C(Y)^2$, then
on that class
\begin{equation}\label{eq:approximation-infimum}
 V_c\leq\inf_{0<\varepsilon<\varepsilon_0}
 \left\{2A_\varepsilon(J_c+2e_\varepsilon)+8e_\varepsilon^2\right\}.
\end{equation}
\end{proposition}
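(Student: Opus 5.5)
The plan is to prove the three assertions in order, each by an elementary comparison; no geometry enters.

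\emph{Transforms.} For each $x\in\overline U$ and $y\in Y$,
\[
b(x,y)-\theta(y)\leq c(x,y)-\theta(y)+e .
\]
Taking the minimum over $y$ gives $\theta^b\leq\theta^c+e$, and exchanging the roles of $b$ and $c$ gives the reverse inequality. Hence $\norm{\theta^b-\theta^c}_\infty\leq e$.

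\emph{Jensen gaps.} This bound implies $|K_b(\theta)-K_c(\theta)|\leq e$ for every $\theta$, because $\rho$ is a probability measure. The gap $J$ is a combination of $K$ at three points with coefficients $1,-\tfrac12,-\tfrac12$, whose absolute values sum to $2$. Therefore $|J_b-J_c|\leq2e$.

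\emph{Variances.} Set $f_b=\theta_1^b-\theta_0^b$ and $f_c=\theta_1^c-\theta_0^c$. Then $\norm{f_b-f_c}_\infty\leq2e$. The map $f\mapsto\Var_\rho(f)^{1/2}$ is a seminorm bounded by the $L^2(\rho)$ norm, hence by the sup norm. Together with $(a+b)^2\leq2a^2+2b^2$ this gives
\[
V_c\leq2V_b+2\Var_\rho(f_c-f_b)\leq2V_b+8e^2 .
\]
Next use the hypothesis $V_b\leq AJ_b$ and the previous step $J_b\leq J_c+2e$ to obtain \eqref{eq:perturbed-jensen}. The constant $8e^2$ comes from $2(2e)^2$, so the stated constants match.

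\emph{The infimum.} Apply \eqref{eq:perturbed-jensen} to each $b_\varepsilon$ with $A=A_\varepsilon$ and $e=e_\varepsilon$ on the given class of pairs, and take the infimum over $\varepsilon$; this is \eqref{eq:approximation-infimum}.

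None of these steps is a real obstacle. The only point needing care is to keep the $\tfrac12$-weights in $J$, so that the constant is $2e$ and not $3e$, and to place the factor $2$ consistently.
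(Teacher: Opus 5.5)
Your proposal is correct and follows the paper's own proof: the same minimum comparison for the transforms and the gaps, and the same triangle inequality in $L^2(\rho)$ for centered functions giving $V_c\leq2V_b+8e^2$. You then combine this with $V_b\leq AJ_b\leq A(J_c+2e)$ and take the infimum over $\varepsilon$, as the paper does.
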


\begin{proof}
	The bounds for transforms and gaps follow by comparing minima. The two potential differences differ by at most $2e$
	in the uniform norm. By the triangle inequality in $L^2(\rho)$ for centered functions,
	followed by Cauchy--Schwarz,
	\[
	\begin{aligned}
		V_c\leq2V_b+8e^2\leq2A(J_c+2e)+8e^2.
	\end{aligned}
	\]
	This proves \eqref{eq:perturbed-jensen}.
	Applying the estimate to each $b_\varepsilon$ and taking
	the infimum gives \eqref{eq:approximation-infimum}.
\end{proof} 

In particular, $e_\varepsilon\leq C\varepsilon$ and
$A_\varepsilon\leq C\varepsilon^{-\eta}$, with $0\leq\eta<1$, yield $V_c\leq C'J_c^{1-\eta}$ when $\osc_Y(\theta_1-\theta_0)$ has a fixed bound.
For small positive $J_c$, take $\varepsilon=J_c$; for $J_c=0$,
let $\varepsilon\downarrow0$. For larger $J_c$, use the oscillation
bound. Proposition~\ref{prop:potential-conversion} then gives the
potential exponent $(1-\eta)/2$. 

\Needspace{10\baselineskip}
\subsection{Costs given by squared distances}\label{sec:potential-geometries}
\subsubsection{Riemannian manifolds, fat manifolds, and step-two Carnot groups}
The directional estimates of Section~\ref{sec:realizations}
give a linear variance bound for the original cost.

\begin{proof}[Proof of Theorem~\ref{thm:potential-stability}\textup{(iii)}]
Take normalized $c$-conjugate Kantorovich potentials $(\phi_i,\psi_i)$
from $\rho$ to $\mu_i$, $i=0,1$, and put $b=c$.
Propositions~\ref{prop:fat-directional} and
\ref{prop:horizontal-semiconcavity} give
\eqref{eq:general-directional-bounds} in the stated geometries. 
By Lemma~\ref{lem:transport-foundations}, the potentials
$\psi_0,\psi_1$ are $D$-Lipschitz. Then $\osc_Y(\psi_1-\psi_0)\leq2D^2$. Combining Theorem~\ref{thm:directional-jensen} with \eqref{eq:optimal-jensen-gap}, we then obtain  
\begin{equation}\label{eq:potential-dual-gap}
 \norm{\phi_1-\phi_0}_{L^2(\rho)}^2
 \leq C\int_Y(\psi_1-\psi_0)\,\d(\mu_1-\mu_0)
 \leq C W_1(\mu_0,\mu_1).
\end{equation}
Proposition~\ref{prop:potential-conversion} also gives uniqueness
of the normalized source potential. The argument uses optimal
couplings and does not assume uniqueness of optimal maps. 
\end{proof}

\subsubsection{Penalized energy}\label{sec:two-generating-potentials}
Assume now that $M$ is complete and smooth, with a two-generating
horizontal distribution. The uniform directional semiconcavity
bound used above may fail; see Section~\ref{sec:horizontal-obstruction}.
Riemannian approximation is classical in sub-Riemannian geometry \cite[Section~1.2]{Ge1993}, 
\cite[Section~2.4]{CapognaDanielliPaulsTyson2007}
and was used in Heisenberg optimal transport by Ambrosio--Rigot
\cite[Section~6]{AmbrosioRigot2004}.
The squared distance is locally Lipschitz in local coordinates
\cite[Corollary~6.2]{AgrachevLee2009}
(see also \cite[Proposition~4.2]{FigalliRifford10}).
This bounds both the cost error and the penalized control components
by $O(\varepsilon)$, and hence bounds the normal covectors uniformly.
Horizontal control variations whose higher-order time moments
vanish then give bounds on the directional second derivatives. First-order
optimality removes the need to bound second derivatives of the
corrected controls. Proposition~\ref{prop:cost-comparison}
then gives every potential exponent below $1/2$.

Fix compact endpoint neighborhoods containing $\overline S\cup Y$.
Choose a larger neighborhood containing all horizontal minimizing
curves between these endpoints. On this neighborhood, choose an
auxiliary Riemannian metric $g_{\mathrm R}$ and finitely many smooth
fields $F_a,G_b$ such that
\[
 \sum_a F_a\otimes F_a=g_{\mathcal H}^{-1},\qquad
 \sum_b G_b\otimes G_b=g_{\mathrm R}^{-1}.
\]
Here a cometric is the quadratic form on covectors associated
with a metric, as in the construction of the fields $F_i$ above. For $0<\varepsilon<1$ use the dynamics and energy
\begin{equation}\label{eq:penalized-energy}
 \dot\gamma=\sum_a u_aF_a(\gamma)+\sum_b w_bG_b(\gamma),\qquad
 \calA_\varepsilon(u,w)=\tfrac12\norm{u}_{L^2}^2
                         +\tfrac1{2\varepsilon}\norm{w}_{L^2}^2.
\end{equation}
For each curve, we choose the controls $(u,w)$ that minimize the
integrand in \eqref{eq:penalized-energy} among all representations
of its velocity. Minimizing over such representations gives the
Riemannian metric with cometric
$g_{\mathcal H}^{-1}+\varepsilon g_{\mathrm R}^{-1}$.
Extend this metric smoothly to a complete Riemannian metric outside
a larger compact set. Denote its distance by $\dist_\varepsilon$
and its cost by $c_\varepsilon=\dist_\varepsilon^2/2$.

\begin{lemma}[Confinement of penalized minimizers]\label{lem:penalized-confinement}
For all sufficiently small $\varepsilon$, the minimizing curves
between the fixed endpoint neighborhoods stay in one compact
region where \eqref{eq:penalized-energy} holds. The completion
outside this region does not affect their costs, and $c_\varepsilon\leq c$ on the fixed endpoint region. 
\end{lemma}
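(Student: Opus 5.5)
The argument compares three things: the penalized energy, the horizontal energy, and the penalized Riemannian length. There are two claims to prove. The first is the inequality $c_\varepsilon\leq c$ on the endpoint region. The second is confinement of penalized minimizers to a fixed compact region $W$ on which \eqref{eq:penalized-energy} is the actual metric. Here $W$ is the larger neighborhood, fixed above, that contains all horizontal minimizers between the endpoint neighborhoods; call the endpoint region $K_0\supseteq\overline S\cup Y$.

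\textbf{The inequality $c_\varepsilon\leq c$.} Fix $x,y\in K_0$ and a horizontal minimizer $\gamma$ from $x$ to $y$. By the choice of $W$, the curve $\gamma$ lies in $W$. Represent it with the minimum-norm horizontal control $u$ and with $w=0$. Its penalized energy is then $\calA_\varepsilon(u,0)=\tfrac12\norm{u}_{L^2}^2=c(x,y)$. The penalized metric on $W$ has cometric $g_{\mathcal H}^{-1}+\varepsilon g_{\mathrm R}^{-1}$, so its squared length at each point is the minimum of the integrand over representations of the velocity. Hence the energy of $\gamma$ for the completed metric is at most $c(x,y)$. Taking the infimum over curves gives $c_\varepsilon(x,y)\leq c(x,y)$. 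This step needs nothing else.

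\textbf{Confinement.} The first task is a uniform bound on penalized lengths. By the previous paragraph, $c_\varepsilon\leq c\leq D^2/2$ on $K_0\times K_0$, so every $\dist_\varepsilon$-minimizer between points of $K_0$ has $\dist_\varepsilon$-length at most $D$. The second task is to compare $\dist_\varepsilon$ with a fixed distance that does not depend on $\varepsilon$. Let $\dist_1$ be the distance of the completed metric at $\varepsilon=1$. On $W$, the cometric $g_{\mathcal H}^{-1}+\varepsilon g_{\mathrm R}^{-1}$ is at most $g_{\mathcal H}^{-1}+g_{\mathrm R}^{-1}$ for $\varepsilon<1$. Passing to the dual norms, this says the $\varepsilon$-metric dominates the $\varepsilon=1$ metric on $W$. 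I would choose the completions outside $W$ (and outside a larger compact set) so that this monotonicity in $\varepsilon$ persists. The simplest choice is an extension that is monotone in $\varepsilon$, for instance keeping the same interpolation formula with a fixed complete metric glued in by a cutoff.

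With that comparison in hand, any curve starting in $K_0$ with $\dist_\varepsilon$-length at most $D$ has $\dist_1$-length at most $D$. It therefore stays in the $\dist_1$-ball of radius $D$ around $K_0$, which is a compact set independent of $\varepsilon$. The construction of $W$ must make this ball lie inside the region where \eqref{eq:penalized-energy} holds. I would arrange this by enlarging: first define the formula on a neighborhood $W'$ large enough to contain the $\dist_1$-ball of radius $D$ around $K_0$, and only then complete the metric outside $W'$. In fact, the $\dist_1$-ball never leaves $W'$ before reaching distance $D$, because reaching the boundary of $W'$ already costs more than $D$. So the ball can be identified purely from the metric on $W'$, and the completion is never seen.

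\textbf{Costs are unaffected by the completion.} Once the minimizers are confined to $W'$, they and their energies depend only on the metric there. Hence $c_\varepsilon$ on $K_0\times K_0$ does not depend on how the metric is completed outside $W'$.

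The main obstacle is making the comparison with $\dist_1$ independent of $\varepsilon$ and of the completion. I would handle it by the order of choices described above: fix the region $W'$ by a $\dist_1$-length argument first, and complete the metric afterwards.
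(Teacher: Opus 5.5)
The part $c_\varepsilon\leq c$ is correct and is the paper's argument: horizontal minimizers lie where \eqref{eq:penalized-energy} holds and are admissible with $w=0$.

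The confinement step has a genuine gap. The monotonicity $\dist_\varepsilon\geq\dist_1$ is fine, but it moves the whole difficulty into one assertion: that the $\dist_1$-ball of radius $D$ about $K_0$ lies in a region $W'$ where \eqref{eq:penalized-energy} holds. Equivalently, every curve from $K_0$ to $\partial W'$ must have $g_1$-length greater than $D$. You assert this (``reaching the boundary of $W'$ already costs more than $D$'') but give no reason. Enlarging $W'$ cannot produce it, because $g_1$ on the enlarged region is determined by $g_{\mathrm R}$ there, and $g_1\leq g_{\mathrm R}$. If the auxiliary metric gives the relevant region small or finite diameter, as nothing prevents, the $g_1$-distance from $K_0$ to $\partial W'$ stays below $D$ for every choice of $W'$. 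With an arbitrary completion outside, the $\varepsilon=1$ minimizers can then leave the region. Your confinement argument uses neither the smallness of $\varepsilon$ nor the completeness of $\dcc$, and at $\varepsilon=1$ it simply restates the claim. The lemma, by contrast, is a small-$\varepsilon$ statement for an arbitrary auxiliary metric and an arbitrary complete extension.

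The missing idea is to use the penalty to make the nonhorizontal control small, and then compare with a horizontal curve, which is confined by completeness of the sub-Riemannian structure.
\begin{itemize}
\item Choose $E_*$ with $c+1<E_*$ on all endpoint pairs.
\item Up to its first exit from a larger neighborhood, a penalized curve of energy at most $E_*$ satisfies $\norm{u}_{L^2}\leq\sqrt{2E_*}$ and $\norm{w}_{L^2}\leq\sqrt{2E_*\varepsilon}$.
\item Deleting $w$ gives a horizontal curve $\bar\gamma$ of energy at most $E_*$, which stays in a fixed compact set by completeness.
\item Gronwall's inequality gives $\sup_t\dR(\gamma(t),\bar\gamma(t))\leq C\norm{w}_{L^1}=O(\sqrt\varepsilon)$. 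For small $\varepsilon$ this is below the margin to the boundary, so no exit occurs.
\end{itemize}
Together with $c_\varepsilon\leq c<E_*$, this confines all penalized minimizers and makes the completion irrelevant. No monotone choice of completion is needed.

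Your scheme could be rescued only by changing the setup. One would choose $g_{\mathrm R}$ globally so that the $\varepsilon=1$ metric is complete, for instance with $|\der f|_{g_{\mathrm R}^*}\leq1$ for a proper smooth $f$ with bounded horizontal gradient. The fields would then be chosen on a neighborhood of the resulting compact $\dist_1$-ball. That requires an additional argument that the proposal does not supply.
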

\begin{proof}
Choose $E_*$ such that $c(x,y)+1<E_*$ for all endpoint pairs under consideration. By completeness, original horizontal curves with energy at most $E_*$ and initial point in the endpoint region stay in a fixed
compact neighborhood. Choose successively larger open
neighborhoods of this set. Extend $F_a$ with cutoffs so that they
remain horizontal and $\sum_aF_a\otimes F_a\leq g_{\mathcal H}^{-1}$.
A curve driven only by $u$ then has horizontal energy at most
$\norm{u}_{L^2}^2/2$. 

Consider a penalized curve of energy at most $E_*$.
Up to its possible first exit from the larger neighborhood,
the energy bound yields
\[
\norm{u}_{L^2}\leq\sqrt{2E_*},\qquad
\norm{w}_{L^2}\leq\sqrt{2E_*\varepsilon}.
\] 
Delete $w$ and solve the horizontal equation with the same $u$.
The comparison path $\bar\gamma$ stays in the smaller compact set.
The smooth control equations and Gronwall's inequality yield 
\begin{equation}\label{eq:penalized-deletion}
 \sup_t\dR(\gamma(t),\bar\gamma(t))\leq C\norm{w}_{L^1}
\end{equation}
up to a possible first exit. Here $\dR$ is the distance of the
fixed auxiliary metric. Finitely many charts give the estimate on
the whole compact comparison region. For sufficiently small $\varepsilon$, the bound in
\eqref{eq:penalized-deletion} is smaller than the distance
from the compact comparison region to the boundary of the
larger neighborhood. Thus no first exit can occur.
Original horizontal minimizers are admissible with $w=0$,
so $c_\varepsilon\leq c<E_*$. Hence every penalized minimizer
satisfies the preceding energy bound and remains in the
larger neighborhood. 
\end{proof} 

\begin{proposition}[Cost approximation and normal covectors]
	\label{prop:penalized-approximation}
	For all sufficiently small $\varepsilon>0$, uniformly on the
	fixed endpoint region, 
	\begin{equation}\label{eq:linear-cost-approximation}
		0\leq c-c_\varepsilon\leq C\varepsilon.
	\end{equation}
	For every constant-speed penalized minimizer on $[0,1]$
	with endpoints in these regions, its normal covector
	$p_\varepsilon$ satisfies
	\[
	\sup_{t\in[0,1]}|p_\varepsilon(t)|_{g_{\mathrm R}^*}\leq C.
	\]
	In fixed local coordinates, the paths, covectors, and controls $(u,w)$
	have uniformly bounded time derivatives of every fixed order.
	The constants may depend on the derivative order, but not on
	$\varepsilon$ or the endpoints.
\end{proposition}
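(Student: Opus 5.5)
The plan is to prove the three assertions in order: the cost bound, the covector bound, and the derivative bounds. The covector bound will follow from the first via the first-order optimality structure.

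\textbf{Cost bound.} The lower bound $c_\varepsilon\le c$ is Lemma~\ref{lem:penalized-confinement}. For the upper bound, fix endpoints $x,y$ and a constant-speed penalized minimizer with controls $(u,w)$. Its energy is at most $c(x,y)\le E_*$, so $\|w\|_{L^2}^2\le 2E_*\varepsilon$.

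Delete $w$ as in \eqref{eq:penalized-deletion}. The resulting horizontal curve $\bar\gamma$ starts at $x$, has horizontal energy at most $\|u\|^2_{L^2}/2$, and ends at a point $\bar y$ with $\dR(\bar y,y)\le C\|w\|_{L^1}\le C\sqrt\varepsilon$. Joining $\bar y$ to $y$ costs sub-Riemannian length, but ball--box only gives $\dcc(\bar y,y)\le C\dR^{1/2}\le C\varepsilon^{1/4}$, which is too weak.

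Instead I will use that $c(x,\cdot)$ is locally Lipschitz in coordinates \cite[Corollary~6.2]{AgrachevLee2009} (see also \cite[Proposition~4.2]{FigalliRifford10}). This gives
\[
c(x,y)\le c(x,\bar y)+C\dR(\bar y,y)\le \tfrac12\|u\|^2+C\|w\|_{L^1}.
\]
Since $\|w\|_{L^1}\le\|w\|_{L^2}$ and $c_\varepsilon=\tfrac12\|u\|^2+\tfrac1{2\varepsilon}\|w\|^2$, we get
\[
c-c_\varepsilon\le C\|w\|-\tfrac1{2\varepsilon}\|w\|^2\le \tfrac{C^2\varepsilon}{2}.
\]
This is \eqref{eq:linear-cost-approximation}. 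The same inequality together with $c-c_\varepsilon\ge0$ also gives $\|w\|_{L^2}\le C\varepsilon$.

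\textbf{Covector bound.} Pontryagin's principle for the Riemannian metric with cometric $g_{\mathcal H}^{-1}+\varepsilon g_{\mathrm R}^{-1}$ gives $u_a=p_\varepsilon(F_a)$ and $w_b=\varepsilon\,p_\varepsilon(G_b)$. By Hamiltonian conservation these components are constant in norm along the curve.

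The key claim is a uniform bound on $|p_\varepsilon(0)|$. To get it, I will identify $p_\varepsilon(0)$, up to sign, with a supergradient of $c_\varepsilon(\cdot,y)$ at $x$; the differential of the cost along the minimizer is the initial covector. Then I will bound that supergradient by a Lipschitz bound on $c_\varepsilon$ that is uniform in $\varepsilon$.

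The Lipschitz bound is uniform because any Riemannian perturbation of $x$ by a vector $h$ can be realized by a horizontal steering of cost $O(|h|)$. Concretely, since $c_\varepsilon\le c$,
\[
c_\varepsilon(x',y)\le c(x',x)^{1/2}\ \text{estimates}\;\ldots
\]
but a cleaner route is to use $c(x',y)\le c(x,y)+C|x-x'|$ and $c_\varepsilon\ge c-C\varepsilon$. This only gives a Lipschitz bound up to $O(\varepsilon)$ errors, so it controls difference quotients only at scales at least $\varepsilon$.

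I therefore plan to argue as follows. The Hamiltonian gives $|u|^2+\varepsilon^{-1}|w|^2\le 2E_*$, so the horizontal components of $p_\varepsilon$ are bounded, and $|p_\varepsilon(G_b)|=|w_b|/\varepsilon$. By the first-order optimality identity, $p_\varepsilon(1)$ equals the differential of the endpoint cost. The bound on $\|w\|_{L^2}\le C\varepsilon$ from Step 1 gives the $L^2$ average bound $|p_\varepsilon(G)|\lesssim1$. Since the adjoint equation is linear with controls bounded in $L^2$, Gronwall upgrades this average bound to a sup bound. This step, converting the $O(\varepsilon)$ bound on $w$ into a uniform covector bound, is the main obstacle. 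It requires the $L^2$ bound on $w$ to be sharp at order $\varepsilon$, not $\sqrt\varepsilon$, which is exactly why the Lipschitz cost estimate in Step 1 is essential.

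\textbf{Higher derivatives.} Once $p_\varepsilon$ is bounded, the controls $u=p(F)$ and $w=\varepsilon p(G)$ are bounded, so the Hamiltonian vector field is smooth with $\varepsilon$-independent bounds on the bounded set of covectors. Differentiating Hamilton's equations repeatedly then bounds all time derivatives of paths, covectors, and controls, with constants depending only on the order.
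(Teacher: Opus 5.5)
Your proposal is correct and follows the paper's proof. You delete $w$ and use the coordinate Lipschitz bound on $c$ to get $c-c_\varepsilon\le C\varepsilon$ and $\|w\|_{L^2}\le C\varepsilon$. This is an $L^2$ bound on $p_\varepsilon$, which Gronwall on the linear adjoint equation upgrades to a sup bound, and differentiating Hamilton's equations then bounds the time derivatives. You can delete the abandoned supergradient detour and the claim that each component is constant in norm, since only the total $|u|^2+\varepsilon^{-1}|w|^2$ is conserved and your final argument uses neither.
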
 
\begin{proof}
Let $(u,w)$ be the controls of a penalized minimizer, chosen as above,
from $x$ to $y$, and put $b=\norm{w}_{L^2}$.
By \eqref{eq:penalized-deletion}, deleting $w$ gives a horizontal
path ending at $y'$ with $\dR(y,y')\leq Cb$.
The energy bound implies $b=O(\sqrt\varepsilon)$, so $y'$ remains
in a fixed compact neighborhood of the target region.
By the uniform coordinate Lipschitz bound for $c$, 
\[
 c(x,y)\leq c(x,y')+Bb
 \leq\tfrac12\norm{u}_{L^2}^2+Bb
 =c_\varepsilon(x,y)-\tfrac{b^2}{2\varepsilon}+Bb.
\]
Combining with $c_\varepsilon\leq c$, we obtain 
\[
 0\leq c(x,y)-c_\varepsilon(x,y)
 \leq Bb-\frac{b^2}{2\varepsilon}
 \leq\frac{B^2}{2}\varepsilon. 
\]

\medskip
For the normal covector $p_\varepsilon$, these controls
satisfy $u_a=p_\varepsilon(F_a)$ and
$w_b=\varepsilon p_\varepsilon(G_b)$. By the preceding inequality,
\[
 b\leq2B\varepsilon,\qquad
\int_0^1|p_\varepsilon|_{g_{\mathrm R}^*}^2\d t
=\varepsilon^{-2}b^2\leq4B^2.
\] 
By the adjoint equation and the uniform $L^1$ bounds on the
controls, Gronwall's inequality yields
\[
\sup_{t\in[0,1]}|p_\varepsilon(t)|_{g_{\mathrm R}^*}
\leq C\left(
\int_0^1|p_\varepsilon|_{g_{\mathrm R}^*}^2\d t
\right)^{1/2}
\leq C.
\]
On the resulting compact subset of $T^*M$, the Hamiltonians
$\frac12\sum_a p(F_a)^2+\frac\varepsilon2\sum_b p(G_b)^2$
have uniformly bounded derivatives of every fixed order.
By differentiating Hamilton's equations and using the
identities for $u$ and $w$, we obtain the asserted
bounds on time derivatives. 
\end{proof}

\subsubsection{Endpoint corrections}
Write $U=(u,w/\sqrt\varepsilon)$, and let $E_\varepsilon(U)$ be
the endpoint map with fixed initial point. Its fields are
$(F_a,\sqrt\varepsilon G_b)$ and its energy is $\norm{U}_{L^2}^2/2$.
Let $P(1,t)$ be the differential of the flow from time $t$ to time one, with the
control fixed,
along the minimizer. Set
$A_a(t)=P(1,t)F_a(\gamma(t))$ and
$B_b(t)=P(1,t)G_b(\gamma(t))$. Then
\[
 DE_\varepsilon(U)(h,k)
 =\sum_a\int_0^1A_a(t)h_a(t)\d t
  +\sqrt\varepsilon\sum_b\int_0^1B_b(t)k_b(t)\d t.
\]
Write $L_Gk=\sum_b\int_0^1B_bk_b\d t$.
In this subsection, adjoints and operator norms use the Euclidean
inner product in the fixed endpoint coordinates and the usual
$L^2$ inner product on controls.
This flow differential and its inverse are uniformly bounded.
The cometric identity for $G_b$ yields $L_GL_G^*\geq c\Id$, and hence 
\begin{equation}\label{eq:penalized-right-inverse}
 Q_\varepsilon=
 (0,\varepsilon^{-1/2}L_G^*(L_GL_G^*)^{-1}),
 \qquad DE_\varepsilon(U)Q_\varepsilon=\Id,
 \qquad \norm{Q_\varepsilon}\leq C\varepsilon^{-1/2}.
\end{equation}
The first two derivatives of the endpoint map are uniformly bounded
on a fixed $L^2$ neighborhood, by their variational equations.
The right inverse $Q_\varepsilon$ gives only a squared control norm
of order $\varepsilon^{-1}$. To improve this bound, we use a horizontal
variation with vanishing time moments and correct only its endpoint error.

\begin{lemma}[Horizontal endpoint variations]\label{lem:moment-correction}
Fix the finite collection of smooth unit horizontal fields used
for the source paths. For every integer $N\geq0$, each field $X$,
and each penalized minimizer ending at $x$, there exists a control variation $q_X$ such that
\begin{equation}\label{eq:moment-correction}
 DE_\varepsilon(U)q_X=X(x),\qquad
 \norm{q_X}_{L^2}^2\leq C_N\varepsilon^{-1/(2N+3)}.
\end{equation}
\end{lemma}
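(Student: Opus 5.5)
We build $q_X$ as a horizontal control supported on a short terminal interval, with vanishing time moments, and then correct its endpoint error with the right inverse $Q_\varepsilon$ of \eqref{eq:penalized-right-inverse}, balancing the two contributions by the length of the interval.

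\emph{Step 1: expansion of the horizontal part.} Write $X=\sum_a\xi_aF_a$ near $x$ with smooth bounded $\xi_a$; this is possible because $\sum_aF_a\otimes F_a=g_{\mathcal H}^{-1}$. For $0<\delta\leq1$, take a horizontal control $h$ supported on $[1-\delta,1]$,
\[
 h_a(t)=\delta^{-1}\xi_a(x)\,m\!\left(\tfrac{t-(1-\delta)}{\delta}\right),
\]
with $m$ a fixed smooth profile on $[0,1]$ satisfying $\int_0^1m=1$ and $\int_0^1\tau^jm(\tau)\,\d\tau=0$ for $1\leq j\leq N+1$. Then $\norm{h}_{L^2}^2\leq C_N\delta^{-1}$. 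By Proposition~\ref{prop:penalized-approximation}, $t\mapsto A_a(t)=P(1,t)F_a(\gamma(t))$ has time derivatives of every fixed order bounded uniformly in $\varepsilon$, and $A_a(1)=F_a(x)$. Taylor-expand $A_a$ at $t=1$ to order $N+1$; the moment conditions cancel all terms except the zeroth. Hence
\[
 \sum_a\int_0^1A_ah_a\,\d t=X(x)+\mathrm{err},\qquad |\mathrm{err}|\leq C_N\delta^{N+2}.
\]

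\emph{Step 2: correction and balance.} Set $q_X=(h,0)-Q_\varepsilon\,\mathrm{err}$. Then $DE_\varepsilon(U)q_X=X(x)$ exactly, and
\[
 \norm{q_X}_{L^2}^2\leq C_N\bigl(\delta^{-1}+\varepsilon^{-1}\delta^{2N+4}\bigr).
\]
Choosing $\delta=\varepsilon^{1/(2N+5)}$ gives order $\varepsilon^{-1/(2N+5)}$, which is better than the claimed $\varepsilon^{-1/(2N+3)}$. With only $N$ vanishing moments one gets $\varepsilon^{-1/(2N+3)}$ exactly, so the indexing is a matter of convention and either version suffices. For $\varepsilon$ bounded below, take $\delta=1$.

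\emph{Main obstacle.} The delicate point is that the Taylor constants of $A_a$ near $t=1$ must be uniform in $\varepsilon$. This requires uniform $C^{N+2}$ bounds on $t\mapsto P(1,t)$ and on $\gamma$. The flow $P(1,t)$ solves the linearized equation driven by $u$ and by $w$, whose fields carry the factor $\sqrt\varepsilon$ or the control $w=\varepsilon p_\varepsilon(G_b)$. Time derivatives of $u$ and $w$ are bounded uniformly by Proposition~\ref{prop:penalized-approximation}, so differentiating the variational equation gives the required bounds. The bounds depend on $N$ only through the derivative order. Uniformity of the frame coefficients $\xi_a$ comes from taking the finitely many fields on fixed compact sets.
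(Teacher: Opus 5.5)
Your strategy is the paper's: a horizontal variation concentrated on $[1-\delta,1]$ with vanishing time moments, a correction of the residual by $Q_\varepsilon$ from \eqref{eq:penalized-right-inverse}, and a choice of $\delta$ that balances the two norms. Using $N+1$ moments instead of $N$ is a harmless reindexing. However, the cancellation step fails as you wrote it, because your moments are centred at the wrong endpoint.

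With $\sigma=(t-(1-\delta))/\delta$, the value $\sigma=0$ corresponds to $t=1-\delta$. The conditions $\int_0^1m=1$ and $\int_0^1\sigma^jm(\sigma)\,\d\sigma=0$ for $1\leq j\leq N+1$ therefore give $\int_0^1P\,m=P(0)$ for every polynomial $P$ of degree at most $N+1$. In particular,
\[
 \int_0^1(t-1)^jh_a(t)\,\d t
 =\xi_a(x)\,\delta^j\int_0^1(\sigma-1)^jm(\sigma)\,\d\sigma
 =\xi_a(x)(-\delta)^j\neq0 .
\]
So the Taylor expansion at $t=1$ does not cancel. It returns $\sum_a\xi_a(x)A_a(1-\delta)+O(\delta^{N+2})$, and the discrepancy from $X(x)$ is $-\delta\sum_a\xi_a(x)A_a'(1)+O(\delta^2)$.

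This discrepancy is genuinely of order $\delta$. Up to sign, $\sum_a\xi_a(x)A_a'(1)$ is the bracket at $x$ of the terminal velocity field $\sum_bu_b(1)F_b+\sum_bw_b(1)G_b$ with $\sum_a\xi_a(x)F_a$. Its transverse part is essentially the Levi form of the terminal horizontal velocity and $X(x)$, which is nonzero for generic $X$. Your claimed bound $|\mathrm{err}|\leq C_N\delta^{N+2}$ is therefore false. With an $O(\delta)$ residual, the balance $\delta^{-1}+\varepsilon^{-1}\delta^2$ gives only $\varepsilon^{-1/3}$ for every $N$. So your argument proves only the case $N=0$, whereas Theorem~\ref{thm:potential-stability}\textup{(ii)} needs the exponents $1/(2N+3)\to0$.

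The fix is to anchor the profile at the terminal time. Use $m\bigl((1-t)/\delta\bigr)$, as the paper does with $k_N\bigl((1-t)/\tau\bigr)$. Equivalently, require $\int_0^1(1-\sigma)^jm(\sigma)\,\d\sigma=0$ for $1\leq j\leq N+1$. Then $\int_0^1(t-1)^jh_a\,\d t=0$ for these $j$, and the Taylor terms at $t=1$ cancel against the uniform derivative bounds on $A_a$ from Proposition~\ref{prop:penalized-approximation}. Your correction $q_X=(h,0)-Q_\varepsilon\,\mathrm{err}$ and your choice of $\delta$ then go through exactly as in the paper.
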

\begin{proof}
Choose a polynomial $k_N$ of degree at most $N$ satisfying $\int_0^1k_N=1$ and $\int_0^1s^jk_N(s)\d s=0$ for $1\leq j\leq N$. 
Such a polynomial exists since the moment matrix $\bigl(\int_0^1 s^{i+j}\,\d s\bigr)_{i,j=0}^N$ is positive definite. Write $X(x)=\sum_a\xi_aF_a(x)$ with
bounded $\xi$. For $0<\tau<1$ define
\[
 h_a(t)=\frac{\xi_a}{\tau}
 k_N\left(\frac{1-t}{\tau}\right)\one_{[1-\tau,1]}(t).
\]
By Proposition~\ref{prop:penalized-approximation}, the derivatives
of $A_a$ of every fixed order are uniformly bounded.
Since $A_a(1)=F_a(x)$, Taylor expansion at $t=1$ and the
moment conditions on $k_N$ yield 
\[
DE_\varepsilon(U)(h,0)=X(x)+r_\tau,\qquad
|r_\tau|\leq C_N\tau^{N+1}.
\]
The definition of $h$ also gives
$\norm{h}_{L^2}^2\leq C_N/\tau$.
Set $q_X=(h,0)-Q_\varepsilon r_\tau$. Then 
\[
 \norm{q_X}_{L^2}^2\leq C_N
 (\tau^{-1}+\varepsilon^{-1}\tau^{2N+2}).
\]
Taking $\tau=\varepsilon^{1/(2N+3)}$ proves the result. 
\end{proof}

\begin{proposition}[Directional bounds for the penalized cost]
\label{prop:penalized-directional}
For every $N\geq0$, the cost $c_\varepsilon$ satisfies
\eqref{eq:general-directional-bounds} with
\[
 L_1\leq C,\qquad L_2\leq C_N\varepsilon^{-1/(2N+3)}. 
\]
The bounds hold along all the selected flows, including at cut
points and diagonal points.
\end{proposition}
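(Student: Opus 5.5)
The plan is to bound $c_\varepsilon(\Phi_X^s x,y)$ from above near each $s$ by an explicit competitor whose energy is a smooth function of $s$. Its second derivative is then controlled by Lemma~\ref{lem:moment-correction}.

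\textbf{First derivative.} The bound on $\partial_s c_\varepsilon$ follows as in Lemma~\ref{lem:transport-foundations}. Since $X$ is a unit horizontal field and $\sum_aF_a\otimes F_a=g_{\mathcal H}^{-1}$, moving along the flow by $|h|$ costs $\dist_\varepsilon$-length at most $|h|$. Indeed, $X$ has length at most one in the penalized metric, because $g_{\mathcal H}^{-1}+\varepsilon g_{\mathrm R}^{-1}\geq g_{\mathcal H}^{-1}$ only enlarges the cometric. Hence $s\mapsto\dist_\varepsilon(\Phi_X^sx,y)$ is $1$-Lipschitz. Lemma~\ref{lem:penalized-confinement} gives $c_\varepsilon\leq c\leq D^2/2$, so $L_1\leq C$ uniformly in $\varepsilon$.

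\textbf{Second derivative.} Fix $s_0$, put $z=\Phi_X^{s_0}x$, and take a constant-speed penalized minimizer from $y$ to $z$ with controls $U$. Its reversal is again a minimizer, so it is convenient to put the variable endpoint at time one. Define the competitor
\[
 \Gamma(h)=\tfrac12\norm{U+h\,q_X+V(h)}_{L^2}^2 ,
\]
where $q_X$ is the variation of Lemma~\ref{lem:moment-correction}, which satisfies $DE_\varepsilon(U)q_X=X(z)$. The correction $V(h)=Q_\varepsilon a(h)$ is chosen so that $E_\varepsilon(U+hq_X+Q_\varepsilon a(h))=\Phi_X^{h}z$.

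\textbf{Size of the correction.} Expanding to second order gives an endpoint error of order $h^2(\norm{q_X}^2+1)$, since $\Phi_X^h z=z+hX(z)+O(h^2)$ and $D^2E_\varepsilon$ is uniformly bounded on a fixed $L^2$ ball. Applying the quantitative inverse function theorem along $Q_\varepsilon$ therefore gives
\[
 a(0)=a'(0)=0,\qquad |a''|\leq C(1+\norm{q_X}^2).
\]

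\textbf{Energy expansion.} Since $c_\varepsilon(\Phi_X^{s_0+h}x,y)\leq\Gamma(h)$ with equality at $h=0$, it suffices to bound $\Gamma''(0)$. Differentiating twice gives
\[
 \Gamma''(0)=\norm{q_X}^2+\langle U,Q_\varepsilon a''(0)\rangle .
\]
First-order optimality says $U=DE_\varepsilon(U)^*p_\varepsilon(1)$, where $p_\varepsilon$ is the normal covector. Combined with $DE_\varepsilon Q_\varepsilon=\Id$, this gives
\[
 \langle U,Q_\varepsilon a''\rangle=p_\varepsilon(1)\cdot a''.
\]
This is exactly how first-order optimality removes the bad factor $\norm{Q_\varepsilon}\sim\varepsilon^{-1/2}$. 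Proposition~\ref{prop:penalized-approximation} bounds $p_\varepsilon(1)$ uniformly, so
\[
 \Gamma''(0)\leq C(1+\norm{q_X}^2)\leq C_N\varepsilon^{-1/(2N+3)} .
\]

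\textbf{Main obstacle: uniformity of the second derivative.} The hard part is making the bound on $\Gamma''$ hold uniformly on an $h$-interval of fixed size, not just at $h=0$, so that $\Gamma$ is a genuine $C^2$ upper support. The inverse-function radius is the issue. Perturbations of size $h\norm{q_X}\sim h\varepsilon^{-1/(4N+6)}$ must stay inside the fixed $L^2$ ball where $D^2E_\varepsilon$ is bounded. Moreover, $Q_\varepsilon a$ must remain small, since $|a|\lesssim h^2\norm{q_X}^2$ while $\norm{Q_\varepsilon}\sim\varepsilon^{-1/2}$.

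I would handle this by working on a window $|h|\leq\varepsilon^{\beta}$ with $\beta$ chosen large enough. Within that window the argument above gives a uniform one-sided bound on $\Gamma''$. Semiconcavity in distributions is a local property, and this is the characterization of semiconcavity by local upper supports used in Proposition~\ref{prop:fat-directional}. An $\varepsilon$-dependent window therefore suffices.

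\textbf{Degenerate pairs.} At diagonal points, where $z=y$, the minimizer is constant. In that case $U=0$ and there is no issue: one uses the direct bound $c_\varepsilon(\Phi_X^{h}z,z)\leq h^2/2$, exactly as in the diagonal argument of Proposition~\ref{prop:fat-directional}. Cut points need no separate treatment, since the construction uses only one minimizer at a time.
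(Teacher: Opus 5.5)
Your proposal is correct and follows essentially the paper's proof. It uses the same upper support built from the moment-corrected variation $q_X$ plus a correction through $Q_\varepsilon$ (the paper packages this as $\widehat Q_\varepsilon$), the same first-order optimality $U=DE_\varepsilon(U)^*p$ to turn $\langle U,Q_\varepsilon a''(0)\rangle$ into $p\cdot a''(0)$, and the same one-dimensional local upper-support criterion. Your ``main obstacle'' is not actually needed: continuity of $\Gamma''$ at $h=0$ already gives $\Gamma(h)\leq\Gamma(0)+\Gamma'(0)h+\tfrac12(\Gamma''(0)+\delta)h^2$ on some neighborhood of $0$, and letting $\delta\downarrow0$ after applying the criterion is exactly how the paper concludes, with no uniform bound on $\Gamma''$ over an $\varepsilon$-dependent window.
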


\begin{proof}
	Since $X$ is unit horizontal and $\dist_\varepsilon\leq\dcc$,
	\[
	\dist_\varepsilon(\Phi_X^s x,\Phi_X^t x)\leq |s-t|.
	\]
	The triangle inequality and the uniform distance bound yield $L_1\leq C$.
	
	Reverse a penalized minimizer so that it ends at $x$, and
	let $U=(u,w/\sqrt\varepsilon)$ be its control in the rescaled variables. Choose $q_X$ from
	Lemma~\ref{lem:moment-correction} and a linear functional
	$\ell$ with $\ell(X(x))=1$. Set
	\[
	\widehat Q_\varepsilon
	=Q_\varepsilon+
	(q_X-Q_\varepsilon X(x))\otimes\ell.
	\]
	Then $DE_\varepsilon(U)\widehat Q_\varepsilon=\Id$ and
	$\widehat Q_\varepsilon X(x)=q_X$.
	The inverse function theorem applied to
	$a\mapsto E_\varepsilon(U+\widehat Q_\varepsilon a)$ yields a $C^2$ curve of controls satisfying $E_\varepsilon(U(s))=\Phi_X^s x$, $U(0)=U$, and $U'(0)=q_X$. 
	Thus $S(s)=\norm{U(s)}_{L^2}^2/2$ is an upper support for
	$f(s):=c_\varepsilon(\Phi_X^s x,y)$ at $s=0$.

	\medskip
	Let $p$ be the terminal normal covector. By first-order optimality
	under the endpoint constraint, $U=DE_\varepsilon(U)^*p$.
	In fixed endpoint coordinates, differentiating the constraint
	twice yields 
	\[
	DE_\varepsilon(U)U''(0)
	+D^2E_\varepsilon(U)[q_X,q_X]
	=\left.\frac{\der^2}{\der s^2}\Phi_X^s x\right|_{s=0}.
	\]
	Hence
	\[
	\begin{aligned}
		S''(0)
		=\norm{q_X}_{L^2}^2+\langle U,U''(0)\rangle_{L^2}=\norm{q_X}_{L^2}^2-p\bigl(D^2E_\varepsilon(U)[q_X,q_X]\bigr)+p\left(
		\left.\frac{\der^2}{\der s^2}\Phi_X^s x\right|_{s=0}
		\right).
	\end{aligned}
	\] 
	By the uniform bounds on $p$, $D^2E_\varepsilon(U)$,
	and the flow acceleration, together with
	Lemma~\ref{lem:moment-correction},
	\[
	S''(0)\leq C(1+\norm{q_X}_{L^2}^2)
	\leq C_N\varepsilon^{-1/(2N+3)}=:K.
	\]

	\medskip
	For every $\delta>0$, continuity of $S''$ gives, near zero,
	\[
	f(s)\leq S(s)
	\leq f(0)+S'(0)s+\tfrac12(K+\delta)s^2.
	\]
	The same construction applies at every point of the flow interval.
	Hence $f(s)-(K+\delta)s^2/2$ admits a local affine upper support
	at every point and is concave by the one-dimensional
	criterion that a continuous function with a local affine upper
	support at every point is concave. Thus $f''\leq K+\delta$ in
	distributions. Letting $\delta\downarrow0$ proves the bound.
	The argument also applies when $x=y$, with $U=0$ and $p=0$. 
\end{proof}

\begin{proof}[Proof of Theorem~\ref{thm:potential-stability}\textup{(ii)}]
First take arbitrary $\theta_0,\theta_1\in C(Y)$ with
$\osc_Y(\theta_1-\theta_0)\leq R_0$. By Theorem~\ref{thm:directional-jensen}
and Proposition~\ref{prop:penalized-directional}, we obtain 
\[
 \Var_\rho(\theta_1^{c_\varepsilon}-\theta_0^{c_\varepsilon})
 \leq C_{N,R_0}\varepsilon^{-1/(2N+3)}
 J_{c_\varepsilon}(\theta_0,\theta_1).
\]
Set $J=J_c(\theta_0,\theta_1)$. By \eqref{eq:linear-cost-approximation} and
Proposition~\ref{prop:cost-comparison}, 
\[
 \Var_\rho(\theta_1^c-\theta_0^c)\leq C_{N,R_0}\varepsilon^{-1/(2N+3)}(J+\varepsilon)
       +C\varepsilon^2.
\]
By the rate estimate in Section~\ref{sec:cost-approximation}
with $\eta=1/(2N+3)$, we obtain
\begin{equation}\label{eq:subcritical-jensen}
 \Var_\rho(\theta_1^c-\theta_0^c)
 \leq C_{N,R_0}J_c(\theta_0,\theta_1)^{(2N+2)/(2N+3)}.
\end{equation}
For target Kantorovich potentials, take $R_0=2D^2$.
By \eqref{eq:optimal-jensen-gap} and
Proposition~\ref{prop:potential-conversion}, we obtain uniqueness
and stability with exponent $(N+1)/(2N+3)$.
These exponents increase to $1/2$; since $W_1$ is bounded,
every $0<\alpha<1/2$ follows by choosing $N$ sufficiently large. 
\end{proof}

\Needspace{12\baselineskip}
\subsection{Quantitative approximation at arbitrary step}\label{sec:general-carnot-potentials}
\subsubsection{Carnot groups}
We now replace the directional second-order bounds by quantitative
first-order approximation. Throughout this subsubsection, $G$ is a
Carnot group of arbitrary step, with Haar measure $\mm$ and
homogeneous dimension
\[
 Q=\sum_{j=1}^s j\dim V_j.
\]
The source domain $S$ is bounded and John with respect to the
Carnot--Carath\'eodory distance,
$Y\subset G$ is compact, and
$a_1\mm\restr S\leq\rho\leq a_2\mm\restr S$.
We retain the notation $J_c,V_c$ from
\eqref{eq:potential-gap-variance}.

\begin{theorem}[Logarithmic coercivity]\label{thm:carnot-log-coercivity}
There exist $C,J_0>0$, depending only on the fixed group, source,
target region, and density bounds, such that, for every
$\theta_0,\theta_1\in C(Y)$,
\begin{equation}\label{eq:carnot-log-coercivity}
 V_c(\theta_0,\theta_1)
 \leq C\left[\log\left(e+\frac{J_0}{J_c(\theta_0,\theta_1)}\right)
       \right]^{-1/(Q+2)}.
\end{equation}
The right-hand side is defined to be zero when $J_c=0$.
Consequently the normalized source potential is unique for every
$\mu\in\PP(Y)$, and, with $J_0$ as above,
\begin{equation}\label{eq:carnot-log-stability}
 \norm{\phi_{\mu_1}-\phi_{\mu_0}}_{L^2(\rho)}
 \leq C\left[\log\left(e+\frac{2J_0}{D W_1(\mu_0,\mu_1)}\right)
       \right]^{-1/[2(Q+2)]}.
\end{equation}
\end{theorem}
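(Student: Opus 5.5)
The plan is to prove \eqref{eq:carnot-log-coercivity} for arbitrary $\theta_0,\theta_1\in C(Y)$ and then deduce \eqref{eq:carnot-log-stability} and uniqueness from Proposition~\ref{prop:potential-conversion}. The modulus $\Omega(J)=C[\log(e+J_0/J)]^{-1/(Q+2)}$ is nondecreasing with $\Omega(0)=0$, and \eqref{eq:optimal-jensen-gap} gives $J_c(\psi_0,\psi_1)\leq\frac D2W_1$, which converts $J_0/J$ into $2J_0/(DW_1)$. We may restrict to $\osc_Y(\theta_1-\theta_0)\leq 2D^2$ after normalizing, since for large $J$ the bound follows from the trivial estimate $V_c\leq C$. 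Write $\phi_\lambda=\theta_\lambda^c$ for $\lambda\in\{0,\frac12,1\}$, $u=\phi_1-\phi_0$, and $\beta=\phi_{1/2}-\frac12(\phi_0+\phi_1)\geq0$, so that $\int_S\beta\,\d\rho=J:=J_c(\theta_0,\theta_1)$. All $\phi_\lambda$ are $D$-Lipschitz by Lemma~\ref{lem:transport-foundations}.

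\textbf{Step 1: common approximate contact.} If $y$ attains the minimum defining $\phi_{1/2}(x)$, then adding the two inequalities $\phi_i(x)\leq c(x,y)-\theta_i(y)$ gives
\[
R_0(x,y)+R_1(x,y)=2\beta(x).
\]
Thus $y$ is a common $2\beta(x)$-approximate contact point for both potentials. Consequently, for $z$ near $x$ and $i=0,1$,
\[
\phi_i(z)-\phi_i(x)\leq c(z,y)-c(x,y)+2\beta(x),
\]
with the same upper function for $i=0$ and $i=1$. Exchanging the roles of two such points $x,x'$ gives the reverse inequality with the contact point $y'$ of $x'$.

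\textbf{Step 2: horizontal-affine approximation.} On a ball $B=B_{\mathrm{cc}}(p,r)$ with $\kappa B\subseteq S$, approximate each $\phi_i$ by a function $\ell_i$ that is affine in the horizontal exponential coordinates, with normalized $L^1$ error $\epsilon_i(B)\,r$. Comparing the two contact inequalities at many pairs $x,x'\in B$ with small $\beta$ (Chebyshev), the common upper functions cancel in the difference. This should yield a bound of the form
\[
|\nabla_{\mathcal H}\ell_1-\nabla_{\mathcal H}\ell_0|
\leq C\Bigl(\epsilon_0(B)+\epsilon_1(B)+\bigl(r^{-1-Q}\textstyle\int_{\kappa B}\beta\,\d\mm\bigr)^{\gamma}\Bigr)
\]
for some power $\gamma$; this is the content I expect Lemma~\ref{lem:common-approximate-contact} to have. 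It follows that $u$ is close in $L^1(B)$ to a constant, up to error $r$ times the right-hand side.

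This step is the main obstacle. The cost increments $c(z,y)-c(x,y)$ are only $O(d(z,x))$, not small, and at step $s>2$ there is no semiconcavity, even horizontally. So the cancellation must come entirely from the affine structure of $\ell_i$ along horizontal lines through the ball, tested in two opposite directions. I would first try to prove it along single horizontal lines and integrate with the path averaging of Lemma~\ref{lem:averaged-paths}.

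\textbf{Step 3: choosing scales.} Dorronsoro's theorem on Carnot groups \cite[Theorem~34]{Ryoo2026} gives a square-summable bound over dyadic scales for the affine-approximation errors of a Lipschitz function:
\[
\sum_k\int\epsilon_i(B(x,2^{-k}))^2\,\d x\leq CD^2 .
\]
Among $K$ consecutive dyadic scales between $r_1$ and $r_0$, there is therefore a scale where both errors have $L^2$ average at most $CK^{-1/2}$. At that scale, Step~2 makes $u$ locally almost constant with total error of order
\[
K^{-1/2}+r^{-Q-1}J,\qquad r\geq 2^{-K}.
\]
Smoothing $u$ at that scale, applying a Poincar\'e inequality, and summing local variances with Lemma~\ref{lem:john-gluing} gives
\[
V_c\lesssim K^{-a}+e^{CK}J
\]
for some exponent $a$. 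Choosing $K\sim c\log(J_0/J)$ gives the logarithmic modulus. I expect the bookkeeping of the smoothing and Chebyshev losses against $r^Q$ to produce exactly the exponent $1/(Q+2)$, although the precise value depends on the form of Step~2.
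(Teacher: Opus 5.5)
Your architecture matches the paper's: the midpoint minimizer $y$ gives contact errors with $e_0+e_1=2\gamma(x)$ (your $2\beta(x)$), followed by horizontal-affine approximation, a Dorronsoro scale selection, smoothing, the John--Poincar\'e inequality, and Proposition~\ref{prop:potential-conversion}. The gap is Step~2. That step carries the whole proof, and the mechanism you propose for it would not work.

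The contact inequalities $\phi_i(z)-\phi_i(x)\leq c(z,y)-c(x,y)+e_i$ are one-sided, so nothing cancels when you subtract the cases $i=0$ and $i=1$. Exchanging $x$ with a second point $x'$ brings in the other contact point $y'$. The bound on $u(x')-u(x)$ then contains the cross-difference $c(x',y)-c(x,y)-c(x',y')+c(x,y')$, which is only $O(\dcc(x,x'))$. Without semiconcavity, or control of $\dcc(y,y')$ (essentially the map regularity you are trying to avoid), this term is not $o(r)$, so Chebyshev over pairs does not help. Testing along one horizontal line in the directions $\pm u$ only gives $|\langle p_i,u\rangle|\leq\dcc(x,y)+O(r+\varepsilon_i+e_i/r)$. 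That is a size bound, not a determination of the slope, so path averaging does not close the argument either.

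The missing idea is that the common point $y$ fixes each approximate slope separately, so no cancellation between $i=0$ and $i=1$ is needed. Suppose $\sup_{B(x,r)}|\phi_i-A_i|\leq\varepsilon_i r$ with $A_i(z)=a_i+\langle p_i,\pi_1(x^{-1}z)\rangle$, and put $d=\dcc(x,y)$ and $t_i=r/2+2\varepsilon_i+e_i/r$.
\begin{itemize}
\item Testing at $z=x\exp(ru)$, where $\dcc(z,y)\leq d+r$, gives $|p_i|\leq d+t_i$.
\item When $d\geq r$, test at the point of length $r$ on a minimizing curve from $x$ to $y$. There $\dcc(z,y)=d-r$ exactly and $v=\pi_1(x^{-1}z)/r$ satisfies $|v|\leq1$, which gives $\langle p_i,v\rangle\leq-d+t_i$.
\item Expanding the square yields $|p_i+dv|^2\leq4dt_i+t_i^2$, hence $|p_1-p_0|^2\leq C(r+\varepsilon_0+\varepsilon_1+\gamma(x)/r)$ pointwise in $x$.
\end{itemize}
This is Lemma~\ref{lem:common-approximate-contact}. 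It uses only the triangle inequality and the exact decrease along a minimizer, so it holds at every step and with abnormal minimizers. Note the square on the left: the two constraints cut out a cap of diameter about $\sqrt{d\,t_i}$, so the linear bound you anticipate is more than this argument delivers.

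This also changes your Step~3 in two ways.
\begin{itemize}
\item The affine errors must be in sup norm, because the contact inequality is tested at particular points; $L^1$ errors are not enough. For Lipschitz functions, the conversion $\varepsilon_f(x,r)^{Q+2}\leq CL^Qb_f(x,r)^2$ in \eqref{eq:affine-uniform-from-L2}, combined with H\"older's inequality, is exactly what produces the exponent $1/(Q+2)$.
\item No Chebyshev step is needed. Integrating $\gamma(x)/r$ gives $J/r$, and the window $a=r_0\tau^{1/2}$, $b=r_0\tau^{1/4}$ with $\tau=J/J_0$ finishes the argument.
\end{itemize}

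Finally, normalization does not justify your reduction to $\osc_Y(\theta_1-\theta_0)\leq2D^2$, since oscillation is invariant under adding constants. The reduction is also unnecessary: the $\phi_i=\theta_i^c$ are uniformly Lipschitz on $S$ for arbitrary $\theta_i$, which gives the trivial bound when $J$ is large.
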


The proof uses first-order approximation in the source variable
and allows arbitrary targets and abnormal minimizing curves.
The first lemma bounds the difference between the horizontal
gradients of two affine approximations when the corresponding
functions satisfy contact inequalities with the same target point
$y$ and with errors $e_0,e_1$.
Let $\pi_1:G\to V_1$ be projection onto the first layer in
exponential coordinates. This identifies the abelian quotient
$G/[G,G]$ with the additive group $V_1$, so $\pi_1$ is a group
homomorphism and
$|\pi_1(x^{-1}z)|\leq\dcc(x,z)$. In this subsection, we identify
horizontal gradients with their components in the left-invariant
orthonormal frame.

\begin{lemma}[Gradient comparison from contact inequalities]\label{lem:common-approximate-contact}
Fix bounds $D_*,L_*>0$. Let $x,y\in G$, with
$\dcc(x,y)\leq D_*$, and $0<r\leq1$.
For $i=0,1$, suppose that $f_i$ and a function affine in $\pi_1(x^{-1}z)$,
\[
 A_i(z)=a_i+\langle p_i,\pi_1(x^{-1}z)\rangle
\]
satisfy $|p_i|\leq L_*$,
\[
 \sup_{B(x,r)}|f_i-A_i|\leq\varepsilon_i r,
 \qquad
 f_i(z)-f_i(x)\leq c(z,y)-c(x,y)+e_i
 \quad(z\in B(x,r)),
\]
where $e_i,\varepsilon_i\geq0$. Then
\begin{equation}\label{eq:common-contact-slopes}
 |p_1-p_0|^2
 \leq C\left(r+\varepsilon_0+\varepsilon_1
                +\frac{e_0+e_1}{r}\right),
\end{equation}
where $C$ depends only on $D_*,L_*$.
\end{lemma}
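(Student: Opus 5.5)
The plan is to reduce \eqref{eq:common-contact-slopes} to a one-dimensional comparison along a horizontal line through $x$, tested in the single direction $v=(p_1-p_0)/|p_1-p_0|$. We may assume $p_1\neq p_0$, and by left invariance $x=e$. Write $g(z)=c(z,y)-c(x,y)$.

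\textbf{Step 1 (affine contact).} Combining $\sup_{B(x,r)}|f_i-A_i|\le\varepsilon_i r$ with the contact inequality gives, for $z\in B(x,r)$,
\[
 \langle p_i,\pi_1(x^{-1}z)\rangle\le g(z)+e_i+2\varepsilon_i r .
\]

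\textbf{Step 2 (two opposite test points).} Take $z_\pm=x\cdot[\pm sv,0]$ with $0<s<r$. Then $\pi_1(x^{-1}z_\pm)=\pm sv$ and $\dcc(x,z_\pm)=s<r$. Apply the inequality of Step 1 with $i=1$ at $z_+$ and with $i=0$ at $z_-$, and add. This gives
\[
 s|p_1-p_0|\le \Delta_s+e_0+e_1+2(\varepsilon_0+\varepsilon_1)r,
\]
where $\Delta_s=c(z_+,y)+c(z_-,y)-2c(x,y)$ is the horizontal second difference of the cost.

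\textbf{Step 3 (the main obstacle: a second-difference bound).} I need
\[
 \Delta_s\le C(D_*)\,s^{3/2}\qquad\text{for }0<s\le1,\ \dcc(x,y)\le D_*,
\]
valid at arbitrary step and for abnormal minimizers. Horizontal semiconcavity, which would give $Cs^2$, is not available here (Remark~\ref{rem:nonfat-obstruction}, and the Engel group), and the Lipschitz bound only gives $\Delta_s\le 2D_*s$, which is useless. I would try the following.
\begin{enumerate}[(a)]
\item Fix a unit-speed minimizer $\gamma$ from $x$ to $y$, of length $\ell=\dcc(x,y)$. For $s\le t\le \ell$, the triangle inequality through $\gamma(t)$ gives $\dcc(z_\pm,y)\le \dcc(z_\pm,\gamma(t))+\ell-t$.
\item Expand the squares. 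Since $|\dcc(z_\pm,\gamma(t))-t|\le s$, this yields $\Delta_s\le 2\ell\,\bigl(d_++d_--2t\bigr)+2s^2$, where $d_\pm=\dcc(z_\pm,\gamma(t))$ and $d_++d_--2t$ is the cancellation term.
\item Write $h=x^{-1}\gamma(t)$. By homogeneity, $d_\pm=\dcc([\pm sv,0],h)$ scales as $t\cdot\dcc([\pm (s/t)v,0],\delta_{1/t}h)$, and $\delta_{1/t}h$ ranges over the unit sphere.
\item The quantity to control is the symmetric second difference of $\dcc(\cdot,k)$ at $e$ along horizontal lines, uniformly over $|k|=1$. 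I expect first-order cancellation, because the first-layer projection enters linearly with opposite signs, leaving an error of order $(s/t)^{3/2}$ or better on the compact unit sphere. Choosing $t\asymp s^{1/2}$ would then balance the terms to give $s^{3/2}$.
\end{enumerate}
If this uniform estimate fails at some sphere points, for instance at abnormal directions, I would fall back on averaging over $v$ in a small cone around $(p_1-p_0)/|p_1-p_0|$. An averaged second difference only needs integrability and suffices for Step 2, since the left side changes by a bounded factor.

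\textbf{Step 4 (conclusion).} Take $s=r$ in Step 2. This gives
\[
 |p_1-p_0|\le C\bigl(r^{1/2}+\varepsilon_0+\varepsilon_1+(e_0+e_1)/r\bigr).
\]
Combine this with the trivial bound $|p_1-p_0|\le 2L_*$ and square, using $\min\{a,2L_*\}^2\le 2L_*a$ for each term. This gives \eqref{eq:common-contact-slopes} with $C=C(D_*,L_*)$.
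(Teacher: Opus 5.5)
\noindent Your proposal has a genuine gap at Step~3, and it cannot be repaired. The bound $\Delta_s\le Cs^{3/2}$ is false at step three. Your Step~2 inequality is valid, but it is too weak in general.

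\textbf{Counterexample in the Engel group.} Take $X_1=\partial_1$ and $X_2=\partial_2+x_1\partial_3+\frac{x_1^2}{2}\partial_4$. Let $y=e$ and $x=\exp(X_2)=(0,1,0,0)$, which lies on the abnormal line $t\mapsto\exp(tX_2)$, so $\dcc(x,y)=1$. Take $v=e_1$, so that $z_\pm=x\exp(\pm sX_1)=(\pm s,1,0,0)$.
\begin{enumerate}[(1)]
\item A horizontal curve from $e$ to $(\pm1,h,0,0)$ satisfies $\int x_1^2\,\dot x_2\,\der t=0$. Write its length as $h+E$.
\item The total descent of $x_2$ is at most $E/2$. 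Hence the ascent of $x_2$ while $|x_1|\geq m/2$, where $m=\max|x_1|\geq1$, is at most $2E$.
\item Apply Cauchy--Schwarz on the stretch where $|x_1|$ grows from $m/2$ to $m$. This gives $E\geq m/10\geq1/10$.
\item By dilation, $\dcc(e,z_\pm)\geq1+s/10$. Hence $\Delta_s\geq s/5$.
\end{enumerate}
So $c(\cdot,y)$ has an upward cusp along a horizontal line. The same computation gives $\Delta_s\geq cs$ for every $v$ in an open cone around $e_1$. Averaging over directions therefore does not help: the failure is of first order, not a non-integrable second-order term.

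With $\Delta_s\asymp s$, Step~2 only yields $|p_1-p_0|\lesssim1$. Your argument does work on step-two groups, where Proposition~\ref{prop:horizontal-semiconcavity} gives $\Delta_s\leq K_Gs^2$. However, the lemma is needed at arbitrary step, for example on the Engel group. Separately, in Step~4 the $r^{1/2}$ term must be squared directly. Applying $\min\{a,2L_*\}^2\leq2L_*a$ to it would only give $r^{1/2}$.

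\textbf{The missing idea.} The paper compares each $p_i$ with one common vector, not with each other, and uses only first-order information. Let $d=\dcc(x,y)$ and $t_i=r/2+2\varepsilon_i+e_i/r$.
\begin{enumerate}[(1)]
\item Test your Step~1 inequality at $z=x\exp(ru)$ with $u=p_i/|p_i|$. Using only $\dcc(z,y)\leq d+r$, this gives $|p_i|\leq d+t_i$.
\item If $d\geq r$, let $\gamma$ be a unit-speed minimizer from $x$ to $y$, and set $v=\pi_1(x^{-1}\gamma(r))/r$, so $|v|\leq1$. At $z=\gamma(r)$ the cost drops by exactly $dr-r^2/2$, so the same test gives $\langle p_i,v\rangle\leq-d+t_i$.
\item Expanding the square,
\[
 |p_i+dv|^2\leq(d+t_i)^2+2d(-d+t_i)+d^2\leq(4D_*+1)t_i
\]
when $t_0+t_1\leq1$. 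The triangle inequality then gives \eqref{eq:common-contact-slopes}.
\item If $d<r$, the norm bound together with $|p_i|\leq L_*$ suffices. The same holds when $t_0+t_1>1$.
\end{enumerate}
Both tests use only the triangle inequality and exact decrease along a minimizer. They therefore hold at any step and along abnormal minimizers. In the Engel configuration above, they force both $p_i$ close to $e_2$. A single horizontal line through $x$ in the direction $e_1$ cannot see this.
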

\begin{proof}
Put $d=\dcc(x,y)$ and
$t_i=r/2+2\varepsilon_i+e_i/r$.
Testing at $z=x\exp(ru)$, $|u|=1$, and using
$\dcc(z,y)\leq d+r$, gives
\begin{equation}\label{eq:contact-slope-norm}
 |p_i|\leq d+t_i.
\end{equation}
The estimate at distance $r$ follows by taking interior points
and using continuity of $A_i$ and $c$.
If $d\geq r$, take a unit-speed minimizing curve from $x$ to
$y$, and let $z$ be its point at time $r$. Then
$\dcc(z,y)=d-r$ and
$v=\pi_1(x^{-1}z)/r$ satisfies $|v|\leq1$. Hence
\[
 \langle p_i,v\rangle\leq-d+t_i.
\]
For $t_0+t_1\leq1$, this and
\eqref{eq:contact-slope-norm} imply
\[
 |p_i+dv|^2
 \leq(d+t_i)^2+2d(-d+t_i)+d^2
 \leq(4D_*+1)t_i.
\]
The triangle inequality followed by Cauchy--Schwarz proves the
claim in this case.
If $t_0+t_1>1$, the bound $|p_i|\leq L_*$ proves it directly.
Finally, if $d<r$, \eqref{eq:contact-slope-norm} and the same
uniform bound give
\[
 |p_1-p_0|^2\leq2L_*(|p_0|+|p_1|)
 \leq C(r+t_0+t_1).
\]
This proves \eqref{eq:common-contact-slopes}.
\end{proof}

For a compactly supported Lipschitz function $f:G\to\RR$, let
$A_{x,r}f$ be its orthogonal projection in $L^2(B(x,2r),\mm)$
onto the functions affine in $\pi_1(x^{-1}z)$. Write
\[
 A_{x,r}f(z)=a_{x,r}+\langle p_{x,r},\pi_1(x^{-1}z)\rangle,
\]
and set
\begin{equation}\label{eq:affine-errors}
 \begin{aligned}
 b_f(x,r)^2&=
 \frac{1}{r^2\mm(B(x,2r))}
 \int_{B(x,2r)}|f-A_{x,r}f|^2\,\d\mm,\\
 \varepsilon_f(x,r)&=
 \frac1r\sup_{B(x,r)}|f-A_{x,r}f|.
 \end{aligned}
\end{equation}
We use Dorronsoro's theorem in the following form.

\begin{lemma}[Affine approximation and smoothing]\label{lem:carnot-affine-smoothing}
If $f$ is compactly supported and $L$-Lipschitz, then
\begin{equation}\label{eq:carnot-dorronsoro}
 \int_0^\infty\int_G b_f(x,r)^2\,\d\mm(x)\frac{\d r}{r}
 \leq C_G\int_G|\nabla_{\mathcal H}f|^2\,\d\mm.
\end{equation}
Moreover,
\begin{equation}\label{eq:affine-uniform-from-L2}
 |p_{x,r}|\leq C_GL,\qquad
 \varepsilon_f(x,r)\leq C_GL,\qquad
 \varepsilon_f(x,r)^{Q+2}\leq C_GL^Q b_f(x,r)^2.
\end{equation}
There are smooth averaging operators $P_r$ such that
\begin{equation}\label{eq:carnot-smoothing-affine}
 \norm{P_rf-f}_{L^\infty(G)}\leq Lr,
 \qquad
 |\nabla_{\mathcal H}P_rf(x)-p_{x,r}|
 \leq C_G\varepsilon_f(x,r).
\end{equation}
\end{lemma}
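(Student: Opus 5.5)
The proof splits into three parts: Dorronsoro's inequality, pointwise control of the affine approximations, and a smoothing construction.

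For \eqref{eq:carnot-dorronsoro} we quote Dorronsoro's theorem on Carnot groups \cite[Theorem~34]{Ryoo2026}, in the form that controls the square function of $L^2$ affine approximation errors by the horizontal Sobolev seminorm. Two points need checking. First, the affine class there, polynomials of homogeneous degree at most one, coincides with the functions affine in $\pi_1(x^{-1}z)$. Second, the normalization by $B(x,2r)$ and the factor $r^{-2}$ match ours up to constants depending on $G$. A Lipschitz function with compact support lies in the horizontal Sobolev space, so the right-hand side is finite.

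For \eqref{eq:affine-uniform-from-L2}, translate and dilate to reduce to $x=e$ and $r=1$. Dilation maps affine functions to affine functions and scales $p$ by $r$ and $f$ by $r$. On the finite-dimensional space of affine functions on $B(e,2)$, all norms are equivalent. Subtracting the constant $f(e)$, the function $g=f-f(e)$ satisfies $|g|\leq 2L$ on $B(e,2)$, and projection has norm one in $L^2$. Hence $\norm{A g}_{L^\infty(B(e,2))}\leq C\norm{Ag}_{L^2}\leq CL$, which gives $|p|\leq C_GL$ and $\sup_{B(e,1)}|f-Af|\leq C_GL$.

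The third inequality in \eqref{eq:affine-uniform-from-L2} is the step needing care. Set $h=f-Af$. It is $C_GL$-Lipschitz, since $Af$ is Lipschitz with constant $|p|$. Let $\varepsilon=\sup_{B(e,1)}|h|$, attained at some $z_0$. On $B(z_0,\varepsilon/(2C_GL))\cap B(e,2)$ we have $|h|\geq\varepsilon/2$. The radius is at most a fixed constant, so this ball, intersected with $B(e,2)$, has measure $\gtrsim(\varepsilon/L)^Q$. Indeed it contains a ball of comparable radius inside $B(e,2)$, because $z_0\in B(e,1)$. Therefore $b^2\gtrsim \varepsilon^2(\varepsilon/L)^Q$, which is the claim after rescaling.

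For \eqref{eq:carnot-smoothing-affine}, set $P_rf(x)=\int f(x\,\delta_r w)\eta(w)\,\d w$, with $\eta$ smooth, nonnegative, of unit mass, and supported in $B(e,1)$. Then $|P_rf-f|\leq Lr$ because $\dcc(x,x\delta_rw)\leq r$. For the gradient we use right convolution: horizontal left-invariant derivatives act on $x$, and we move them onto $\eta$. Because $P_r$ reproduces the affine function $A_{x,r}f$ up to its exact gradient $p_{x,r}$, we get $\nabla_{\mathcal H}P_rf(x)-p_{x,r}=\nabla_{\mathcal H}P_r(f-A_{x,r}f)(x)$. This uses that affine functions in $\pi_1$ have constant horizontal gradient under right averaging, as $\pi_1$ is a homomorphism. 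Writing $X_j$ acting at $x$ as a derivative of $\eta$ in the group variable (via the right-invariant field, which at $x$ combines with the translated kernel), we obtain $|\nabla_{\mathcal H}P_r g(x)|\leq C r^{-1}\sup_{B(x,r)}|g|$. Applied to $g=f-A_{x,r}f$, this bounds the difference by $C_G\varepsilon_f(x,r)$.

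The main obstacle is this derivative transfer, because left-invariant derivatives do not commute with right convolution. I would therefore instead define $P_rf(x)=\int f(y)\,r^{-Q}\eta(\delta_{1/r}(y^{-1}x))\,\d\mm(y)$, a left convolution in the variable $y^{-1}x$. Then $X_j$ falls directly on the kernel, giving the factor $r^{-1}\norm{X_j\eta}_{L^1}$, and the affine identity still holds because $\pi_1$ is additive.
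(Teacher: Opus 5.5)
Your proposal is correct and follows the paper's proof essentially step by step. It uses Ryoo's Dorronsoro theorem with $p=q=2$, $\alpha=1$; it reduces the first two estimates by translation and dilation to the projection on $B(e,2)$; and it gets the third estimate from the Lipschitz lower bound $|f-A_{x,r}f|\geq\varepsilon r/2$ on a ball of radius $\asymp\varepsilon r/L$. Your final smoothing operator $P_rf(x)=\int f(y)\eta_r(y^{-1}x)\,\d\mm(y)=f*\eta_r(x)$, with the left-invariant derivative falling on the kernel, is exactly the paper's. (The obstacle you flagged for your first averaging was not real: the substitution $y=x\delta_rw$ rewrites it as $f*\check\eta_r$ with $\check\eta_r(z)=\eta_r(z^{-1})$, so the same argument applies.)
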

\begin{proof}
Equation \eqref{eq:carnot-dorronsoro} is the case
$p=q=2$, $\alpha=1$ of Ryoo's Dorronsoro theorem on Carnot
groups \cite[Theorem~34]{Ryoo2026}. The polynomials of weighted
degree at most one in that theorem are precisely the functions affine in the first-layer coordinates. For the nonnegative sub-Laplacian
$\mathscr L=-\sum_{j=1}^{\dim V_1}X_j^2$, its quadratic form gives
$\|\mathscr L^{1/2}f\|_{L^2}=\|\nabla_{\mathcal H}f\|_{L^2}$. Replacing its ball radius
by $2r$ changes only a fixed factor.

Translation and dilation reduce the first two estimates in
\eqref{eq:affine-uniform-from-L2} to orthogonal projection onto a
fixed finite-dimensional space on $B(e,2)$. The function
$z\mapsto[f(x\delta_r z)-f(x)]/r$ is bounded by $2L$ on $B(e,2)$.
Its projection has coefficients bounded by $C_GL$. Scaling back
gives the bounds for $p_{x,r}$ and $\varepsilon_f(x,r)$.
Thus $f-A_{x,r}f$ is $C_GL$-Lipschitz. If its absolute value
at a point of $B(x,r)$ is $\varepsilon r$, it is at least
$\varepsilon r/2$ on a ball of radius $c\varepsilon r/L$
contained in $B(x,2r)$. Haar measure scales as the $Q$th
power of the radius. Integrating on this ball proves the last
estimate in \eqref{eq:affine-uniform-from-L2}; the case $L=0$ is immediate because $f$ is constant.

Choose a nonnegative $\eta\in C_c^\infty(B(e,1/2))$ with
integral one, put
$\eta_r(z)=r^{-Q}\eta(\delta_{1/r}z)$, and define
\[
 P_rf(x)=(f*\eta_r)(x)
       =\int_G f(z)\eta_r(z^{-1}x)\,\d\mm(z).
\]
The kernel is supported where $\dcc(x,z)<r/2$, giving the
first estimate in \eqref{eq:carnot-smoothing-affine}.
For a left-invariant unit horizontal field $X$,
\[
 X(P_rf)(x)=\int_G f(z)(X\eta_r)(z^{-1}x)\,\d\mm(z),
 \qquad \norm{X\eta_r}_{L^1}\leq C_G/r.
\]
Convolution sends each function affine in $\pi_1$ to another
function of this form with the same horizontal gradient. Subtracting
$A_{x,r}f$ inside this derivative therefore proves the second
estimate in \eqref{eq:carnot-smoothing-affine}.
\end{proof}

\begin{proof}[Proof of Theorem~\ref{thm:carnot-log-coercivity}]
Choose a relatively compact neighborhood $U$ of $\overline S$ and
$r_0\in(0,1)$ such that $B(x,2r_0)\subset U$ for all $x\in S$.
Put
\[
 \phi_i=\theta_i^c,\qquad
 \phi_{1/2}=\left(\frac{\theta_0+\theta_1}{2}\right)^c,
 \qquad
 \gamma=\phi_{1/2}-\frac{\phi_0+\phi_1}{2}.
\]
Then $\gamma\geq0$ and $\int_S\gamma\,\d\rho=J$, where
$J=J_c(\theta_0,\theta_1)$.
The triangle inequality implies that the $\phi_i$ are uniformly
Lipschitz on $U$, independently of $\theta_0,\theta_1$. Subtract the value of each $\phi_i$ at a fixed point. Extend the
resulting functions by the McShane formula. Truncate the extensions
to a fixed bounded interval, then multiply by a fixed Lipschitz
cutoff equal to one on $U$. The resulting compactly supported
functions $f_i$ agree with the normalized $\phi_i$ on $U$ and satisfy
\begin{equation}\label{eq:extended-potential-energy}
 \Lip(f_i)\leq L,\qquad
 \int_G|\nabla_{\mathcal H}f_i|^2\,\d\mm\leq E,
\end{equation}
for constants independent of $\theta_i$.
Subtracting constants does not affect the estimates below.

Fix $x\in S$. Compactness of $Y$ gives a minimizer $y$ in the
formula for $\phi_{1/2}(x)$. The contact errors
\[
 e_i=c(x,y)-\theta_i(y)-\phi_i(x)\geq0
\]
satisfy $e_0+e_1=2\gamma(x)$, and the defining infimum gives
\[
 \phi_i(z)-\phi_i(x)
 \leq c(z,y)-c(x,y)+e_i\qquad(z\in U).
\]
Use the affine functions in \eqref{eq:affine-errors} and
Lemma~\ref{lem:common-approximate-contact}. Together with
\eqref{eq:affine-uniform-from-L2} and
\eqref{eq:carnot-smoothing-affine}, this yields, for $0<r\leq r_0$,
\begin{equation}\label{eq:smoothed-gradient-contact}
 |\nabla_{\mathcal H}P_r(f_1-f_0)(x)|^2
 \leq C\left(r+\varepsilon_{f_0}(x,r)
                  +\varepsilon_{f_1}(x,r)+\frac{\gamma(x)}r\right).
\end{equation}
Here the squared smoothing errors are absorbed using their uniform
bound. The choice of $y$ is only pointwise; no selection theorem
is needed to integrate this inequality.

The local horizontal $L^2$ Poincar\'e inequality on Carnot groups
\cite{Jerison1986}, followed by the summation of local variances in
Lemma~\ref{lem:john-gluing}, gives
\begin{equation}\label{eq:carnot-john-Poincare}
 \Var_\rho(u)\leq C\int_S|\nabla_{\mathcal H}u|^2\,\d\rho.
\end{equation}
This summation uses only local doubling, the local Poincar\'e inequality,
and the John condition, all valid at arbitrary step. Applying it
to $P_r(f_1-f_0)$ and using
\eqref{eq:carnot-smoothing-affine} and
\eqref{eq:smoothed-gradient-contact}, we obtain
\begin{equation}\label{eq:carnot-one-scale-coercivity}
 V_c(\theta_0,\theta_1)
 \leq C\left(r+\int_S(\varepsilon_{f_0}(x,r)
                    +\varepsilon_{f_1}(x,r))\,\d\rho(x)
                 +\frac Jr\right).
\end{equation}
No boundary term arises, since the extensions agree with the
potentials on a fixed neighborhood of $\overline S$ and the
contact errors in \eqref{eq:smoothed-gradient-contact} are evaluated
only on $S$.

For $0<a<b\leq r_0$, equations
\eqref{eq:carnot-dorronsoro} and
\eqref{eq:extended-potential-energy} provide a radius $r\in[a,b]$
with
\[
 \int_S\bigl(b_{f_0}(x,r)^2+b_{f_1}(x,r)^2\bigr)\,\d\rho(x)
 \leq\frac{C}{\log(b/a)}.
\]
By \eqref{eq:affine-uniform-from-L2} and H\"older's inequality,
\[
 \int_S(\varepsilon_{f_0}(x,r)+\varepsilon_{f_1}(x,r))\,\d\rho(x)
 \leq C[\log(b/a)]^{-1/(Q+2)}.
\]
Consequently
\begin{equation}\label{eq:carnot-scale-window}
 V_c(\theta_0,\theta_1)
 \leq C\left(b+[\log(b/a)]^{-1/(Q+2)}+\frac Ja\right).
\end{equation}
For $0<J<J_0/e$, set $\tau=J/J_0$ and choose
$a=r_0\tau^{1/2}$, $b=r_0\tau^{1/4}$.
Then \eqref{eq:carnot-scale-window} gives
\eqref{eq:carnot-log-coercivity}. For $J\geq J_0/e$, the same
claim follows by increasing $C$, since the oscillations of the
$\phi_i$ on $S$ are uniformly bounded.
If $J=0$, take $a=r_0e^{-2N}$ and $b=r_0e^{-N}$ in
\eqref{eq:carnot-scale-window} and let $N\to\infty$.
Finally Proposition~\ref{prop:potential-conversion} proves
uniqueness and \eqref{eq:carnot-log-stability}.
For fixed $a,b>0$, the functions $\log(e+a/w)$ and
$\log(e+b/w)$ are comparable uniformly for $w>0$.
Taking $a=2J_0/D$ and $b=D$ therefore gives the normalization
in~\eqref{eq:main-log-potential-stability}.
\end{proof}

The logarithmic exponent in \eqref{eq:carnot-log-stability} comes
from passing from $L^2$ to uniform affine approximation on balls
of homogeneous dimension $Q$, then selecting a scale at which
the approximation error is small. We do not know whether this
exponent is optimal. On step-two Carnot groups, horizontal
semiconcavity gives $V_c\leq CJ_c$ directly through
Theorem~\ref{thm:directional-jensen}, yielding the sharp exponent
$1/2$ without scale selection.

\subsubsection{Equiregular structures}\label{sec:equiregular-potentials}
The argument extends to equiregular structures through quantitative
nilpotent approximation in the source variable. To retain the
logarithmic rate, we need bounds on the approximation errors and constants uniform
over the tangent groups. The next lemma gives these bounds.

Shrinking the equiregular neighborhood if necessary, choose
$\overline S\subset U\Subset U_+$ within it. The step $s$,
horizontal rank $h_0$, and homogeneous dimension $Q$ are fixed
there. All constants below are uniform on these neighborhoods.
The case $s=1$ is included. Put $n=\dim M$. Choose smooth frames
$Z_1,\ldots,Z_n$ adapted to the spaces $\mathcal H^j$ on finitely
many neighborhoods, with $Z_1,\ldots,Z_{h_0}$ orthonormal and
horizontal. Write $X_a=Z_a$ for $1\leq a\leq h_0$. Let $w_j$
be the least integer for which $Z_j$ belongs to $\mathcal H^{w_j}$. Set
$\delta_R(z_1,\ldots,z_n)=(R^{w_1}z_1,\ldots,R^{w_n}z_n)$ and
$\Phi_p(z)=\exp(\sum_jz_jZ_j)(p)$. These canonical coordinates
of the first kind are Carnot coordinates by
\cite[Proposition~6.2]{ChoiPonge2019II}. Write
$\Phi_{p,R}(z)=\Phi_p(\delta_Rz)$ and
$X_a^{p,R}=R(\delta_{1/R})_*(\Phi_p^{-1})_*X_a$ for this horizontal frame. Denote the
tangent Carnot group by $G_p$, its distance by $\widehat d_p$,
and its Haar measure by $\mm_{G_p}$, normalized as in the proof
below. The notation $B_{G_p}$ refers to balls for $\widehat d_p$.
Write $\widehat X_a^p$ for the left-invariant horizontal fields
of the nilpotent approximation.
The uniform convergence of the resulting rescaled fields and local
distances is given by \cite[Theorem~4.6]{DonMagnani2023}.
We need a rate for the fields: on any fixed coordinate box,
\begin{equation}\label{eq:equiregular-field-error}
 X_a^{p,R}=\widehat X_a^p+O(R)\quad\hbox{in }C^k,
 \qquad a=1,\ldots,h_0,
\end{equation}
for every fixed $k$, uniformly in $p$.
The expansion for a fixed base point is given in
\cite[Theorem~5.6, equation~(5.5)]{ChoiPonge2019I}.
To see the uniform rate, write the horizontal field in these
coordinates as $\sum_j a_{aj}(p,z)\partial_{z_j}$, where $z_j$
has weight $w_j$. Its rescaled coefficient is
$R^{1-w_j}a_{aj}(p,\delta_Rz)$. The weighted Taylor coefficients
below degree $w_j-1$ vanish, and the terms of degree $w_j-1$
give $\widehat X_a^p$. All remaining terms have a factor $R$.
The coefficients depend smoothly on $p$ by smooth dependence of
the coordinate flows. Taylor's formula to sufficiently high finite
order therefore bounds the remainder in $C^k$ by $C_kR$ on
each compact parameter set. A finite cover makes the bound uniform.
The coefficients in the first-layer directions of $\widehat X_a^p$ are the constant
coordinate vectors. The reference measure has the form
$\Phi_{p,R}^*\mm=R^Qj_{p,R}(z)\,\d z$, with
$c\leq j_{p,R}\leq C$ on these boxes.

\begin{lemma}[Uniform estimates on the tangent Carnot groups]\label{lem:uniform-tangent-estimates}
After choosing fixed nested coordinate neighborhoods, the following
estimates hold uniformly in $p$ and sufficiently small $R$.
\begin{enumerate}[(a)]
\item For $z,w$ in the smaller neighborhood,
\begin{equation}\label{eq:equiregular-metric-error}
 \big|\dcc(\Phi_{p,R}z,\Phi_{p,R}w)
       -R\widehat d_p(z,w)\big|\leq CR^{1+1/s}.
\end{equation}
\item If $F$ is uniformly Lipschitz on a fixed neighborhood in
$G_p$, after multiplication by a cutoff supported in a fixed coordinate
neighborhood it satisfies
\begin{equation}\label{eq:uniform-tangent-dorronsoro}
 \int_{B_{G_p}(e,A)}\int_0^{r_*}
 \beta_F(z,r)^2\,\frac{\d r}{r}\,\d\mm_{G_p}(z)\leq C.
\end{equation}
Here $A,r_*>0$ are fixed, the cutoff equals one on every ball
$B_{G_p}(z,8r)$ with $z\in B_{G_p}(e,A)$ and $r\leq r_*$, and
\[
 \beta_F(z,r)^2:=\inf_{a,\ell}
 \frac{1}{\mm_{G_p}(B_{G_p}(z,8r))}\int_{B_{G_p}(z,8r)}
 \frac{|F(w)-a-\ell(\pi_1(z^{-1}w))|^2}{r^2}\,\d\mm_{G_p}(w).
\]
The bound assumes that $F(e)$ and its Lipschitz constant are bounded.
\item There are smooth kernels $\eta_{p,r}$ supported in
$B_{G_p}(e,r/2)$, of Haar integral one, such that for every
left-invariant field $Z_j^p$ in the chosen graded frame, of
weight $w_j$,
\begin{equation}\label{eq:uniform-tangent-kernels}
 \|Z_j^p\eta_{p,r}\|_{L^1(G_p)}\leq Cr^{-w_j}.
\end{equation}
For $P_r^pF=F*\eta_{p,r}$ and $F$ with Lipschitz constant $L$,
\begin{equation}\label{eq:uniform-tangent-smoothing}
 \|P_r^pF-F\|_\infty\leq Lr,\qquad
 |Z_j^pP_r^pF|\leq CLr^{1-w_j}.
\end{equation}
The estimates for the affine projections and horizontal gradients in
\eqref{eq:affine-uniform-from-L2} and
\eqref{eq:carnot-smoothing-affine} also hold with constants
independent of $p$.
\end{enumerate}
\end{lemma}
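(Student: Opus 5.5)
We prove the three parts separately. Each time we combine the weighted Taylor expansion \eqref{eq:equiregular-field-error} with compactness in the parameter $p$.

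\textbf{Part (a).} Write $d_{p,R}(z,w)=R^{-1}\dcc(\Phi_{p,R}z,\Phi_{p,R}w)$. This is the control distance of the rescaled fields $X_a^{p,R}$, by the analogue of \eqref{eq:fat-distance-rescaling} and \cite[Proposition~2.15]{DonMagnani2023}. The curves are confined to a fixed box, as in Section~\ref{sec:two-step-local}. We then compare the control distances of two frames that differ by $O(R)$ in $C^1$.

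Take a minimizing control $u$ for one system, with $\norm u_{L^2}$ bounded. Run the same control in the other system. By Gronwall's inequality the endpoints differ by $O(R)$ in Euclidean coordinates. The uniform ball--box estimate $\widehat d_p(q,q')\leq C|q-q'|^{1/s}$ then joins the two endpoints by a correcting curve of length $O(R^{1/s})$. This correction is uniform in $p$: the determinant of the bracket frame is uniformly bounded below on the compact equiregular region, and the same holds for the rescaled systems with small $R$.

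Symmetrizing gives $|d_{p,R}-\widehat d_p|\leq CR^{1/s}$. Multiplying by $R$ yields \eqref{eq:equiregular-metric-error}.

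\textbf{Part (b).} The issue is only uniformity of the constant in Ryoo's Dorronsoro theorem \cite[Theorem~34]{Ryoo2026} over the groups $G_p$. The structure constants of $G_p$ vary continuously in $p$, over a compact set, and always have the same graded dimensions.

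I would first try to check that Ryoo's constant depends only on bounds for the structure constants and on the lower bound for the bracket-generation determinant. This is plausible because the proof uses heat-kernel and Littlewood--Paley bounds, and those are stable under such data.

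Failing that, identify all $G_p$ with $\RR^n$ by graded linear maps $\Lambda_p$ depending continuously on $p$. Each $G_p$ is then a nilpotent Lie group on $\RR^n$ with a continuously varying law. A compactness-and-contradiction argument is awkward for an integral inequality. So the uniform-constant route, justified by tracking constants, is the plan.

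With a uniform constant in hand, apply the theorem to the cut-off function $\chi F$. It is uniformly Lipschitz with bounded support, so $\int|\nabla_{\mathcal H}(\chi F)|^2\leq C$. Restricting to $z\in B_{G_p}(e,A)$ and $r\leq r_*$, where $\chi=1$ on $B_{G_p}(z,8r)$, gives \eqref{eq:uniform-tangent-dorronsoro}. The normalization of $\mm_{G_p}$ as the limit of $j_{p,R}\,\d z$, that is $j_{p,0}\,\d z$, has density bounded above and below. So the ball volumes are $\asymp r^Q$ uniformly.

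\textbf{Part (c).} Fix one $\eta\in C_c^\infty$ in exponential coordinates, supported well inside the unit $\widehat d_p$ balls for all $p$; uniform ball--box makes this possible. Normalize it to have $\mm_{G_p}$-integral one. The normalizing factor is continuous in $p$ and hence uniformly bounded. Set $\eta_{p,r}=r^{-Q}\eta_p\circ\delta_{1/r}$. Homogeneity of $Z_j^p$ of degree $w_j$ gives $\|Z_j^p\eta_{p,r}\|_{L^1}=r^{-w_j}\|Z_j^p\eta_p\|_{L^1}$. The last norm is bounded by continuity in $p$, which proves \eqref{eq:uniform-tangent-kernels}.

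The estimates \eqref{eq:uniform-tangent-smoothing} follow as in Lemma~\ref{lem:carnot-affine-smoothing}. For the derivative bound, write $Z_j^pP_r^pF=(F-F(x))*Z_j^p\eta_{p,r}$, using $\int Z_j^p\eta_{p,r}=0$. Then use $|F-F(x)|\leq Lr$ on the support. The affine projection bounds reduce, by translation and dilation, to projection onto a finite-dimensional space on $B_{G_p}(e,2)$. The Gram matrices there are continuous and invertible in $p$, which gives uniform constants.

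\textbf{Main obstacle.} The main obstacle is part (b): securing a Dorronsoro constant uniform over the family of tangent groups.
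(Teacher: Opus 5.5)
Your parts (a) and (c) follow the paper's argument essentially verbatim. In (a) you run the same control in both systems, use Gronwall, and correct with a uniform ball--box curve of length $O(R^{1/s})$. In (c) you take one fixed bump, normalize and dilate it, and use homogeneity, cancellation, and uniformly conditioned Gram matrices.

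\textbf{The gap is in (b).} The only real content of that part is that the Dorronsoro constant does not depend on $p$. Your plan is to ``check'' that Ryoo's constant depends only on bounds for the structure constants and the bracket determinant, by tracking constants through a heat-kernel and Littlewood--Paley proof you have not examined. That is a hope, not an argument. Dorronsoro's theorem is an integral square-function inequality whose constant is not a priori continuous in the group law. You name this as the main obstacle but do not resolve it, and you drop the fallback you mention. As written, (b) is unproved. The paper states explicitly that it proves (b) without assuming continuous dependence of Dorronsoro constants on the group.

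\textbf{The missing idea is to avoid varying the group at all.}
\begin{itemize}
\item Every $G_p$ is a quotient of one fixed free Carnot group $\mathbb F$ of rank $h_0$ and step $s$. The map is a surjective homomorphism $\Pi_p:\mathbb F\to G_p$ commuting with dilations and sending the free generators isometrically onto the orthonormal horizontal frame.
\item Horizontal curves lift with the same control, so $\Pi_p(B_{\mathbb F}(u,r))=B_{G_p}(\Pi_pu,r)$.
\item Affine functions $a+\ell(\pi_1^{\mathbb F}(u^{-1}v))$ on $\mathbb F$ descend to affine functions on $G_p$, because $d\Pi_p$ is an isomorphism on the first layer.
\item Equiregularity fixes the layerwise dimensions of $\ker d\Pi_p$. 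Hence the fibre pieces $N_p\cap B_{\mathbb F}(e,r/4)$ have Haar measure at least $c\,r^{Q_{\mathbb F}-Q}$ uniformly in $p$. This gives $(\Pi_p)_\#(\one_{B_{\mathbb F}(u,r)}\mm_{\mathbb F})\geq c\,r^{Q_{\mathbb F}-Q}\one_{B_{G_p}(\Pi_pu,r/4)}\mm_{G_p}$.
\item Let $\widetilde F_p$ be the cut-off lift of $F\circ\Pi_p$; its horizontal energy is bounded independently of $p$. The previous point yields $\beta_F(\Pi_pu,r)^2\leq C\,b_{\widetilde F_p}(u,16r)^2$.
\item A single application of Ryoo's theorem on the fixed group $\mathbb F$ then gives \eqref{eq:uniform-tangent-dorronsoro} with a $p$-independent constant.
\end{itemize}
To complete (b), you need either this reduction or an actual proof that the Dorronsoro constant is uniform over a compact family of Carnot groups with fixed graded dimensions.
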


\begin{proof}
For (a), compare curves with the same horizontal controls for the
rescaled fields and their nilpotent approximations. Curves of
bounded length remain in a fixed larger coordinate neighborhood.
Equation~\eqref{eq:equiregular-field-error} and Gronwall's inequality
give an $O(R)$ Euclidean discrepancy between their endpoints.
The uniform ball--box estimate corrects this discrepancy by a
horizontal curve of length at most $CR^{1/s}$ in either rescaled
structure. Apply this first to a minimizing tangent curve and then
to a minimizing rescaled curve. The selected brackets in the
adapted frame have determinants bounded away from zero for both
the limiting and rescaled fields; this gives the uniform ball--box
constants used in the correction. Choose the smaller and larger
coordinate neighborhoods so that every curve of the relevant
bounded length stays in the larger one. The local distances then
equal the ambient distances at the scales in the original manifold
considered here; see also \cite[Proposition~2.15]{DonMagnani2023}.
Rescaling proves~\eqref{eq:equiregular-metric-error}.

For (b), obtain uniform constants from a single free Carnot group.
Let $\mathbb F$ be the free Carnot group of rank $h_0$ and step
$s$, with the standard inner product on its first layer.
The Lie algebra of $\mathbb F$ is the free nilpotent Lie algebra
on $h_0$ generators, with all brackets of length greater than $s$
set to zero. For each $p$, there is a surjective Lie group
homomorphism $\Pi_p:\mathbb F\to G_p$ commuting with dilations.
Its differential sends the free generators isometrically to the
chosen orthonormal horizontal frame. Its kernel $N_p$ is a normal
subgroup invariant under dilations. In particular,
\[
 \mathfrak n_p\cap V_1(\mathbb F)=\{0\},\qquad
 \dim(\mathfrak n_p\cap V_j(\mathbb F))
       =\dim V_j(\mathbb F)-\dim V_j(G_p),
\]
where $\mathfrak n_p$ is the Lie algebra of $N_p$.
These dimensions are constant by equiregularity.
Horizontal curves project and lift with the same control and length.
Consequently $\Pi_p(B_{\mathbb F}(u,r))=B_{G_p}(\Pi_pu,r)$
for every $u,r$. Put
$Q_{\mathbb F}=\sum_j j\dim V_j(\mathbb F)$, the homogeneous
dimension of $\mathbb F$.
Give each homogeneous layer of the free Lie algebra a fixed
Euclidean inner product, and give $N_p$ the product of the induced
layerwise volumes in exponential coordinates. This is Haar measure.
Disintegrate the fixed Haar measure of $\mathbb F$ along these
fibers to obtain the quotient measure $\mm_{G_p}$.
The layerwise matrices of $\der\Pi_p$ vary smoothly and are
surjective with constant rank. On a compact parameter set their
nonzero singular values have positive lower and finite upper
bounds. The quotient measures are consequently uniformly
comparable with Lebesgue measure $\d z$ in exponential coordinates.

We use the following ball estimate for the disintegration. If
$v\in B_{G_p}(\Pi_pu,r/4)$, this equality of ball images gives a lift
$\widetilde v\in B_{\mathbb F}(u,r/4)$. Therefore
\[
 \widetilde v\bigl(N_p\cap B_{\mathbb F}(e,r/4)\bigr)
       \subseteq B_{\mathbb F}(u,r).
\]
The Haar measure of this fiber piece is bounded below by
$cr^{Q_{\mathbb F}-Q}$, uniformly in $p$. Indeed, the unit ball of $\mathbb F$, viewed in exponential coordinates,
contains a Euclidean ball of positive radius and is contained
in a bounded Euclidean ball. Its intersections with
the linear spaces $\mathfrak n_p$ have induced volumes uniformly
bounded above and away from zero, since their dimension is fixed.
The restriction of $\delta_r$ to $\mathfrak n_p$ has determinant
$r^{Q_{\mathbb F}-Q}$ by the layerwise dimension formula.
Translation within a fiber preserves its Haar measure.
If the kernel is trivial, use unit mass on that fiber.
Consequently,
\begin{equation}\label{eq:free-quotient-ball}
 (\Pi_p)_\#\bigl(\one_{B_{\mathbb F}(u,r)}\mm_{\mathbb F}\bigr)
 \geq cr^{Q_{\mathbb F}-Q}
       \one_{B_{G_p}(\Pi_pu,r/4)}\mm_{G_p}.
\end{equation}

Lift $F$ to $F\circ\Pi_p$ and multiply by a fixed cutoff on
$\mathbb F$, equal to one on all the balls used below. Denote the
result by $\widetilde F_p$. Its horizontal Dirichlet energy
$\int_{\mathbb F}|\nabla_{\mathcal H}\widetilde F_p|^2\,\d\mm_{\mathbb F}$
is bounded independently of $p$: the lift preserves the horizontal
Lipschitz bound, and $F(e)$ bounds its values on the fixed support.
Every weighted polynomial of degree at most one on $\mathbb F$
has the form $a+\ell(\pi_1^{\mathbb F}(u^{-1}v))$.
Since $\der\Pi_p$ is an isomorphism on the first layer, this
polynomial descends to a function affine in the first-layer
coordinates of $G_p$.

More explicitly, let $u$ lie in a fixed ball of $\mathbb F$ and take
the best affine approximant to $\widetilde F_p$ on
$B_{\mathbb F}(u,32r)$. Apply~\eqref{eq:free-quotient-ball}
at radius $32r$ to the squared error of its descended polynomial.
The ball volumes on $\mathbb F$ and $G_p$ are comparable to
$r^{Q_{\mathbb F}}$ and $r^Q$, respectively, uniformly in $p$. Dividing by the volumes
and the squared radii gives
\begin{equation}\label{eq:free-quotient-affine-error}
 \beta_F(\Pi_pu,r)^2
       \leq C b_{\widetilde F_p}(u,16r)^2,
\end{equation}
where $b$ is defined by~\eqref{eq:affine-errors} on $\mathbb F$.
Replacing the descended polynomial by the best affine approximation
on $G_p$ can only reduce the left-hand side.
Choose a fixed radius $A_{\mathbb F}>4A$ and a cutoff equal to
one on $B_{\mathbb F}(e,A_{\mathbb F}+32r_*)$.
A further application of~\eqref{eq:free-quotient-ball}, at the
fixed radius $A_{\mathbb F}$, gives
\[
 \int_{B_{G_p}(e,A)}\beta_F(z,r)^2\,\d\mm_{G_p}(z)
 \leq C\int_{B_{\mathbb F}(e,A_{\mathbb F})}
                 b_{\widetilde F_p}(u,16r)^2\,\d\mm_{\mathbb F}(u).
\]
Integrate in $\d r/r$ and apply~\eqref{eq:carnot-dorronsoro}
on the single fixed group $\mathbb F$. This proves (b), without
assuming continuous dependence of Dorronsoro constants on a group.

For (c), work in the smoothly varying graded exponential frames.
The group laws and the coefficients of their left-invariant
fields are polynomials with uniformly bounded coefficients on
fixed boxes. The uniform ball--box estimates give a fixed
coordinate neighborhood contained in every $B_{G_p}(e,1/2)$.
Choose a nonnegative smooth function supported in this neighborhood
and divide it by its $\mm_{G_p}$-integral to obtain $\eta_p$.
The normalization factors and coordinate derivatives of every fixed order
are uniformly bounded. Set
$\eta_{p,r}(z)=r^{-Q}\eta_p(\delta_{1/r}z)$.
Homogeneity of $Z_j^p$ proves~\eqref{eq:uniform-tangent-kernels}.
For the convolution in the order used in
Lemma~\ref{lem:carnot-affine-smoothing}, cancellation gives
\[
 Z_j^pP_r^pF(x)=\int_{G_p}(F(z)-F(x))
        (Z_j^p\eta_{p,r})(z^{-1}x)\,\d\mm_{G_p}(z).
\]
The support and Lipschitz bound imply
\eqref{eq:uniform-tangent-smoothing}.
For uniformity of the affine projections, reduce to a unit ball.
That ball contains a fixed coordinate neighborhood and is
contained in a fixed box. The Gram matrix of $1,z_1,\ldots,z_{h_0}$
therefore has its smallest eigenvalue bounded below and its
largest eigenvalue bounded above. This bounds the affine slopes
uniformly. The lower bound on the measure of a ball then gives the uniform estimate of the supremum error by the $L^2$ error exactly as in
\eqref{eq:affine-uniform-from-L2}.
Convolution preserves the horizontal slope of affine functions;
subtracting the affine approximant and using
\eqref{eq:uniform-tangent-kernels} proves the gradient assertion.
Finally, coordinate derivatives on a fixed box are bounded
linear combinations of the fields $Z_j^p$, with uniformly
bounded coefficients. Hence
$|\partial_{z_j}P_r^pF|\leq CLr^{1-s}$ for $r\leq1$,
which will be used to compare tangent and manifold derivatives.
\end{proof}

\begin{proof}[Proof of Theorem~\ref{thm:potential-stability}\textup{(i)}]
We transfer the logarithmic estimate from the tangent groups. Put
$\phi_i=\theta_i^c$, $i=0,1$, and
\[
 \gamma(x)=\left(\frac{\theta_0+\theta_1}{2}\right)^c(x)
       -\frac{\phi_0(x)+\phi_1(x)}2,\qquad
 J=\int_S \gamma\,\d\rho.
\]
Thus $\gamma\geq0$. The functions $\phi_i$ are uniformly
$\dcc$-Lipschitz on $U$, independently of $\theta_0,\theta_1$.

\medskip
\noindent\textbf{Step 1: Localization in rescaled coordinates.}
Fix a constant $J_0>0$, independent of $\theta_0,\theta_1$.
First suppose $0<J<J_0\tau_0$, and put
\begin{equation}\label{eq:equiregular-scales}
 \tau=J/J_0,\qquad R=R_0\tau^{1/4},\qquad
 \tau^{1/(8s)}\leq r\leq\tau^{1/(16s)},\qquad H=Rr.
\end{equation}
Here $R_0>0$ is fixed so that all enlarged patches lie in $U_+$, and
$\tau_0>0$ is small enough that all scales remain in the chosen
charts. Fixed factors depending on $R_0,J_0$ are absorbed in $C$.

Cover a fixed neighborhood of $\overline S$ by coordinate patches
of Carnot--Carath\'eodory diameter comparable to $R$, with bounded
overlap and a smooth partition of unity
$\{\chi_p\}$ satisfying $|\nabla_{\mathcal H}\chi_p|\leq C/R$.
The patches and their fixed enlargements have measure comparable
with $R^Q$, and their total measure is bounded independently of $R$.
Such a cover is obtained from a maximal separated set at scale
$R$. Pulling back fixed smooth cutoffs in Carnot coordinates and
dividing by their sum gives the stated partition and derivative
bound; the ball--box estimates give bounded overlap.
For each patch define
\[
 f_{p,i}(z)=\frac{\phi_i(\Phi_{p,R}z)-\phi_i(p)}R.
\]
By~\eqref{eq:equiregular-metric-error}, these functions satisfy
\[
 |f_{p,i}(z)-f_{p,i}(w)|
 \leq L\widehat d_p(z,w)+CR^{1/s}.
\]
On a fixed enlarged coordinate domain $\mathcal V_p$ containing
all the tangent balls used below, define the McShane infimum
\[
 F_{p,i}(z)=\inf_{w\in\mathcal V_p}
       \{f_{p,i}(w)+L\widehat d_p(z,w)\}.
\]
It is $L$-Lipschitz on $G_p$ and satisfies
$|F_{p,i}-f_{p,i}|\leq CR^{1/s}$ on the coordinate domain.
Multiply it by a cutoff equal to one on all the smaller patches
used below. These functions satisfy the hypotheses of
Lemma~\ref{lem:uniform-tangent-estimates}(b) uniformly.

\medskip
\noindent\textbf{Step 2: A common scale of affine approximation.}
Multiply~\eqref{eq:uniform-tangent-dorronsoro} for each patch by
$R^Q$ and sum. By bounded overlap and the coordinate density bounds,
there is one common $r$ in~\eqref{eq:equiregular-scales} such that
\begin{equation}\label{eq:equiregular-beta-sum}
 \sum_p\int_{S\cap\operatorname{supp}\chi_p}
 \bigl(\beta_{F_{p,0}}(z_p(x),r)^2+
       \beta_{F_{p,1}}(z_p(x),r)^2\bigr)\,\d\rho(x)
 \leq\frac C{\log(1/\tau)},
\end{equation}
where $z_p(x)=\Phi_{p,R}^{-1}(x)$.
For each center take a best affine approximation, with horizontal
slope $q_{p,i}(x)$. The slopes are uniformly bounded. The argument for
\eqref{eq:affine-uniform-from-L2}, with the uniform bounds on the measures of balls just proved, gives, on $B_{G_p}(z_p(x),4r)$,
\begin{equation}\label{eq:equiregular-affine-error}
 \frac{\|f_{p,i}-A_{p,i,x,r}\|_\infty}{r}
 \leq C\beta_{F_{p,i}}(z_p(x),r)^{2/(Q+2)}
       +C\frac{R^{1/s}}r=:\eta_{p,i}(x).
\end{equation}
All balls used here remain in the enlarged coordinate patch.
The quantities $\eta_{p,i}$ are uniformly bounded: the Lipschitz
bound controls $\beta_{F_{p,i}}$, and the interval of scales in \eqref{eq:equiregular-scales} gives
$R^{1/s}/r\leq C\tau^{1/(8s)}$. In particular, their squares
can be bounded by a fixed multiple of $\eta_{p,i}$ below.

\medskip
\noindent\textbf{Step 3: Comparison of horizontal gradients.}
The argument of Lemma~\ref{lem:common-approximate-contact} also gives
\begin{equation}\label{eq:equiregular-contact}
 |q_{p,0}(x)-q_{p,1}(x)|^2
 \leq C\left(H+R+\eta_{p,0}(x)+\eta_{p,1}(x)
                    +\frac{\gamma(x)}H\right).
\end{equation}
Choose $y\in Y$ attaining the infimum defining
$((\theta_0+\theta_1)/2)^c(x)$, and put
$e_i(x)=c(x,y)-\theta_i(y)-\phi_i(x)\geq0$.
These contact errors sum to $2\gamma(x)$. For a unit horizontal vector $v$,
the tangent point $z_p(x)\exp(rv)$ corresponds to a source point
at distance at most $H+CR^{1+1/s}$ from $x$, by
\eqref{eq:equiregular-metric-error}. Testing
$\phi_i(z)-\phi_i(x)\leq c(z,y)-c(x,y)+e_i(x)$ at that point gives
\[
 |q_{p,i}(x)|\leq\dcc(x,y)
 +C\left(H+\eta_{p,i}(x)+\frac{R^{1/s}}r+
                    \frac{e_i(x)}H\right).
\]
If $\dcc(x,y)\geq H$, the point at length $H$ of a minimizing
curve from $x$ to $y$, denoted by $x_H$, has the following rescaled displacement in the first layer:
\[
 a=\frac{\pi_1(z_p(x)^{-1}z_p(x_H))}{r},\qquad
 |a|\leq1+CR^{1/s}/r.
\]
Here products and $\pi_1$ are taken in $G_p$.
For sufficiently small $\tau_0$, the metric comparison places
$z_p(x_H)$ in $B_{G_p}(z_p(x),2r)$, where the affine error
bound applies. The exact decrease of the squared
distance along this curve yields
\[
 q_{p,i}(x)\cdot a\leq-\dcc(x,y)
 +C\left(H+\eta_{p,i}(x)+\frac{e_i(x)}H\right).
\]
Expand $|q_{p,i}(x)+\dcc(x,y)a|^2$, exactly as in
Lemma~\ref{lem:common-approximate-contact}. The additional error
$CR^{1/s}/r$ is already included in~\eqref{eq:equiregular-affine-error}.
If $\dcc(x,y)<H$, the first displayed bound and boundedness of the
slopes suffice. This proves~\eqref{eq:equiregular-contact}.
The bound is independent of the chosen target point $y$ and minimizing curve. The quantities to be integrated are Borel: the affine
coefficients are obtained by inverting their Gram matrices, and
the affine errors are integrals or suprema of continuous
functions over metric balls.

\medskip
\noindent\textbf{Step 4: Smoothing and the global Poincar\'e inequality.}
Use the smoothing operators from
Lemma~\ref{lem:uniform-tangent-estimates}(c). Define smooth
functions on the source neighborhood by
\[
 u_i(x)=\sum_p\chi_p(x)
       \bigl[\phi_i(p)+R(P_r^pF_{p,i})(z_p(x))\bigr].
\]
Then
\begin{equation}\label{eq:equiregular-smoothing-errors}
 \|u_i-\phi_i\|_\infty\leq C(H+R^{1+1/s}).
\end{equation}
The horizontal derivative estimate for convolution on $G_p$ and
\eqref{eq:equiregular-affine-error} approximate the horizontal
derivatives of each smoothed summand by $q_{p,i}$ with error
$C\eta_{p,i}$. There are two further errors. First,
\eqref{eq:equiregular-field-error} changes tangent derivatives to
manifold derivatives by at most $CRr^{1-s}$, by the coordinate
bound in Lemma~\ref{lem:uniform-tangent-estimates}(c).
The factor $R$ in each summand cancels the factor $R^{-1}$
introduced by rescaling the horizontal derivative. Norms of horizontal slopes
are computed in the orthonormal frame on that patch; changing
patches amounts to an orthogonal change of horizontal components.
Second, differentiating the
partition gives at most $C(r+R^{1/s})$. To obtain the latter bound,
subtract $\phi_i(x)$ in every differentiated partition term and use
$\sum_p\nabla_{\mathcal H}\chi_p=0$ together with
\eqref{eq:equiregular-smoothing-errors} on each patch.

The global $L^2$ Poincar\'e inequality on the John domain $S$ yields
\begin{align}
 \Var_\rho(\phi_1-\phi_0)
 &\leq C\Bigg[
 H+R+\frac JH+
 \sum_p\int_{S\cap\operatorname{supp}\chi_p}
      (\eta_{p,0}+\eta_{p,1})\,\d\rho\notag\\*
 &\hspace{2.5em}+(Rr^{1-s})^2+(r+R^{1/s})^2
       +(H+R^{1+1/s})^2\Bigg].
 \label{eq:equiregular-final-bound}
\end{align}
Here bounded overlap, the bounded affine slopes, and
\eqref{eq:equiregular-contact} were used to estimate the gradient
of $u_1-u_0$. The required Poincar\'e inequality follows from the
local Poincar\'e inequality for bracket-generating vector fields
\cite{Jerison1986} and the same John chains as above; neither
argument requires step two.

By~\eqref{eq:equiregular-beta-sum}, H\"older's inequality, and
\eqref{eq:equiregular-affine-error}, the sum of affine errors is at
most
\[
 C[\log(1/\tau)]^{-1/(Q+2)}+C R^{1/s}/r.
\]
Every remaining error in~\eqref{eq:equiregular-final-bound} is a
positive power of $\tau$. In particular,
\[
 \frac{R^{1/s}}r\leq C\tau^{1/(8s)},\qquad
 Rr^{1-s}\leq C\tau^{1/4-(s-1)/(8s)},\qquad
 \frac JH\leq C\tau^{3/4-1/(8s)}.
\]
We have proved
\begin{equation}\label{eq:equiregular-log-jensen}
 V_c(\theta_0,\theta_1)
 \leq C\left[\log\left(e+\frac{J_0}{J_c(\theta_0,\theta_1)}\right)
       \right]^{-1/(Q+2)}.
\end{equation}
If $J=0$, apply the same argument with an arbitrary
$0<\tau<\tau_0$ in place of $J/J_0$; the term $J/H$ is zero, and we then
let $\tau\downarrow0$. The bounded oscillation of source potential
differences gives the estimate for larger $J$. Proposition~\ref{prop:potential-conversion}
and~\eqref{eq:optimal-jensen-gap} give the logarithmic stability
estimate, with exponent $1/[2(Q+2)]$, and uniqueness of normalized
source potentials. Only a neighborhood of the source is used in the nilpotent
approximation, so equiregularity is required only there.
\end{proof}

\subsection{One finitely supported target}\label{sec:finite-targets} 
If one target has a bounded number of atoms, the exponent $1/2$
follows from Lipschitz bounds and an $L^1$ Poincar\'e inequality
even when the other target is
arbitrary. We assume that, for a fixed $P>0$, every function $u$
Lipschitz on a neighborhood of $S$ satisfies
\begin{equation}\label{eq:pairwise-L1-poincare}
	\int_S\int_S|u(x)-u(x')|\,\d\rho(x)\,\d\rho(x')
	\leq P\int_S|\nabla_{\mathcal H}u|\,\d\rho.
\end{equation}
Here $\nabla_{\mathcal H}u$ is the weak horizontal gradient:
$g_{\mathcal H}(\nabla_{\mathcal H}u,X)=Xu$
almost everywhere for each smooth horizontal field $X$.

\begin{proposition}[Stability from one finitely supported target]\label{prop:one-finite-target}
	Let $S$ and $\rho$ satisfy the standing source assumptions of
	this section. Let $U$ be a neighborhood of $\overline S$, and let
	$b\in C(U\times Y)$.
	Suppose that $\rho$ satisfies the $L^1$ Poincar\'e inequality
	\eqref{eq:pairwise-L1-poincare} with constant $P$, and that
	\[
	\begin{aligned}
		\Lip(b(\cdot,y);U)\leq L_x,\quad y\in Y,\qquad 
		\Lip(b(x,\cdot);Y)\leq L_y,\quad x\in S.
	\end{aligned}
	\] 
	For $i=0,1$, let $(\phi_i,\theta_i)$ be continuous
	$b$-conjugate Kantorovich potentials from $\rho$ to
	$\mu_i\in\PP(Y)$.
	If $\#\spt\mu_0\leq m$ for $m\in\N^+$, then
	\begin{equation}\label{eq:one-finite-target}
		\Var_\rho(\phi_1-\phi_0)
		\leq8mPL_xL_yW_1(\mu_0,\mu_1).
	\end{equation}
\end{proposition}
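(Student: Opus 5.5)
The plan is to exploit the finite support of $\mu_0$ through its Laguerre cells, so that $u:=\phi_1-\phi_0$ becomes an explicit piecewise expression. Then the $L^1$ Poincar\'e inequality \eqref{eq:pairwise-L1-poincare} is applied to suitable truncations of $u$, and everything is closed with the duality quantity
\[
\Delta:=\int_Y(\theta_1-\theta_0)\,\d(\mu_1-\mu_0)\leq 2L_yW_1(\mu_0,\mu_1),
\]
as in \eqref{eq:optimal-jensen-gap}. The bound on $\Delta$ holds because each $\theta_i=\phi_i^b$ is $L_y$-Lipschitz on $Y$, since an infimum of $L_y$-Lipschitz functions is $L_y$-Lipschitz, and because of the Kantorovich--Rubinstein formula \eqref{eq:KR-duality}.

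\textbf{Step 1 (cell structure).} Write $\spt\mu_0=\{y_1,\dots,y_m\}$, discarding atoms outside the support. Then $\phi_0(x)=\min_k\{b(x,y_k)-\theta_0(y_k)\}$ on $S$ after replacing $\phi_0$ by the $b$-transform over $\spt\mu_0$; this changes nothing $\rho$-a.e. because $\phi_0$ and $\theta_0$ are optimal. Choose a Borel cell index $k(x)$ realizing this minimum. Put $v_k=\theta_1(y_k)-\theta_0(y_k)$ and $R_1(x,y)=b(x,y)-\phi_1(x)-\theta_1(y)\geq0$. Then on the cell $\{k(x)=k\}$ we have the identity
\[
u(x)=-v_k-R_1(x,y_k).
\]
In particular $u\leq -v_{k(x)}$ pointwise, and the defect satisfies
\[
\int_S R_1(x,y_{k(x)})\,\d\rho=\int(\theta_1-\theta_0)\,\d\mu_0-\int R_0\,\d\pi_1\leq\Delta,
\]
where $\pi_1$ is an optimal plan for $\mu_1$. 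This is the same symmetrized-gap computation as in \eqref{eq:symmetricgap}, with $R_0\geq0$ discarded.

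\textbf{Step 2 (level truncations).} Order the values so that $-v_{(1)}\leq\dots\leq-v_{(m)}$. Split the range of $u$ into the at most $m$ intervals between consecutive values $-v_{(j)}$ and apply \eqref{eq:pairwise-L1-poincare} to each clipped function $u_j=\min\{\max\{u,-v_{(j-1)}\},-v_{(j)}\}$. On any cell, $u$ differs from the constant $-v_k$ by $-R_1(\cdot,y_k)$. Hence $|\nabla_{\mathcal H}u_j|$ is supported where some $R_1(\cdot,y_k)$ is active, and it is bounded by $2L_x$ there. The hope is to convert $\int|\nabla_{\mathcal H}u_j|\,\d\rho$ into $L_x$ times the $\rho$-measure of the transition regions, and then bound those regions through the defect in Step~1. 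Summing the pairwise $L^1$ oscillations over the $m$ layers, and using that a variance is at most $\tfrac12\iint|u(x)-u(x')|^2$, which in turn is bounded by the oscillation of $u$ (of order $L_y$ via $\theta_i$) times the pairwise $L^1$ quantity, should give
\[
\Var_\rho(u)\leq 4mPL_x\cdot\Delta\leq 8mPL_xL_yW_1(\mu_0,\mu_1).
\]
The factor $m$ arises from the number of layers, and the factor $2$ in $\Delta\leq 2L_yW_1$ accounts for the final constant $8$.

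\textbf{Main obstacle.} The delicate point is Step~2: bounding $\iint|u(x)-u(x')|^2$ linearly in $\Delta$ rather than in $\sqrt\Delta$, without any twist condition or second-order information on $b$. I would first try to check that, on each layer, the relevant quantity is $\int|\nabla_{\mathcal H}u_j|\,\d\rho$ weighted by the layer height $|v_{(j)}-v_{(j-1)}|$, and that these products telescope to $\int(\theta_1-\theta_0)\,\d(\mu_0-\mu_1)$-type sums controlled by $\Delta$. The Lipschitz bound $L_x$ enters through the gradient, and the layer heights are controlled by $L_y$-Lipschitzness of $\theta_1-\theta_0$. If this telescoping fails, the fallback is to accept a constant depending on $\osc u$, using $\int R_1^2\leq(\osc R_1)\int R_1$. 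That gives the same $W_1^{1/2}$ rate with a slightly worse constant.
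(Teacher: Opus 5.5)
Your Step~1 contains the right pointwise fact. The function $u=\phi_1-\phi_0$ lies within $R_1$ of the finite set $A=-v(\spt\mu_0)$, where $v=\theta_1-\theta_0$, hence $\int_S\dE(u,A)\,\d\rho\le\Delta\le2L_yW_1(\mu_0,\mu_1)$. Two small corrections are needed. First, state this $\pi_0$-a.e., as $\dE(u(x),A)\le R_1(x,y)$, rather than through a Borel cell index: without a twist condition the Laguerre cells can overlap on sets of positive measure, so $x\mapsto y_{k(x)}$ need not push $\rho$ to $\mu_0$. Second, the correct identity is $\int R_1\,\d\pi_0=\int v\,\d(\mu_1-\mu_0)-\int R_0\,\d\pi_1$.

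The genuine gap is Step~2. Step~1 controls $\dE(u,A)$ only in $L^1$. It does not control the measure of the set where $u\notin A$: Chebyshev gives only $\rho\{\dE(u,A)>\delta\}\le\Delta/\delta$, and $u$ may differ slightly from $A$ on a set of mass of order one. So ``transition regions'' cannot be bounded through the defect.

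The bound by $\osc_S(u)$ times $\iint|u(x)-u(x')|$, followed by \eqref{eq:pairwise-L1-poincare} for $u$ or its truncations, produces the unweighted integral $\int_S|\nabla_{\mathcal H}u|\,\d\rho$. This integral is not $O(\Delta)$. In the three-atom configuration of Theorem~\ref{thm:simultaneous-sharpness}, $|\nabla_{\mathcal H}u|\asymp1$ on two strips of mass $\asymp\varepsilon$ and vanishes elsewhere, while $\Delta\asymp\varepsilon^2$. With only the a priori bound $\osc_Su=O(1)$, this route gives at best $\Var_\rho u\lesssim\Delta^{1/2}$.

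Weighting a layer by its full height $h_j$ has the same defect at levels close to $A$, where Step~1 supplies only the weight $\dE(u,A)\ll h_j$. The range $u<\min A$, which is everything when $m=1$, is not covered by layers between consecutive values at all. Your fallback fails as well. The bound $\int\dE(u,A)^2\,\d\rho\lesssim\Delta$ says $u$ is close to some $A$-valued function, not to a constant. Excluding jumps between distinct points of $A$ is exactly the job the Poincar\'e inequality must do.

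The missing idea is to weight each level $r$ by $\dE(r,A)$. Set $H_A(t)=\int_0^t\dE(r,A)\,\d r$. The nearest-point intervals of $A$ split $[s,t]$ into at most $m$ pieces of lengths $\ell_j$, each contributing at least $\ell_j^2/4$. Cauchy--Schwarz then gives $|s-t|^2\le4m|H_A(s)-H_A(t)|$. Apply \eqref{eq:pairwise-L1-poincare} to the Lipschitz function $H_A\circ u$, whose horizontal gradient is $\dE(u,A)\nabla_{\mathcal H}u$:
\[
 \Var_\rho u\le2m\iint|H_A(u(x))-H_A(u(x'))|\,\d\rho(x)\,\d\rho(x')
 \le2mP\int_S\dE(u,A)|\nabla_{\mathcal H}u|\,\d\rho
 \le4mPL_x\int_S\dE(u,A)\,\d\rho .
\]
Step~1 then gives the bound $8mPL_xL_yW_1(\mu_0,\mu_1)$. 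Your layers between consecutive points of $A$ are a discretization of this, but the weight must be $\dE(\cdot,A)$ level by level, not the layer height.
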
 

\begin{proof} 
	Set $g=\phi_1-\phi_0$, $v=\theta_1-\theta_0$, and $ A=-v(\spt\mu_0)$. Then $A$ has at most $m$ points. Let $\pi_i$ be an optimal coupling
	from $\rho$ to $\mu_i$, $i=0,1$, and set
	$R_i=b-\phi_i-\theta_i$.
	For $\pi_0$-almost every $(x,y)$, $-v(y)\in A$ and
	$\dE(g(x),A)\leq -g(x)-v(y)=R_1(x,y)$.
	Since $R_0\geq0$ and $R_i=0$ $\pi_i$-almost everywhere, we obtain 
	\begin{equation}\label{eq:one-sided-residual}
		\begin{aligned}
			\int_S\dE(g,A)\,\d\rho
			\leq\int R_1\,\d\pi_0+\int R_0\,\d\pi_1=\int_Yv\,\d(\mu_1-\mu_0)
			\leq2L_yW_1(\mu_0,\mu_1).
		\end{aligned}
	\end{equation}
	The last inequality uses target conjugacy, which gives
	$\Lip(v;Y)\leq2L_y$, and Kantorovich--Rubinstein duality.
	
	By the source Lipschitz bound on $b$, both $\phi_0$ and $\phi_1$
	are $L_x$-Lipschitz on $U$. Hence $|\nabla_{\mathcal H}g|\leq2L_x$ almost everywhere. 
	Put $H_A(t)=\int_0^t\dE(r,A)\,\d r$, where
	$\dE(r,A)=\min_{a\in A}|r-a|$. To verify the scalar estimate
	$|s-t|^2\leq4m|H_A(s)-H_A(t)|$, assume $s<t$ and partition
	$[s,t]$ into intervals on each of which one fixed point of $A$
	is nearest. There are at most $m$ such intervals. On an interval of length $\ell_j$ assigned to
	$a_j\in A$, the integral of $|r-a_j|$ is at least
	$\ell_j^2/4$. Thus
	$H_A(t)-H_A(s)\geq\frac14\sum_j\ell_j^2
	\geq(t-s)^2/(4m)$ by Cauchy--Schwarz.
	The variance identity, \eqref{eq:pairwise-L1-poincare}, and the
	chain rule now give
	\begin{align*}
		\Var_\rho g
		&\leq2m\iint|H_A(g(x))-H_A(g(x'))|\,\d\rho(x)\,\d\rho(x')\\
		&\leq2mP\int_S\dE(g,A)|\nabla_{\mathcal H}g|\,\d\rho\\
		&\leq8mPL_xL_yW_1(\mu_0,\mu_1), 
	\end{align*}
	which proves the claim. 
\end{proof}

\begin{lemma}[A first-order estimate on John domains]\label{lem:john-L1}
The sources allowed in Theorem~\ref{thm:potential-stability} satisfy
\eqref{eq:pairwise-L1-poincare} with a fixed $P<\infty$.
\end{lemma}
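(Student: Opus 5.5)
The plan is to derive \eqref{eq:pairwise-L1-poincare} in three steps: a local $L^1$ Poincar\'e inequality on small balls, summation over a Whitney cover of the John domain, and conversion between $\mm$ and $\rho$ using the two-sided density bounds.

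\emph{Local inequality.} Fix a ball $B$ from Lemma~\ref{lem:averaged-paths} and a function $u$ Lipschitz near $\overline S$. For $x,y\in B$, integrate the horizontal derivative along the path $\gamma_{xy}$. Each piece is a flow segment of a unit horizontal field, so $|u(x)-u(y)|\leq\int_{\gamma_{xy}}|\nabla_{\mathcal H}u|\,\d s$. This holds for almost every choice of pieces, and Fubini applied to the change of variables in the proof of Lemma~\ref{lem:averaged-paths} makes it valid after integration. Integrating over $B\times B$ and applying \eqref{eq:local-main-integral} with $F=|\nabla_{\mathcal H}u|$ gives
\[
 \int_B\int_B|u(x)-u(y)|\,\d\mm\,\d\mm\leq Cr\,\mm(B)\int_{\kappa B}|\nabla_{\mathcal H}u|\,\d\mm .
\]
Lemma~\ref{lem:averaged-paths} is stated for two-generating or Riemannian regions. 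For a general equiregular structure, which is needed for part~(iv) at arbitrary step, I would instead cite Jerison's local Poincar\'e inequality \cite{Jerison1986}; its weak $L^1$ form on balls is classical, and the same citation is used for the $L^2$ case in the proof of part~(i).

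\emph{Summation over the John domain.} Take the Whitney cover $\calB$ and the chains $\mathcal C(i)$ from Lemma~\ref{lem:john-gluing}. Let $B_0$ be the central ball. For $x\in B_i$, telescope
\[
 |u(x)-u_{B_0}|\leq|u(x)-u_{B_i}|+\sum_{j\in\mathcal C(i)}|u_{B_j}-u_{B_{j'}}| .
\]
Consecutive balls overlap in comparable measure, so each difference of averages is bounded by $C r_{B_j}\mm(B_j)^{-1}\int_{\kappa B_j}|\nabla_{\mathcal H}u|$. Integrating and exchanging the order of summation reduces the claim to the standard Boman-chain estimate with $p=1$ (\cite{BuckleyKoskelaLu1996,HajlaszKoskela2000}). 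The required input is $\sum_i\mm(B_i)\one_{i\in\text{tail of }j}\leq C\,\mm(B_j)$ for $B_i\subseteq CB_j$, which follows from bounded overlap, local doubling, and $r_B\leq r_*$. For $p=1$ the weights $r_{B_j}$ need not be absorbed by any maximal-function argument, and the bounded radii give
\[
 \int_S|u-u_{B_0}|\,\d\mm\leq C\int_S|\nabla_{\mathcal H}u|\,\d\mm .
\]
When $S=M$ is compact, use the finite chain cover as in Lemma~\ref{lem:john-gluing}.

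\emph{Conversion to the pairwise form.} Since $\rho\leq a_2\mm$,
\[
 \iint|u(x)-u(x')|\,\d\rho\,\d\rho\leq 2a_2\int_S|u-u_{B_0}|\,\d\mm ,
\]
and since $\rho\geq a_1\mm$, the gradient term is at most $a_1^{-1}\int_S|\nabla_{\mathcal H}u|\,\d\rho$. This proves \eqref{eq:pairwise-L1-poincare} with $P$ depending on $a_1,a_2,\eta_0$ and the geometry.

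The main obstacle is the chain summation. In the $L^2$ case it is quoted from \cite{KitagawaLetrouitMerigot2025}, and I need to check that their argument adapts to $L^1$ with only local doubling. The absorption of the tail sums $\sum_{i:\,j\in\mathcal C(i)}\mm(B_i)\leq C\mm(B_j)$ is exactly where the John condition enters, through the geometric decay of balls along the chains.
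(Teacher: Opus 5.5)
Your proposal is correct and follows essentially the same route as the paper. The steps match: a local horizontal $L^1$ Poincar\'e inequality on the Whitney balls, summation along the John chains of Lemma~\ref{lem:john-gluing} using the tail bound $\sum_{i:\,j\in\mathcal C(i)}\mm(B_i)\leq C\mm(B_j)$, and conversion via $a_1\mm\restr S\leq\rho\leq a_2\mm\restr S$.

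There are two small differences. The paper uses a disjoint partition $E_i\subseteq B_i$ where you use bounded overlap, and it proves the estimate for smooth $u$ before approximating Lipschitz functions. Also, your closing remark is inaccurate: the tail bound needs only $B_i\subseteq CB_j$, bounded overlap, and doubling, not any geometric decay along the chains. The John condition enters only through the existence of chains with that inclusion.
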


\begin{proof}
	First take $u$ smooth. On a compact equiregular neighborhood,
	local doubling and the horizontal $L^1$ Poincar\'e inequality hold
	at all sufficiently small radii
	\cite{NagelSteinWainger1985,FranchiLuWheeden1995}.
	With the balls and their interior enlargements from
	Lemma~\ref{lem:john-gluing}, this gives
	\[
	e_B:=\int_B|u-u_{B,\mm}|\,\d\mm
	\leq Cr_B\int_{\kappa B}|\nabla_{\mathcal H}u|\,\d\mm.
	\]
	
	Let $B_i$ and $\mathcal C(i)$ be the Whitney balls and
	chains without repeated balls constructed in
	Lemma~\ref{lem:john-gluing}.
	Set $u_i=u_{B_i,\mm}$, and let $u_*$ be the mean on the
	central ball. By the overlap estimates,
	$|u_i-u_*|\leq
	C\sum_{j\in\mathcal C(i)}e_{B_j}/\mm(B_j)$.
	Choose a measurable partition $S=\bigsqcup_iE_i$ with
	$E_i\subseteq B_i$.
	Since $B_i\subseteq C B_j$ whenever $j\in\mathcal C(i)$,
	local doubling yields $\sum_{i:j\in\mathcal C(i)}\mm(E_i)\leq C\mm(B_j)$. Thus, by Fubini's theorem, 
	\[
	\begin{aligned}
		\int_S|u-u_*|\,\d\mm\leq\sum_i e_{B_i}+\sum_i\mm(E_i)|u_i-u_*|\leq C\sum_j e_{B_j}
		\leq C\int_S|\nabla_{\mathcal H}u|\,\d\mm, 
	\end{aligned}
	\]
   where the last step uses $r_B\leq r_*$ and bounded overlap
	of the enlarged balls.
	
	By the density bounds and the triangle inequality,
	\[
	\begin{aligned}
		\int_S\int_S|u(x)-u(x')|\,\d\rho(x)\,\d\rho(x')\leq2\int_S|u-u_*|\,\d\rho\leq C\frac{a_2}{a_1}
		\int_S|\nabla_{\mathcal H}u|\,\d\rho.
	\end{aligned}
	\]
	For compact $S=M$, the same argument uses the finite
	connected cover from Lemma~\ref{lem:john-gluing}.
	Finally, smooth approximation in local charts, with
	$L^1$ convergence of the functions and their weak horizontal
	gradients, extends the estimate to functions Lipschitz on a
	neighborhood of $S$.
\end{proof} 

\begin{proof}[Proof of Theorem~\ref{thm:potential-stability}\textup{(iv)}]
After exchanging the indices if necessary, assume that
$\#\spt\mu_0\leq m$. For every $\varepsilon>0$, the cost $c$ is $(D+\varepsilon)$-Lipschitz
in the source variable on a sufficiently small neighborhood of
$\overline S$, uniformly for $y\in Y$. By Lemma~\ref{lem:john-L1}, we apply
Proposition~\ref{prop:one-finite-target} with
$L_x=D+\varepsilon$ and $L_y=D$ and then let $\varepsilon\downarrow0$ to obtain 
\begin{equation}\label{eq:finite-target-bound}
	\norm{\phi_{\mu_1}-\phi_{\mu_0}}_{L^2(\rho)}^2
	\leq8mPD^2W_1(\mu_0,\mu_1), 
\end{equation}
which proves the claim. 
\end{proof} 

For fixed finite target sites, Poincar\'e inequalities are used
to bound the modulus of strong concavity of the semi-discrete
dual functional
\cite[Theorem~5.1]{KitagawaMerigotThibert2019} and stability estimates
for target dual weights \cite[Theorem~1.10]{BansilKitagawa2022}. Here we estimate source potentials in $L^2(\rho)$ in terms of
$W_1$ when only one target is finitely supported.
The proof combines the duality defect estimate with the
$L^1$-Poincar\'e inequality; the cost enters only through its
Lipschitz bounds. 

\begin{remark}\label{rem:cardinality}
	The $\sqrt m$ factor in Proposition~\ref{prop:one-finite-target}
	is optimal for general Lipschitz costs. Indeed, choose $S=(0,1)$, $\rho=\mathcal L^1\restr S$,
	$b(x,y)=|x-y|$, and $Y_N=\{j/N:0\leq j\leq N\}$.
	Fix $0<a<1$ and set $\theta_0=0$, $\theta_1(y)=ay$.
	For $i=0,1$, denote $\phi_i=\theta_i^b$, let $T_i^N(x)$
	minimize $b(x,y)-\theta_i(y)$ over $Y_N$, and set
	$\mu_i^N=(T_i^N)_\#\rho$.
	
	For $\theta_0$, the cell boundaries are $(j+1/2)/N$,
	$0\leq j<N$.
	For $\theta_1$, each boundary moves left by $a/(2N)$.
	Every cell has positive mass, so $(\phi_i,\theta_i)$ are
	mutual $b$-conjugate Kantorovich pairs. Since the two measures differ only at $0$ and $1$, we obtain 
	\[
	\mu_1^N-\mu_0^N=\frac{a}{2N}(\delta_1-\delta_0),
	\qquad W_1(\mu_0^N,\mu_1^N)=\frac{a}{2N}.
	\]
	Moreover, $\phi_1-\phi_0=-ax+O(N^{-1})$ uniformly. Hence 
	\[
	\Var_\rho(\phi_1-\phi_0)\longrightarrow\frac{a^2}{12}, \qquad  \frac{\Var_\rho(\phi_1-\phi_0)^{1/2}}
	{W_1^{1/2}(\mu_0^N,\mu_1^N)}\asymp\sqrt N. 
	\]
	Here $L_x=L_y=1$ and the Poincar\'e constant is independent
	of $N$. Since normalization leaves variance unchanged and
	$m=N+1$, the coefficient in a uniform $L^2$ estimate with
	$W_1$ exponent $1/2$ must grow at least as $\sqrt m$. The optimal dependence on $m$ for the squared
	Carnot--Carath\'eodory distance remains open. 
\end{remark} 

\subsection{The role of step two for potentials}\label{sec:step-two-potentials}
For potentials, step two enters at two distinct points of the proof.
First, on a complete two-generating structure the squared
distance is locally Lipschitz in local coordinates, including
near the diagonal \cite[Corollary~6.2]{AgrachevLee2009}.
For Carnot groups, the obstruction in higher step is already
visible at the diagonal: for
$0\ne Z\in V_j$, homogeneity and symmetry give
\begin{equation}\label{eq:layer-cost-scaling}
 c(e,\exp(tZ))=|t|^{2/j}c(e,\exp Z).
\end{equation}
Thus layers of weight greater than two prevent coordinate
Lipschitz regularity. In Proposition~\ref{prop:penalized-approximation},
this regularity gives the linear approximation error and the
uniform covector bound. The endpoint corrections then yield
every potential exponent below $1/2$.

Second, on a step-two Carnot group the stronger horizontal
semiconcavity estimate of
Proposition~\ref{prop:horizontal-semiconcavity} gives a linear
variance bound directly, and hence the potential exponent $1/2$.
This second property is not a consequence of two-generation alone.
The example below separates the two properties on a smooth
two-generating structure.

At higher step, the failure of horizontal semiconcavity on the Engel
group \cite[Proposition~4.5]{DragoniLiuZhang2025} rules out that
criterion based on directional second derivatives, but does not
rule out a H\"older estimate for potentials.

\subsection{Failure of horizontal semiconcavity}\label{sec:horizontal-obstruction}
Uniform horizontal semiconcavity may fail on smooth
two-generating manifolds, even away from the diagonal. The example below also shows that the positive parts of directional
second-difference quotients need not admit a uniform $L^1$ bound
at pairs $(x,T(x))$ for one fixed optimal transport map $T$.
These differences are taken with the target fixed, whereas
the source potential minimizes over the target set.
The example therefore shows that the directional hypothesis of
Theorem~\ref{thm:directional-jensen} can fail; it leaves open the
potential stability estimate with exponent $1/2$. 

On $\RR^4$ with coordinates $(x,y,z,s)$, put
\[
 f(s)=\begin{cases}0,&s\leq0,\\ e^{-1/s^2},&s>0,\end{cases}
 \qquad F(s)=\int_0^s f(r)\,\d r,
\]
and declare the fields
\begin{equation}\label{eq:curved-two-step-model}
 X=\partial_x,\qquad Y_0=\partial_y+x\partial_z,
 \qquad W=\partial_s+f(s)\partial_x
\end{equation}
orthonormal. The distribution is that of $\HH^1\times\RR$, since
it is spanned by $X,Y_0,\partial_s$. Moreover,
$[X,Y_0]=\partial_z$, so $(\dim\mathcal H,\dim(\mathcal H+[\mathcal H,\mathcal H]))
=(3,4)$ everywhere.
It is not fat: the Levi bracket of $\partial_s$ with the
horizontal space vanishes. Since $0\leq f\leq1$, the coefficient
map $(a,b,c)\mapsto(a+f(s)c,b,c)$ and its inverse are uniformly
bounded. Hence the metric is uniformly comparable to the
complete product metric and is complete.

\Needspace{6\baselineskip}
\begin{proposition}[Failure of directional semiconcavity]
\label{prop:horizontal-obstruction}
For the cost $c=\dcc^2/2$ of \eqref{eq:curved-two-step-model}:
\begin{enumerate}[(i)]
\item There exists a fixed point $p$ such that $c(p,\cdot)$ has no
uniform semiconcavity constant along $Y_0$-flows on a fixed compact
region away from $p$.
\item There exist a fixed John domain $S$, its normalized Lebesgue measure $\rho$, a
fixed compact target set $Y$, and a map $T:S\to Y$ that is optimal from $\rho$ to $T_\#\rho$ with the following property. For 
\[
 K_h(q)=\left[
 \frac{c(\Phi_{Y_0}^h q,T(q))+c(\Phi_{Y_0}^{-h}q,T(q))
                      -2c(q,T(q))}{h^2}\right]_+,
 \qquad [a]_+=\max\{a,0\},
\]
one has
\begin{equation}\label{eq:nonintegrable-second-differences}
 \liminf_{h\to0}\int_S K_h\,\d\rho=+\infty.
\end{equation}
The associated source Kantorovich potential is nevertheless
constant along every $Y_0$-flow segment contained in $S$.
\end{enumerate}
\end{proposition}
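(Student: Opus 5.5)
The plan is to pass to the coordinate $\tilde x=x-F(s)$, which turns the structure into a twisted copy of $\HH^1\times\RR$. The controls $(a,b,w)$ of $X,Y_0,W$ give
\[
\dot x=a+f(s)w,\qquad \dot y=b,\qquad \dot z=xb,\qquad \dot s=w,
\]
with energy $\tfrac12\int(a^2+b^2+w^2)$. In the coordinates $(\tilde x,y,z,s)$ these become
\[
\dot{\tilde x}=a,\qquad \dot y=b,\qquad \dot s=w,\qquad \dot z=(\tilde x+F(s))\,b.
\]
Thus the only coupling between $s$ and the Heisenberg part is through $F(s)$ in the $z$-equation. Two facts then hold. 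First, $|\dot s|$ is bounded by the speed, so $\dcc(q,q')\geq|s'-s|$. Second, equality holds along the pure $W$-flow, which fixes $(\tilde x,y,z)$. These segments are the abnormal-type minimizers responsible for the failure of semiconcavity.

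For (i), take $p=(0,0,0,0)$ and targets $q=(F(\sigma),0,0,\sigma)$, that is $\tilde x=0$, with $\sigma$ in a compact interval $[\sigma_1,\sigma_2]\subset(0,\infty)$. Then $c(p,q)=\sigma^2/2$, attained by the $W$-flow. In the new coordinates the $Y_0$-flow is $q_h=(0,h,F(\sigma)h,\sigma)$.

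The upper bound comes from moving first in $s$ and then in $y$ at level $s=\sigma$. This gives $c(p,q_h)\leq(\sigma+|h|)^2/2$, so the first-order term is $\sigma|h|$.

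The key step, which I expect to be the main obstacle, is a matching lower bound
\[
c(p,q_h)\geq\tfrac12\sigma^2+\kappa|h|\qquad(\kappa>0,\ |h|\ \text{small}),
\]
uniform for $\sigma\in[\sigma_1,\sigma_2]$. Once this is proved, $c(p,q_h)+c(p,q_{-h})-2c(p,q)\geq2\kappa|h|$, and no semiconcavity constant can exist.

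To prove the lower bound I would argue by contradiction with energy $\sigma^2/2+o(h)$, and split into two constraints.
\begin{itemize}
\item The energy bound forces $\int(a^2+b^2)=o(h)$ and $s(t)\leq\sigma+o(1)$. The bound on $s$ uses $\int w\geq\sigma$ together with the total budget.
\item The endpoint condition requires $\int(\tilde x+F(s))b=F(\sigma)h$ and $\int b=h$.
\end{itemize}
The $\tilde x$-term is $O(\|a\|_{L^2}\|b\|_{L^1})$, which is small. Meanwhile $F(s(t))<F(\sigma)-c$ on a definite portion of the time interval, because $s$ must travel from $0$ to $\sigma$ at nearly constant speed and $F$ is strictly increasing on $(0,\infty)$. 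Hence matching $\int F(s)b$ to $F(\sigma)\int b$ requires $\|b\|_{L^1}\gtrsim|h|$ concentrated where $s\approx\sigma$. This costs $w$-speed, or an overshoot in $s$, of first order in $|h|$.

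To make this quantitative I would follow Remark~\ref{rem:nonfat-obstruction} and reduce to a one-dimensional minimization over the time profile of $s$. Alternatively one can compare with the product case via the Jensen bound $\tfrac12\int\dot s^2\geq\tfrac12(\Delta s)^2$ plus a positive remainder.

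For (ii), define $T$ as the time-$a$ map of the $W$-flow, $T(x,y,z,s)=(x+F(s+a)-F(s),y,z,s+a)$, on a small box $S$ whose $s$-range contains $0$. The box is a John domain. For any $\pi\in\Pi(\rho,T_\#\rho)$, the bound $\dcc\geq|\Delta s|$ and Jensen give $\int c\,\d\pi\geq a^2/2=\int c(q,T(q))\,\d\rho$, so $T$ is optimal.

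The potentials $\phi(q)=-as$ and $\psi(q')=as'-a^2/2$ satisfy $\phi+\psi\leq(s'-s)^2/2\leq c$, with equality on the graph of $T$. So they form a Kantorovich pair. The source potential depends only on $s$ and is therefore constant along $Y_0$-flows, which fix $s$.

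Finally, apply the kink estimate from (i), translated to base points $q$ and $T(q)$. By left-invariance in $(\tilde x,y,z)$ with $s$ fixed, the pair $(q,T(q))$ has the same structure as $(p,q)$ in (i). This gives $K_h(q)\geq2\kappa/|h|$ for $q$ in the positive-measure subset where the $s$-interval $[s,s+a]$ meets $\{f>0\}$, with $\kappa$ uniform there. Integrating over that subset yields \eqref{eq:nonintegrable-second-differences}.
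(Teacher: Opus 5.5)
There is a genuine gap. The first-order bound $c(p,q_h)\geq\tfrac12\sigma^2+\kappa|h|$, which you identify as the key step, is false.

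Use the control $(a,b,w)=(0,hv(t),\sigma)$. Then $s(t)=\sigma t$ and $x(t)=F(\sigma t)$ regardless of $v$, while $y(1)=h\int_0^1v$ and $z(1)=h\int_0^1F(\sigma t)v(t)\,dt$. Because $t\mapsto F(\sigma t)$ is nonconstant, some $v=\alpha+\beta F(\sigma\,\cdot)$ satisfies $\int_0^1v=1$ and $\int_0^1F(\sigma t)v(t)\,dt=F(\sigma)$. This control therefore reaches $q_h=(F(\sigma),h,F(\sigma)h,\sigma)$ exactly, and
\[
 \tfrac12\sigma^2\leq c(p,q_h)\leq\tfrac12\sigma^2+\tfrac12\kappa_\sigma h^2,
 \qquad
 \kappa_\sigma=1+\frac{(F(\sigma)-m_1)^2}{m_2-m_1^2},\quad
 m_j=\int_0^1F(\sigma t)^j\,dt.
\]
Your heuristic that $b$ must concentrate where $s\approx\sigma$ overlooks sign-changing $b$. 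The variable coefficient $F(s)$ in $\dot z=(\tilde x+F(s))b$ is exactly what produces the required $z$-displacement at quadratic cost. The cusp in Remark~\ref{rem:nonfat-obstruction} concerns a central displacement in the untwisted product, so it does not transfer here.

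Moreover, $\kappa_\sigma$ is continuous on $(0,\infty)$. For $\sigma\in[\sigma_1,\sigma_2]$ the second differences are therefore bounded by $\max\kappa_\sigma$, and your choice of $p$ yields no obstruction at all. The same computation with $G(t)=F(s+at)-F(s)$ applies to your map in (ii). It shows that $K_h$ stays bounded wherever $[s,s+a]\cap(0,\infty)$ has length bounded below, which is exactly the set on which you claimed a uniform kink.

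The actual mechanism is a quadratic coefficient that is finite at every point but unbounded. It blows up only when the profile of $F$ along the minimizer becomes flat. Since $f$ vanishes to infinite order at $0$, this happens when the minimizer meets only a short stretch $(0,e]$ of $\{f>0\}$.

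To keep the targets away from the base point, the paper takes $p=(0,0,0,-1)$, where $F\equiv0$ on $[-1,0]$, and targets $q_e=(F(e),0,0,e)$ with $e\downarrow0$. It then proves the exact expansion $c(p,q_e(h))=\tfrac12\ell^2+\tfrac12\kappa_eh^2+o(h^2)$. The upper bound is the control above. The lower bound comes from the identity $\calA(u)-\tfrac12\ell^2=\tfrac12\|u-u^0\|_{L^2}^2$, forced by $\int_0^1u_3=\ell$, together with weak limits of $(u_h-u^0)/h$. Finally it computes $\kappa_e\sim4\ell/e^3$.

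For (ii), the map must send sources into this degenerate regime, for example $T(x,y,z,s)=(x-F(s),y,z,-1)$ on a small ball around the origin. Then $K_h(q)\to\kappa_s\sim4s^{-3}$ for $s>0$. Fatou's lemma and the nonintegrability of $s^{-3}$ give \eqref{eq:nonintegrable-second-differences}. Your optimality argument via $\dcc\geq|\Delta s|$ and Jensen is correct, as is the observation that the source potential depends only on $s$; both carry over to this choice of $T$.
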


\begin{proof}
	\emph{Part (i).}
	For $0<e<1$, put
	\[
	p=(0,0,0,-1),\quad \ell=1+e,\quad A=F(e),\quad
	q_e(h)=(A,h,Ah,e),\quad q_e=q_e(0).
	\]
	Set $u^0=(0,0,\ell)$, $a_e(t)=F(-1+\ell t)$, and
	$m_j=\int_0^1a_e(t)^j\,\d t$, $j=1,2$.
	The control $u^0$ reaches $q_e$. By $\dot s=u_3$, every control from $p$ to $q_e(h)$ satisfies
	$\int_0^1u_3\,\d t=1+e=\ell$. Hence 
	\begin{equation}\label{eq:model-energy-excess}
		\calA(u)-\frac{\ell^2}{2}
		=\frac12\norm{u-u^0}_{L^2}^2,
		\qquad \calA(u)=\frac12\norm{u}_{L^2}^2.
	\end{equation}
	
	Since $x(t)=\int_0^t u_1(r)\,\d r+F(s(t))$,
	linearizing the endpoint equations at $u^0$ yields 
	\[
	\int_0^1v_1=\int_0^1v_3=0,\qquad
	\int_0^1v_2=1,\qquad
	\int_0^1a_ev_2=A.
	\]
	As $a_e$ is nonconstant, the least squared norm under these
	constraints is
	\[
	\kappa_e
	=(1,A)\begin{pmatrix}1&m_1\\m_1&m_2\end{pmatrix}^{-1}
	\binom1A
	=1+\frac{(A-m_1)^2}{m_2-m_1^2},
	\]
	attained with $v_1=v_3=0$. Let $v_2$ attain the minimum above. The control $(0,hv_2,\ell)$
	joins $p$ to $q_e(h)$ exactly. Hence  
	\[
	c(p,q_e(h))\leq\frac{\ell^2}{2}+\frac{\kappa_e}{2}h^2.
	\]
	
	Conversely, for fixed $e>0$, minimizing controls $u_h$ satisfy
	$\norm{u_h-u^0}_{L^2}\leq\sqrt{\kappa_e}|h|$.
	By differentiability of the endpoint map, every weak limit of
	$(u_h-u^0)/h$ satisfies the displayed constraints.
	Equation~\eqref{eq:model-energy-excess} and weak lower
	semicontinuity yield the matching lower bound. Therefore, 
	\begin{equation}\label{eq:exact-directional-coefficient}
		c(p,q_e(h))
		=\frac{\ell^2}{2}+\frac{\kappa_e}{2}h^2+o(h^2)
		\qquad\text{for each fixed }e>0.
	\end{equation}
	Since $m_j=\ell^{-1}\int_0^eF(s)^j\,\d s$, 
	\[
	A\sim\frac12e^3e^{-1/e^2},\qquad
	m_1\sim\frac{e^3A}{2\ell},\qquad
	m_2\sim\frac{e^3A^2}{4\ell}, 
	\]
	which implies $\kappa_e\sim4\ell/e^3\to\infty$.
	Since $q_e\to0$, $\dcc(p,q_e)=\ell\geq1$, and
	$q_e(h)=\Phi_{Y_0}^h q_e$, we obtain \textup{(i)}.
	
	\emph{Part (ii).}
	Let $S$ be a sufficiently small Carnot--Carath\'eodory ball
	centered at the origin, of radius less than $1/4$, and let
	$\rho$ be its normalized Lebesgue measure.
	Minimizing paths to the center show that $S$ is John. Denote 
	\[
	T(x,y,z,s)=(x-F(s),y,z,-1),\qquad Y=T(\overline S).
	\]
	Every horizontal curve from the plane $s=-1$ to
	$q=(x,y,z,s)$ has length at least $1+s$, attained by the
	$W$-flow from $T(q)$.
	Thus the source transform for $\psi=0$ is $	\phi(q)=(1+s)^2/2$. 
	The contact identity proves that $T$ is optimal from $\rho$
	to $T_\#\rho$. Note that the maps 
	\[
	L_{a,b,d}(x,y,z,s)=(a+x,b+y,d+z+ay,s)
	\]
	preserve $X,Y_0,W$, hence are isometries and preserve the
	$Y_0$-flow.
	For $q=(x,y,z,s)$ with $s>0$, the map $L_{x-F(s),y,z}$
	takes $(p,q_s)$ to $(T(q),q)$.
	By symmetry of the cost and
	\eqref{eq:exact-directional-coefficient}, 
	\[
	\lim_{h\to0}K_h(q)=\kappa_s\sim4s^{-3},
	\qquad s\downarrow0.
	\]
	Since $S$ is an open neighborhood of the origin in the manifold topology,
	it contains $B_0\times(0,\delta)$ for a three-dimensional
	box $B_0$ of positive volume. Hence 
	\[
	\liminf_{h\to0}\int_S K_h\,\d\rho
	\geq\int_{B_0\times(0,\delta)}\kappa_s\,\d\rho
	=+\infty.
	\]
	Finally, $Y_0s=0$ implies that $\phi$ is constant along
	every $Y_0$-flow segment contained in $S$.
\end{proof}

\section{Sharpness and geometric obstructions}\label{sec:sharpness}
We give two constructions. Perturbations of target measures supported on three points show that the map and potential exponents cannot exceed $1/4$
and $1/2$, respectively. Families with smooth source and target
densities give a further restriction on the map exponent through
the anisotropic scaling of the Carnot--Carath\'eodory distance.

\medskip
Letrouit constructs unstable transport maps by perturbing discrete
transport configurations
\cite[Theorem~2 and Section~3.1]{LetrouitUnstable2026}.
We also perturb finitely supported targets.
For a finitely supported target, target locations and dual weights
determine the source regions assigned to each atom, called
Laguerre cells, and hence their masses; see
\cite{KitagawaMerigotThibert2019,deGournayKahnLebrat2019,BansilKitagawa2022}.
We use three target atoms in a region where the cost is smooth.
We vary their cell masses while keeping the source domain and
density fixed. This gives lower bounds for changes in both optimal
transport maps and Kantorovich potentials. The limiting transport
problem also has a unique optimal plan.

Two nearby target points may correspond to source cells separated by another cell. In our construction, the targets $A_\varepsilon,B_\varepsilon$ are at distance $\varepsilon$, while their source cells are separated by the cell
of a fixed third target $C$. We transfer target mass of order $\varepsilon$ over distance
$\varepsilon$. The maps then change across the interfaces
$A_\varepsilon$--$C$ and $C$--$B_\varepsilon$; see
Figure~\ref{fig:three-cell-construction}. 

Under the hypotheses of Theorem~\ref{thm:simultaneous-sharpness},
fix a nonempty open set $O\subseteq M$. Throughout this section,
$\dcc$ is the original ambient distance, $c=\dcc^2/2$, and constants
may depend on the fixed local geometry and $\omega$, but not on
the perturbation parameter. 

\begin{figure}[htbp]
\centering
\begin{tikzpicture}[x=1cm,y=.8cm,>=Stealth,font=\small]
\node[font=\small\bfseries] at (1.7,2.65) {Target locations};
\node[font=\small\bfseries] at (7.5,2.65) {Source cells};
\coordinate (A) at (1.3,.65);
\coordinate (B) at (2.1,.65);
\coordinate (C) at (1.7,2.05);
\draw[dashed,gray] (A)--node[left,black] {$\asymp1$}(C);
\draw[dashed,gray] (B)--node[right,black] {$\asymp1$}(C);
\fill (A) circle (1.5pt) node[below left] {$A_\varepsilon$};
\fill (B) circle (1.5pt) node[below right] {$B_\varepsilon$};
\fill (C) circle (1.5pt) node[above] {$C$};
\draw[<->] (1.3,.1)--node[below] {$\dcc(A_\varepsilon,B_\varepsilon)=\varepsilon$}(2.1,.1);
\node[align=center] at (1.7,-.9) {Target mass $\asymp\varepsilon$ transferred\\[-2pt]
  over distance $\varepsilon$: $W_1\asymp\varepsilon^2$};
\node[draw,rounded corners=2pt,fill=blue!5,minimum width=1.35cm,minimum height=.7cm] (LA) at (5.45,1.4) {$L_A^\varepsilon$};
\node[draw,rounded corners=2pt,fill=gray!10,minimum width=1.35cm,minimum height=.7cm] (LC) at (7.5,1.4) {$L_C^\varepsilon$};
\node[draw,rounded corners=2pt,fill=blue!5,minimum width=1.35cm,minimum height=.7cm] (LB) at (9.55,1.4) {$L_B^\varepsilon$};
\draw[->] (LA)--node[above] {$\asymp\varepsilon$}(LC);
\draw[->] (LC)--node[above] {$\asymp\varepsilon$}(LB);
\node[align=center] at (7.5,.35) {Target changes $A_\varepsilon\to C$ and $C\to B_\varepsilon$};
\node[align=center] at (7.5,-.9) {Changed source mass $\asymp\varepsilon$\\[-2pt]
  moves to targets at distance $\asymp1$};
\end{tikzpicture}
\caption{Target locations and source cells (schematic, not to scale). 
Each arrow in the right panel represents a change of target
on a source set of mass $\asymp\varepsilon$.
The panel shows which cells exchange source mass. Their geometric
arrangement in $M$ or in the $(p,q)$-plane is not depicted.} \label{fig:three-cell-construction}
\end{figure} 

\subsection{Localization}
\begin{lemma}[Completion preserving local distances]
\label{lem:local-completion}
For $x_*\in M$ and a neighborhood $U$ of $x_*$, there exist a
neighborhood $V\Subset U$ of $x_*$ and a smooth horizontal metric
$\widehat g_{\mathcal H}\geq g_{\mathcal H}$ on the same distribution
such that $\widehat g_{\mathcal H}=g_{\mathcal H}$ near $\overline V$,
$\widehat g_{\mathcal H}$ is the restriction of a complete Riemannian
metric, and its Carnot--Carath\'eodory distance satisfies
\begin{equation}\label{eq:local-distance-equality}
 \widehat\dcc\geq\dcc\quad\text{on }M\times M,
 \qquad \widehat\dcc=\dcc\quad\text{on }V\times V.
\end{equation}
Minimizers between points of $V$ exist for both metrics and coincide.
\end{lemma}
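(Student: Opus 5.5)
The plan is to build $\widehat g_{\mathcal H}$ by enlarging $g_{\mathcal H}$ far from $x_*$, so that long curves become expensive while nothing changes near $x_*$. First fix an auxiliary complete Riemannian metric $h$ on $M$. Choose a small $\dcc$-ball $B_0=B_{\mathrm{cc}}(x_*,4\delta)$ with $\overline{B_0}$ compact and contained in $U$. This is possible because $\dcc$ induces the manifold topology, which follows from bracket generation, so small closed $\dcc$-balls are compact. Set $V=B_{\mathrm{cc}}(x_*,\delta)$.

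Next take a smooth cutoff $0\le\chi\le1$ with $\chi=1$ near $\overline{B_{\mathrm{cc}}(x_*,3\delta)}$ and $\spt\chi\subseteq B_0$. Define
\[
\widehat g_{\mathcal H}=g_{\mathcal H}+(1-\chi)\,h|_{\mathcal H}+(1-\chi)\,\lambda\,g_{\mathcal H},
\]
where $\lambda\ge0$ is a smooth function chosen to enforce completeness. This metric is smooth and satisfies $\widehat g_{\mathcal H}\ge g_{\mathcal H}$. It equals $g_{\mathcal H}$ near $\overline{B_{\mathrm{cc}}(x_*,3\delta)}\supseteq\overline V$. Away from $B_0$ it dominates $h|_{\mathcal H}$.

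To write $\widehat g_{\mathcal H}$ as the restriction of a complete Riemannian metric, extend it by any smooth complement metric. Concretely, take $G=\widehat g_{\mathcal H}\oplus h|_{\mathcal H^\perp}$ using the $h$-orthogonal splitting, plus $h$ itself outside a compact set if needed. Since $G\ge c\,h$ outside the compact set $\overline{B_0}$ for some $c>0$, every $G$-Cauchy sequence leaving $\overline{B_0}$ is $h$-Cauchy up to bounded pieces. Completeness of $h$ then gives completeness of $G$. This can be arranged more safely by taking $G\ge h$ outside $B_0$ directly. The inequality $\widehat\dcc\ge\dcc$ is immediate, since every horizontal curve is longer for $\widehat g_{\mathcal H}$.

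For equality on $V\times V$, take $x,y\in V$. Then $\dcc(x,y)<2\delta$. Any horizontal curve of $g_{\mathcal H}$-length less than $2\delta$ starting at $x$ stays in $B_{\mathrm{cc}}(x_*,3\delta)$, where the two metrics agree. So
\[
\widehat\dcc(x,y)\le\inf\{\text{lengths of such short curves}\}=\dcc(x,y).
\]
Conversely, every $\widehat g$-competitor with length below $2\delta$ also stays in that region, because its $g$-length is at most its $\widehat g$-length. Existence of minimizers for $\dcc$ between points of $V$ comes from local compactness: curves of length at most $2\delta$ remain in the compact set $\overline{B_{\mathrm{cc}}(x_*,3\delta)}$, and Arzelà--Ascoli together with lower semicontinuity of length applies, as in the standard existence theorem. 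Both families of minimizers have length below $2\delta$, so they remain where the metrics coincide. Hence they are admissible and minimizing for each other's problem, and the two sets of minimizers agree.

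The main obstacle is the global inequality together with completeness. It requires that $\overline{B_{\mathrm{cc}}(x_*,3\delta)}$ be compact even though $g_{\mathcal H}$ itself may be incomplete. This is handled by choosing $\delta$ small relative to a coordinate neighbourhood, using the ball--box theorem.
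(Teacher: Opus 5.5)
Your proposal is correct and follows essentially the paper's route. You enlarge the horizontal metric away from a neighborhood of $x_*$ so that it dominates a complete metric $h$: the paper does this conformally, $\widehat g_0=F^2g_0\ge h$ with $F=1$ near $x_*$, while you add $(1-\chi)h|_{\mathcal H}$ and extend $h$-orthogonally. Both then observe that curves of length below $2\delta$ starting in $V$ stay where the two metrics agree.

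Three minor points. The $\lambda$ term is superfluous. Completeness of $G$ is cleanest from the global bound $G\ge\min\{1,c\}\,h$, since $G\ge h$ off $B_0$ and $G\ge ch$ on the compact set $\overline{B_0}$. Your local Arzel\`a--Ascoli argument for existence of $\dcc$-minimizers is a valid substitute for the paper's appeal to properness of $\widehat\dcc$.
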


\begin{proof}
	Extend $g_{\mathcal H}$ to a smooth Riemannian metric $g_0$ 
	and choose $x_*\in U_1\Subset U_2\Subset U$.
	Take a complete Riemannian metric $h$, rescaled so that
	$h\leq g_0$ near $\overline U_2$. Choose a smooth function
	$F\geq1$, equal to one on $U_1$, such that $F^2g_0\geq h$,
	and set
	\[
	\widehat g_0=F^2g_0,\qquad
	\widehat g_{\mathcal H}=\widehat g_0|_{\mathcal H}.
	\]
	Since $\widehat g_0\geq h$, $\widehat g_0$ is complete.
	The associated distance $\widehat\dcc$ induces the manifold
	topology, and its closed balls are closed subsets of compact
	$h$-balls. Hence it is proper and admits minimizing curves.
	
	Choose $r>0$ such that
	$\overline{B_{\mathrm{cc}}(x_*,4r)}\Subset U_1$, and put
	$V=B_{\mathrm{cc}}(x_*,r)$. Since $F\geq1$,
	$\widehat\dcc\geq\dcc$ globally.
	For $x,y\in V$, we have $\dcc(x,y)<2r$.
	Every horizontal curve from $x$ to $y$ with original length
	less than $3r$ stays in $B_{\mathrm{cc}}(x_*,4r)$,
	where the two metrics agree. Applying this to curves whose
	lengths approach $\dcc(x,y)$ yields 
	$\widehat\dcc(x,y)\leq\dcc(x,y)$, proving
	\eqref{eq:local-distance-equality}.
	Every minimizer for either metric has original length less
	than $2r$ and stays in $U_1$, where its two lengths agree.
	Thus the two sets of minimizers coincide.
\end{proof} 

\subsection{A construction with three Laguerre cells}
\begin{lemma}\label{lem:sharp-open-chain}
	There exist $x_*\in O$, a John domain $S\Subset O$, a compact set
	$Y\subseteq O$, a fixed point $C\in Y$, and constants
	$a,c_0,k_1,k_2>0$. Set $\rho=\omega\restr S/\omega(S)$.
	For all sufficiently small $\varepsilon>0$, there exist
	$A_\varepsilon,B_\varepsilon\in Y$ with the following properties.
	
	For $y\in Y$, put $h_y(x)=c(x,y)-c(x_*,y)$, and let
	$L_A^\varepsilon(m_A,m_B)$, $L_B^\varepsilon(m_A,m_B)$, and
	$L_C^\varepsilon(m_A,m_B)$ be the Laguerre cells in $S$ associated
	with $h_{A_\varepsilon}-m_A,h_{B_\varepsilon}-m_B,h_C$.
	Each cell consists of the points where the corresponding function
	attains the minimum of these three functions.
	The interface of two cells is the set where their corresponding
	functions both attain this minimum.
	
	\begin{enumerate}
		\item[\textup{(i)}] The target points satisfy
		\begin{equation}\label{eq:sharp-target-distances}
			\dcc(A_\varepsilon,B_\varepsilon)=\varepsilon,
			\qquad
			\min\{\dcc(A_\varepsilon,C),\dcc(B_\varepsilon,C)\}\geq c_0,
		\end{equation}
		and $c$ is smooth near $\overline S\times Y$.
		
		\item[\textup{(ii)}] The cells with $m_A=m_B=0$ satisfy
		\[
		\rho(L_i^\varepsilon(0,0))\geq c_0,
		\qquad i\in\{A,B,C\}.
		\]
		
		\item[\textup{(iii)}] For $|m_A|,|m_B|\leq a\varepsilon$,
		the interfaces are smooth hypersurfaces, there is no interface between the cells of $A_\varepsilon$ and $B_\varepsilon$, and
		\begin{equation}\label{eq:sharp-no-cross}
			L_A^\varepsilon(m_A,m_B)=L_A^\varepsilon(m_A,0),
			\qquad
			L_B^\varepsilon(m_A,m_B)=L_B^\varepsilon(0,m_B).
		\end{equation}
		
		\item[\textup{(iv)}] For $-a\varepsilon\leq u<v\leq a\varepsilon$,
		\begin{equation}\label{eq:sharp-cell-mass}
			\begin{aligned}
				k_1(v-u)
				&\leq \rho(L_A^\varepsilon(v,0))
				-\rho(L_A^\varepsilon(u,0))
				\leq k_2(v-u),\\
				k_1(v-u)
				&\leq \rho(L_B^\varepsilon(0,v))
				-\rho(L_B^\varepsilon(0,u))
				\leq k_2(v-u). 
			\end{aligned}
		\end{equation} 
	\end{enumerate}
\end{lemma}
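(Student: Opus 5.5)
The plan is to build everything from one off-diagonal pair at which the cost is smooth, and then to shape a thin source domain so that the limiting two-target Laguerre cell splits into two components. The third target $C$ occupies the gap between them.

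\textbf{Smooth pair.} First apply Lemma~\ref{lem:local-completion} inside $O$, so that we may assume completeness without changing distances or minimizers near a point of $O$. Then choose $x_*\neq y_0$ and $C$ in $V\subseteq O$ such that $c$ is smooth near $(x_*,y_0)$ and $(x_*,C)$, and each pair is joined by a unique normal minimizer with invertible $D_p\Exp$. For this I would invoke the density of the smooth set of the squared distance for complete structures (Agrachev, Rifford~\cite{Rifford2014}). Shrinking neighborhoods, $y\mapsto d_xc(x_*,y)$ is then a local diffeomorphism near $y_0$; this is the twist property. Moreover $\varphi:=h_{y_0}-h_C$ has $d\varphi(x_*)\neq0$, provided we choose $C$ with $d_xc(x_*,C)\ne d_xc(x_*,y_0)$.

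\textbf{Shape of the source.} Pick a smooth horizontal curve $\gamma$ through $x_*$, using rank at least two. Along $\gamma$, the function $\varphi\circ\gamma$ should have a strict nondegenerate maximum at $x_*$. This can be arranged by bending $\gamma$ relative to the level set of $\varphi$: take a horizontal direction tangent to $\{\varphi=\varphi(x_*)\}$ and add curvature into $\{\varphi>\varphi(x_*)\}$. Next choose a target curve $\varepsilon\mapsto A_\varepsilon$ near $y_0$, using twist, such that
\[
 g:=\partial_\varepsilon h_{A_\varepsilon}|_{\varepsilon=0}
\]
is strictly monotone along $\gamma$; one can even take $g$ affine in source coordinates with nonzero slope along $\gamma'$. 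Put $B_\varepsilon$ on the same target curve, and fix the parameter so that $\dcc(A_\varepsilon,B_\varepsilon)=\varepsilon$ by continuity. Set $Y$ to be a small compact neighborhood of $\{y_0,C\}$. Let $S$ be a thin tube around $\gamma([-t_0,t_0])$, realized as a chain of small CC balls centered on $\gamma$; such a chain is John by the same chain argument as in Lemma~\ref{lem:john-gluing}. After adding a constant to $h_C$ (i.e., recentering), the limiting cells are determined as follows. The set $\{\varphi<\text{const}\}\cap S$ consists of two end pieces $E_\pm$, and the middle of the tube is $C$'s cell. For small $\varepsilon$, the sign of $h_{A_\varepsilon}-h_{B_\varepsilon}\approx\pm\varepsilon g$ assigns $E_-$ to $A_\varepsilon$ and $E_+$ to $B_\varepsilon$, after relabelling. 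This yields (i) and (ii), each cell having mass bounded below.

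\textbf{Perturbation and mass bounds.} For $|m_A|,|m_B|\le a\varepsilon$, the interfaces $A$--$C$ and $C$--$B$ are small $C^1$ perturbations of $\{\varphi=\text{const}\}\cap S$. These are smooth hypersurfaces crossing the tube transversally, because $d\varphi\ne0$ there and $\varphi$ is monotone along $\gamma$ on each side of $x_*$. The $A$ and $B$ cells stay in disjoint components separated by a region of width $\asymp1$ in which $h_C$ is strictly smaller. Hence there is no $A$--$B$ interface, and each cell depends only on its own weight, giving \eqref{eq:sharp-no-cross}. For (iv), write $\rho(L_A^\varepsilon(v,0))-\rho(L_A^\varepsilon(u,0))$ as an integral over $m\in[u,v]$ via the coarea formula. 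The integrand is the $\omega$-weighted interface area divided by $|\nabla(h_{A_\varepsilon}-h_C)|$ on the interface. Both are bounded above and below uniformly, since the interfaces are uniformly transversal graphs over a cross-section of fixed area.

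The main obstacle I expect is the geometric arrangement in the second step. One must simultaneously get a horizontal (or at least admissible) curve along which $\varphi$ has a nondegenerate maximum, a target variation with $g$ monotone along it, and a John tube thin enough that the cells are given by these one-dimensional profiles. This has to be uniform in $\varepsilon$ and must keep all pairs within the smooth region of $c$.
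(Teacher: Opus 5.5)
The overall plan is the paper's: a bent source along which $\varphi=h_{y_0}-h_C$ rises and falls while the first variation $g$ of $h_{A_\varepsilon}$ is monotone. This puts the $A_\varepsilon$ and $B_\varepsilon$ cells at the two ends and lets the $C$ cell cut across the middle. The John chain and the flow-box/coarea mass bounds are fine.

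\textbf{The gap is in the choice of $A_\varepsilon,B_\varepsilon$.} You choose the target curve by twist so as to prescribe $\der g(x_*)$. Such a velocity is in general not horizontal at $y_0$. If it is not horizontal, then locally $\dcc(\Gamma(\sigma),\Gamma(\sigma'))\geq c|\sigma-\sigma'|^{1/2}$, since some coordinate of weight at least two moves by $\asymp|\sigma-\sigma'|$. Imposing $\dcc(A_\varepsilon,B_\varepsilon)=\varepsilon$ ``by continuity'' then forces a parameter gap $O(\varepsilon^2)$. Hence $|h_{A_\varepsilon}-h_{B_\varepsilon}|=O(\varepsilon^2)$ on $S$, not $\approx\varepsilon|g|$. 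Weights of size $a\varepsilon$ then swamp the $A$/$B$ comparison. Take $m_A=0$ and $m_B=a\varepsilon$: then $h_{B_\varepsilon}-m_B<h_{A_\varepsilon}$ on all of $S$, so $L_A^\varepsilon(0,m_B)=\emptyset$. But $L_A^\varepsilon(0,0)$ has mass at least $c_0$, so (iii) fails. The missing idea is that $A_\varepsilon,B_\varepsilon$ must lie on a horizontal curve along which $\dcc$ is comparable to the parameter. The paper uses the extension $y_s=\Exp_{x_*}((1+s/L)\lambda_0)$ of the minimizing normal geodesic. This gives $\dcc(y_s,y_t)=|s-t|$ exactly, and first variation gives $\der_{\mathcal H}p(x_*)=-\ell/L\neq0$, where $\ell=\lambda_0|_{\mathcal H_{x_*}}$ and $L=|\ell|$. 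Also, only $\der g(x_*)$ can be prescribed; $g$ cannot be made affine.

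\textbf{The transversality must come from $C$, at the level of horizontal differentials.} Once the target curve is the geodesic, $\der g(x_*)$ is no longer free. To bend a horizontal $\gamma$ into $\{\varphi>\varphi(x_*)\}$ you need $\der_{\mathcal H}\varphi(x_*)\neq0$, not merely $\der\varphi(x_*)\neq0$. To have $\gamma'(0)\in\ker\der\varphi(x_*)$ with $\der g(x_*)\gamma'(0)\neq0$, you need $\der_{\mathcal H}\varphi(x_*)$ and $\ell$ linearly independent. The paper gets exactly this by taking $C=\Exp_{x_*}(\lambda_0+\tau\zeta)$ with $\zeta|_{\mathcal H}\neq0$ orthogonal to $\ell$.

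\textbf{The normalization is a second, smaller problem.} The lemma fixes $h_y=c(\cdot,y)-c(x_*,y)$, so you may not add a constant to $h_C$. All three functions vanish at $x_*$, which must therefore be a triple point outside $\overline S$. With your $x_*$ at the maximum of $\varphi\circ\gamma$ inside the tube, $A$, $B$, $C$ tie at a point of $S$ and (iii) fails. You would have to move the normalization point off the tube, to $\{\varphi=k\}\cap\{g=g(x_*)\}$. You would also need the tube thin relative to the level offset, because near a nondegenerate maximum the level set meets $\gamma$ at a shallow angle. The paper avoids these scale issues by using $(p,q)$ as coordinates with dual horizontal fields $V_p,V_q$ and the path $(\eta,-\eta)\to(\eta,\eta)\to(-\eta,\eta)\to(-\eta,-\eta)$. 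This places $x_*=(0,0)$ in the hole of the U and gives explicit margins $\eta\varepsilon/4$ and $\eta/4$.
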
 

\begin{proof}
\textbf{Step 1: Target points and comparison functions.}
Choose $x_*\in O$ and apply Lemma~\ref{lem:local-completion} with a
coordinate neighborhood $U\Subset O$. Apply Agrachev's theorem to the completed structure
\cite[Theorem~1, expanded version]{AgrachevSmooth2009}. It gives
$y_0\in V\setminus\{x_*\}$ and $\lambda_0\in T_{x_*}^*M$ such that
$y_0=\Exp_{x_*}(\lambda_0)$. The differential
$D_\lambda\Exp_{x_*}(\lambda_0)$ is invertible, and the corresponding
geodesic is uniquely minimizing.
Here $\Exp$ is the time-one normal exponential map for the completed
structure. By \cite[Theorem~* and Lemma~2, expanded version]{AgrachevSmooth2009},
the cost is jointly smooth near this pair. The branch remains minimizing under
small perturbations of the covectors and endpoints.
Choose product neighborhoods $U_x\times U_y$ of $(x_*,y_0)$,
with $\overline U_x\times\overline U_y\subseteq V\times V$, on which
this branch and its cost are smooth. By
\eqref{eq:local-distance-equality}, this is also a smooth minimizing
branch for the original cost.

Write $\ell=\lambda_0|_{\mathcal H_{x_*}}$ and
$L=|\ell|_{g_{\mathcal H}^*}>0$, using the dual horizontal norm.
Quadratic homogeneity of the Hamiltonian gives the unit-speed curve
\[
 y_s=\Exp_{x_*}\bigl((1+s/L)\lambda_0\bigr),\qquad |s|\leq s_0,
\]
for small $s_0<L$, with its image compactly contained in $U_y$.
The geodesic remains minimizing up to $y_{s_0}$.
Its subsegments are therefore minimizing for the original ambient
distance. Thus $\dcc(y_s,y_t)=|s-t|$.
Since the horizontal rank is at least two, choose
$\zeta\in T_{x_*}^*M$ with nonzero horizontal restriction
orthogonal to $\ell$ in the dual horizontal metric, and fix a small $\tau>0$.
Choose $\tau$ small enough that
$C=\Exp_{x_*}(\lambda_0+\tau\zeta)\in U_y$ lies in the same inverse
branch. The covector $\lambda_0+\tau\zeta$ is not a scalar multiple
of $\lambda_0$. Injectivity of this branch then yields 
$C\ne y_s$ for every $s$. Set
\[
 Y:=\{y_s:|s|\leq s_0\}\cup\{C\}\Subset U_y,
 \qquad x_*\in U_*\Subset U_x.
\]
Then $\overline U_*\times Y$ is a compact subset of the smooth
product neighborhood $U_x\times U_y$. In particular, the original
cost is smooth on one neighborhood of the entire product
$\overline U_*\times Y$.
Put $A_\varepsilon=y_{-\varepsilon/2}$,
$B_\varepsilon=y_{\varepsilon/2}$, and
\[
 p(x)=\left.\frac{\der}{\der s}\right|_{s=0}h_{y_s}(x),
 \qquad q(x)=h_{y_0}(x)-h_C(x).
\]
Write $\der_{\mathcal H}f(x)=\der f(x)|_{\mathcal H_x}$ for the
horizontal differential of a smooth function $f$.
By the first variation, $\der_xc(x_*,y_s)=-(1+s/L)\lambda_0$ and
$\der_xc(x_*,C)=-(\lambda_0+\tau\zeta)$. Consequently,
\begin{equation}\label{eq:sharp-horizontal-transversality}
 \der_{\mathcal H}p(x_*)=-\ell/L,
 \qquad \der_{\mathcal H}q(x_*)=\tau\zeta|_{\mathcal H_{x_*}}.
\end{equation}

By the independence of these horizontal differentials at $x_*$,
we can shrink $U_*$ and choose smooth horizontal fields
$V_p,V_q$ such that 
\begin{equation}\label{eq:sharp-coordinate-fields}
 V_pp=V_qq=1,\qquad V_pq=V_qp=0.
\end{equation}
Since $p(x_*)=q(x_*)=0$, Taylor's formula yields 
\begin{align}
 h_{A_\varepsilon}=h_{y_0}-\tfrac\varepsilon2p+O_{C^1(U_*)}(\varepsilon^2),\qquad
 h_{B_\varepsilon}=h_{y_0}+\tfrac\varepsilon2p+O_{C^1(U_*)}(\varepsilon^2),
 \qquad h_C=h_{y_0}-q, 
 \label{eq:sharp-branch-expansion}
\end{align}
where the remainders have $C^1(U_*)$ norm $O(\varepsilon^2)$.
Since $A_\varepsilon,B_\varepsilon\to y_0\ne C$,
the lower bounds in \eqref{eq:sharp-target-distances} hold
after decreasing $c_0$ and taking $\varepsilon$ small enough.

\medskip
\textbf{Step 2: A fixed John domain.}
Write $\Phi_t^V$ for the local flow of a vector field $V$ at time $t$.
For a small fixed $\eta>0$, start at
$x_0=\Phi_\eta^{V_p}\Phi_{-\eta}^{V_q}(x_*)$ and concatenate the
flows of $V_q,-V_p,-V_q$, each for time $2\eta$.
By \eqref{eq:sharp-coordinate-fields}, the resulting horizontal
curve in $U_*$ traces, in the $(p,q)$ coordinates,
\[
 (\eta,-\eta)\longrightarrow(\eta,\eta)
 \longrightarrow(-\eta,\eta)\longrightarrow(-\eta,-\eta).
\]
Choose a small fixed $R>0$ and consecutive centers $z_1,\ldots,z_N$
along the curve so that the $R/2$ balls cover it,
$\dcc(z_j,z_{j+1})\leq R/2$, and the closed $R$ balls are compactly
contained in $U_*$. Make $R$ small enough that balls centered on the
right, upper, and left segments lie respectively in
$\{p\geq\eta/2\}$, $\{q\geq\eta/2\}$, and $\{p\leq-\eta/2\}$.
Set $S=\bigcup_{j=1}^NB_{\mathrm{cc}}(z_j,R)$ and
$\rho=\omega\restr S/\omega(S)$.
In particular,
\[
 S\subseteq\{p\geq\eta/2\}\cup\{q\geq\eta/2\}
             \cup\{p\leq-\eta/2\}.
\]

We claim that \(S\) is a John domain. Indeed, fix \(z_1\) as the
distinguished center.  For \(x\in B_{\mathrm{cc}}(z_j,R)\), join $x$ to $z_j$ and then through
$z_{j-1},\ldots,z_1$ using minimizers for $\widehat\dcc$.
Lemma~\ref{lem:local-completion} makes these minimizers for $\dcc$.
Each segment stays in the ball about its terminal or initial center,
so the path lies in $S$. Parametrized by original arc length, it obeys
\[
 \dcc(\gamma(s),M\setminus S)\geq s
 \quad\text{on the first segment},\qquad
 \dcc(\gamma(s),M\setminus S)\geq R/2
 \quad\text{on subsequent segments}.
\]
Since its total length is at most $R+R(N-1)/2$, this is a John path
with parameter $1/(N+1)$ for the ambient distance. Since $S\Subset U_*$, the cost is smooth near
$\overline S\times Y$ by Step~1.

\medskip
\textbf{Step 3: Cell comparisons and mass estimates.}
By \eqref{eq:sharp-branch-expansion}, for sufficiently small $\varepsilon$, 
\begin{equation*}
		h_{A_\varepsilon}-h_{B_\varepsilon}
	\leq-\eta\varepsilon/4, 
	\quad\text{on }S\cap\{p\geq\eta/2\},\qquad h_{B_\varepsilon}-h_{A_\varepsilon}\leq-\eta\varepsilon/4, \quad \text{on }S\cap\{p\leq-\eta/2\}, 
\end{equation*}
and 
\begin{equation*}
	h_C\leq\min\{h_{A_\varepsilon},h_{B_\varepsilon}\}-\eta/4, \quad \text{on } S\cap\{q\geq\eta/2\}. 
\end{equation*} 
Thus $h_{A_\varepsilon}$ and $h_{B_\varepsilon}$ never attain
the minimum together.
Choose fixed balls $K_A,K_B$ about the two lower endpoints and
$K_C$ about an interior upper point, with
\begin{align*}
 K_A\Subset S\cap\{p>\eta/2,\ q<-\eta/2\},\quad 
 K_B\Subset S\cap\{p<-\eta/2,\ q<-\eta/2\},\quad 
 K_C\Subset S\cap\{q>\eta/2\}.
\end{align*}
By the expansion and these comparisons, $K_i$ lies in the
cell $L_i^\varepsilon(0,0)$ for small $\varepsilon$. Hence
\[
 \rho(L_i^\varepsilon(0,0))\geq\rho(K_i)>0,
 \qquad i\in\{A,B,C\}.
\]
Decreasing $c_0$ if necessary proves \textup{(ii)} while preserving
\textup{(i)}.

Choose $a=\eta/16$. For $|m_A|,|m_B|\leq a\varepsilon$,
the comparison on $S\cap\{p\geq\eta/2\}$ gives 
\[
 (h_{A_\varepsilon}-m_A)-(h_{B_\varepsilon}-m_B)
 \leq-\eta\varepsilon/4+2a\varepsilon
 =-\eta\varepsilon/8.
\]
The analogous comparison holds on $S\cap\{p\leq-\eta/2\}$.
On $S\cap\{q\geq\eta/2\}$, $h_C$ remains strictly below both
$h_{A_\varepsilon}-m_A$ and $h_{B_\varepsilon}-m_B$ for small
$\varepsilon$. These three subsets cover $S$, so the comparisons
prove \eqref{eq:sharp-no-cross} and show that the cells of
$A_\varepsilon$ and $B_\varepsilon$ have no common interface.

Put $F_{A,\varepsilon}=h_{A_\varepsilon}-h_C$ and
$F_{B,\varepsilon}=h_{B_\varepsilon}-h_C$.
On a fixed neighborhood of $\overline S$,
\begin{equation}\label{eq:sharp-A-transversality}
 \tfrac12\leq V_qF_{i,\varepsilon}\leq2,\qquad i=A,B.
\end{equation}
By \eqref{eq:sharp-A-transversality}, the level sets
$F_{A,\varepsilon}=m_A$ and $F_{B,\varepsilon}=m_B$
are smooth hypersurfaces. Since the cells of $A_\varepsilon$ and $B_\varepsilon$ have no
common interface, the interfaces are relatively open subsets
of these hypersurfaces. Hence they are smooth and have zero
$\omega$-measure, proving \textup{(iii)}. Note that 
\[
\rho(L_A^\varepsilon(v,0))-\rho(L_A^\varepsilon(u,0))= \rho\bigl(S\cap\{p\geq\eta/2\}\cap
                 \{u<F_{A,\varepsilon}\leq v\}\bigr).
\]
In a $V_q$ flow box write
$\d\omega=J(t,\xi)\,\d t\,\d\xi$. A finite cover of
$\overline S$ by relatively compact flow boxes has uniformly bounded
positive Jacobians and finite transverse coordinate measures.
By \eqref{eq:sharp-A-transversality}, each flow line spends at most
$2(v-u)$ in the slab, giving the upper mass bound. 

For the lower bound, choose a fixed $V_q$-flow box around the
point of the constructed curve with $(p,q)=(\eta,0)$, compactly contained in
$S\cap\{p>\eta/2\}$. Let $\Sigma\subseteq\{q=0\}$ be its
transverse section, with positive coordinate measure, and fix
endpoint times $\pm t_*$. By the branch expansion,
\[
F_{A,\varepsilon}(\Phi_{\pm t_*}^{V_q}(\xi))
=\pm t_*-\tfrac\varepsilon2p(\xi)+O(\varepsilon^2),
\qquad \xi\in\Sigma.
\]
For small $\varepsilon$, these endpoint values lie below
$-a\varepsilon$ and above $a\varepsilon$, uniformly in $\xi$.
Thus every flow line crosses both levels $u$ and $v$.
Let $t_u(\xi)<t_v(\xi)$ be the corresponding crossing times.
By \eqref{eq:sharp-A-transversality}, $v-u\leq2\bigl(t_v(\xi)-t_u(\xi)\bigr)$. Integrating over $\Sigma$ yields 
\[
\begin{aligned}
	\rho(L_A^\varepsilon(v,0))-\rho(L_A^\varepsilon(u,0))\geq\frac1{\omega(S)}
	\int_\Sigma\int_{t_u(\xi)}^{t_v(\xi)}
	J(t,\xi)\,\d t\,\d\xi\geq k_1(v-u),
\end{aligned}
\]
where $k_1>0$ follows from the positive lower bound for $J$
and the positive coordinate measure of $\Sigma$.
The same argument near the curve point $(p,q)=(-\eta,0)$ gives
the estimate for the $B$ cell,
proving \textup{(iv)}. 
\end{proof}

\subsection{Proof of Theorem~\ref{thm:simultaneous-sharpness}}
\begin{proof}[Proof of Theorem~\ref{thm:simultaneous-sharpness}]
\textbf{Step 1: Perturbed cell masses and target distance.}
Use the source, targets, and constants from
Lemma~\ref{lem:sharp-open-chain}. Fix $b=k_1a/4$.
By \eqref{eq:sharp-cell-mass}, the functions giving the masses of the $A$ and $B$ cells are
continuous and strictly increasing. Choose
$-a\varepsilon<m_A<0<m_B<a\varepsilon$ so that the $A$ mass decreases
by $b\varepsilon$ and the $B$ mass increases by $b\varepsilon$.
By \eqref{eq:sharp-no-cross}, the same changes occur when both weights are applied,
and the $C$ mass is unchanged. Moreover, 
\begin{equation}\label{eq:sharp-weight-bounds}
 b\varepsilon/k_2\leq -m_A,m_B\leq b\varepsilon/k_1.
\end{equation}
Define $\mu_\varepsilon$ by the cell masses at $(0,0)$ and
$\nu_\varepsilon$ by the cell masses at $(m_A,m_B)$, both on
$\{A_\varepsilon,B_\varepsilon,C\}$. All masses are positive for
small $\varepsilon$, and
\[
 \nu_\varepsilon-\mu_\varepsilon
 =b\varepsilon(\delta_{B_\varepsilon}-\delta_{A_\varepsilon}).
\]
Transporting this mass directly from $A_\varepsilon$ to
$B_\varepsilon$ yields $W_1\leq b\varepsilon^2$.
The $1$-Lipschitz test function $y\mapsto\dcc(A_\varepsilon,y)$
gives equality, proving \eqref{eq:universal-W1}.
For every $1\leq p<\infty$ and sufficiently small $\varepsilon$,
\[
 W_p^p(\mu_\varepsilon,\nu_\varepsilon)=b\varepsilon^{p+1}.
\]
Indeed, any coupling sends at least $b\varepsilon$ mass out of
$A_\varepsilon$. Its other possible destinations are
$B_\varepsilon$ and $C$, both at distance at least $\varepsilon$
for small $\varepsilon$, by \eqref{eq:sharp-target-distances}.
This gives the lower bound $b\varepsilon^{p+1}$, attained by
leaving the common masses fixed and transporting $b\varepsilon$
directly from $A_\varepsilon$ to $B_\varepsilon$.

\medskip
\textbf{Step 2: Optimality and uniqueness.}
Set
\begin{align*}
 \tilde\phi_0=\min\{h_{A_\varepsilon},h_{B_\varepsilon},h_C\},\qquad 
 \tilde\phi_1=\min\{h_{A_\varepsilon}-m_A,
                         h_{B_\varepsilon}-m_B,h_C\}.
\end{align*}
Let $T_0,T_1$ select the minimizing atoms in the two minima,
with ties resolved in a fixed order. Since the interfaces
have zero $\rho$-measure, the minimizing atom is unique for
$\rho$-almost every source point. By construction, $(T_0)_\#\rho=\mu_\varepsilon$ and $(T_1)_\#\rho=\nu_\varepsilon$. 
On the three atoms, set $\psi_0(y)=c(x_*,y)$ and define
$\psi_1$ by adding $m_A,m_B,0$ at
$A_\varepsilon,B_\varepsilon,C$, respectively.
The pairs $(\tilde\phi_i,\psi_i)$ are dual feasible,
with equality on the graphs of $T_i$. Hence both graph
couplings are optimal.

For either problem, every optimal coupling $\pi$ satisfies
\[
\int\bigl(c(x,y)-\tilde\phi_i(x)-\psi_i(y)\bigr)
\,\d\pi(x,y)=0.
\]
The integrand is nonnegative, so $\pi$ is concentrated on
minimizing atoms. Their almost-everywhere uniqueness forces
$\pi=(\Id,T_i)_\#\rho$, proving uniqueness of the optimal coupling. For $i=0,1$, extend $\psi_i$ to $Y$ by
\[
\psi_i(y)=\inf_{x\in S}
\{c(x,y)-\tilde\phi_i(x)\}.
\]
Each atom has a nonempty cell, so its assigned value is unchanged,
and $\psi_i^c=\tilde\phi_i$ on $S$.

\medskip
For either target, let $(\phi,\psi)$ and $(\phi',\psi')$ be
continuous Kantorovich pairs. Equality along the optimal transport map $T$ yields 
\[
\phi(x)-\phi'(x)=\psi'(T(x))-\psi(T(x)), 
\qquad\text{for }\rho\text{-almost every }x.
\]
The right-hand side takes at most three values.
Every nonempty relatively open subset of $S$ has positive
$\rho$-measure, so the almost-everywhere equality holds on a
dense subset of $S$. Continuity then implies that $\phi-\phi'$
takes values in the same finite set throughout $S$. Since $S$ is connected,
this difference is constant. Subtracting source means yields
the unique normalized potentials
$\phi_{\mu_\varepsilon},\phi_{\nu_\varepsilon}$. 

The limiting target can also be identified explicitly. Since
$\rho(\{q=0\})=0$ and the weights are $O(\varepsilon)$, convergence
of the three branches yields 
\begin{equation}\label{eq:sharp-limiting-target}
 \mu_\varepsilon,\nu_\varepsilon\rightharpoonup
 a_0\delta_{y_0}+(1-a_0)\delta_C,
 \qquad a_0:=\rho(S\cap\{q<0\})\in(0,1).
\end{equation} 
The limiting branches $h_{y_0},h_C$ have interface $\{q=0\}$ with
$V_qq=1$, so the same argument using Kantorovich duality gives a unique optimal plan for
this target measure supported on the two points $y_0,C$.

\medskip
\textbf{Step 3: Error estimates and sharpness.}
Put $f=\tilde\phi_1-\tilde\phi_0$.
On the part of the $A$ cell that remains after the perturbation,
$f=-m_A$. On the original $B$ cell, $f=-m_B$. For small $\varepsilon$,
\[
 \rho\bigl(L_A^\varepsilon(m_A,m_B)\bigr)
 \geq c_0-b\varepsilon\geq\frac{c_0}{2},
 \qquad
 \rho\bigl(L_B^\varepsilon(0,0)\bigr)\geq c_0.
\]
Hence by \eqref{eq:sharp-weight-bounds}, 
\begin{equation}\label{eq:sharp-potential-variance}
 \|\phi_{\mu_\varepsilon}-\phi_{\nu_\varepsilon}\|_{L^2(\rho)}^2
 =\Var_\rho(f)
 =\tfrac12\iint|f(x)-f(z)|^2\,\d\rho(x)\,\d\rho(z)
 \geq c_*\varepsilon^2. 
\end{equation}
Conversely,
$\|f\|_\infty\leq\max\{-m_A,m_B\}\leq C\varepsilon$, which proves \eqref{eq:universal-potentials}. 
By \eqref{eq:sharp-no-cross}, up to $\rho$-null sets,
the maps differ only on the part removed from the $A$ cell
and the part added to the $B$ cell. These two sets are disjoint,
each with $\rho$-mass $b\varepsilon$; and the maps agree elsewhere. Hence 
\begin{equation}\label{eq:sharp-map-error}
 \int_S\dcc^2(T_0,T_1)\,\d\rho
 =b\varepsilon\bigl(\dcc^2(A_\varepsilon,C)
                     +\dcc^2(C,B_\varepsilon)\bigr)\asymp\varepsilon,
\end{equation}
which proves \eqref{eq:universal-maps}.
If $\alpha>1/2$ and $\beta>1/4$, the ratios of the potential and
map errors to $W_1^\alpha$ and $W_1^\beta$ are comparable to
$\varepsilon^{1-2\alpha}$ and $\varepsilon^{1/2-2\beta}$,
respectively. Both ratios tend to infinity.
\end{proof}

The calculation in Step~1 also gives, for every fixed
$1\leq p<\infty$,
\[
 \begin{aligned}
 \norm{\dcc(T_{\mu_\varepsilon},T_{\nu_\varepsilon})}_{L^2(\rho)}
 &\asymp W_p^{p/[2(p+1)]}(\mu_\varepsilon,\nu_\varepsilon),\\
 \norm{\phi_{\mu_\varepsilon}-\phi_{\nu_\varepsilon}}_{L^2(\rho)}
 &\asymp W_p^{p/(p+1)}(\mu_\varepsilon,\nu_\varepsilon).
 \end{aligned}
\]
Thus, over the source class of
Theorem~\ref{thm:simultaneous-sharpness}, the possible $W_p$
exponents are at most $p/[2(p+1)]$ for maps and $p/(p+1)$ for
potentials. For $p=2$, these are $1/3$ and $2/3$.
These are restrictions on possible exponents, without matching
$W_p$ upper estimates asserted for $p>1$. In the Euclidean model
below, the source domain is John and need not be convex, so the
potential restriction does not address estimates restricted to
uniform measures on convex bodies.

\begin{corollary}[Sharpness on Carnot groups and Euclidean spaces]
\label{cor:carnot-euclidean-sharpness}
On every Carnot group with $\dim V_1\geq2$, of arbitrary step,
and inside every nonempty open set, the relations
\eqref{eq:universal-W1}--\eqref{eq:universal-maps} hold for a fixed
John domain $S$, the uniform source $\rho=\mm\restr S/\mm(S)$,
and a fixed compact target set $Y$.
In particular, for each fixed $d\geq2$, the potential exponent $1/2$
in the Euclidean John domain estimate of
\cite[Theorem~1.7]{LetrouitMerigot2026} is optimal.
\end{corollary}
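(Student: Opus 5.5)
The plan is to deduce the corollary directly from Theorem~\ref{thm:simultaneous-sharpness} and Lemma~\ref{lem:sharp-open-chain}, by checking that a Carnot group and a Euclidean space meet their hypotheses with the prescribed reference measures. A Carnot group $G$ with $\dim V_1\geq2$ is a connected smooth manifold without boundary. Its horizontal subbundle is the left translate of $V_1$; it is smooth, bracket-generating by the stratification, and of rank $\dim V_1\geq2$. Haar measure, normalized in exponential coordinates, is Lebesgue measure $\mathcal L^d$, so it is a smooth positive density $\omega=\mm$. Given a nonempty open set $O\subseteq G$, Theorem~\ref{thm:simultaneous-sharpness} then supplies a John domain $S\Subset O$, a compact $Y\subseteq O$, and three-atom targets with \eqref{eq:universal-W1}--\eqref{eq:universal-maps} for $\rho=\omega\restr S/\omega(S)=\mm\restr S/\mm(S)$. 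This is exactly the uniform source, and no equiregularity or step restriction is used anywhere in that construction.

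The one point I would double-check is that nothing in Lemma~\ref{lem:sharp-open-chain} tacitly needs more than smoothness. The only external input is Agrachev's theorem on smooth points of the cost, applied to the completed structure of Lemma~\ref{lem:local-completion}. On $G$ the structure is already complete, so the completion can be taken trivial, but the general lemma applies anyway. The John property is defined with the ambient distance $\dcc$, which is what the corollary asserts.

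For the Euclidean statement, view $\RR^d$, $d\geq2$, as the abelian Carnot group of step one, with $V_1=\RR^d$ and $\mathcal H=T\RR^d$. This is the Riemannian case: the rank is $d\geq2$, $\dcc$ is Euclidean distance, and Haar measure is Lebesgue measure. The first part then gives a fixed bounded Euclidean John domain $S$ with the uniform density, a compact $Y$, and targets $\mu_\varepsilon,\nu_\varepsilon$ with $W_1=b\varepsilon^2$ and $\norm{\phi_{\mu_\varepsilon}-\phi_{\nu_\varepsilon}}_{L^2(\rho)}\asymp\varepsilon$. A uniform density on a John domain satisfies the two-sided density hypotheses of \cite[Theorem~1.7]{LetrouitMerigot2026}, and the targets lie in a fixed compact set. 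If that estimate held with any exponent $\alpha>1/2$, the ratio $\varepsilon/\varepsilon^{2\alpha}=\varepsilon^{1-2\alpha}$ would blow up as $\varepsilon\to0$, a contradiction. So $1/2$ is optimal. The main care point is matching normalizations, namely the zero-mean potentials and the $W_1$ versus $W_2$ convention in the cited theorem. By the comparison $W_1\leq W_2\leq (DW_1)^{1/2}$ on the fixed compact $Y$, the exponent $1/2$ for $W_1$ is the relevant one, and Step~1 of the theorem's proof also gives the $W_p$ scaling if the citation uses a different $p$.
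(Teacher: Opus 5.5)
Your proposal is correct and matches the paper's proof. The paper likewise applies Theorem~\ref{thm:simultaneous-sharpness} with Haar measure on $G$, using Lebesgue measure when $G=\RR^d$ is viewed as a step-one group, and reads off optimality of the potential exponent $1/2$ from $\|\phi_{\mu_\varepsilon}-\phi_{\nu_\varepsilon}\|_{L^2(\rho)}\asymp\varepsilon\asymp W_1^{1/2}$. It additionally writes out the Euclidean model ($x_*=0$, $y_0=e_1$, $C=e_1+\tau e_2$, $p(x)=-x_1$, $q(x)=\tau x_2$), but this is illustrative and not needed for the argument.

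One small caveat concerns your $W_p$ fallback. The argument works because the cited estimate is stated in $W_1$. These same examples give $\|\phi_{\mu_\varepsilon}-\phi_{\nu_\varepsilon}\|_{L^2(\rho)}\asymp W_2^{2/3}$, so they would not show optimality of a $W_2^{1/2}$ bound.
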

\begin{proof}
Apply Theorem~\ref{thm:simultaneous-sharpness} with Haar measure on
$G$, and with Lebesgue measure when $G=\RR^d$.
The Euclidean model can be written explicitly. With $x_*=0$,
$y_0=e_1$ and $C=e_1+\tau e_2$, one has
\[
 h_y(x)=\tfrac12|x|^2-x\cdot y,
 \qquad p(x)=-x_1,\qquad q(x)=\tau x_2.
\]
Thus $V_p=-\partial_{x_1}$ and $V_q=\tau^{-1}\partial_{x_2}$,
and the source is the fixed union of overlapping balls along the
three-segment curve in Lemma~\ref{lem:sharp-open-chain}.
\end{proof}

\begin{corollary}[Sharpness on manifolds]
\label{cor:individual-sharpness}
On every complete smooth Riemannian manifold of dimension at least
two and every complete smooth fat sub-Riemannian manifold with horizontal rank at least two, the map
exponent in
Corollary~\ref{cor:map-stability} is optimal in every nonempty open
set. The potential exponent in
Theorem~\ref{thm:potential-stability}\textup{(iii)} is optimal on every Riemannian manifold of dimension at least two, every
fat sub-Riemannian manifold with horizontal rank at least two, and every step-two Carnot group.
\end{corollary}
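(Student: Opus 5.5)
The plan is to combine the upper estimates already proved with the lower-bound examples of Theorem~\ref{thm:simultaneous-sharpness}. Optimality of an exponent means two things: the estimate holds with that exponent, and no larger exponent works uniformly. The first part is Corollary~\ref{cor:map-stability} for maps and Theorem~\ref{thm:potential-stability}\textup{(iii)} for potentials. So the remaining work is to check that the examples of Theorem~\ref{thm:simultaneous-sharpness} are admissible data for these estimates in each listed geometry, inside an arbitrary nonempty open set $O$.

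\textbf{Step 1: the rank hypothesis.} In each case the horizontal rank is at least two.
\begin{itemize}
\item For a Riemannian manifold, $\mathcal H=TM$ has rank $\dim M\geq2$.
\item For a fat structure, the rank is at least two by assumption.
\item For a step-two Carnot group, $V_2=[V_1,V_1]\ne\{0\}$ forces $\dim V_1\geq2$.
\end{itemize}
Completeness and smoothness hold in all three cases. Hence Theorem~\ref{thm:simultaneous-sharpness} applies with $\omega=\mm$, which is a smooth positive density, or Haar measure on the group. It produces a John domain $S\Subset O$, a compact $Y\subseteq O$, the source $\rho=\mm\restr S/\mm(S)$, and three-atom targets $\mu_\varepsilon,\nu_\varepsilon\in\PP(Y)$.

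\textbf{Step 2: admissibility of the data.}
\begin{itemize}
\item For maps: $S$ is bounded and $\rho\leq\mm(S)^{-1}\mm\restr S$. These are exactly the hypotheses of Theorem~\ref{thm:main-coercivity}, so Corollary~\ref{cor:map-stability} gives $\norm{\dcc(T_{\mu_\varepsilon},T_{\nu_\varepsilon})}_{L^2(\rho)}\leq C W_1^{1/4}$.
\item For potentials: the two-sided bound $a_1=a_2=\mm(S)^{-1}$ holds, and $S$ is John.
\item Equiregularity near $\overline S$: Riemannian structures are trivially equiregular, Carnot groups are equiregular by left invariance, and fatness forces $\mathcal H+[\mathcal H,\mathcal H]=TM$ everywhere, so fat structures are equiregular of step at most two.
\end{itemize}
Thus Theorem~\ref{thm:potential-stability}\textup{(iii)} gives the $W_1^{1/2}$ potential bound on these data. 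The maps and normalized potentials appearing in \eqref{eq:universal-potentials}--\eqref{eq:universal-maps} are the unique ones, so they coincide with $T_\mu$ and $\phi_\mu$ in the estimates.

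\textbf{Step 3: the exponent comparison.} Suppose some exponent $\beta>1/4$ for maps, or $\alpha>1/2$ for potentials, held with a constant depending only on the fixed data. Theorem~\ref{thm:simultaneous-sharpness} gives
\[
 W_1(\mu_\varepsilon,\nu_\varepsilon)=b\varepsilon^2,\qquad
 \text{map error}\asymp\varepsilon^{1/2},\qquad
 \text{potential error}\asymp\varepsilon.
\]
The corresponding ratios are therefore of order $\varepsilon^{1/2-2\beta}$ and $\varepsilon^{1-2\alpha}$. Both tend to infinity as $\varepsilon\to0$, which contradicts the assumed bound.

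The only point needing care is the map statement for fat structures: the corollary is stated for fat manifolds of horizontal rank at least two, and Theorem~\ref{thm:main-coercivity}\textup{(ii)} requires completeness, which is assumed. No new estimates are needed beyond this bookkeeping.
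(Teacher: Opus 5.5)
Your proposal is correct and follows the paper's route. The paper pairs the upper estimates of Corollary~\ref{cor:map-stability} and Theorem~\ref{thm:potential-stability}\textup{(iii)} with the three-atom examples of Theorem~\ref{thm:simultaneous-sharpness}, after checking the rank, density, John and equiregularity hypotheses as you do; the one thing to tighten is that the map half should be invoked only in the Riemannian and fat cases, since no map estimate is claimed on nonfat step-two Carnot groups.
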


\begin{remark}\label{rem:one-dimensional-exception}
	If $S\subseteq\RR$ is a bounded
	John domain, then $S$ is an interval. For its uniform source $\rho$,
	monotone rearrangement gives
	$(\phi_\mu-\phi_\nu)'=T_\nu-T_\mu$ a.e., and
	$(T_\mu,T_\nu)_\#\rho$ is an optimal coupling for $W_1$. Since the Kantorovich potentials are normalized, we have  
	\[
	\|\phi_\mu-\phi_\nu\|_{L^2(\rho)}
	\leq\osc_S(\phi_\mu-\phi_\nu)
	\leq\int_S|T_\mu-T_\nu|\,\d x
	=\mathcal L^1(S)W_1(\mu,\nu).
	\]
	Moreover, translating a Dirac target shows that this Lipschitz exponent is optimal. 
\end{remark}

\subsection{Sharpness for fixed marginals}
\begin{proposition}[Sharpness for a fixed transport problem]\label{prop:fixedsharp}
Under the hypotheses of Theorem~\ref{thm:simultaneous-sharpness},
inside every nonempty open set there exist a fixed compactly supported
source $\rho$ with a smooth positive density on a John domain and a
fixed target measure $\mu$ supported on two points. The optimal transport map $T$ from $\rho$ to $\mu$ is unique and there exist maps $S_h$ with $(S_h)_\#\rho=\mu$
such that
\begin{equation}\label{eq:fixedsharp}
 \int \dcc^2(T,S_h)\dd\rho\asymp h,\qquad
 \int c(x,S_h(x))\dd\rho-\int c(x,T(x))\dd\rho\asymp h^2.
\end{equation}
Consequently the power $2$ on $\calE$ in \eqref{eq:intro-coercivity} cannot be replaced by any smaller positive power, even for this one fixed transport problem.
\end{proposition}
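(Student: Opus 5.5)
We reuse the local geometry from Lemma~\ref{lem:sharp-open-chain}, but with only two atoms $y_0$ and $C$. With the same $x_*$, John domain $S$, branch functions $h_{y_0}$, $h_C$, and the fields $V_p,V_q$, we take $\rho=\omega\restr S/\omega(S)$. The target is
\[
\mu=a_0\delta_{y_0}+(1-a_0)\delta_C,
\qquad a_0=\rho(S\cap\{q<0\}).
\]
By the argument at the end of Step~2 of the proof of Theorem~\ref{thm:simultaneous-sharpness}, the optimal plan is unique. It is induced by $T=y_0$ on $\{q<0\}$ and $T=C$ on $\{q>0\}$, and $\phi=\min\{h_{y_0},h_C\}$ together with $\psi(y_0)=c(x_*,y_0)$, $\psi(C)=c(x_*,C)$ is a conjugate pair. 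The residual is
\[
R(x,C)=h_C-\phi=q_+(x)\ \text{ on }\{q<0\}\text{ side reversed},
\]
precisely $R(x,C)=(h_C-h_{y_0})_+=(-q)_+$ and $R(x,y_0)=q_+$, since $q=h_{y_0}-h_C$.

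Next we build $S_h$ by swapping two thin slabs of equal mass across the interface, placed at two separated portions of $\{q=0\}$. Both portions exist because the construction curve crosses $q=0$ on the right and on the left segments, near $(p,q)=(\pm\eta,0)$. Fix disjoint $V_q$-flow boxes $\mathcal B_1,\mathcal B_2\Subset S$ around these two crossing points. In $\mathcal B_1$ take the slab $P_h=\{0<q\le t_1(h)\}$, and in $\mathcal B_2$ take the slab $N_h=\{-t_2(h)\le q<0\}$. Choose $t_1,t_2\asymp h$ by continuity and monotonicity of mass, using the flow-box Jacobian bounds as in part~(iv) of Lemma~\ref{lem:sharp-open-chain}, so that $\rho(P_h)=\rho(N_h)=m_h$ with $m_h\asymp h$. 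Define $S_h=T$ outside $P_h\cup N_h$, $S_h=y_0$ on $P_h$, and $S_h=C$ on $N_h$. Then $(S_h)_\#\rho=\mu$, because the same mass moves in each direction.

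Then
\[
\int\dcc^2(T,S_h)\,\d\rho=2m_h\,\dcc^2(y_0,C)\asymp h .
\]
For the excess cost we apply Corollary~\ref{cor:coercivity-consequences}, in the form of the identity $\calD=\int c\,\d\pi-\operatorname{OT}_c$. That identity only uses duality, so it holds here as well. It gives
\[
\int c(x,S_h)\,\d\rho-\int c(x,T)\,\d\rho=\int_{P_h}q\,\d\rho+\int_{N_h}(-q)\,\d\rho .
\]
On each slab $|q|$ ranges over $[0,t_i]$. In flow coordinates with bounded Jacobian, $q$ is comparable to the flow time, since $V_qq=1$. The integrals are therefore $\asymp t_i^2\asymp h^2$, from above and below. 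This gives \eqref{eq:fixedsharp}.

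Finally, if $\calE^\beta\le C\calD$ held with $\beta<2$, we would get $h^\beta\lesssim h^2$, which is false as $h\to0$. This proves the consequence.

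The main obstacle is the two-sided mass and integral estimates in curved flow boxes, in particular that $\int_{P_h}q\,\d\rho\gtrsim h^2$ and not merely $\lesssim$. We handle this exactly as in Step~3 of Lemma~\ref{lem:sharp-open-chain}: $V_q$ flow lines cross each level of $q$ transversally, at rate $1$, and the transverse section has positive measure. A minor point is that the two slabs must avoid the other portion of the interface, which the disjoint flow boxes ensure.
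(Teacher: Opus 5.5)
Your proof is correct and follows essentially the paper's argument: you swap two equal-mass slabs of width $\asymp h$ on opposite sides of the Laguerre interface $\{q=0\}$ (the paper's $F$ is exactly your $q$), which gives $\calE\asymp h$, while the residual's linear growth in flow time gives $\calD\asymp h^2$, with flow-box estimates supplying both sides. The only differences are cosmetic. The paper takes a small ball centered at $x_*$ with adjacent slabs at one portion of the interface, whereas you reuse the three-segment John domain and the limiting two-atom target from Theorem~\ref{thm:simultaneous-sharpness} and place the slabs at two separate crossings. Your garbled first formula for $R(x,C)$ should simply be deleted.
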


\begin{proof}
	Choose $a=y_0$ and $b=C$ from the construction in
	Lemma~\ref{lem:sharp-open-chain}, and put
	\[
	k=c(x_*,a)-c(x_*,b),\qquad
	F(x)=c(x,a)-c(x,b)-k.
	\]
	Then $F(x_*)=0$ and $\der_{\mathcal H}F\ne0$ near $x_*$.
	Choose a sufficiently small fixed ball
	$S=B_{\mathrm{cc}}(x_*,r)$ whose closure lies in this smooth
	source region. Lemma~\ref{lem:local-completion} ensures the
	required minimizing curves to its center, even if the original
	metric is incomplete. These curves show that $S$ is John.
	Set $\rho=\omega\restr S/\omega(S)$. The branches $c(x,a)-k$ and $c(x,b)$ define $T=a$ on
	$\{F\leq0\}$ and $T=b$ on $\{F>0\}$. Since
	$\rho(\{F=0\})=0$, the minimizing atom is unique almost
	everywhere. For $\mu=T_\#\rho$, equality in the dual constraint
	makes $T$ optimal, and every optimal coupling is concentrated
	on its graph. Thus $T$ is the unique optimal transport map.
	
	A fixed flow box around $x_*$ and a finite cover of
	$\overline S$ yield, as in \eqref{eq:sharp-cell-mass},
	\[
	c_1(v-u)\leq\rho(\{u<F<v\})\leq c_2(v-u),
	\qquad -\delta<u<v<\delta,
	\]
	for fixed $\delta,c_1,c_2>0$.
	For small $h>0$, let
	\[
	A_h=S\cap\{-h<F<0\},\qquad
	B_{k_h}=S\cap\{0<F<k_h\},
	\]
	and choose $k_h$ so that
	\[
	\rho(A_h)=\rho(B_{k_h})\asymp h,\qquad k_h\asymp h.
	\]
	Such a choice follows from continuity and the two-sided estimate. Define $S_h=b$ on $A_h$, $S_h=a$ on $B_{k_h}$,
	and $S_h=T$ elsewhere. Since $\rho(A_h)=\rho(B_{k_h})$, $(S_h)_\#\rho=T_\#\rho=\mu$. Moreover, 
	\[
	\int_S\dcc^2(T,S_h)\,\dd\rho
	=2\dcc^2(a,b)\rho(A_h)\asymp h.
	\]
	
	Since the same equality of masses cancels the terms involving $k$, it follows that 
	\[
	\begin{aligned}
		\int_S\bigl[c(x,S_h(x))-c(x,T(x))\bigr]\,\dd\rho=\int_{A_h}(-F)\,\dd\rho+\int_{B_{k_h}}F\,\dd\rho\asymp h^2+k_h^2\asymp h^2.
	\end{aligned}
	\]
	For example, the part $\{-h<F<-h/2\}$ has mass at least
	$c_1h/2$ and contributes at least $c_1h^2/4$ to the first
	integral; its upper bound is $c_2h^2$. The same argument on
	$\{k_h/2<F<k_h\}$ treats the second integral. Together with
	$k_h\asymp h$, this proves \eqref{eq:fixedsharp}.
\end{proof} 

\begin{remark}\label{rem:heisenberg-l1-defect}
	Write $\HH^1=\mathbb C\times\RR$ in coordinates
	$[\zeta,t]$, with group law
	\[
	[\zeta,t]\cdot[\zeta',t']
	=[\zeta+\zeta',t+t'+\tfrac12\operatorname{Im}(\overline\zeta\zeta')].
	\]
	The horizontal metric makes the left translates of the two
	real coordinate vectors in $\mathbb C$ orthonormal.
	Let $\rho$ be uniform on the coordinate cube
	$S=\{[u+\iu v,t]:0\leq u,v,t<1\}$, and set
	$T(x)=x\cdot[1,0]$, $\mu=T_\#\rho$, and $Y=T(\overline S)$.
	By the horizontal projection bound, the weight
	$\psi([\zeta,t])=\operatorname{Re}\zeta$ gives
	$\psi^c([\zeta,t])=-\operatorname{Re}\zeta-\tfrac12$ on $S$,
	with equality in the dual constraint at $T(x)$.
	
	For $\eps_n=(2n)^{-1}$, let $F_n$ swap consecutive
	intervals of length $\eps_n$ in the central coordinate $t$,
	leaving the coordinates $u,v$ unchanged.
	Then $F_n$ preserves $\rho$ and satisfies
	$F_n(x)=x\cdot[0,\pm\eps_n]$, giving
	$\pi_n=(\Id,T\circ F_n)_\#\rho\in\Pi(\rho,\mu)$.
	Since $\eta\mapsto\dcc^2(e,[1,\eta])$ is smooth and minimized
	at zero, $\dcc^2(e,[1,\eta])=1+O(\eta^2)$.
	By left invariance and homogeneity,
	\[
	\left(\int_{S\times Y}\dcc(T(x),y)\,\d\pi_n(x,y)\right)^2
	\asymp\eps_n,\qquad
	\calD(\pi_n)=O(\eps_n^2).
	\]
	Thus the $L^1$ estimate in Remark~\ref{rem:riemannian-l1-defect}
	fails in general sub-Riemannian geometry, even with fixed marginals.
\end{remark} 

\subsection{Smooth transport families}\label{sec:smooth-response}
We next construct smooth families of optimal maps and densities.
The target measures satisfy $W_1(\mu_t,\mu_0)\asymp|t|$.
The map error has order $|t|$ for an auxiliary Riemannian distance.
For the Carnot--Carath\'eodory distance, it has order $|t|^{1/s}$. We use
Agrachev's theorem on points of smoothness to choose a region
where the cost is smooth
and the ball--box theorem to compare the two distances.

\begin{proof}[Proof of Theorem~\ref{thm:smooth-response}]
\textbf{Step 1: A smooth family of optimal maps.}
Lemma~\ref{lem:local-completion} and Agrachev's theorem on points of smoothness
\cite[Theorems~1 and~*, expanded version]{AgrachevSmooth2009}
give a pair of distinct points $x_*,y_*\in O$ where $c$ is smooth.
Uniqueness of the normal minimizing geodesic and invertibility of
the exponential differential persist under small perturbations of
both endpoints \cite[Lemma~2, expanded version]{AgrachevSmooth2009},
as used in Step~1 of Lemma~\ref{lem:sharp-open-chain}.
We may therefore choose coordinate neighborhoods $U,V\Subset O$
such that $c$ is smooth near $\overline U\times\overline V$,
and every pair in this product has a unique normal minimizing
geodesic with invertible exponential differential.
First variation gives $\der_xc(x,y)=-\lambda(x,y)$, where
$y=\Exp_x\lambda(x,y)$. Thus the coordinate matrix
\[
 B(x,y):=D^2_{xy}c(x,y)
       =-\bigl(D_\lambda\Exp_x(\lambda(x,y))\bigr)^{-1}
\]
is invertible. Here and below, derivatives and matrix products
are taken in these fixed source and target coordinates.

Take $U$ convex in source coordinates and choose $\kappa>0$
large enough that, for
\[
 \phi_0(x)=\der_xc(x_*,y_*)\cdot(x-x_*)
                 -\frac\kappa2|x-x_*|^2,
 \qquad
 A(x,y):=D^2_{xx}c(x,y)-D^2\phi_0(x),
\]
we have $A(x,y)\geq I$ on $\overline U\times\overline V$.
Let $v_*\in T_{y_*}M$ be any nonzero vector. Choose a linear
function $h$ in source coordinates with
$\der h=B(x_*,y_*)v_*$, and put $\phi_t=\phi_0+th$.
After shrinking $U$ and the interval of $t$, the implicit
function theorem gives a smooth family $T_t:U\to V$ with
\begin{equation}\label{eq:smooth-contact-equation}
 \der_xc(x,T_t(x))=\der\phi_t(x),
 \qquad T_0(x_*)=y_*.
\end{equation}
Differentiating in $t$ and in $x$ yields
\[
 B(x,T_t(x))\dot T_t(x)=\der h,
 \qquad
 B(x,T_t(x))D_xT_t(x)=-A(x,T_t(x)).
\]
In particular, $\dot T_0(x_*)=v_*$ and $D_xT_t$ is invertible.
For each $y\in V$, the function $x\mapsto c(x,y)-\phi_t(x)$
is strictly convex. It follows that $T_t$ is injective, hence
is a diffeomorphism onto its image.

For $y\in V$, set
\[
 \psi_t(y)=\min_{z\in\overline U}\{c(z,y)-\phi_t(z)\}.
\]
When $y=T_t(x)$ and $x\in U$, strict convexity makes $x$ the
unique minimizer. Hence
$c(x,T_t(x))=\phi_t(x)+\psi_t(T_t(x))$.
Choose $0\leq f\in C_c^\infty(U)$ with $\int f\,\d\omega=1$,
put $\rho=f\omega$, and set $\mu_t=(T_t)_\#\rho$.
Kantorovich duality proves optimality. For $\mu_t$-almost every
$y$, equality in the dual constraint forces its source to be
$T_t^{-1}(y)$. Thus the optimal plan is unique.
Choose a fixed compact target set $Y$ containing all
$T_t(\spt\rho)$ for small $|t|$ and contained in $T_0(U)$.
The support of $f$ and the interval of $t$ can be chosen small
enough to ensure this. On a neighborhood of $\spt\rho$, we also
have $\phi_t=\psi_t^c$ with the transform taken over $Y$;
choose $Y$ to contain the images of a slightly larger compact
source neighborhood. The pair $(\psi_t^c,\psi_t)$ is
$c$-conjugate on $U\times Y$, since $\psi_t=\phi_t^c$ on $Y$.
Its source component therefore agrees with the smooth function
$\phi_t$ near $\spt\rho$.

\medskip
\textbf{Step 2: The target measures.}
Change of variables under the diffeomorphisms $T_t$ shows that
$\mu_t=g_t\omega$, where $g_t$ is jointly smooth and supported
in the fixed compact set $Y$. The zero extension is smooth
because $f$ is compactly supported in $U$. In particular,
\[
 \int_M|g_t-g_0|\,\d\omega\leq C|t|,
 \qquad
 W_1(\mu_t,\mu_0)
 \leq\frac{\diam_{\mathrm{cc}}Y}{2}
       \int_M|g_t-g_0|\,\d\omega\leq C|t|.
\]
The second estimate couples the common density to itself and
moves the remaining mass within $Y$.
For the lower bound, choose $a\in C_c^\infty(V)$ with
$\der a(y_*)v_*>0$. Shrink the support of $f$ so that
$\der a(T_0(x))\dot T_0(x)$ is bounded below by a positive
constant there. The horizontal differential of $a$ is bounded,
so its zero extension is Lipschitz for the ambient
Carnot--Carath\'eodory distance. Therefore
\[
 \left.\frac{\der}{\der t}\right|_{t=0}\int a\,\d\mu_t
 =\int\der a(T_0(x))\dot T_0(x)\,\d\rho(x)>0.
\]
The Kantorovich--Rubinstein formula gives
$W_1(\mu_t,\mu_0)\asymp|t|$. Smoothness and the nonzero
velocity also give $\dR(T_t(x),T_0(x))\asymp|t|$ uniformly
on $\spt\rho$. This proves \eqref{eq:smooth-response-linear}
for any prescribed nonzero $v_*$.

\medskip
\textbf{Step 3: The Carnot--Carath\'eodory distance.}
Choose $v_*\notin\mathcal H^{s-1}_{y_*}$ in Step~1.
On a smaller neighborhood of $y_*$, take privileged coordinates
$z^y=(z^y_1,\ldots,z^y_n)$ centered at $y$, depending smoothly
on $y$, where $n=\dim M$. Their weights $w_i$ lie in
$\{1,\ldots,s\}$; the differentials of the weight-$s$ coordinates
span the annihilator of $\mathcal H^{s-1}_y$.
Such coordinates and the uniform ball--box bounds are standard;
see \cite{Bellaiche1996,ChoiPonge2019I,DonMagnani2023,NagelSteinWainger1985}.
For nearby $y,z$ in a fixed compact neighborhood,
\begin{equation}\label{eq:smooth-response-ballbox}
 \dcc(y,z)\asymp\max_i|z_i^y(z)|^{1/w_i}.
\end{equation}
All coordinates of $T_t(x)$ centered at $T_0(x)$ are $O(|t|)$.
For at least one weight-$s$ coordinate, the derivative at
$(t,x)=(0,x_*)$ is nonzero. By continuity, after shrinking the
support of $f$, that coordinate has absolute value comparable to $|t|$
uniformly for $x\in\spt\rho$ and small $|t|$.
Equation~\eqref{eq:smooth-response-ballbox} now gives
$\dcc(T_t(x),T_0(x))\asymp|t|^{1/s}$ on $\spt\rho$.
Integration proves \eqref{eq:smooth-response-weight}.
\end{proof}

The same example gives a restriction on defect estimates.
Unlike Proposition~\ref{prop:fixedsharp}, the next statement
allows the second marginal to vary.

\begin{proposition}[Scaling obstruction for coercivity]
\label{prop:smooth-defect-obstruction}
For the source and maps constructed above, fix the dual potentials
$\phi_0,\psi_0$ and the optimal map $T_0$. For
$\pi_t=(\Id,T_t)_\#\rho$, the defect and error satisfy
\begin{equation}\label{eq:smooth-defect-rates}
 \calD(\pi_t)\asymp |t|^2,
 \qquad \calE(\pi_t)\asymp |t|^{2/s}.
\end{equation}
Consequently, an estimate $\calE(\pi)^p\leq C\calD(\pi)$
for all plans with first marginal $\rho$ requires $p\geq s$.
In particular, \eqref{eq:intro-coercivity} fails in every
equiregular open set of step $s>2$.
\end{proposition}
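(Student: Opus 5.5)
We estimate the two quantities directly from the construction in the proof of Theorem~\ref{thm:smooth-response}, with everything smooth near $\spt\rho\times Y$. For the error, $\calE(\pi_t)=\int\dcc^2(T_0(x),T_t(x))\,\d\rho(x)$. Step~3 of that proof already gives $\dcc(T_t(x),T_0(x))\asymp|t|^{1/s}$ uniformly on $\spt\rho$, so integrating the square yields $\calE(\pi_t)\asymp|t|^{2/s}$ at once.

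For the defect, $\calD(\pi_t)=\int R_0(x,T_t(x))\,\d\rho(x)$ with $R_0(x,y)=c(x,y)-\phi_0(x)-\psi_0(y)\geq0$. Near $\spt\rho$ the function $R_0$ is smooth, because the cost is smooth and $\psi_0$ is the minimum of $c(z,y)-\phi_0(z)$ over $z$, attained at the unique nondegenerate critical point $z=T_0^{-1}(y)$, where $\der_zc=\der\phi_0$ and the Hessian in $z$ is $A\geq I$. By the implicit function theorem this makes $\psi_0$ smooth on $T_0(U)$. Fix $x$ and set $g(y)=R_0(x,y)$. Then $g\geq0$ and $g(T_0(x))=0$, so $\der_yg(T_0(x))=0$. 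Taylor expansion in $y$ gives $R_0(x,T_t(x))=\tfrac12 H_x[\delta,\delta]+O(|\delta|^3)$, where $\delta$ is the coordinate displacement with $|\delta|\asymp|t|$ (Step~2) and $H_x=D_y^2g(T_0(x))$. This immediately gives the upper bound $\calD\leq Ct^2$.

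The lower bound is the main step. We need $H_x$ to be positive definite uniformly on $\spt\rho$. Rather than differentiating $\psi_0$ twice, we use duality in $y$ directly. Write $y=T_0(z)$, so that $\psi_0(y)=c(z,y)-\phi_0(z)$. Then
\[
R_0(x,y)=[c(x,y)-\phi_0(x)]-[c(z,y)-\phi_0(z)].
\]
The function $z'\mapsto c(z',y)-\phi_0(z')$ has Hessian at least $I$ on the convex set $U$ and its minimum at $z'=z$. Hence $R_0(x,y)\geq\tfrac12|x-z|^2=\tfrac12|x-T_0^{-1}(y)|^2$. Since $T_0^{-1}$ is a diffeomorphism with uniformly bounded differential of its inverse, we get $|x-T_0^{-1}(T_t(x))|\geq c\,|T_t(x)-T_0(x)|\geq c|t|$. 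This yields $\calD(\pi_t)\geq ct^2$, and combined with the upper bound, $\calD(\pi_t)\asymp t^2$. One should check that $T_t(x)$ stays in $T_0(U)$ so that $z$ is defined; this holds for small $|t|$ by the choice of $Y$.

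For the consequence, suppose $\calE(\pi)^p\leq C\calD(\pi)$ for all plans with first marginal $\rho$. Applying it to $\pi_t$ gives $|t|^{2p/s}\leq C't^2$ as $t\to0$, which forces $2p/s\geq2$, i.e. $p\geq s$. Finally, \eqref{eq:intro-coercivity} is the case $p=2$, so it fails whenever $s>2$. This holds in every equiregular open set, since Theorem~\ref{thm:smooth-response} provides the construction inside any such set.
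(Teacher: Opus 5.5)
Your proof is correct. It differs from the paper's only in how you get the lower bound $\calD(\pi_t)\gtrsim t^2$.

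\textbf{The paper's route.} It differentiates the minimizer $z(y)=T_0^{-1}(y)$ implicitly to compute $D^2_{yy}R_0(x,T_0(x))=B^{\mathsf T}A^{-1}B$. It notes that this matrix is uniformly positive definite near the graph of $T_0$. Both bounds then come from one second-order Taylor expansion about $y=T_0(x)$.

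\textbf{Your route.} You use the strong convexity $A\geq I$ of $z'\mapsto c(z',y)-\phi_0(z')$ on the convex set $U$, together with the critical-point identity at $z=T_0^{-1}(y)$. This gives the global inequality $R_0(x,y)\geq\frac12|x-T_0^{-1}(y)|^2$ for every $y\in T_0(U)$. You then convert it into $R_0(x,T_t(x))\geq c\,|T_t(x)-T_0(x)|^2$ using the Lipschitz bound on $T_0$, that is, boundedness of $DT_0=-B^{-1}A$.

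\textbf{Comparison.} Both arguments rest on the same two facts: $A\geq I$ and invertibility of $B$. Yours avoids computing second derivatives of $\psi_0$ and needs no remainder control for the lower bound. It also gives a quantitative estimate valid away from the graph. The paper's computation identifies the exact leading coefficient of $\calD(\pi_t)$ and treats the upper and lower bounds symmetrically. Your convexity argument, run with an upper bound on $A$ and the Lipschitz bound on $T_0^{-1}$, would also give your upper bound without a Taylor expansion.
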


\begin{proof}
On $Y\subset T_0(U)$, the minimizer defining $\psi_0(y)$ is
$z(y)=T_0^{-1}(y)$, so $\psi_0$ is smooth.
Recall $A=D^2_{xx}c-D^2\phi_0$ and $B=D^2_{xy}c$, and set
$R_0(x,y)=c(x,y)-\phi_0(x)-\psi_0(y)$.
At $y=T_0(x)$, we have $R_0=0$, $D_yR_0=0$, and
\[
 D^2_{yy}R_0(x,T_0(x))
 =B(x,T_0(x))^{\mathsf T}A(x,T_0(x))^{-1}B(x,T_0(x)).
\]
Indeed, differentiating the equation for the minimizing $z(y)$
gives $D_yz=-A^{-1}B$; differentiating
$D_y\psi_0(y)=D_yc(z(y),y)$ gives the displayed identity.
The matrix is uniformly positive definite near $\{(x,T_0(x)):x\in\spt\rho\}$. Taylor's formula and
$|T_t(x)-T_0(x)|\asymp|t|$ therefore give
$R_0(x,T_t(x))\asymp|t|^2$. This proves the first estimate
in \eqref{eq:smooth-defect-rates}; the second follows from
\eqref{eq:smooth-response-weight}.
Their quotient is
$\calE(\pi_t)^p/\calD(\pi_t)\asymp|t|^{2p/s-2}$,
which is unbounded if $p<s$.
\end{proof}

\begin{corollary}[Distances after projection onto quotient groups]
\label{cor:quotient-response}
Let $G$ be a Carnot group with stratified Lie algebra
$V_1\oplus\cdots\oplus V_s$ and $\dim V_1\geq2$.
For $1\leq j\leq s$, let
\[
 G_j=G/\exp(V_{j+1}\oplus\cdots\oplus V_s),
 \qquad \pi_j:G\longrightarrow G_j.
\]
Here $\pi_j$ is the canonical projection, $G_s=G$, and
$\pi_s=\Id$. The subgroup $\exp(V_{j+1}\oplus\cdots\oplus V_s)$
is normal, and $G_j$ has stratified Lie algebra
$V_1\oplus\cdots\oplus V_j$. Let $\dist_j$ be its
Carnot--Carath\'eodory distance for the given inner product on $V_1$. Fix $j$ and a nonempty open set $O\subseteq G$. There exist
$t_0>0$, a source probability measure $\rho=f\mm$ with
$0\leq f\in C_c^\infty(O)$, and probability measures $\mu_t=g_t\mm$
for $|t|<t_0$. The function $(t,y)\mapsto g_t(y)$ is smooth.
All $g_t$ have support in one compact subset of $O$. The optimal maps $T_t$
from $\rho$ to $\mu_t$ are unique and smooth, and satisfy
\begin{equation}\label{eq:quotient-response}
 \norm{\dist_j(\pi_jT_t,\pi_jT_0)}_{L^2(\rho)}
 \asymp |t|^{1/j}
 \asymp W_1^{1/j}(\mu_t,\mu_0).
\end{equation}
Here $W_1$ uses the distance on $G$, and the family may depend on $j$.
\end{corollary}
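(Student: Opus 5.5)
The plan is to rerun Steps~1 and~2 of the proof of Theorem~\ref{thm:smooth-response} on $G$, choosing the velocity $v_*$ to fit the quotient $G_j$, and then to replace the ball--box comparison on $G$ in Step~3 by one on $G_j$.

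\textbf{Step 1: the smooth family on $G$.} Since $\dim V_1\geq2$, the structure is bracket generating and equiregular of step $s$ on $O$. Steps~1--2 of the proof of Theorem~\ref{thm:smooth-response}, carried out with Haar measure, produce the following data:
\begin{itemize}
\item a smooth optimal family $T_t$, with $\rho=f\mm$ and $\mu_t=(T_t)_\#\rho=g_t\mm$;
\item a prescribed nonzero initial velocity $\dot T_0(x_*)=v_*\in T_{y_*}G$;
\item the bound $W_1(\mu_t,\mu_0)\asymp|t|$ for every nonzero $v_*$, together with uniqueness and smoothness.
\end{itemize}
Only $v_*$ remains to be chosen. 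I would take $v_*=(\der L_{y_*})_eZ$ for a nonzero $Z\in V_j$.

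\textbf{Step 2: projection to $G_j$.} The map $\pi_j$ is a homomorphism commuting with dilations. In exponential coordinates it simply deletes the layers $V_{j+1},\dots,V_s$. Hence $\der\pi_j$ maps left-invariant horizontal fields of $G$ isometrically onto those of $G_j$. Consequently $\pi_j$ is $1$-Lipschitz, and $\pi_j T_t$ is smooth in $(t,x)$. At $y_*$, the image of $v_*$ is the left translate of the nonzero element $Z\in V_j(G_j)$. This vector lies in the top layer of $G_j$ and outside $\mathcal H^{j-1}$ of $G_j$.

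\textbf{Step 3: the ball--box estimate on $G_j$.} This is where the exponent changes from $1/s$ to $1/j$. I apply the ball--box estimate \eqref{eq:smooth-response-ballbox} on the step-$j$ group $G_j$, centered at $\pi_jT_0(x)$. It gives the following:
\begin{itemize}
\item Each coordinate of weight $w$ of $\pi_jT_t(x)$ is $O(|t|)$, so it contributes at most $C|t|^{1/w}\leq C|t|^{1/j}$ for small $|t|$.
\item Some weight-$j$ coordinate has nonzero $t$-derivative at $(0,x_*)$.
\item After shrinking $\spt f$, that coordinate is comparable to $|t|$ uniformly on $\spt\rho$.
\end{itemize}
Therefore $\dist_j(\pi_jT_t(x),\pi_jT_0(x))\asymp|t|^{1/j}$ uniformly on $\spt\rho$. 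Integrating gives the first relation in \eqref{eq:quotient-response}. The second relation follows from $W_1\asymp|t|$.

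The main point to check is that the weight-$j$ coordinate really has nonzero derivative uniformly near $x_*$. Left invariance makes this transparent: in exponential coordinates centered at $\pi_jT_0(x)$, the vector $\der\pi_j\dot T_0(x)$ depends continuously on $x$. At $x_*$ it has nonzero $V_j$-component, and translation alters only lower-order terms. Finally, the case $j=1$ gives the abelian quotient $V_1$ with Euclidean distance, and the same argument yields exponent $1$.
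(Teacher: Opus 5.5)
Your proposal is correct and follows the paper's proof essentially step for step. You reuse Steps~1 and~2 of the proof of Theorem~\ref{thm:smooth-response} with $v_*=(\der L_{y_*})_eZ$ for a nonzero $Z\in V_j$, observe that the projected velocity has a nonzero top-layer component in $G_j$ that persists near $x_*$, and then run the ball--box argument of Step~3 in $G_j$ together with $W_1(\mu_t,\mu_0)\asymp|t|$ from Step~2, which is exactly what the paper does (your remarks on $\pi_j$ deleting the upper layers and being $1$-Lipschitz only make its implicit details explicit).
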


\begin{proof}
In Steps~1 and~2 above, choose
$v_*=(D L_{y_*})_e v_j$ for $0\ne v_j\in V_j$, where
$L_y$ is left translation by $y$ and $e$ is the identity.
The projected velocity has a nonzero component in the top layer
of the step-$j$ group $G_j$. This remains true near $x_*$.
Apply the ball--box argument of Step~3 in $G_j$ and use the
same $W_1$ estimate from Step~2.
\end{proof}

For $j<s$, the projection in \eqref{eq:quotient-response} discards
the layers $V_{j+1},\ldots,V_s$. Prescribing a velocity in $V_j$
at one point does not exclude components in these layers at
nearby source points,
so it does not give the same exponent for the full distance on $G$.

\subsection{Open questions}\label{sec:boundary}
Does the $W_1^{1/4}$ map estimate in
Corollary~\ref{cor:map-stability} hold on every step-two Carnot
group, including nonfat groups, under the same source assumptions?
More generally, does it extend to complete smooth two-generating
manifolds? The cusp in Remark~\ref{rem:nonfat-obstruction}
obstructs our semiconcavity argument, but does not disprove stability.
Our proof would need uniform estimates near pairs joined by
abnormal minimizers. Uniqueness of optimal maps alone gives
no such estimate.
A related direction is extension to ideal manifolds, namely
complete structures without nonconstant abnormal minimizers.
Their cost is locally semiconcave off the diagonal
\cite[Theorem~5.9]{FigalliRifford10}, but
Lemma~\ref{lem:varying-controls} also requires regular minimizers
in the rescaled limits.

\medskip
Does Theorem~\ref{thm:potential-stability}\textup{(ii)} hold with
$\alpha=1/2$, with the same dependence of the constant as in that
theorem?
Our approximation constants are not controlled uniformly as
$\alpha\uparrow1/2$. The estimates for $\alpha<1/2$ and
Theorem~\ref{thm:simultaneous-sharpness} identify $1/2$ as the
supremum for sources on John domains when the horizontal rank is at least two.
Part~\textup{(iv)} restricts finite atomic counterexamples:
for a fixed source and compact target region, with $W_1(\mu_0^n,\mu_1^n)>0$,
\[
\begin{gathered}
\frac{
 \norm{\phi_{\mu_1^n}-\phi_{\mu_0^n}}_{L^2(\rho)}}
 {W_1^{1/2}(\mu_0^n,\mu_1^n)}\longrightarrow+\infty
 \quad\Longrightarrow\quad
 \min\{\#\spt\mu_0^n,\#\spt\mu_1^n\}\longrightarrow+\infty.
\end{gathered}
\]

\medskip
The two local constructions give restrictions arising from changes
in Laguerre cells and from the anisotropic scaling of the distance.
It remains to determine which map estimates hold under both
restrictions in higher step.
In particular, on equiregular structures of steps three and four
where existence and uniqueness of optimal maps are available,
does the map exponent $1/4$ persist even though
\eqref{eq:intro-coercivity} fails? For unrestricted higher-step
structures, establishing existence for every compactly supported
absolutely continuous source and arbitrary compact target is
already an open problem.
More generally, what are the best exponents when all transported
pairs stay in a fixed region where the cost is smooth, with quantitative bounds
on the inverse exponential differential?

\medskip
For potentials, Theorem~\ref{thm:potential-stability}\textup{(i)}
gives a logarithmic modulus on every equiregular structure.
Can this be improved to a positive power of $W_1$ on every
fixed higher-step Carnot group? The failure of horizontal
semiconcavity on the Engel group does not decide this question.
The logarithmic rate arises when selecting a scale of affine
approximation, and we have no matching lower bound.

\medskip
For maps on a Carnot group of step greater than two, is there any
positive uniform H\"older exponent under only an upper density
bound on the source? The question remains open even if we consider
only targets with unique optimal maps. Our examples give upper
bounds on the exponent, but do not exclude all positive exponents.

\section*{Use of generative AI}
The authors used ChatGPT, Codex, and Kimi extensively in this work.
They used these tools to explore mathematical questions, test proof
strategies, check arguments and references, and draft and revise
the text. The authors chose the research questions and directed
the development of the proofs. They reviewed the final manuscript
and take responsibility for its arguments, references, and conclusions.

\enlargethispage{\baselineskip}

\enlargethispage{3\baselineskip}

\end{document}